\newtheorem{theorem}{Theorem}[section]
\newtheorem{lemma}[theorem]{Lemma}
\newtheorem{prop}[theorem]{Proposition}
\newtheorem{corollary}[theorem]{Corollary}
\theoremstyle{definition}
\newtheorem{conj}[theorem]{Conjecture}
\newtheorem{defn}[theorem]{Definition}
\newtheorem{rem}[theorem]{Remark}
\newtheorem{conventions}[theorem]{Conventions}
\numberwithin{equation}{section}
\def\ggg{\mathfrak{g}}
\def\gl{\mathfrak{gl}}
\def\sssl{\mathfrak{sl}}
\def\ggg{\mathfrak{g}}
\def\mmm{\mathfrak{m}}
\def\ppp{\mathfrak{p}}
\def\aaa{\mathfrak{a}}
\def\uuu{\mathfrak{u}}
\def\bbc{\mathbb{C}}
\def\bbf{\mathbb{F}}
\def\bbz{\mathbb{Z}}
\def\bbk{{\mathds{k}}}
\def\bo{{\bar 1}}
\def\bz{{\bar 0}}
\def\ev{{\text{ev}}}
\def\sfo{\textsf{o}}
\def\sfO{\textsf{O}}
\def\Lie{\text{Lie}}
\def\ad{\text{ad}}
\def\gr{\text{gr}}
\begin{document}
\title[finite $W$-superalgebras and lower bounds]{Finite $W$-superalgebras and the dimensional lower bounds for the representations of basic Lie superalgebras}
\author{Yang Zeng and Bin Shu}
\thanks{\nonumber{{\it{Mathematics Subject Classification}} (2000 {\it{revision}})
Primary 17B35. Secondary 17B81. This work is  supported partially by the NSF of China (No. 11271130; 11201293; 111126062),  the Innovation Program of Shanghai Municipal Education Commission (No. 12zz038). }}
\address{School of Science, Nanjing Audit University, Nanjing, Jiangsu Province 211815, China}
\email{zengyang214@163.com}
\address{Department of Mathematics, East China Normal University, Shanghai 200241, China}
\email{bshu@math.ecnu.edu.cn}


\begin{abstract} In this paper we formulate a conjecture about the minimal dimensional representations of the finite $W$-superalgebra $U(\mathfrak{g}_\bbc,e)$ over the field of complex numbers and demonstrate it with examples including all the cases of type $A$. Under the assumption of this conjecture, we show that the lower bounds of dimensions in the modular representations of basic Lie superalgebras are attainable. Such lower bounds, as a super-version of Kac-Weisfeiler conjecture, were formulated by Wang-Zhao in \cite{WZ} for the modular representations of a basic Lie superalgebra ${\ggg}_{{\bbk}}$ over an algebraically closed field $\bbk$ of positive characteristic $p$.
\end{abstract}
\maketitle
\setcounter{tocdepth}{1}
\tableofcontents
\section*{Introduction}
This paper is a sequel to \cite{ZS2}. On the basis of the structure theory of finite $W$-superalgebras developed there, we study the modular representations of basic Lie superalgebras, as a remarkable application of finite $W$-superalgebras.
\subsection{} A finite $W$-algebra $U(\ggg,e)$ is a certain associative algebra associated to a complex semisimple Lie algebra $\mathfrak{g}$ and a nilpotent element $e\in{\ggg}$. The study of finite $W$-algebras can be traced back to Kostant's work in the case when $e$ is regular \cite{Ko}, then the further study was done by Lynch in the case when $e$ is an arbitrary even nilpotent element (cf. \cite{Ly}). Premet developed the finite $W$-algebras in full generality in \cite{P2}. On his way of proving the celebrated Kac-Weisfeiler conjecture for Lie algebras of reductive groups in \cite{P1}, Premet first constructed the modular version of finite $W$-algebras (they will be called the reduced $W$-algebras in this paper). By means of a complicated but natural ``admissible'' procedure, the finite $W$-algebras over the field of complex numbers were introduced in \cite{P2}, which showed that they are filtrated deformations of the coordinate rings of Slodowy slices.

Aside from the advances in finite $W$-algebras over the field of complex numbers, the modular theory of finite $W$-algebras is also developed excitingly. It is remarkable that in \cite{P7} Premet proved that if the $\bbc$-algebra $U(\ggg,e)$ has a one-dimensional representation, then under the assumption $p\gg 0$ for the positive characteristic field ${\bbk}=\overline{\mathbb{F}}_p$, the reduced enveloping algebra $U_\chi(\ggg_\bbk)$ of the modular counterpart $\ggg_\bbk$ of $\ggg$ possesses an irreducible module of dimension $d(e)$ (where $\chi$ is the linear function on $\ggg_{{\bbk}}$ corresponding to $e$, and $d(e)$ is half of the dimension of the orbit $G_\bbk\cdot \chi$ for the simple, simply-connected algebraic group $G_\bbk$ with $\ggg_\bbk=\Lie(G_\bbk)$), which is a lower bound predicted by Kac-Weisfeiler conjecture mentioned above.

~
The existence of one-dimensional representations for $U(\ggg,e)$ associated to a classical Lie algebra over $\mathbb{C}$ was obtained by Losev in \cite[Theorem 1.2.3(1)]{L3} (see also \cite[\S6]{L1}).
Goodwin-R\"{o}hrle-Ubly \cite{GRU} proved that the $W$-algebras associated to exceptional Lie algebras $E_6,E_7,F_4,G_2$, or $E_8$ with $e$ not rigid, admit one-dimensional representations (see also \cite{P7}).

\subsection{} The theory of finite $W$-superalgebras was developed in the same time.  In the work of De Sole and Kac \cite{SK}, finite $W$-superalgebras were defined in terms of BRST cohomology under the background of vertex algebras and quantum reduction.
The topics on finite $W$-superalgebras attracted many researchers (cf.  \cite{BBG2},  \cite{Peng2}, \cite{Peng3}, \cite{PS}, \cite{PS2}, \cite{PS3}, \cite{WZ2} and \cite{Z2}).

In the work of Wang-Zhao \cite{WZ}, they initiated the study of modular representations of basic Lie superalgebras over an algebraically closed field of positive characteristic, formulating the super Kac-Weisfeiler property for those Lie superalgebras as well as  presenting the definition of modular $W$-superalgebras.

\subsection{}
Based on Premet's and Wang-Zhao's work as mentioned above, our previous paper \cite{ZS2} presents the PBW structure theorem for the finite $W$-superalgebras (along with the reduced $W$-superalgebras), which shows that the construction of finite $W$-superalgebras (also the reduced $W$-superalgebras) can be divided into two cases in virtue of the parity of $\text{dim}\,\mathfrak{g_\bbf}(-1)_{\bar1}$ (where $\bbf$ is an algebraically closed field of any characteristic). To some extent, the situation of finite $W$-superalgebras  is significantly different from that of finite $W$-algebras.

Explicitly speaking, let ${\ggg}={\ggg}_{\bar0}+{\ggg}_{\bar1}$ be a basic Lie superalgebra over $\mathbb{C}$ excluding type $D(2,1;a)$ ($a\in\bbc$ is not an algebraic number), and $e\in{\ggg}_{\bar0}$ a nilpotent element. Fix an $\mathfrak{sl}_2$-triple $f,h,e$, and denote by ${\ggg}^e:=\text{Ker}(\ad\,e)$ in ${\ggg}$. The linear
operator ad\,$h$ defines a ${\bbz}$-grading ${\ggg}=\bigoplus\limits_{i\in{\bbz}}{\ggg}(i)$. Define the Kazhdan degree on ${\ggg}$ by declaring that $x\in{\ggg}(j)$ is of $(j+2)$. A finite $W$-superalgebra is defined by $$U({\ggg},e)=(\text{End}_{\ggg}Q_{\chi})^{\text{op}},$$where $Q_{\chi}$ is the generalized Gelfand-Graev ${\ggg}$-module associated to $e$. In \cite{ZS2} we showed that

\begin{theorem}\label{graded W}(\cite{ZS2})
Under the Kazhdan grading, we have
\begin{itemize}
\item[(1)] $\gr\,U(\ggg,e)\cong S(\ggg^e)$ as $\bbc$-algebras when $\dim\ggg(-1)_\bo$ is even;
\item[(2)] $ \gr\,U(\ggg,e)\cong S(\ggg^e)\otimes \bbc[\Theta]$ as vector spaces over $\bbc$ when  $\dim\ggg(-1)_\bo$ is odd,
\end{itemize}
where $\bbc[\Theta]$ is the exterior algebra generated by one element $\Theta$.
\end{theorem}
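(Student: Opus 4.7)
The plan is to adapt Premet's PBW construction for classical finite $W$-algebras to the super setting, treating the two parities of $\dim\ggg(-1)_\bo$ separately. First I would fix an invariant non-degenerate even supersymmetric form $(\cdot\mid\cdot)$ on $\ggg$ and define $\langle x,y\rangle := (e\mid[x,y])$ on $\ggg(-1)$. Its restriction to $\ggg(-1)_\bz$ is skew and to $\ggg(-1)_\bo$ is symmetric; by $\sssl_2$-theory both restrictions are non-degenerate. When $\dim\ggg(-1)_\bo$ is even one may pick a genuine Lagrangian $\ell\subset\ggg(-1)$; when it is odd, maximal isotropic subspaces of $\ggg(-1)_\bo$ fall exactly one odd dimension short of a Lagrangian, and this one-dimensional defect is what is destined to produce the exterior factor $\bbc[\Theta]$ in (2).

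Next I would set $\mmm = \ell \oplus \bigoplus_{i\le -2}\ggg(i)$, extend $\chi := (e\mid\cdot)$ to a character of $U(\mmm)$, and form the generalized Gelfand--Graev module $Q_\chi = U(\ggg)\otimes_{U(\mmm)}\bbc_\chi$, so that $U(\ggg,e) = (\text{End}_\ggg Q_\chi)^{\text{op}}$ is identified via Frobenius reciprocity with the Whittaker invariants $Q_\chi^{\text{ad}\,\mmm,\chi}$. I would equip $U(\ggg)$ with the Kazhdan filtration (placing $\ggg(j)$ in degree $j+2$), transport it to $Q_\chi$, and induce a filtration on $U(\ggg,e)$. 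Under this filtration $\gr U(\ggg)\cong S(\ggg)$ as super-commutative superalgebras, and $\gr Q_\chi$ becomes the coordinate ring of the affine subspace $e+\mmm^\perp$.

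The heart of the argument is a super analogue of Slodowy's transversal slice theorem: the adjoint action of $\exp(\text{ad}\,\mmm_\bz)$ sweeps $e+\mmm^\perp$ onto $e+\ggg^f$, and using the decomposition $\ggg = \ggg^e \oplus [f,\ggg]$ one identifies the $(\text{ad}\,\mmm,\chi)$-invariants inside $\gr Q_\chi$ with $S(\ggg^e)$. In the odd-parity case this identification picks up the extra factor $\bbc[\Theta]$ from the missing odd Lagrangian direction, but only as a vector-space summand, since the corresponding graded generator fails to supercommute with the rest on the nose—this is precisely why (2) is stated only as a vector-space isomorphism while (1) is an algebra isomorphism.

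The main obstacle will be producing honest Whittaker-invariant lifts of a fixed Kazhdan-homogeneous basis of $\ggg^e$ (and, in the odd case, of $\Theta$) and verifying that the resulting PBW monomials form a basis of $U(\ggg,e)$. This is an induction on Kazhdan degree: one proposes a PBW candidate, applies the twisted action $\text{ad}\,x - \chi(x)$ for $x\in\mmm$, and corrects by lower-filtration terms, all while tracking $\bbz/2$-parity carefully. One must also check that the odd generator $\Theta$ squares to zero modulo lower filtration so that the exterior algebra $\bbc[\Theta]$ actually materializes at the graded level. Once these lifts exist, both (1) and (2) follow from a dimension comparison between the Kazhdan-graded pieces of $U(\ggg,e)$ and those of $S(\ggg^e)$, respectively $S(\ggg^e)\otimes\bbc[\Theta]$.
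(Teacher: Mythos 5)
Your proposal reproduces the Premet-style Kazhdan-filtration PBW argument adapted to the super setting, which is exactly the route of the cited paper: choose $\mmm$ from a maximal isotropic subspace of $\ggg(-1)$ (falling one odd dimension short of a Lagrangian when $\dim\ggg(-1)_\bo$ is odd), filter $Q_\chi$ by Kazhdan degree, and lift a homogeneous basis of $\ggg^e$ (plus the extra odd element $\Theta_{l+q+1}$ when $\dim\ggg(-1)_\bo$ is odd) to Whittaker invariants $\Theta_1,\dots,\Theta_{l+q'}$ whose ordered monomials form a PBW basis, whence $\gr U(\ggg,e)$ is a free supercommutative algebra on the $\gr\Theta_i$. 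One correction to your narrative: the phrase ``as vector spaces'' in (2) is not explained by $\gr\Theta$ failing to supercommute with the rest --- the Kazhdan-degree bounds built into the $F_{ij}$'s (see \eqref{fij}--\eqref{fij2}) make $\gr U(\ggg,e)$ a genuine supercommutative polynomial superalgebra in $\gr\Theta_1,\dots,\gr\Theta_{l+q'}$, and $\Theta_{l+q+1}^2=\tfrac12\,\mathrm{id}$ drops all the way to Kazhdan degree $0$, so $(\gr\Theta_{l+q+1})^2=0$ and the exterior factor $\bbc[\Theta]$ does materialize as a supercommutative algebra factor, not merely a vector-space summand.
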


\subsection{}
The main purpose of this paper is to further develop the construction and representation theory of finite $W$-superalgebras both over the field of complex numbers and over a field in prime characteristic. The most important part is the accessibility of lower bounds in the super Kac-Weisfeiler property. Our approach is roughly generalizing the ``reduction modulo $p$'' method introduced by Premet for the finite $W$-algebra case in \cite{P7}, with careful analysis and examination on the variation of structural features arising from the parity of $\dim \ggg(-1)_\bo$. Let us explain it roughly as below.

Let ${\ggg}_{\bbk}$ and  $Q_{\chi,{\bbk}}$ be the modular counterparts of $\ggg$, and of the generalized Gelfand-Graev $\ggg$-module $Q_{\chi}$ respectively. To ease notation, we will identify the nilpotent element $e\in\ggg$ over $\bbc$ with the element $\bar{e}=e\otimes1$ in ${\ggg}_{\bbk}$ by ``reduction modulo $p$" in the following.   Define the finite $W$-superalgebra over ${\bbk}$ by $$U({\ggg}_{\bbk},e):=(\text{End}_{{\ggg}_{\bbk}}Q_{\chi,{\bbk}})^{\text{op}}.$$ 
Let $T({\ggg}_{\bbk},e)$ be the transition subalgebra of $U({\ggg}_{\bbk},e)$ which is derived from the ${\bbc}$-algebra $U({\ggg},e)$ by ``reduction modulo $p$''. In our arguments, the transition subalgebras will play some medium role between the  theory of finite $W$-superalgebras over $\bbc$ and that of reduced enveloping algebras over $\bbk$. So in the  first part of the paper, we will investigate the structure of the transition subalgebra  $T(\ggg_\bbk,e)$ and the finite $W$-superalgebra $U(\ggg_\bbk,e)$ over $\bbk$. Explicitly speaking, for any real number $a\in\mathbb{R}$, let $\lceil a\rceil$ denote the largest integer lower bound of $a$, and $\lfloor a\rfloor$ the least integer upper bound of $a$. It is notable that these notations also work for the  numbers in the prime field $\mathbb{F}_p$. Set $d_i:=\text{dim}{\ggg}_i-\text{dim}{\ggg}^e_i$ for $i\in\bbz_2=\{\bz,\bo\}$, then we have

\begin{theorem}\label{graded W2} There is a subspace $\aaa_\bbk$ of $\ggg_\bbk$  with
$\underline{\dim}\,\aaa_\bbk = ({\frac{d_0}{2}}|{\lceil\frac{d_1}{2}\rceil})$ such that
$U({\ggg}_{\bbk},e)\cong T({\ggg}_{\bbk},e)\otimes_{\bbk}Z_p(\aaa_{\bbk})$ as ${\bbk}$-algebras, where $Z_p(\aaa_\bbk)$ is the $p$-center as usually defined,  with respect to the subspace $\aaa_\bbk$.
\end{theorem}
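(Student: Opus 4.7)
The plan is to mimic Premet's reduction-modulo-$p$ strategy for the Lie algebra case \cite{P7}, adapting it to the two possible behaviors dictated by the parity of $\dim\ggg(-1)_\bo$ as recorded in Theorem~\ref{graded W}. First, I would recall from our preceding structural work \cite{ZS2} that the finite $W$-superalgebra $U(\ggg_\bbk,e)$ admits a PBW-type basis given by ordered monomials in a distinguished set of generators indexed by a homogeneous basis of $\ggg^e_\bbk$, together, when $\dim\ggg(-1)_\bo$ is odd, with one additional odd generator $\Theta$. The transition subalgebra $T(\ggg_\bbk,e)$ is by construction the $\bbk$-span of the ``admissible'' generators descending from an integral form of $U(\ggg,e)$, and inherits from the complex side a PBW basis of the same shape; this is the input from Theorem~\ref{graded W} to which the ``extra'' $p$-central data in $U(\ggg_\bbk,e)$ must be compared.

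Next I would choose a homogeneous complement $\mmm^*_\bbk$ to $\ggg^e_\bbk$ inside $\ggg_\bbk$ of superdimension $(d_0\mid d_1)$. The Kirillov form $(x,y)\mapsto\chi([x,y])$ restricts to a nondegenerate supersymmetric form on $\mmm^*_\bbk$; selecting a Lagrangian subspace in the even part and a maximal isotropic subspace in the odd part produces the desired $\aaa_\bbk$ of superdimension $(\tfrac{d_0}{2}\mid\lceil\tfrac{d_1}{2}\rceil)$, the ceiling accounting for the fact that an odd-dimensional symmetric form over $\bbk$ admits only a ``near-Lagrangian'' of the stated dimension. For each $x\in\aaa_\bbk$ I would then produce the associated $p$-central element of $U(\ggg_\bbk,e)$ represented inside $U(\ggg_\bbk)$ by (a lift of) $x^p-x^{[p]}-\chi(x)^p$ for $x\in(\aaa_\bbk)_\bz$ and the appropriate odd analogue for $x\in(\aaa_\bbk)_\bo$, and check via the Kazhdan filtration that these elements generate a subalgebra of $U(\ggg_\bbk,e)$ isomorphic to the $p$-center $Z_p(\aaa_\bbk)$.

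Finally, the multiplication map $T(\ggg_\bbk,e)\otimes_\bbk Z_p(\aaa_\bbk)\to U(\ggg_\bbk,e)$ is an algebra homomorphism because the $p$-central generators lie in the center of $U(\ggg_\bbk,e)$ modulo lower-Kazhdan-degree terms already living in $T(\ggg_\bbk,e)$. Injectivity follows from linear independence of the product monomials in the filtered PBW basis, and surjectivity from a dimension count: the two sides carry compatible Kazhdan filtrations whose associated gradeds match once one multiplies the transition-algebra PBW basis (indexed by $\ggg^e_\bbk$ and possibly $\Theta$) against the enveloping $p$-algebra of $\aaa_\bbk$ and compares with the known PBW basis of $U(\ggg_\bbk,e)$.

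The hard part will be the odd-parity case $\dim\ggg(-1)_\bo$ odd, where the extra generator $\Theta$ in $T(\ggg_\bbk,e)$ and the ceiling $\lceil d_1/2\rceil$ in the odd superdimension of $\aaa_\bbk$ must be reconciled: one has to verify that $\Theta$ and the odd $p$-central generators are not double-counted, and that the supersymmetric form on $\mmm^*_\bbk$ really admits an isotropic subspace of exactly the claimed dimension. This will require a careful choice of the integral admissible $\bbz$-form so that the image of $\Theta$ inside $U(\ggg_\bbk,e)$ is orthogonal, in the Kirillov sense, to the odd part of $\aaa_\bbk$, and a small change of generators may be needed to make the tensor decomposition hold on the nose.
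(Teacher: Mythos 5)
Your overall strategy — adapt Premet's reduction-modulo-$p$ argument and compare $T(\ggg_\bbk,e)$ with $U(\ggg_\bbk,e)$ via the Kazhdan filtration — is the right one, but there is a conceptual error at the center of your plan. You propose to ``produce the associated $p$-central element of $U(\ggg_\bbk,e)$ \ldots for $x\in(\aaa_\bbk)_\bz$ and the appropriate odd analogue for $x\in(\aaa_\bbk)_\bo$.'' No such odd analogue exists. The $p$-centre of a restricted Lie superalgebra is generated \emph{only} by $\bar x^p-\bar x^{[p]}$ for \emph{even} $\bar x$; there is no $p$-power operation on odd elements, and an odd $\bar x$ contributes only the relation $\bar x^2=\tfrac12[\bar x,\bar x]$, which produces nothing new. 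Accordingly the paper defines $Z_p(V)$ as the polynomial algebra on $\{\bar x^p-\bar x^{[p]}\mid \bar x\in V_\bz\}$, so $Z_p(\aaa_\bbk)$ depends only on $(\aaa_\bbk)_\bz$ and has exactly $\tfrac{d_0}{2}$ generators. The odd superdimension $\lceil d_1/2\rceil$ in the statement records the size of $\aaa_\bbk$ as a homogeneous subspace but contributes nothing to $Z_p(\aaa_\bbk)$. Because of this misconception, your closing paragraph about ``double-counting'' the extra odd generator $\Theta$ against ``odd $p$-central generators'' addresses a problem that does not arise, while the real issues of the odd-$d_1$ case are elsewhere.

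Your choice of $\aaa_\bbk$ also diverges from the paper's, in a way that would make the filtered comparison harder to run. The paper takes $\aaa_\bbk$ to be the $\bbk$-span of the explicit elements $\bar X_1,\ldots,\bar X_{m+n-l-q+s+t'}$ (the images of $x_{l+1},\ldots,x_m$, $y_{q+1},\ldots,y_n$, $u_1,\ldots,u_s$, $v_1,\ldots,v_{t'}$), so that $\widetilde\ppp_\bbk=\aaa_\bbk\oplus\ggg^e_\bbk$ (possibly $\oplus\,\bbk v_{(r+1)/2}$) is a fixed complement to $\mmm_\bbk$; the paper then observes $\aaa_\bbk=\{\bar x\in\widetilde\ppp_\bbk\mid(\bar x,\ggg^f_\bbk)=0\}$ with respect to the ambient invariant form $(\cdot,\cdot)$, not a Lagrangian of the Kirillov form $\chi([\cdot,\cdot])$ inside an abstract complement of $\ggg^e_\bbk$. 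The point of this choice is that, by the $p$-power formula~\eqref{dede}, the top Kazhdan-graded term of $\bar\Theta_k^p$ for $1\le k\le l$ is $\bar x_k^p$, so $\rho_\bbk(\bar x_k^p-\bar x_k^{[p]})$ is already captured inside $T(\ggg_\bbk,e)$ and the remaining free $p$-central parameters are exactly those coming from $(\aaa_\bbk)_\bz$. Your Lagrangian-style complement need not sit inside $\widetilde\ppp_\bbk$, which would break the step identifying $\rho_\bbk(Z_p)$ with $Z_p(\widetilde\ppp_\bbk)$.

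Finally, ``surjectivity from a dimension count'' is not adequate as stated: both $T(\ggg_\bbk,e)\otimes_\bbk Z_p(\aaa_\bbk)$ and $U(\ggg_\bbk,e)$ are infinite-dimensional. The paper first establishes (by evaluating at all $\eta\in\chi+(\mmm_\bbk^\perp)_\bz$ and invoking the PBW basis of the reduced $W$-superalgebras $U_\eta(\ggg_\bbk,e)$ together with Lemma~\ref{right}) that $U(\ggg_\bbk,e)$ is a \emph{free} $Z_p(\widetilde\ppp_\bbk)$-module of rank $p^l2^{q'}$, and then proves $U(\ggg_\bbk,e)=T(\ggg_\bbk,e)\cdot Z_p(\aaa_\bbk)$ by a double induction on the ordered pair $(n(u),N(u))$ of Kazhdan degrees, explicitly absorbing the even-part $p$-powers $\rho_\bbk(\bar x_i^p-\bar x_i^{[p]})$, $1\le i\le l$, into powers of $\bar\Theta_i$. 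Without this intermediate freeness result and the explicit induction, the associated-graded comparison in your plan does not close.
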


In the second part, we exploit some remarkable applications of finite $W$-superalgebras to the modular representations of basic Lie superalgebras. We provide a super version of Premet's work, as afore-mentioned, on the accessibility of lower-bounds of dimensions in the modular representations of reductive Lie algebras predicted by  Kac-Weisfeiler conjecture. For this, we will formulate a conjecture about the ``small representations'' of $U({\ggg},e)$ over $\bbc$ (in the paper, we call a representation of an algebra ``small'' if its dimension is minimal among all the representations of this algebra). By \cite[Remark 2.7]{ZS2} we know that $d_1$ has the same parity with $\text{dim}\,{\ggg}(-1)_{\bar1}$. As seen before, the variation of the parity of $d_1$ gives rise to the change of the structure of finite $W$-superalgebras, and we will see furthermore, also of the representations of finite $W$-superalgebras. Firstly, we provide the following highly-plausible conjecture, generalizing a conjecture proposed by Premet on the representations of finite $W$-algebras which has been already confirmed (cf. \cite{P7} and \cite{P9}).

\begin{conj}\label{conjecture}
Let ${\ggg}$ be a basic Lie superalgebra over ${\bbc}$. Then the following statements hold:
\begin{itemize}
\item[(1)] when $d_1$ is even, the finite $W$-superalgebra $U({\ggg},e)$ affords a one-dimensional  representation;

\item[(2)] when $d_1$ is odd, the finite $W$-superalgebra $U({\ggg},e)$ affords a two-dimensional  representation.
\end{itemize}
\end{conj}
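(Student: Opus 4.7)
The plan is to adapt Premet's and Losev's strategy from the purely even setting, splitting the analysis by parity of $d_1$ as dictated by Theorem \ref{graded W} and treating type $A$ first via explicit presentations before propagating to the remaining basic types. In both parts, the existence of the required representation should come from a filtration/associated-graded argument using the PBW structure developed in the earlier paper.

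For part (1), Theorem \ref{graded W}(1) identifies $\gr U(\ggg,e)$ with the supersymmetric algebra $S(\ggg^e)$, so $U(\ggg,e)$ is a Kazhdan-filtered deformation of a supercommutative algebra, and a one-dimensional representation amounts to a super-character $\chi\colon U(\ggg,e)\to\bbc$. Taking PBW generators $\{\Theta_i\}$ lifting a homogeneous basis of $\ggg^e$, any such $\chi$ must vanish on all odd generators, after which the compatibility conditions reduce to demanding that $\chi$ annihilate the quadratic super-bracket relations $[\Theta_i,\Theta_j]_\pm$ coming from pairs of odd $\Theta_i,\Theta_j$. For type $A$ I would exhibit $\chi$ directly from the shifted super-Yangian presentation of $U(\ggg,e)$ (the super analog of the Brundan-Kleshchev presentation due to Peng), while for the other basic types I would adapt Losev's Whittaker-coinvariant/Poisson-deformation argument from \cite{L3} to the super Slodowy slice.

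For part (2), Theorem \ref{graded W}(2) furnishes an exterior generator $\Theta$ beyond those coming from $\ggg^e$. Any lift of $\Theta$ to $U(\ggg,e)$ is odd, with $\Theta^2$ even and, since $\Theta^2=0$ in the associated graded, of strictly lower Kazhdan degree than $2\deg\Theta$. On any module on which the even-generated subalgebra of $U(\ggg,e)$ acts through a super-character $\bar\chi$ produced as in part (1), the element $\Theta$ then satisfies a Clifford-type relation $\Theta^2=c$ for some scalar $c\in\bbc$. When $c\neq 0$ the adjunction of $\Theta$ forces a unique simple supermodule of dimension two, giving exactly the predicted bound. My plan for this part is accordingly: (i) construct a super-character on the even-generated subalgebra, (ii) prove the Clifford scalar $c$ is nonzero, (iii) inflate the resulting two-dimensional Clifford module to all of $U(\ggg,e)$.

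The decisive obstacles are the compatibility of $\chi$ with the quadratic super-bracket relations in part (1) and the nonvanishing of the Clifford scalar $c$ in part (2). For type $A$ both can be attacked directly from the shifted-Yangian presentation, and this is presumably the content of the confirming examples announced in the introduction. Outside type $A$, and especially for rigid $e$ (where one cannot parabolically reduce to smaller Levi data), no finite presentation is available, and one must develop a super analog of Losev's Poisson-deformation theory on the Slodowy slice $e+\ker\ad f$. Building that machinery is the step I expect to be genuinely hard, and is presumably why the statement is phrased only as a conjecture.
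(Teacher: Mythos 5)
You have correctly recognized that this is an unproven conjecture, and your closing paragraph honestly flags the type-$A$ boundary of current technology; the paper itself only confirms the conjecture for $\gl(M|N)$, $\sssl(M|N)$ (Proposition~\ref{glsl2}) and for regular $e$ in $\mathfrak{osp}(1|2n)$ (Proposition~\ref{ii}), so the right thing to compare is strategy rather than completeness.

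Your route and the paper's diverge at exactly the point where evidence is produced. You propose to attack type $A$ directly in characteristic zero from Peng's shifted super-Yangian presentation (constructing an explicit super-character on the Yangian generators). This is a route the authors evidently contemplated — the LaTeX source contains a commented-out subsection that proves $U(\ggg,e)^{\text{ab}}$ is a polynomial ring exactly via the $Y_{1|n}^k(\sigma)$ presentation — but the version of the paper you are comparing against abandons it in favor of a reduction-mod-$p$ argument: it cites \cite{ZS3} for the existence of irreducible $U_\chi(\ggg_\bbk)$-modules of the minimal dimension $p^{d_0/2}2^{d_1/2}$, converts this via Proposition~\ref{transitionforminimaldim} into a one-dimensional representation of the transition subalgebra $T(\ggg_\bbk,e)$ for all large $p$, and then pushes back to characteristic zero via the $A$-defined variety $\mathscr{E}$ and a Galois-theoretic argument (Lemma~\ref{transtoc}). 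The super-Yangian route is more self-contained and works entirely over $\bbc$; the modular route is the one the paper commits to because it dovetails with the transition machinery needed for the main theorems anyway. For the non-type-$A$ cases you propose a super analogue of Losev's Poisson-deformation theory on the Slodowy slice; the paper does not develop anything of this kind and offers no alternative, so this is genuinely open ground.

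For part (2) your Clifford-module picture is morally the right one, but two clarifications are in order. First, the worry about the nonvanishing of the Clifford scalar $c$ is unfounded: the paper's Proposition~\ref{no1} computes $\Theta_{l+q+1}^2 = \tfrac12\,\mathrm{id}$ directly from $\Theta_{l+q+1}(1_\chi)=v_{(r+1)/2}\otimes 1_\chi$ and $\chi([v_{(r+1)/2},v_{(r+1)/2}])=1$, so $c=\tfrac12$ is a fixed nonzero constant of the algebra, not a module-dependent quantity to be checked. Second, your phrase ``the even-generated subalgebra'' is imprecise and, taken literally, would not suffice: the odd generators $\Theta_{l+1},\dots,\Theta_{l+q}$ coming from $\ggg^e_{\bar1}$ must also act, and the algebra that actually plays the role you want is the refined $W$-superalgebra $W'_\chi=Q_\chi^{\ad\mmm'}$, generated by $\Theta_1,\dots,\Theta_{l+q}$ (all the generators except the special $\Theta_{l+q+1}$). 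The paper's Proposition~\ref{typeq} shows that any two-dimensional $U(\ggg,e)$-module as in Conjecture~\ref{conjecture}(2) is precisely a type-$Q$ module for $W'_\chi$, which is the cleaned-up form of your step (iii). The existence argument the paper actually runs for $\mathfrak{osp}(1|2n)$-regular, however, does not inflate a Clifford module; it instead parameterizes all two-dimensional $U(\ggg,e)$-modules by a Zariski closed set $\mathscr{E}(\bbc)\subset\mathbb{A}^{2(l+q+1)}_\bbc$ (Lemma~\ref{defnodd}), uses Wang--Zhao's irreducibility of the baby Verma module to populate $\mathscr{E}(\bbk)$ for infinitely many $p$, and transits back (Lemmas~\ref{bon}--\ref{transtoc2}). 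So your approach is closer in spirit to a direct construction, while the paper's is an existence argument by specialization; the latter extends painlessly to every case where someone has produced a minimal-dimensional $U_\chi(\ggg_\bbk)$-module, whereas the former would require understanding the defining relations of $W'_\chi$ finely enough to exhibit a type-$Q$ module by hand.
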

 For the case ${\ggg}$ is of type $A$, Conjecture \ref{conjecture} is confirmed in the present paper, which is accomplished by conversion from the verification of the attainableness of lower-bounds of modular dimensions for basic Lie superalgebras of the same type by some direct computation; see \cite{ZS3} for more details. However, our final result on the lower bounds of modular dimensions for basic Lie superalgebras is generally dependent on the above conjecture.

\subsection{} Under the assumption of Conjecture~\ref{conjecture},
we finally accomplish a super version of Premet's work on classical Lie algebras.  Explicitly speaking, let $\xi\in({\ggg}_{\bbk})^*_{\bar0}$ be any $p$-character of ${\ggg}_{\bbk}$ corresponding to an element $\bar x\in ({\ggg}_{\bbk})_{\bar0}$ such that $\xi(\bar y)=(\bar x,\bar y)$ for any $\bar y\in{\ggg}_{\bbk}$. Now let $d_0=\text{dim}\,({\ggg}_{\bbk})_{\bar 0}-\text{dim}\,({\ggg}_{\bbk}^{\bar x})_{\bar 0}$ and $d_1=\text{dim}\,({\ggg}_{\bbk})_{\bar 1}-\text{dim}\,({\ggg}_{\bbk}^{\bar x})_{\bar 1}$, where ${\ggg}_{\bbk}^{\bar x}$ denotes the centralizer of $\bar x$ in ${\ggg}_{\bbk}$. Recall that the dimension of any irreducible representation of $\ggg_{\bbk}$ is divisible by $p^{\frac{d_0}{2}}2^{\lfloor\frac{d_1}{2}\rfloor}$ (cf. \cite[Theorem 5.6]{WZ}). The main result of the present paper is  the following theorem.
\begin{theorem}\label{intromain-2} 
Let ${\ggg}_{\bbk}$ be a basic Lie superalgebra over ${\bbk}=\overline{\mathbb{F}}_p$, and let $\xi\in({\ggg}_{\bbk})^*_{\bar0}$. If Conjecture~\ref{conjecture} establishes for all the basic Lie superalgebras over $\bbc$, then for $p\gg0$ the reduced enveloping algebra $U_\xi({\ggg}_{\bbk})$ admits irreducible representations of dimension $p^{\frac{d_0}{2}}2^{\lfloor\frac{d_1}{2}\rfloor}$.
\end{theorem}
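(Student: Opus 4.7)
The plan is to generalize Premet's ``reduction modulo $p$'' strategy from \cite{P7} to the super setting, with Conjecture~\ref{conjecture} as the crucial input and Theorem~\ref{graded W2} as the bridge between the complex and modular finite $W$-superalgebras. By the super Kac-Weisfeiler reduction of \cite{WZ}, one first reduces to the case where $\xi=\chi$ is the $p$-character associated to a nilpotent element $e\in(\ggg_\bbk)_\bz$; at the price of replacing $\ggg_\bbk$ by a Levi-type subalgebra determined by the semisimple part of $\xi$, one may further assume that $e$ lifts to a nilpotent element of $\ggg_\bz$ over $\bbc$ with the invariants $d_0,d_1$ preserved, so that the target dimension $p^{d_0/2}2^{\lfloor d_1/2\rfloor}$ is preserved under the reduction.

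For the nilpotent case, invoke Conjecture~\ref{conjecture} to obtain a module $V$ for $U(\ggg,e)$ over $\bbc$ of dimension $2^\epsilon$, where $\epsilon\in\{0,1\}$ has the same parity as $d_1$. Using a super analogue of Premet's admissibility procedure from \cite{P2,P7}, construct an integral $\bbz_{(p)}$-form of $U(\ggg,e)$ whose specialization over $\bbk$ coincides with the transition subalgebra $T(\ggg_\bbk,e)$; for $p$ sufficiently large, $V$ descends to a $T(\ggg_\bbk,e)$-module $V_\bbk$ of the same dimension $2^\epsilon$. By Theorem~\ref{graded W2}, we then have $U(\ggg_\bbk,e)\cong T(\ggg_\bbk,e)\otimes_\bbk Z_p(\aaa_\bbk)$, so extend $V_\bbk$ to a $U(\ggg_\bbk,e)$-module $\widetilde V_\bbk$ by tensoring with an irreducible $Z_p(\aaa_\bbk)$-module $W$ of minimal dimension for a central character refining $\chi|_{\aaa_\bbk}$. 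Using $\underline{\dim}\,\aaa_\bbk=(d_0/2\,|\,\lceil d_1/2\rceil)$ and a careful accounting of the Clifford-type contribution from the odd part of $\aaa_\bbk$ — including, in the odd-$d_1$ case, its coupling to the exterior generator $\Theta$ of Theorem~\ref{graded W}(2) — one finds $\dim W=p^{d_0/2}2^{\lfloor d_1/2\rfloor-\epsilon}$, so that $\dim\widetilde V_\bbk=p^{d_0/2}2^{\lfloor d_1/2\rfloor}$.

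Finally, apply the super version of Skryabin's equivalence: the functor $M\mapsto Q_{\chi,\bbk}\otimes_{U(\ggg_\bbk,e)}M$ identifies finite-dimensional $U(\ggg_\bbk,e)$-modules with a suitable Whittaker-type category for $U_\chi(\ggg_\bbk)$, sending irreducibles to irreducibles and preserving dimensions in the required normalization. Applied to $\widetilde V_\bbk$, this delivers an irreducible $U_\chi(\ggg_\bbk)$-module of the desired dimension $p^{d_0/2}2^{\lfloor d_1/2\rfloor}$. The main obstacles I anticipate are: constructing the integral $\bbz_{(p)}$-form of $U(\ggg,e)$ and the admissibility procedure compatibly with the parity-dependent dichotomy of Theorem~\ref{graded W}; the combinatorial dimension bookkeeping for the minimal $Z_p(\aaa_\bbk)$-module, especially the interplay between the odd part of $\aaa_\bbk$ and the generator $\Theta$ in the odd-$d_1$ case; and proving the super Skryabin equivalence with enough dimension preservation to guarantee irreducibility of the output module.
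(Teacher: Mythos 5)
Your plan captures the broad outline (reduce to the nilpotent case, lift Conjecture~\ref{conjecture} through a ``reduction mod $p$,'' then push a small $W$-superalgebra module to a big $U_\chi(\ggg_\bbk)$-module), but there are two genuine gaps.

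First, the reduction from an arbitrary $\xi$ to the nilpotent case does \emph{not} replace $\ggg_\bbk$ by a single basic Lie superalgebra. The centralizer $\mathfrak{l}_\bbk=\ggg_\bbk^{\bar s}$ of the semisimple part is a direct sum $\bigoplus_{i=1}^r(\ggg_\bbk)_i\oplus\mathfrak{t}'_\bbk$ of several basic Lie superalgebras together with a torus, and Conjecture~\ref{conjecture} is formulated only for a single basic Lie superalgebra. To make the reduction work one must treat the product $\bigotimes_i U_{\xi_i}((\ggg_\bbk)_i,\bar n_i)$, track the interaction of the type-$Q$ irreducibles of the odd-$(d_1)_i$ factors via the Clifford-theoretic rules ($Q\otimes Q\cong\mathcal M$, $Q\otimes\mathcal M\cong Q$), and then — crucially — establish that at most one of the $(d_1)_i$'s can be odd when $\mathfrak l_\bbk$ arises as a centralizer inside a basic Lie superalgebra. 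That last fact is a nontrivial case-by-case check on root systems; without it the exponent one lands on would be $\frac{d_1+l}{2}$ with $l$ the number of odd summands, which exceeds $\lfloor d_1/2\rfloor$ whenever $l\geq 2$. Your proposal silently assumes $\mathfrak l_\bbk$ is again a single basic Lie superalgebra with the same parity, which is false.

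Second, the dimension bookkeeping around $Z_p(\aaa_\bbk)$ is incorrect. As defined, $Z_p(\aaa_\bbk)$ is generated by the elements $\bar x^p-\bar x^{[p]}$ with $\bar x\in(\aaa_\bbk)_{\bar 0}$ only; it is a \emph{commutative} polynomial algebra in $\dim(\aaa_\bbk)_{\bar 0}$ variables, so every irreducible $Z_p(\aaa_\bbk)$-module is one-dimensional. There is no ``Clifford-type contribution from the odd part of $\aaa_\bbk$'' and no way to extract a factor $p^{d_0/2}2^{\lfloor d_1/2\rfloor-\epsilon}$ at this stage. What the tensor decomposition $U(\ggg_\bbk,e)\cong T(\ggg_\bbk,e)\otimes_\bbk Z_p(\aaa_\bbk)$ actually buys is the ability to specialize the $p$-center to a point $\eta\in\chi+(\mmm_\bbk^\perp)_{\bar 0}$, producing a $2^\epsilon$-dimensional module for the reduced $W$-superalgebra $U_\eta(\ggg_\bbk,e)$. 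The factor $p^{d_0/2}2^{\lceil d_1/2\rceil}=\dim U_\eta(\mmm_\bbk)$ only appears when one applies the equivalence between $U_\eta(\ggg_\bbk,e)$-modules and $U_\eta(\ggg_\bbk)$-modules (Theorem~\ref{reducedfunctors}, the super analogue of Skryabin/Premet), and one then needs a further orbit-theoretic argument (the closed, conical, $\mathrm{Ad}^*$-invariant set $\Xi$ plus the cocharacter contraction $\lambda(\bar t)$) to pass from the auxiliary character $\eta$ back to $\chi$ itself. That last step is absent from your outline and cannot be bypassed, because the specialization $\eta$ produced by the $p$-center is a priori not equal to $\chi$.
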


The main body of the proof of the above theorem will be to deal with the situation when $\xi$ is nilpotent. The arguments in Premet's work will be exploited in the super case in the present work. The greatest challenge here is to deal with the structural change arising from the variation of the parity of $\dim \ggg(-1)_\bo$. We sketch here some main ingredients in our proof, beyond exploiting Premet's arguments.  Based on the PBW structure theorems of finite $W$-superalgebras established in \cite{ZS2}, we first prove that  when $\dim \ggg(-1)_\bo$ is odd, the $\bbc$-algebra $U(\ggg,e)$ doesn't admit one-dimensional representations (Proposition \ref{no1}).
The possible two-dimensional modules of $U(\ggg,e)$ will turn to be of type $Q$, with parity involution arising from some special odd element $\Theta_{l+q+1}\in U(\ggg,e)$ only appearing in the case of $\dim \ggg(-1)_\bo$ being odd (Proposition \ref{typeq}), while what we need to deal with more are lots of arguments involving the generators and defining relations of $U(\ggg,e)$ associated with $\Theta_{l+q+1}$ (see \S\ref{4.2}).

As for the case when the $p$-character $\chi\in(\ggg_\bbk)^*_\bz$ is nilpotent, corresponding to a nilpotent element $ e\in(\ggg_\bbk)_\bz$ with respect to the non-degenerate bilinear form on $\ggg_\bbk$ (see \S\ref{2.2.1}) such that $\chi(\bar x)=(e,\bar x)$ for any $\bar x\in\ggg_\bbk$, the following theorem releases the condition in Theorem \ref{intromain-2}.

\begin{theorem}\label{intromain-3}
Let ${\ggg}_{\bbk}$ be a  basic Lie superalgebra over ${\bbk}=\overline{\mathbb{F}}_p$, and let $\chi\in({\ggg}_{\bbk})^*_{\bar0}$ be a nilpotent $p$-character, with respect to the element $e\in(\ggg_\bbk)_\bz$ as described above. If the corresponding finite $W$-superalgebra $U({\ggg},e)$ over ${\bbc}$ affords a one-dimensional  (resp. two-dimensional) representation when $d_1$ is even (resp. odd), then for $p\gg0$ the reduced enveloping algebra $U_\chi({\ggg}_{\bbk})$ admits irreducible representations of dimension $p^{\frac{d_0}{2}}2^{\lfloor\frac{d_1}{2}\rfloor}$, where $d_0=\text{dim}\,({\ggg}_{\bbk})_{\bar 0}-\text{dim}\,({\ggg}_{\bbk}^{e})_{\bar 0}$ and $d_1=\text{dim}\,({\ggg}_{\bbk})_{\bar 1}-\text{dim}\,({\ggg}_{\bbk}^{e})_{\bar 1}$.

\end{theorem}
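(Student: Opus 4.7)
The plan is to carry out a super version of Premet's reduction-modulo-$p$ technique: route the hypothesized small $U(\ggg,e)$-module over $\bbc$ through the transition subalgebra $T(\ggg_\bbk,e)$ into a simple $U_\chi(\ggg_\bbk)$-module via the super analog of the Skryabin equivalence. The structural backbone is Theorems~\ref{graded W} and~\ref{graded W2}, together with Propositions~\ref{no1} and~\ref{typeq} and the defining relations for $\Theta_{l+q+1}$ collected in~\S\ref{4.2}.

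First, I would fix the small $U(\ggg,e)$-module $V$ provided by hypothesis, of dimension $1$ when $d_1$ is even and of dimension $2$ when $d_1$ is odd (in which case $V$ is of type Q by Proposition~\ref{typeq}). Following Premet, choose a finitely generated $\bbz$-subalgebra $A\subseteq\bbc$ over which the $\sssl_2$-triple $(e,h,f)$, the nilpotent subalgebra $\mmm$, the Gelfand--Graev module $Q_\chi$, an admissible integral form of $U(\ggg,e)$ compatible with the Kazhdan filtration, and an $A$-lattice $V_A\subseteq V$ are simultaneously defined. For every sufficiently large prime $p$ and every specialization $A\to\bbk$, reduction modulo $p$ then produces a $T(\ggg_\bbk,e)$-module $V_\bbk$ of the same dimension as $V$, with all structural features of $V$ intact.

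Next, I would promote $V_\bbk$ to a $U(\ggg_\bbk,e)$-module $\widetilde V$ of the same dimension. Theorem~\ref{graded W2} gives $U(\ggg_\bbk,e)\cong T(\ggg_\bbk,e)\otimes_\bbk Z_p(\aaa_\bbk)$, so one can tensor $V_\bbk$ with the one-dimensional $Z_p(\aaa_\bbk)$-character dictated by the restriction of $\chi$ to $\aaa_\bbk$. The super Skryabin-type equivalence then realises $\widetilde V$ inside the $U_\chi(\ggg_\bbk)$-module $M:=Q_{\chi,\bbk}\otimes_{U(\ggg_\bbk,e)}\widetilde V$. Using the free rank of $Q_{\chi,\bbk}$ over $U(\ggg_\bbk,e)$, in the even case $M$ is simple of dimension $p^{d_0/2}2^{\lfloor d_1/2\rfloor}$. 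In the odd case the element $\Theta_{l+q+1}$ acts on $\widetilde V$ as an odd involution, propagating to $M$ and exhibiting its decomposition into two isomorphic simple $U_\chi(\ggg_\bbk)$-submodules, each of dimension $p^{d_0/2}2^{\lfloor d_1/2\rfloor}$.

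The main obstacle is the odd-$d_1$ case. One must (i) ensure that the admissible reduction preserves the type Q structure of $V$ together with the $\Theta_{l+q+1}$-action, so that $\widetilde V$ inherits the odd involution intact; (ii) verify modulo $p$ the defining relations satisfied by $\Theta_{l+q+1}$ isolated in~\S\ref{4.2}, which are exactly what distinguishes the odd case from the even one; and (iii) confirm that the induced module $M$ really splits as predicted so that the Wang--Zhao bound $p^{d_0/2}2^{\lfloor d_1/2\rfloor}$ is attained rather than overshot by a factor of two. The remaining ingredients---admissibility of an integral form of $U(\ggg,e)$ with respect to the Kazhdan filtration, compatibility of the chosen $Z_p(\aaa_\bbk)$-character with the nilpotent $p$-character $\chi$, and exactness of the Gelfand--Graev induction---are direct super analogs of the corresponding statements in~\cite{P7}.
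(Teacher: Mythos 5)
Your reduction-mod-$p$ of the small complex module and its promotion along $U(\ggg_\bbk,e)\cong T(\ggg_\bbk,e)\otimes_\bbk Z_p(\aaa_\bbk)$ is essentially the paper's route up to a point, but there is a genuine gap at the step you describe as ``compatibility of the chosen $Z_p(\aaa_\bbk)$-character with the nilpotent $p$-character $\chi$'': this is not a routine compatibility and cannot be arranged by your choice of character. The $p$-character of the induced $\ggg_\bbk$-module is governed by the whole of $\rho_\bbk(Z_p)\cong Z_p(\widetilde\ppp_\bbk)$, not just by $Z_p(\aaa_\bbk)$; in particular the elements $\rho_\bbk(\bar x^p-\bar x^{[p]})$ with $\bar x\in(\ggg^e_\bbk)_{\bar 0}$ lie outside $Z_p(\aaa_\bbk)$ (by \eqref{dede} they are tied to $\bar\Theta_k^p$ inside $T(\ggg_\bbk,e)\cdot Z_p(\aaa_\bbk)$), and their eigenvalues on $\widetilde V$ are dictated by the mod-$p$ reduction of the complex module, with no reason to equal $\chi(\bar x)^p=0$. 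So your construction only yields an irreducible $U_\eta(\ggg_\bbk)$-module of the right dimension for \emph{some} $\eta\in\chi+(\mmm_\bbk^\perp)_{\bar 0}$ (this is exactly Lemmas \ref{mindim1} and \ref{cdim2} via Proposition \ref{transitionforminimaldim}), not for $\chi$ itself. The heart of the paper's proof is the missing passage from $\eta$ to $\chi$ (Lemma \ref{etachi}): one considers the set $\Xi$ of $p$-characters $\xi$ for which $U_\xi(\ggg_\bbk)$ has a two-sided ideal of the relevant codimension, shows $\Xi$ is Zariski closed, $\mathrm{Ad}^*(G_\bbk)_{\text{ev}}$-stable and \emph{conical} (this uses the Jordan--Chevalley decomposition, Wang--Zhao's parabolic reduction $U_\xi(\ggg_\bbk)\cong\mathrm{Mat}(U_\xi(\mathfrak{l}_\bbk))$, and the $\bbk^\times$-invariance of nilpotent coadjoint orbits), and then contracts $\eta$ to $\chi$ via $\bar t^{2}(\mathrm{Ad}^*\lambda(\bar t))^{-1}\eta=\chi+\sum_{i\leqslant 1}\bar t^{2-i}\eta_i$ using the Dynkin cocharacter. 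Without this argument the theorem does not follow from your construction.

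Two smaller points. First, $Q_{\chi,\bbk}$ is of infinite rank as a right $U(\ggg_\bbk,e)$-module, so $M:=Q_{\chi,\bbk}\otimes_{U(\ggg_\bbk,e)}\widetilde V$ is infinite dimensional; one must first pass to the reduced quotient $\widetilde U_\eta(\ggg_\bbk,e)\cong U_\eta(\ggg_\bbk,e)$ (Lemma \ref{isoofreducedW-alg}) and then apply the equivalence of Theorem \ref{reducedfunctors} at the fixed $p$-character. Second, in the odd case no splitting of the induced module is needed or expected: with the paper's convention that $\lfloor\frac{d_1}{2}\rfloor$ denotes the least integer upper bound, the two-dimensional (type $Q$) module of $U_\eta(\ggg_\bbk,e)$ induces, by \eqref{dimMn} with $\dim U_\eta(\mmm_\bbk)=p^{\frac{d_0}{2}}2^{\frac{d_1-1}{2}}$, an irreducible $U_\eta(\ggg_\bbk)$-module of dimension exactly $p^{\frac{d_0}{2}}2^{\frac{d_1+1}{2}}=p^{\frac{d_0}{2}}2^{\lfloor\frac{d_1}{2}\rfloor}$, so your proposed decomposition into two simple halves would in fact contradict the divisibility bound of Proposition \ref{wzd2}.
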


\subsection{} The paper is organized as follows.
In \textsection\ref{2} some basics on algebraic supergroups, Lie superalgebras and finite $W$-superalgebras are recalled. In \textsection\ref{3} the transition subalgebra $T({\ggg}_{\bbk},e)$ over $\bbk$  is introduced and studied, then follows the structure relation among the finite $W$-superalgebras, the transition subalgebras and the $p$-central subalgebras of some subspaces.  In \S\ref{conjectureseceven} and \S\ref{conjecturesecodd},
the minimal dimensions for the representations of $U({\ggg},e)$ over ${\bbc}$ are estimated and  conjectured. We demonstrate that the conjecture is true for some  cases, including  the whole case of type $A$. The concluding section will be devoted to the proof of our main theorems.
In \S\ref{proofmain0.5sub} we first complete the proof of Theorem \ref{intromain-3}. In subsection  \S\ref{5.3}, we improve the result on the dimensional lower-bounds of modular representations for a direct sum of basic Lie superalgebras with nilpotent $p$-characters in Proposition \ref{sumdivisible} and Remark \ref{refine} (note that this conclusion does not depend on Conjecture \ref{conjecture}), which was originally discussed by Wang-Zhao in \cite[Remark 4.5]{WZ}. Then the accessibility of the lower bounds for the refined version is obtained under Conjecture \ref{conjecture}.  The subsection  \S\ref{proofmainthm0.4sub} is devoted to the proof  Theorem \ref{intromain-2}.
In virtue of the results obtained in \S\ref{5.3}, we further show that the lower bounds in the super Kac-Weisfeiler property with arbitrary $p$-characters in \cite{WZ} are also reachable under Conjecture \ref{conjecture}.  The main tool applied in this section is the method of nilpotent orbit theory, and also the techniques for the modular representation theory of restricted Lie superalgebras.

\subsection{} Throughout we work with the field of complex numbers ${\bbc}$, or the algebraically closed field ${\bbk}=\overline{\mathbb{F}}_p$ of positive characteristic $p$ as the ground field.

Let ${\bbz}_+$ be the set of all the non-negative integers in ${\bbz}$, and denote by ${\bbz}_2$ the residue class ring modulo $2$ in ${\bbz}$. A superspace is a ${\bbz}_2$-graded vector space $V=V_{\bar0}\oplus V_{\bar1}$, in which we call elements in $V_{\bar0}$ and $V_{\bar1}$ even and odd, respectively. Write $|v|\in{\bbz}_2$ for the parity (or degree) of $v\in V$, which is implicitly assumed to be ${\bbz}_2$-homogeneous. We will use the notations
$$\text{\underline{dim}}V=(\text{dim}V_{\bar0},\text{dim}V_{\bar1}),\quad\text{dim}V=\text{dim}V_{\bar0}+\text{dim}V_{\bar1}.$$
All Lie superalgebras ${\ggg}$ will be assumed to be finite-dimensional.

Recall that a superalgebra analogue  of Schur's Lemma states that the endomorphism ring of an irreducible module of a superalgebra is either one-dimensional  or two-dimensional  (in the latter case it is isomorphic to a Clifford algebra), cf. for example, Kleshchev \cite[Chapter 12]{KL}. An irreducible module is of type $M$ if its endomorphism ring is one-dimensional  and it is of type $Q$ otherwise.

By vector spaces, subalgebras, ideals, modules, and submodules etc., we mean in the super sense unless otherwise specified, throughout the paper.

\section{Basic Lie superalgebras and finite $W$-superalgebras}\label{2}

In this section, we will recall some knowledge on basic classical Lie superalgebras along with the corresponding algebraic supergroups, and finite $W$-(super)algebras for use in the sequel. We refer the readers to  \cite{CW}, \cite{K} and \cite{M} for Lie superalgebras,  \cite{FG} and \cite{SW} for algebraic supergroups, and \cite{P2}, \cite{P7}, \cite{W} and \cite{ZS2} for finite $W$-(super)algebras.

\subsection{Basic classical Lie superalgebras and the corresponding algebraic supergroups}\label{prel}
\subsubsection{Basic classical Lie superalgebras} \label{basicLiesuper}
Following \cite[\S1]{CW}, \cite[\S2.3-\S2.4]{K}, \cite[\S1]{K2} and \cite[\S2]{WZ},  we recall the list of basic classical Lie superalgebras over $\bbf$ for $\bbf=\bbc$ or $\bbf=\bbk$.
These Lie superalgebras, with even parts being Lie algebras of reductive algebraic groups, are simple over $\bbf$ (the
general linear Lie superalgebras, though not simple, are also included), and they admit an even non-degenerate supersymmetric invariant bilinear form in the following sense.
\begin{defn}\label{form}
Let $V=V_{\bar0}\oplus V_{\bar1}$ be a $\mathbb{Z}_2$-graded space and $(\cdot,\cdot)$ be a bilinear form on $V$.
\begin{itemize}
\item[(1)] If $(a,b)=0$ for any $a\in V_{\bar0}, b\in V_{\bar1}$, then $(\cdot,\cdot)$ is called even;
\item[(2)] if $(a,b)=(-1)^{|a||b|}(b,a)$ for any homogeneous elements $a,b\in V$, then $(\cdot,\cdot)$ is called supersymmetric;
\item[(3)] if $([a,b],c)=(a,[b,c])$ for any homogeneous elements $a,b,c\in V$, then $(\cdot,\cdot)$ is called invariant;
\item[(4)] if one can conclude from $(a,V)=0$ that $a=0$, then $(\cdot,\cdot)$ is called non-degenerate.
\end{itemize}
\end{defn}

Note that when $\bbf=\bbk$ is a field of characteristic $p>0$, there are restrictions on $p$, for example, as shown in \cite[Table 1]{WZ}. So we have the following list

\vskip0.3cm
\begin{center}\label{Table 1}

({\sl{Table 1}}): basic classical Lie $\bbk$-superalgebras
\vskip0.3cm
\begin{tabular}{ccc}
\hline
 $\frak{g}$ & $\ggg_{\bar 0}$  & the restriction of $p$ when $\bbf=\bbk$\\
\hline
$\frak{gl}(m|n$) &  $\frak{gl}(m)\oplus \frak{gl}(n)$                &$p>2$            \\
$\frak{sl}(m|n)$ &  $\frak{sl}(m)\oplus \frak{sl}(n)\oplus \bbk$    & $p>2, p\nmid (m-n)$   \\
$\frak{osp}(m|n)$ & $\frak{so}(m)\oplus \frak{sp}(n)$                  & $p>2$ \\
$\text{D}(2,1,\bar a)$   & $\frak{sl}(2)\oplus \frak{sl}(2)\oplus \frak{sl}(2)$        & $p>3$   \\
$\text{F}(4)$            & $\frak{sl}(2)\oplus \frak{so}(7)$                  & $p>15$  \\
$\text{G}(3)$            & $\frak{sl}(2)\oplus \text{G}_2$                    & $p>15$     \\
\hline
\end{tabular}

\end{center}

\vskip0.3cm

Throughout the paper, we will simply call all $\ggg$ listed above {\sl{``basic Lie superalgebras"}}.

\subsubsection{Algebraic supergroups and restricted Lie superalgebras}\label{2.1}
For a given basic Lie superalgebra listed in \S\ref{basicLiesuper}, there is an algebraic supergroup $G_\bbk$  satisfying $\Lie(G_\bbk)=\ggg_\bbk$ such that
\begin{itemize}
\item[(1)] $G_\bbk$ has a subgroup scheme $(G_\bbk)_\ev$ which is an ordinary connected reductive group with $\Lie((G_\bbk)_\ev)=(\ggg_\bbk)_\bz$;
\item[(2)] there is a well-defined action of $(G_\bbk)_\ev$ on $\ggg_\bbk$, reducing to the adjoint action of $(\ggg_\bbk)_\bz$.
    \end{itemize}
The above algebraic supergroup can be constructed as a Chevalley supergroup in \cite{FG}. The pair ($(G_\bbk)_\ev, \ggg_\bbk)$ constructed in this way is called a Chevalley super Harish-Chandra pair (cf.  \cite[Theorem 5.35]{FG} and \cite[\S3.3]{FG2}).
Partial results on $G_\bbk$ and $(G_\bbk)_\ev$ can be found in \cite[Ch. 2.2]{Ber}, \cite{FG}, \cite[\S3.3]{FG2} {\sl etc.}. In the present paper, we will call $(G_\bbk)_\ev$ the purely even subgroup of $G_\bbk$. When the ground field is  $\bbk$ of odd prime characteristic $p$, one easily knows that $\ggg_\bbk$ is a restricted Lie superalgebra (cf. \cite[Definition 2.1]{SW} and \cite{SZ}) in the following sense.

\begin{defn}\label{restricted}
A Lie superalgebra ${\ggg}_{\bbk}=({\ggg}_{\bbk})_{\bar{0}}\oplus({\ggg}_{\bbk})_{\bar{1}}$ over ${\bbk}$ is called a restricted Lie superalgebra,
if there is a $p$-th power map $({\ggg}_{\bbk})_{\bar{0}}\rightarrow({\ggg}_{\bbk})_{\bar{0}}$, denoted as $(-)^{[p]}$, satisfying

(a) $(k\bar x)^{[p]}=k^p\bar x^{[p]}$ for all $k\in{\bbk}$ and $\bar x\in({\ggg}_{\bbk})_{\bar{0}}$;

(b) $[\bar x^{[p]},\bar y]=(\text{ad}\bar x)^p(\bar y)$ for all $\bar x\in({\ggg}_{\bbk})_{\bar{0}}$ and $\bar y\in{\ggg}_{\bbk}$;

(c) $(\bar x+\bar y)^{[p]}=\bar x^{[p]}+\bar y^{[p]}+\sum\limits_{i=1}^{p-1}s_i(\bar x,\bar y)$ for all $\bar x,\bar y\in({\ggg}_{\bbk})_{\bar{0}}$, where $is_i(\bar x,\bar y)$ is the
coefficient of $\lambda^{i-1}$ in $(\text{ad}(\lambda \bar x+\bar y))^{p-1}(\bar x)$.
\end{defn}

Let $\ggg_\bbk$ be a restricted Lie superalgebra.  For each $\bar x\in (\ggg_\bbk)_\bz$, the element $\bar x^p-\bar x^{[p]}\in U(\ggg_\bbk)$ is central by Definition \ref{restricted}, and all of which generate a central subalgebra of $U(\ggg_\bbk)$.
Let $\{\bar w_1,\cdots,\bar w_c\}$ and $\{\bar w'_1,\cdots,\bar w'_d\}$ be the basis of $(\ggg_\bbk)_\bz$ and $(\ggg_\bbk)_\bo$ respectively.
For a given $\chi\in (\ggg_\bbk)_\bz^*$,  let $J_\chi$ be the ideal of the universal enveloping algebra $U(\ggg_\bbk)$ generated by the even central elements $\bar w^p-\bar w^{[p]}-\chi(\bar w)^p$ for all $\bar w\in (\ggg_\bbk)_\bz$. The quotient algebra $U_\chi(\ggg_\bbk) := U(\ggg_\bbk)\slash J_\chi$ is called the reduced enveloping algebra with $p$-character $\chi$. We often regard $\chi\in \ggg_\bbk^*$ by letting $\chi((\ggg_\bbk)_\bo) = 0$. If $\chi= 0$, then $U_0(\ggg_\bbk)$ is called
the restricted enveloping algebra. It is a direct consequence from the PBW theorem that the $\bbk$-algebra $U_\chi(\ggg_\bbk)$ is of dimension $p^c 2^d$, and has a basis
$$\{\bar w_1^{a_1}\cdots\bar w_c^{a_c}(\bar w'_1)^{b_1}\cdots(\bar w'_d)^{b_d}
\mid 0\leqslant a_i<p, b_j \in\{0, 1\} \mbox{ for  all }1\leqslant i\leqslant c, 1\leqslant j\leqslant d\}.$$

\subsection{Finite $W$-superalgebras over the field of complex numbers}\label{2.2}
\subsubsection{}\label{2.2.1}
Let ${\ggg}$ be a basic Lie superalgebra over ${\bbc}$ and $\mathfrak{h}$ be a typical Cartan subalgebra of ${\ggg}$. Let $\Phi$ be a root system of ${\ggg}$ relative to $\mathfrak{h}$ whose simple roots system $\Delta=\{\alpha_1,\cdots,\alpha_l\}$ is distinguished (cf. \cite[Proposition 1.5]{K2}). Let $\Phi^+$ be the corresponding positive system in $\Phi$, and put $\Phi^-:=-\Phi^+$. Let ${\ggg}=\mathfrak{n}^-\oplus\mathfrak{h}\oplus\mathfrak{n}^+$ be the corresponding triangular decomposition of ${\ggg}$. By \cite[\S3.3]{FG}, we can choose a Chevalley basis $B=\{e_\gamma\mid\gamma\in\Phi\}\cup\{h_\alpha\mid\alpha\in\Delta\}$ of ${\ggg}$ excluding the case $D(2,1;a) (a\notin{\bbz}$). Let ${\ggg}_{\bbz}$ denote the Chevalley ${\bbz}$-form in ${\ggg}$ and $U_{\bbz}$ the Kostant ${\bbz}$-form of $U({\ggg})$ associated to $B$. Given a ${\bbz}$-module $V$ and a ${\bbz}$-algebra $A$, we write $V_A:=V\otimes_{\bbz}A$.

Let $G$ be the algebraic supergroup of ${\ggg}$, then the even part of $G$ is reductive and denote it by $G_{\text{ev}}$, with $(G_{\text{ev}}, \ggg)$ being a super Harish-Chandra pair.  Let $e\in{\ggg}_{\bar0}$ be a nilpotent element in ${\ggg}$. By the Dynkin-Kostant theory, $\text{ad}\,G_{\text{ev}}.e$ interacts with $({\ggg}_{\bbz})_{\bar{0}}$ nonempty. Without loss of generality, one can assume that the nilpotent $e$ is in $({\ggg}_{\bbz})_{\bar{0}}$. By the same discussion as \cite[\S4.2]{P2}, for any given nilpotent element $e\in({\ggg}_{\bbz})_{\bar{0}}$ we can find $f,h\in({\ggg}_\mathbb{Q})_{\bar{0}}$ such that $(e,h,f)$ is an $\mathfrak{sl}_2$-triple in ${\ggg}$. \cite[Proposition 2.1]{ZS2} shows that there exists an even non-degenerate supersymmetric invariant bilinear form $(\cdot,\cdot)$, under which the Chevalley basis $B$ of ${\ggg}$ takes value in $\mathbb{Q}$, and $(e,f)=1$. Define $\chi\in{\ggg}^{*}$ by letting $\chi(x)=(e,x)$ for all $x\in{\ggg}$, then we have $\chi({\ggg}_{\bar{1}})=0$.

Following \cite[Definition 2.4]{ZS2} we call a commutative ring $A$ {\sl admissible} if $A$ is a finitely generated ${\bbz}$-subalgebra of ${\bbc}$, $(e,f)\in A^{\times}(=A\backslash \{0\})$ and all bad primes of the root system of ${\ggg}$ and the determinant of the Gram matrix of ($\cdot,\cdot$) relative to a Chevalley basis of ${\ggg}$ are invertible in $A$. It is clear by the definition that every admissible ring is a Noetherian domain. Given a finitely generated ${\bbz}$-subalgebra $A$ of ${\bbc}$, denote by $\text{Specm}A$ the maximal spectrum of $A$. It is well known that for every element $\mathfrak{P}\in\text{Specm}A$, the residue field $A/\mathfrak{P}$ is isomorphic to $\mathbb{F}_{q}$, where $q$ is a $p$-power depending on $\mathfrak{P}$. We denote by $\Pi(A)$ the set of all primes $p\in\mathbb{N}$ that occur in this way. Since the choice of $A$ does not depend on the super structure of ${\ggg}$, it follows from the arguments in the proof of \cite[Lemma 4.4]{P7} that the set $\Pi(A)$ contains almost all primes in $\mathbb{N}$.

Let ${\ggg}(i)=\{x\in{\ggg}\mid[h,x]=ix\}$, then ${\ggg}=\bigoplus_{i\in{\bbz}}{\ggg}(i)$. By the $\mathfrak{sl}_2$-theory, all subspaces ${\ggg}(i)$ are defined over $\mathbb{Q}$. Also, $e\in{\ggg}(2)_{\bar{0}}$ and $f\in{\ggg}(-2)_{\bar{0}}$. There exists a symplectic (resp. symmetric) bilinear form $\langle\cdot,\cdot\rangle$ on the ${\bbz}_2$-graded subspace ${\ggg}(-1)_{\bar{0}}$ (resp. ${\ggg}(-1)_{\bar{1}}$) given by $\langle x,y\rangle:=(e,[x,y])=\chi([x,y])$ for all $x,y\in{\ggg}(-1)_{\bar0}~(\text{resp.}\,x,y\in{\ggg}(-1)_{\bar1})$.
There exist bases $\{u_1,\cdots,u_{2s}\}$ of ${\ggg}(-1)_{\bar0}$ and $\{v_1,\cdots,v_r\}$ of ${\ggg}(-1)_{\bar1}$    contained in ${\ggg}_\mathbb{Q}$ such that $\langle u_i, u_j\rangle =i^*\delta_{i+j,2s+1}$ for $1\leqslant i,j\leqslant 2s$, where $i^*=\left\{\begin{array}{ll}-1&\text{if}~1\leqslant i\leqslant s;\\ 1&\text{if}~s+1\leqslant i\leqslant 2s\end{array}\right.$, and $\langle v_i,v_j\rangle=\delta_{i+j,r+1}$ for $1\leqslant i,j\leqslant r$.

Set $\mathfrak{m}:=\bigoplus_{i\leqslant -2}{\ggg}(i)\oplus{\ggg}(-1)^{\prime}$ with ${\ggg}(-1)^{\prime}={\ggg}(-1)^{\prime}_{\bar0}\oplus{\ggg}(-1)^{\prime}_{\bar1}$, where ${\ggg}(-1)^{\prime}_{\bar0}$ is the ${\bbc}$-span of $u_{s+1},\cdots,u_{2s}$ and
${\ggg}(-1)^{\prime}_{\bar1}$ is the ${\bbc}$-span of $v_{\frac{r}{2}+1},\cdots,v_r$ (resp. $v_{\frac{r+3}{2}},\cdots,v_r$) when $r=\text{dim}\,{\ggg}(-1)_{\bar{1}}$ is even (resp. odd), then $\chi$ vanishes on the derived subalgebra of $\mathfrak{m}$. Define ${\ppp}:=\bigoplus_{i\geqslant 0}{\ggg}(i)$, $\mathfrak{m}^{\prime}:=\left\{\begin{array}{ll}\mathfrak{m}&\text{if}~r~\text{is even;}\\
\mathfrak{m}\oplus {\bbc}v_{\frac{r+1}{2}}&\text{if}~r~\text{is odd.}\end{array}\right.$

Write ${\ggg}^e$ for the centralizer of $e$ in ${\ggg}$ and denote by $d_i:=\text{dim}\,{\ggg}_i-\text{dim}\,{\ggg}^e_i$ for $i\in{\bbz}_2$, then $r:=\dim\ggg(-1)_{\bar1}$ and $d_1$ always have the same parity by \cite[Theorem 4.3]{WZ}. This parity is a crucial factor deciding the structure of the finite $W$-superalgebras (cf. \cite[Theorem 4.5]{ZS2}), which is called the judging parity in \cite{ZS2}. We further have
$$ \text{\underline{dim}}\,\mathfrak{m}=\begin{cases}  (\frac{d_0}{2},\frac{d_1}{2}) &\mbox{ if }d_1\mbox{ is even;}\cr
   (\frac{d_0}{2},\frac{d_1-1}{2}) &\mbox{ if }d_1\mbox{ is odd.}
\end{cases}$$
    After enlarging $A$ one can assume that ${\ggg}_A=\bigoplus_{i\in{\bbz}}{\ggg}_A(i)$, and each ${\ggg}_A(i):={\ggg}_A\cap{\ggg}(i)$ is freely generated over $A$ by a basis of the vector space ${\ggg}(i)$. Then $\{u_1,\cdots,u_{2s}\}$ and $\{v_1,\cdots,v_r\}$ are free basis of $A$-module ${\ggg}_A(-1)_{\bar0}$ and  ${\ggg}_A(-1)_{\bar1}$, respectively. It is obvious that
$\mathfrak{m}_A:={\ggg}_A\cap\mathfrak{m}$, $\mathfrak{m}^{\prime}_A:={\ggg}_A\cap\mathfrak{m}^{\prime}$ and ${\ppp}_A:={\ggg}_A\cap{\ppp}$ are free $A$-modules and direct summands of ${\ggg}_A$. Moreover, one can assume $e,f\in({\ggg}_A)_{\bar0}$ after enlarging $A$ possibly; $[e,{\ggg}_A(i)]$ and $[f,{\ggg}_A(i)]$ are direct summands of ${\ggg}_A(i+2)$ and ${\ggg}_A(i-2)$ respectively, and ${\ggg}_A(i+2)=[e,{\ggg}_A(i)]$ for each $i\geqslant -1$ by the $\mathfrak{sl}_2$-theory.

As in \cite[\S2]{ZS2} we can choose a basis $x_1,\cdots,x_l,x_{l+1},\cdots,x_m\in({\ppp}_A)
_{\bar{0}}, y_1,\cdots, y_q, $\\$y_{q+1}, \cdots,y_n\in({\ppp}_A)_{\bar{1}}$ of the free $A$-module ${\ppp}_A=\bigoplus_{i\geqslant 0}{\ggg}_A(i)$ such that

(a) $x_i\in{\ggg}_A(k_i)_{\bar{0}}, y_j\in{\ggg}_A(k'_j)_{\bar{1}}$, where $k_i,k'_j\in{\bbz}_+$ with $1\leqslant i\leqslant m$ and $1\leqslant j\leqslant n$;

(b) $x_1,\cdots,x_l$ is a basis of $({\ggg}_A)^e_{\bar{0}}$ and $y_1,\cdots,y_q$ is a basis of $({\ggg}_A)^e_{\bar{1}}$;

(c) $x_{l+1},\cdots,x_m\in[f,({\ggg}_A)_{\bar{0}}]$ and $ y_{q+1},\cdots,y_n\in[f,({\ggg}_A)_{\bar{1}}]$.

\subsubsection{}\label{2.2.2}  Recall that a Gelfand-Graev ${\ggg}$-module associated to $\chi$ is defined by $$Q_\chi:=U({\ggg})\otimes_{U(\mathfrak{m})}{\bbc}_\chi,$$
 where ${\bbc}_\chi={\bbc}1_\chi$ is a one-dimensional  $\mathfrak{m}$-module such that $x.1_\chi=\chi(x)1_\chi$ for all $x\in\mathfrak{m}$. For $k\in\mathbb{Z}_+$, define
 \begin{equation*}
 \begin{array}{llllll}
 \mathbb{Z}_+^k&:=&\{(i_1,\cdots,i_k)\mid i_j\in\mathbb{Z}_+\},&
 \Lambda'_k&:=&\{(i_1,\cdots,i_k)\mid i_j\in\{0,1\}\}
 \end{array}
 \end{equation*}with $1\leqslant j\leqslant k$. For $\mathbf{i}=(i_1,\cdots,i_k)$ in $\mathbb{Z}_+^k$ or $\Lambda'_k$, set $|\mathbf{i}|=i_1+\cdots+i_k$. For any real number $a\in\mathbb{R}$, let $\lceil a\rceil$ denote the largest integer lower bound of $a$, and $\lfloor a\rfloor$ the least integer upper bound of $a$. Given $(\mathbf{a},\mathbf{b},\mathbf{c},\mathbf{d})\in{\bbz}^m_+\times\Lambda'_n\times{\bbz}^s_+\times\Lambda'_t$ (where $t:=\lfloor\frac{r}{2}\rfloor=\lfloor\frac{\text{dim}\,{\ggg}(-1)_{\bar1}}{2}\rfloor$), let $x^\mathbf{a}y^\mathbf{b}u^\mathbf{c}v^\mathbf{d}$ denote the monomial $x_1^{a_1}\cdots x_m^{a_m}y_1^{b_1}\cdots y_n^{b_n}u_1^{c_1}\cdots u_s^{c_s}v_1^{d_1}\cdots v_t^{d_t}$ in $U({\ggg})$. Set $Q_{\chi,A}:=U({\ggg}_A)\otimes_{U(\mathfrak{m}_A)}A_\chi$, where $A_\chi=A1_\chi$. By the definition $Q_{\chi,A}$ is a ${\ggg}_A$-stable $A$-lattice in $Q_{\chi}$ with $\{x^\mathbf{a}y^\mathbf{b}u^\mathbf{c}v^\mathbf{d}\otimes1_\chi\mid(\mathbf{a},\mathbf{b},\mathbf{c},\mathbf{d})\in{\bbz}^m_+\times\Lambda'_n\times{\bbz}^s_+\times\Lambda'_t\}$
as a free basis. Given $(\mathbf{a},\mathbf{b},\mathbf{c},\mathbf{d})\in{\bbz}_+^m\times\Lambda'_n\times{\bbz}_+^s\times\Lambda'_t$, set
$$|(\mathbf{a},\mathbf{b},\mathbf{c},\mathbf{d})|_e:=\sum_{i=1}^ma_i(k_i+2)+\sum_{i=1}^nb_i(k'_i+2)+\sum_{i=1}^sc_i+\sum_{i=1}^td_i.$$
Set
\[Y_i:=\left\{
\begin{array}{ll}
x_i&\text{if}~1\leqslant  i\leqslant  l;\\
y_{i-l}&\text{if}~l+1\leqslant  i\leqslant  l+q;\\
v_{\frac{r+1}{2}}&\text{if}~i=l+q+1,
\end{array}
\right.
\]
where $Y_i\in{\ggg}(m_i)$ with $m_i\in{\bbz}$ and the term $Y_{l+q+1}$ occurs only when $r=\dim\ggg(-1)_\bo$ is odd.
Then there are Lie superalgebra operator identities in the setting of $[Y_i,Y_j]=\sum\limits_{k=1}^{l+q}\alpha_{ij}^kY_k$ in ${\ggg}^e$ for $1\leqslant i,j\leqslant l+q$.
 Set $q'=q$ if $r$ (or equivalently, $d_1$) is even, and $q'=q+1$ if $r$ is odd.
 By \cite[Theorem 4.7]{ZS2}, the finite $W$-superalgebra $U({\ggg},e):=(\text{End}_{\ggg}Q_{\chi})^{\text{op}}$ can be determined by a data of generators and defining relations. Those generators are $\Theta_1,\cdots,\Theta_{l}\in U(\ggg,e)_\bz$ and $\Theta_{l+1},\cdots,\Theta_{l+q'}\in U(\ggg,e)_\bo$ with
 \begin{equation}\label{generators}
\begin{array}{ll}
&\Theta_k(1_\chi)\\=&(Y_k+\sum\limits_{\mbox{\tiny $\begin{array}{c}|\mathbf{a},\mathbf{b},\mathbf{c},\mathbf{d}|_e=m_k+2,\\|\mathbf{a}|
+|\mathbf{b}|+|\mathbf{c}|+|\mathbf{d}|\geqslant 2\end{array}$}}\lambda^k_{\mathbf{a},\mathbf{b},\mathbf{c},\mathbf{d}}x^{\mathbf{a}}
y^{\mathbf{b}}u^{\mathbf{c}}v^{\mathbf{d}}+\sum\limits_{|\mathbf{a},\mathbf{b},\mathbf{c},\mathbf{d}|_e<m_k+2}\lambda^k_{\mathbf{a},\mathbf{b},\mathbf{c},\mathbf{d}}x^{\mathbf{a}}
y^{\mathbf{b}}u^{\mathbf{c}}v^{\mathbf{d}})\otimes1_\chi
\end{array}
\end{equation}
for $1\leqslant  k\leqslant  l+q$, where $\lambda^k_{\mathbf{a},\mathbf{b},\mathbf{c},\mathbf{d}}\in\mathbb{Q}$, and $\lambda^k_{\mathbf{a},\mathbf{b},\mathbf{c},\mathbf{d}}=0$ if $a_{l+1}=\cdots=a_m=b_{q+1}=\cdots=b_n=c_1=\cdots=c_s=
d_1=\cdots=d_{\lceil\frac{r}{2}\rceil}=0$. When $r$ is odd, set $\Theta_{l+q+1}(1_\chi)=Y_{l+q+1}\otimes1_\chi=v_{\frac{r+1}{2}}\otimes1_\chi$.

By \cite[Theorem 4.5]{ZS2}, the monomials $\Theta_1^{a_1}\cdots\Theta_l^{a_l}\Theta_{l+1}^{b_1}\cdots\Theta_{l+q'}^{b_{q'}}$ with $a_i\in{\bbz}_+, b_j\in\{0,1\}$ for $1\leqslant i\leqslant l$ and $1\leqslant j\leqslant q'$ form a basis of the vector space $U({\ggg},e)$. The monomial $\Theta_1^{a_1}\cdots\Theta_l^{a_l}\Theta_{l+1}^{b_1}\cdots
\Theta_{l+q'}^{b_{q'}}$ is said to have Kazhdan degree
$\sum\limits_{i=1}^la_i(m_i+2)+\sum\limits_{i=1}^{q'}b_i(m_{l+i}+2)$. For $k\in{\bbz}_+$, let $\text{F}_kU({\ggg},e)$ denote the ${\bbc}$-span of all monomials $\Theta_1^{a_1}\cdots\Theta_l^{a_l}\Theta_{l+1}^{b_1}\cdots\Theta_{l+q'}^{b_{q'}}$ of Kazhdan degree $\leqslant k$. The subspaces $\text{F}_kU({\ggg},e)$ with $k\geqslant 0$ form an increasing exhaustive filtration of the algebra $U({\ggg},e)$, which is called the Kazhdan filtration. The corresponding graded algebra gr\,$U({\ggg},e)$ is a polynomial superalgebra in $\text{gr}\,\Theta_1,\cdots,\text{gr}\,\Theta_{l+q'}$. Recall \cite[Theorem 4.7]{ZS2} shows that there are super-polynomials $F_{ij}$'s with $i,j=1,\cdots,l+q'$ such that the defining relations on those generators can be described as
\begin{align}\label{refine1}
[\Theta_i,\Theta_j]=F_{ij}(\Theta_1,\cdots,\Theta_{l+q'}),\quad i,j=1,\cdots,l+q',
\end{align}
while
\begin{align}\label{fij}
F_{ij}(\Theta_1,\cdots,\Theta_{l+q'})\equiv\sum_{k=1}^{l+q}\alpha_{ij}^k\Theta_k+q_{ij}(\Theta_1,\cdots,\Theta_{l+q'})\,(\mbox{mod }\text{F}_{m_i+m_j+1}U({\ggg},e))
\end{align} for $i,j=1,\cdots,l+q$. For the case when one of the indices $i,j$ equals $l+q+1$, it follows from \cite[Remark 3.8]{ZS2} that there are no obvious formulas for the super-polynomials $F_{ij}$'s as shown in \eqref{fij}. However, by the same discussion as \cite[Theorem 4.5]{ZS2}, one can still choose the super-polynomials $F_{ij}$'s properly such that the Kazhdan degree for all the monomials in the $\bbc$-span of $F_{ij}$'s is less than $m_i+m_j+2$. Moreover,
\begin{align}\label{fij2}
F_{l+q+1,l+q+1}(\Theta_1,\cdots,\Theta_{l+q+1})=1\otimes1_\chi
\end{align}
when $r$ is odd.

In fact, some of the defining relations in \eqref{refine1} are equivalent to each other. By the same discussion as \cite[Remark 3.8(4)]{ZS2}, after deleting all the redundant commutating relations in \eqref{refine1}, the remaining ones are with indices $i,j$ satisfying $1\leqslant i<j\leqslant l$, $l+1\leqslant i\leqslant j\leqslant l+q'$, and $1\leqslant i\leqslant l<j\leqslant l+q'$.

In the sequent arguments, when we consider the corresponding counterparts of all above over the algebraic closured field ${\bbk}=\overline{\mathbb{F}}_p$ of positive characteristic $p$, {\sl we assume that the prime $p$ is large enough such that the admissible ring $A$ contains all $\lambda^k_{\mathbf{a},\mathbf{b},\mathbf{c},\mathbf{d}}$ in \eqref{generators} and all coefficients of the $F_{ij}$'s in \eqref{refine1}}, thereby the ``admissible procedure" developed by Premet for finite $W$-algebras can be reproduced in the super case.

For ${\bf a}=(a_1,\cdots,a_l)\in\bbz_+^l$ and ${\bf b}=(b_1,\cdots,b_{q'})\in \Lambda'_{q'}$, let
$U({\ggg}_A,e)$ be the $A$-span of the monomials
$$\{\Theta_1^{a_1}\cdots\Theta_l^{a_l}\cdot\Theta_{l+1}^{b_1}\cdots\Theta_{l+q'}^{b_{q'}}\mid ({\bf{a}},{\bf{b}})\in \bbz_+^l\times\Lambda'_{q'}\}.$$

\subsubsection{}\label{2.2.3}
Let $I_\chi$ denote the ${\bbz}_2$-graded ideal in $U({\ggg})$ generated by all $x-\chi(x)$ with $x\in\mathfrak{m}_i, i\in{\bbz}_2$. By construction, $I_\chi$ is a $(U({\ggg}),U(\mathfrak{m}))$-bimodule. The fixed point space $(U({\ggg})/I_\chi)^{\ad\,\mmm}$ carries a natural algebra structure given by $(x+I_\chi)\cdot(y+I_\chi):=(xy+I_\chi)$ for all $x,y\in U({\ggg})$. Then $Q_\chi\cong U({\ggg})/I_\chi$ as ${\ggg}$-modules via the ${\ggg}$-module map sending $1+I_\chi$ to $1_\chi$, and $Q_{\chi}^{\ad\,\mmm}\cong U({\ggg},e)$ as $\bbc$-algebras. Any element of $U({\ggg},e)$ is uniquely determined by its effect on the generator $1_\chi\in Q_\chi$, and it follows from \cite[Theorem 2.12]{ZS2} that the canonical isomorphism between $Q_{\chi}^{\ad\,\mmm}$ and $U({\ggg},e)$ is given by $u\mapsto u(1_\chi)$ for any $u\in Q_{\chi}^{\ad\,\mmm}$. In what follows we will often identify $Q_\chi$ with $U({\ggg})/I_\chi$ and $U({\ggg},e)$ with $Q_{\chi}^{\ad\,\mmm}$.

Let $w_1,\cdots, w_c$ be a basis of $\ggg$ over $\bbc$. Let $U({\ggg})=\bigcup_{i\in{\bbz}}\text{F}_iU({\ggg})$ be the Kazhdan filtration of $U({\ggg})$, where $\text{F}_iU({\ggg})$ is the ${\bbc}$-span of all $w_1\cdots w_c$ with $w_1\in{\ggg}(j_1),\cdots,w_c\in{\ggg}(j_c)$ and $(j_1+2)+\cdots+(j_c+2)\leqslant  i$. The Kazhdan filtration on $Q_{\chi}$ is defined by $\text{F}_iQ_{\chi}:=\pi(\text{F}_iU({\ggg}))$ with $\pi:U({\ggg})\twoheadrightarrow U({\ggg})/I_\chi$ being the canonical homomorphism, which makes $Q_{\chi}$ into a filtrated $U({\ggg})$-module. The  Kazhdan filtration of $Q_{\chi}$ has no negative components, and the Kazhdan filtration of $U({\ggg},e)$ defined above is nothing but the filtration of $U({\ggg},e)=Q_{\chi}^{\ad\,\mmm}$ induced from the Kazhdan filtration of $Q_{\chi}$ through the embedding $Q_{\chi}^{\ad\,\mmm}\hookrightarrow Q_{\chi}$ (see \cite[\S2.3]{ZS2}).

\subsection{Finite $W$-superalgebras in positive characteristic}\label{2.3}
\subsubsection{}\label{2.3.1}
Pick a prime $p\in\Pi(A)$ and denote by ${\bbk}=\overline{\mathbb{F}}_p$ the algebraic closure of $\mathbb{F}_p$. Since the bilinear form $(\cdot,\cdot)$ is $A$-valued on ${\ggg}_A$, it induces a bilinear form on the Lie superalgebra ${\ggg}_{\bbk}\cong{\ggg}_A\otimes_A{\bbk}$. In the following we still denote this bilinear form by $(\cdot,\cdot)$. If we denote by $G_{\bbk}$ the algebraic ${\bbk}$-supergroup of distribution algebra $U_{\bbk}=U_{\bbz}\otimes_{\bbz}{\bbk}$, then ${\ggg}_{\bbk}=\text{Lie}(G_{\bbk})$ (cf. \cite[\S{I}.7.10]{J} and \cite[\S2.2]{SW}). Note that the bilinear form $(\cdot,\cdot)$ is non-degenerate and $\text{Ad}\,(G_{\bbk})_{\text{ev}}$-invariant. For $x\in{\ggg}_A$, set $\bar{x}:=x\otimes1$, an element of ${\ggg}_{\bbk}$. To ease notation we identify $e,f,h$ with the nilpotent elements $\bar{e}=e\otimes1,~\bar{f}=f\otimes1$ and $\bar{h}=h\otimes1$ in ${\ggg}_{\bbk}$, and $\chi$ with the linear function $(e,\cdot)$ on ${\ggg}_{\bbk}$.

The Lie superalgebra ${\ggg}_{\bbk}=\text{Lie}\,(G_{{\bbk}})$ carries a natural $p$-mapping $x\mapsto x^{[p]}$ for all $x\in({\ggg}_{\bbk})_{\bar0}$, equivariant under the adjoint action of $(G_{{\bbk}})_{\text{ev}}$. The subalgebra of $U({\ggg}_{\bbk})$ generated by all $\bar x^p-\bar x^{[p]}$ with $\bar x\in({\ggg}_{\bbk})_{\bar{0}}$ is called the $p$-center of $U({\ggg}_{\bbk})$ and we denote it by $Z_p({\ggg}_{\bbk})$ for short.  It follows from the PBW theorem of $U({\ggg}_{\bbk})$ that $Z_p({\ggg}_{\bbk})$ is isomorphic to a polynomial algebra (in the usual sense) in $\text{dim}\,({\ggg}_{\bbk})_{\bar{0}}$ variables. For every maximal ideal $H$ of $Z_p({\ggg}_{\bbk})$ there is a unique linear function $\eta=\eta_H\in({\ggg}_{\bbk})_{\bar{0}}^*$ such that $$H=\langle \bar x^p-\bar x^{[p]}-\eta(\bar x)^p\mid\bar x\in({\ggg}_{\bbk})_{\bar{0}}\rangle.$$
Since the Frobenius map of ${\bbk}$ is bijective, this enables us to identify the maximal spectrum $\text{Specm}(Z_p({\ggg}_{\bbk}))$ of $Z_p({\ggg}_{\bbk})$ with $({\ggg}_{\bbk})_{\bar{0}}^*$.

For any $\xi\in({\ggg}_{\bbk})_{\bar{0}}^*$ we write $J_\xi$ the two-sided ideal of $U({\ggg}_{\bbk})$ generated by the even central elements $\{\bar x^p-\bar x^{[p]}-\xi(\bar x)^p\mid\bar x\in({\ggg}_{\bbk})_{\bar{0}}\}$. Then the quotient algebra $U_\xi({\ggg}_{\bbk}):=U({\ggg}_{\bbk})/J_\xi$ is called the reduced enveloping algebra with $p$-character $\xi$. We have $\dim U_\xi({\ggg}_{\bbk})=p^{\text{dim}\,({\ggg}_{\bbk})_{\bar{0}}}2^{\text{dim}\,({\ggg}_{\bbk})_{\bar{1}}}$ by construction. It follows from the Schur Lemma that any irreducible ${\ggg}_{\bbk}$-module $V$ is of $U_\xi({\ggg}_{\bbk})$ for a unique $\xi=\xi_V\in({\ggg}_{\bbk})_{\bar{0}}^*$. We often regard $\xi\in{\ggg}_{\bbk}^*$ by letting $\xi(({\ggg}_{\bbk})_{\bar{1}})=0$.
\subsubsection{}\label{2.3.2}
For $i\in{\bbz}$, set ${\ggg}_{\bbk}(i):={\ggg}_A(i)\otimes_A{\bbk}$ and put $\mathfrak{m}_{\bbk}:=\mathfrak{m}_A\otimes_A{\bbk}$, then \cite[Lemma 2.18]{ZS2} shows that $\mathfrak{m}_{\bbk}$ is a restricted subalgebra of $\mathfrak{g}_{\bbk}$. Denote by $\mathfrak{m}'_{\bbk}:=\mathfrak{m}'_A\otimes_A{\bbk}$ and $\mathfrak{p}_{\bbk}:=\mathfrak{p}_A\otimes_A{\bbk}$. Due to our assumptions on $A$, the elements $\bar{x}_1,\cdots,\bar{x}_l$ and $\bar{y}_1,\cdots,\bar{y}_q$ form a basis of the centralizer $({\ggg}^e_{\bbk})_{\bar{0}}$ and $({\ggg}^e_{\bbk})_{\bar{1}}$ of $e$ in ${\ggg}_{\bbk}$, respectively. It follows from \cite[\S4.1]{WZ} that the subalgebra $\mathfrak{m}_{\bbk}$ is $p$-nilpotent, and the linear function $\chi$ vanishes on the $p$-closure of $[\mathfrak{m}_{\bbk},\mathfrak{m}_{\bbk}]$. Set $Q_{\chi,{\bbk}}:=U({\ggg}_{\bbk})\otimes_{U(\mathfrak{m}_{\bbk})}{\bbk}_\chi$, where ${\bbk}_\chi=A_\chi\otimes_{A}{\bbk}={\bbk}1_\chi$. Clearly, ${\bbk}1_\chi$ is a one-dimensional  $\mathfrak{m}_{\bbk}$-module with the property $\bar x.1_\chi=\chi(\bar x)1_\chi$ for all $\bar x\in\mathfrak{m}_{\bbk}$, and it is obvious that $Q_{\chi,{\bbk}}\cong Q_{\chi,A}\otimes_A{\bbk}$ as ${\ggg}_{\bbk}$-modules. Define the finite $W$-superalgebra over ${\bbk}$ by $U({\ggg}_{\bbk},e):=(\text{End}_{{\ggg}_{\bbk}}Q_{\chi,{\bbk}})^{\text{op}}$.

Let ${\ggg}_A^*$ be the $A$-module dual to ${\ggg}_A$ and let $(\mathfrak{m}_A^\perp)_{\bar{0}}$ denote the set of all linear functions on $({\ggg}_A)_{\bar0}$ vanishing on $(\mathfrak{m}_A)_{\bar{0}}$. By the assumptions on $A$,  $(\mathfrak{m}_A^\perp)_{\bar{0}}$ is a free $A$-submodule and a direct summand of ${\ggg}^*_A$. Note that
$(\mathfrak{m}_A^\perp\otimes_A{\bbc})_{\bar{0}}$ and $(\mathfrak{m}_A^\perp\otimes_A{\bbk})_{\bar{0}}$ can be identified with the annihilators $\mathfrak{m}^\perp_{\bar{0}}
:=\{f\in{\ggg}^*_{\bar{0}}\mid f(\mathfrak{m}_{\bar{0}})=0\}$ and $(\mathfrak{m}_{\bbk}^\perp)_{\bar{0}}:=\{f\in({\ggg}_{\bbk})^*_{\bar{0}}\mid f((\mathfrak{m}_{\bbk})_{\bar{0}})=0\}$, respectively. Let $I_{\chi,A}$ denote the $A$-span of the left ideal of $U(\ggg_A)$ generated by all $x-\chi(x)$ with $x\in\mathfrak{m}_{A}$.

Given a linear function $\eta\in\chi+(\mathfrak{m}_{\bbk}^\perp)_{\bar{0}}$, set ${\ggg}_{\bbk}$-module $Q_{\chi}^\eta:=Q_{\chi,{\bbk}}/J_\eta Q_{\chi,{\bbk}}$. Each ${\ggg}_{\bbk}$-endomorphism $\bar\Theta_i$ of $Q_{\chi,\bbk}$ preserves $J_\eta Q_{\chi,{\bbk}}$, hence induces a ${\ggg}_{\bbk}$-endomorphism of $Q_{\chi}^\eta$ which is denoted by $\theta_i$.
 As in \cite{ZS2}, we set $U_\eta({\ggg}_{\bbk},e)=(\text{End}_{{\ggg}_{\bbk}}Q_{\chi}^\eta)^{\text{op}}$, and call it a {\sl reduced $W$-superalgebra}.

Since the restriction of $\eta$ to $\mathfrak{m}_{\bbk}$ coincides with that of $\chi$, the left ideal of $U({\ggg}_{\bbk})$ generated by all $x-\eta(x)$ with $x\in\mathfrak{m}_{\bbk}$ equals $I_{\chi,{\bbk}}:=I_{\chi,A}\otimes_A{\bbk}$ and ${\bbk}_\chi={\bbk}_\eta$ as $\mathfrak{m}_{\bbk}$-modules. We denote by $I_{\mathfrak{m}_{\bbk}}$ the left ideal of $U_\eta({\ggg}_{\bbk})$ generated by all $x-\eta(x)$ with $x\in\mathfrak{m}_{\bbk}$.

For a (left) $U_{\eta}({\ggg}_{\bbk})$-module $M$, define
$$M^{\mathfrak{m}_{\bbk}}:=\{v\in M\mid I_{\mathfrak{m}_{\bbk}}.v=0\}.$$  It follows from \cite[Proposition 2.21]{ZS2} that $U_{\eta}({\ggg}_{\bbk},e)$ can be identified with the $\bbk$-algebra $U_\eta({\ggg}_{\bbk})^{\text{ad}\,{\mmm}_{\bbk}}/ U_\eta({\ggg}_{\bbk})^{\text{ad}\,{\mmm}_{\bbk}}\cap I_{{\mmm}_{\bbk}}$. Therefore, any left $U_\eta({\ggg}_{\bbk})^{\text{ad}\,{\mmm}_{\bbk}}$-module can be considered as a $U_{\eta}({\ggg}_{\bbk},e)$-module with the trivial action of the ideal $U_\eta({\ggg}_{\bbk})^{\text{ad}\,{\mmm}_{\bbk}}\cap I_{{\mmm}_{\bbk}}$. Recall \cite[Theorem 2.24]{ZS2} shows that

\begin{theorem}\label{reducedfunctors} The correspondence $M$ to $M^{\mmm_\bbk}$ gives rise to a category equivalence between the category of $U_\eta({\ggg}_{\bbk})$-modules and the category of $U_\eta({\ggg}_{\bbk},e)$-modules:
$$U_\eta({\ggg}_{\bbk})\text{-mod}\longrightarrow U_{\eta}({\ggg}_{\bbk},e)\text{-mod}$$
with the inverse
$$U_{\eta}({\ggg}_{\bbk},e)\text{-mod}\longrightarrow U_\eta({\ggg}_{\bbk})\text{-mod},\qquad V\mapsto U_\eta({\ggg}_{\bbk})\otimes_{U_\eta({\ggg}_{\bbk})^
{\ad\,\mmm_{\bbk}}}V.$$
\end{theorem}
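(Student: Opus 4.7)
The plan is to prove this as a super, modular analog of Skryabin's equivalence, adapting to Lie superalgebras the blueprint laid down by Premet for finite $W$-algebras. The central object is the induced module $Q_\chi^\eta = U_\eta(\ggg_\bbk)\otimes_{U_\eta(\mmm_\bbk)}\bbk_\chi$, and the goal is to establish it as a projective generator of the category of $U_\eta(\ggg_\bbk)$-modules. Once this is done, standard Morita theory yields the desired equivalence with the category of modules over $\text{End}_{U_\eta(\ggg_\bbk)}(Q_\chi^\eta)^{\text{op}} = U_\eta(\ggg_\bbk,e)$.

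First I would verify the dimension of $Q_\chi^\eta$ by applying PBW to the restricted enveloping algebras and using the free-module structure of $Q_{\chi,\bbk}$ over $U(\ppp_\bbk)$ inherited from its $A$-form $Q_{\chi,A}$; this gives $\dim Q_\chi^\eta = p^{\dim(\ppp_\bbk)_\bz}\cdot 2^{\dim(\ppp_\bbk)_\bo}$ after specialization at the central characters prescribed by $\eta$. Next I would establish projectivity of $Q_\chi^\eta$ over the Frobenius algebra $U_\eta(\ggg_\bbk)$. The strategy is to show $Q_\chi^\eta$ is a direct summand of $U_\eta(\ggg_\bbk)$ of the correct rank, leveraging the $p$-nilpotency of $\mmm_\bbk$ established in \cite{ZS2} and the vanishing of $\chi$ on the $p$-closure of $[\mmm_\bbk,\mmm_\bbk]$ noted in \S\ref{2.3.2}. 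This last property guarantees that $\bbk_\chi$ is a legitimate $U_\eta(\mmm_\bbk)$-module so that induction is well-defined, and a Nakayama-type duality argument for the local Frobenius algebra $U_\eta(\mmm_\bbk)$ should supply the desired splitting.

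The principal obstacle will be this projectivity step in the super setting. In the purely even case (Premet) it reduces to the self-injectivity of reduced enveloping algebras combined with a dimension count. In the super case one must handle $(\mmm_\bbk)_\bo$ carefully: the $p$-structure on odd generators is nontrivial through their squares in the restricted bracket, and the parity of $\dim\ggg(-1)_\bo$ dictates whether one works with $\mmm_\bbk$ or its proper extension $\mmm'_\bbk$. A careful comparison between $(Q_{\chi,\bbk})^{\mmm_\bbk}$ and the corresponding construction for $\mmm'_\bbk$ is required so that the dimension arithmetic matches on both sides of the equivalence, and one must verify that the passage between induction and $\text{ad}\,\mmm_\bbk$-invariants commutes with the reduction modulo $J_\eta$.

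Once projectivity and the generator property are in hand, the equivalence $M\mapsto\text{Hom}_{U_\eta(\ggg_\bbk)}(Q_\chi^\eta, M)$ is standard Morita, and the natural identification $\text{Hom}_{U_\eta(\ggg_\bbk)}(Q_\chi^\eta, M)\cong M^{\mmm_\bbk}$ follows from Frobenius reciprocity via $\phi\mapsto\phi(1\otimes 1_\chi)$. For the inverse, observe that $Q_\chi^\eta\cong U_\eta(\ggg_\bbk)/I_{\mmm_\bbk}$ as left $U_\eta(\ggg_\bbk)$-modules, and the identification of $U_\eta(\ggg_\bbk,e)$ as the quotient $U_\eta(\ggg_\bbk)^{\ad\,\mmm_\bbk}/(U_\eta(\ggg_\bbk)^{\ad\,\mmm_\bbk}\cap I_{\mmm_\bbk})$ from \cite[Proposition 2.21]{ZS2} ensures that $U_\eta(\ggg_\bbk)\otimes_{U_\eta(\ggg_\bbk)^{\ad\,\mmm_\bbk}}V$ agrees with the abstract Morita inverse $Q_\chi^\eta\otimes_{U_\eta(\ggg_\bbk,e)}V$ for any $U_\eta(\ggg_\bbk,e)$-module $V$, closing the loop.
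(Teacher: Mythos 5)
The paper does not actually prove this theorem; it is recalled without proof from \cite[Theorem 2.24]{ZS2}, so there is no internal argument in the present paper to compare against. Your high-level strategy --- exhibit $Q_\chi^\eta$ as a projective generator of $U_\eta(\ggg_\bbk)$-mod, invoke Morita theory to obtain an equivalence with $(\text{End}_{U_\eta(\ggg_\bbk)}Q_\chi^\eta)^{\text{op}}$-mod, and identify the Hom-functor with $M\mapsto M^{\mmm_\bbk}$ via Frobenius reciprocity --- is the standard Skryabin/Premet route and is surely the one underlying the cited result.

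That said, the crucial projectivity step is not correctly argued. You propose to split $Q_\chi^\eta$ off $U_\eta(\ggg_\bbk)$ by a ``Nakayama-type duality argument for the local Frobenius algebra $U_\eta(\mmm_\bbk)$.'' Self-duality of $U_\eta(\mmm_\bbk)$ tells you that induction and coinduction from $\mmm_\bbk$ to $\ggg_\bbk$ agree up to a twist, but since $\bbk_\chi$ is the unique simple module of the local algebra $U_\eta(\mmm_\bbk)$ and in particular is neither projective nor injective over it, this gives no splitting. The actual input is the super Kac--Weisfeiler freeness theorem --- \cite[Proposition 4.2]{WZ}, recorded in \S\ref{2.3.2} as $M\cong U_\eta(\mmm_\bbk)^*\otimes_\bbk M^{\mmm_\bbk}$: every $U_\eta(\ggg_\bbk)$-module restricts to a free $U_\eta(\mmm_\bbk)$-module. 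It is this freeness, not Nakayama duality on $\mmm_\bbk$, that makes $M\mapsto M^{\mmm_\bbk}=\text{Hom}_{U_\eta(\mmm_\bbk)}(\bbk_\chi,M)$ exact, which by Frobenius reciprocity is precisely projectivity of $Q_\chi^\eta$. Combined with the count $U_\eta(\ggg_\bbk)\cong(Q_\chi^\eta)^{\oplus\delta}$ with $\delta=\dim U_\eta(\mmm_\bbk)$ (equivalently the matrix isomorphism of \cite[Theorem 4.4]{WZ}), the rest of your outline goes through. Two smaller slips: the dimension $p^{\dim(\ppp_\bbk)_\bz}2^{\dim(\ppp_\bbk)_\bo}$ uses the wrong complement --- $\ppp_\bbk=\bigoplus_{i\geqslant 0}\ggg_\bbk(i)$ is not complementary to $\mmm_\bbk$ in $\ggg_\bbk$; the correct complement is $\widetilde\ppp_\bbk$ of \S\ref{3}. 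And $\mmm'_\bbk$ plays no role in the Morita equivalence as such; it only enters later when analyzing the internal structure of $U_\eta(\ggg_\bbk,e)$ and the type-$Q$ phenomenon when $\dim\ggg(-1)_\bo$ is odd.
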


Furthermore, for any $U_{\eta}({\ggg}_{\bbk})$-module $M$, \cite[Lemma 2.22]{ZS2} shows that $M^{\mathfrak{m}_{\bbk}}$ is a free $U_{\eta}(\mathfrak{m}_{\bbk})$-module. By the same discussion as \cite[Proposition 4.2]{WZ}, one can conclude that there is an isomorphism of vector spaces 
$M\cong U_{\eta}(\mathfrak{m}_{\bbk})^*\otimes_{{\bbk}}M^{\mathfrak{m}_{\bbk}}$. It is immediate that
\begin{equation}\label{dimMn}
\text{dim}\,M^{\mathfrak{m}_{\bbk}}=\frac{\text{dim}\,M}
{\text{dim}\,U_{\eta}(\mathfrak{m}_{\bbk})}.
\end{equation}
In particular, we have $\text{dim}\,U_\eta({\ggg}_{\bbk})=\text{dim}\,U_{\eta}(\mathfrak{m}_{\bbk})\cdot\text{dim}\,U_\eta({\ggg}_{\bbk})^
{\ad\,\mmm_{\bbk}}$.

\section{Transition subalgebras of finite $W$-superalgebras in prime characteristic}\label{3}

Throughout the paper, we will maintain the notations and conventions as in \S1. Especially,  $\ggg$ is a given basic Lie superalgebra over $\bbc$, $A$ is an associated admissible ring, and $\bbk$ is an algebraically closed field of characteristic  $p\in \Pi(A)$. Throughout this section, fix a nilpotent element $e\in\ggg_\bz$, then we can define $d_i=\text{dim}\,{\ggg}_i-\text{dim}\,{\ggg}^e_i$ for $i\in{\bbz}_2$. Further recall that both $\text{dim}\,{\ggg}(-1)_{\bar1}$  and $d_1=\text{dim}\,{\ggg}_{\bar1}-\text{dim}\,{\ggg}^e_{\bar1}$ have the same parity by \S\ref{2.2.1}. We always set $q'=q$ if $d_1$ is even, and $q'=q+1$ if $d_1$ is odd.

In this section we will introduce a so-called transition subalgebra of  the finite $W$-superalgebra $U({\ggg}_{\bbk},e)$ over ${\bbk}$. Some structure relations between $U(\ggg_\bbk,e)$ and its transition subalgebra are presented, which enables us to connect the information of finite $W$-superalgebras over $\bbc$ with the modular representations of reduced enveloping algebras of basic Lie superalgebras over $\bbk$ in the following sections.

This section is somewhat a generalization of the Lie algebra case by Premet in \cite[\S 2]{P7}, with a few modifications. One can find that the emergence of odd parts in the Lie superalgebra ${\ggg}_{\bbk}$ makes the situation complicated.

\subsection{Transition subalgebras}Recall in \S\ref{2.2.2} we have defined \begin{equation*}
 \begin{array}{llllll}
 \mathbb{Z}_+^k&:=&\{(i_1,\cdots,i_k)\mid i_j\in\mathbb{Z}_+\},&
 \Lambda'_k&:=&\{(i_1,\cdots,i_k)\mid i_j\in\{0,1\}\}
 \end{array}
 \end{equation*}for $k\in\mathbb{Z}_+$ with $1\leqslant j\leqslant k$.
For ${\bf a}=(a_1,\cdots,a_l)\in\bbz_+^l$ and ${\bf b}=(b_1,\cdots,b_{q'})\in \Lambda'_{q'}$,
$U({\ggg}_A,e)$ denotes the $A$-span of the monomials
$$\{\Theta_1^{a_1}\cdots\Theta_l^{a_l}\cdot\Theta_{l+1}^{b_1}\cdots\Theta_{l+q'}^{b_{q'}}\mid ({\bf{a}},{\bf{b}})\in \bbz_+^l\times\Lambda'_{q'}\}$$ in \S\ref{2.2.2}.
 Our assumption on $A$ guarantees $U({\ggg}_A,e)$ to be an $A$-subalgebra of $U({\ggg},e)$ contained in
$(\text{End}_{{\ggg}_A}Q_{\chi,A})^{\text{op}}$. By the definition of $Q_{\chi,A}$ in \S\ref{2.2.2} and $I_{\chi,A}$ in \S\ref{2.3.2} we know that $Q_{\chi,A}$ can be identified with the ${\ggg}_A$-module $U({\ggg}_A)/I_{\chi,A}$. Hence $U({\ggg}_A,e)$ embeds into the $A$-algebra $(U({\ggg}_A)/I_{\chi,A})^{\text{ad}\,\mathfrak{m}_A}\cong (Q_{\chi,A})^{\text{ad}\,\mathfrak{m}_A}$. As $Q_{\chi,A}$ is a free $A$-module with basis $\{x^\mathbf{a}y^\mathbf{b}u^\mathbf{c}v^\mathbf{d}\otimes1_\chi\mid(\mathbf{a},\mathbf{b},\mathbf{c},\mathbf{d})\in{\bbz}^m_+
\times\Lambda'_n\times{\bbz}^s_+\times\Lambda'_t\}$, an easy induction on Kazhdan degree (based on \cite[Lemma 4.3]{ZS2} and the formulas displayed in \cite[Lemma 4.2, Theorem 4.5]{ZS2}) shows that
$$U({\ggg}_A,e)=(\text{End}_{{\ggg}_A}Q_{\chi,A})^{\text{op}}\cong Q_{\chi,A}^{\ad\,\mmm_A}.$$

\begin{defn}\label{reduced k} Set the $\bbk$-algebra
$T({\ggg}_{\bbk},e):=U({\ggg}_A,e)\otimes_A{\bbk}$. In the following we will call $T(\ggg_\bbk,e)$ a transition subalgebra.
\end{defn}
 It is notable that by the definition, $T({\ggg}_{\bbk},e)$  can be naturally identified with a subalgebra of the finite $W$-superalgebra $U({\ggg}_{\bbk},e)=(\text{End}_{{\ggg}_{\bbk}}Q_{\chi,{\bbk}})^{\text{op}}$ over ${\bbk}$. Moreover, $T({\ggg}_{\bbk},e)$ has a ${\bbk}$-basis consisting of all monomials $\bar{\Theta}_1^{a_1}\cdots\bar{\Theta}_{l+q'}^{b_{q'}}$, where $\bar{\Theta}_i:=\Theta_i\otimes1\in U({\ggg}_A,e)\otimes_A{\bbk}$ for $1\leqslant i\leqslant l+q'$.

Since all the coefficients of polynomials $F_{ij}$'s for $1\leqslant i,j\leqslant l+q'$ in \S\ref{2.2.2} are in $\mathbb{Q}$, one can assume the $F_{ij}$'s are over $A$ after enlarging $A$ if needed. Given a super-polynomial $g\in A[T_1,\cdots T_n]$, let $^p{\hskip-0.05cm}g$ denote the image of $g$ in the polynomial superalgebra ${\bbk}[T_1,\cdots T_n]=A[T_1,\cdots T_n]\otimes_A{\bbk}$. By the same discussion as \cite[Theorem 4.7]{ZS2}, we know that there exist super-polynomials $^p{\hskip-0.05cm}F_{ij}$'s of $l+q'$ indeterminants   over $\bbk$ ($i,j=1,\cdots,l+q'$) with the first $l$ indeterminants  being even, and the others being odd, such that
$$[\bar\Theta_i,\bar\Theta_j]=^p{\hskip-0.2cm} F_{ij}(\bar\Theta_1,\cdots,\bar\Theta_{l+q'}),~~i,j=1,\cdots,l+q',$$
while the $^p{\hskip-0.05cm}F_{ij}(\bar\Theta_1,\cdots,\bar\Theta_{l+q'})$'s satisfy the same relations as (\ref{fij}) and (\ref{fij2}).
\begin{theorem}\label{transition} Maintain the notations as above. Then the $\bar\Theta_i$'s and $^p{\hskip-0.05cm}F_{ij}$'s for $i,j=1,\cdots,l+q'$ constitute a data of generators and defining relations of $T(\ggg_{\bbk},e)$,
with $\bar{\Theta}_1,\cdots,\bar{\Theta}_{l}\in T({\ggg}_{\bbk},e)_{\bar0}$ and  $\bar{\Theta}_{l+1},\cdots,\bar{\Theta}_{l+q'}\in T({\ggg}_{\bbk},e)_{\bar1}$ as the generators of $T({\ggg}_{\bbk},e)$
subject to the relations
$$[\bar{\Theta}_i,\bar{\Theta}_j]=^p{\hskip-0.2cm}F_{ij}
(\bar{\Theta}_1,\cdots,\bar{\Theta}_{l+q'}),
$$
where $1\leqslant i,j\leqslant l+q'$.
\end{theorem}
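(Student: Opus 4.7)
The plan is to pass from the $\bbc$-presentation of $U(\ggg,e)$ via $\Theta_i$ and $F_{ij}$ established in \cite[Theorem 4.7]{ZS2}, through the $A$-form $U(\ggg_A,e)$, down to the $\bbk$-algebra $T(\ggg_\bbk,e)$ by base change. Since all coefficients of the $F_{ij}$'s lie in $A$ after the enlargement of $A$ made in \S\ref{2.2.2}, the commutator relations $[\Theta_i,\Theta_j]=F_{ij}(\Theta_1,\ldots,\Theta_{l+q'})$ and the extra identity $F_{l+q+1,l+q+1}=1$ in the odd case already hold inside the $A$-subalgebra $U(\ggg_A,e)\subseteq U(\ggg,e)$. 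Tensoring $U(\ggg_A,e)$ over $A$ with $\bbk$ therefore produces the claimed relations $[\bar{\Theta}_i,\bar{\Theta}_j]={}^p\!F_{ij}(\bar{\Theta}_1,\ldots,\bar{\Theta}_{l+q'})$ in $T(\ggg_\bbk,e)$.

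To prove these are \emph{defining} relations, I would form the abstract $\bbk$-superalgebra $\mathcal{A}_\bbk$ presented by generators $T_1,\ldots,T_{l+q'}$ (with the first $l$ even and the rest odd) modulo $[T_i,T_j]={}^p\!F_{ij}(T_1,\ldots,T_{l+q'})$, and the map $\pi_\bbk:\mathcal{A}_\bbk\to T(\ggg_\bbk,e)$ sending $T_i\mapsto \bar{\Theta}_i$. By the discussion at the end of \S\ref{2.2.2}, once the redundant commutators are dropped only the index ranges $1\leqslant i<j\leqslant l$, $l+1\leqslant i\leqslant j\leqslant l+q'$, and $1\leqslant i\leqslant l<j\leqslant l+q'$ need to be imposed. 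The map $\pi_\bbk$ is surjective by the construction of $T(\ggg_\bbk,e)$ in Definition \ref{reduced k}, so it remains to establish injectivity.

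For injectivity I would reproduce over $\bbk$ the Kazhdan-filtered straightening argument of \cite[Theorems 4.5 and 4.7]{ZS2}. Equip $\mathcal{A}_\bbk$ with the Kazhdan filtration induced by declaring $T_i$ to have degree $m_i+2$. The relations \eqref{fij} and \eqref{fij2}, together with the bound $\deg{}^p\!F_{ij}<m_i+m_j+2$ stated after \eqref{fij2}, are preserved under reduction modulo $p$ because $A$ contains all relevant coefficients. An induction on Kazhdan degree then shows that every element of $\mathcal{A}_\bbk$ can be rewritten as a $\bbk$-linear combination of ordered monomials $T_1^{a_1}\cdots T_l^{a_l}T_{l+1}^{b_1}\cdots T_{l+q'}^{b_{q'}}$ with $a_i\in\bbz_+$ and $b_j\in\{0,1\}$. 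By Definition \ref{reduced k}, their images under $\pi_\bbk$ are precisely the $\bbk$-basis of $T(\ggg_\bbk,e)$ inherited from the free $A$-basis of $U(\ggg_A,e)$, so the straightened monomials are also linearly independent in $\mathcal{A}_\bbk$. Comparing dimensions in each filtered piece makes $\pi_\bbk$ an isomorphism.

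The main obstacle is the straightening step itself in the presence of the distinguished odd generator $\bar{\Theta}_{l+q+1}$ arising when $d_1$ is odd, since for commutators involving this element we do not have the clean leading-term description \eqref{fij} but only the Kazhdan-degree bound. One must check that successive applications of the relations involving $\bar{\Theta}_{l+q+1}$, including the square relation \eqref{fij2}, always terminate in PBW-ordered monomials without producing new leading terms outside the inductive hypothesis; this is where the admissibility assumption on $A$ and the bound on $\deg{}^p\!F_{ij}$ are essential. Once this is done, the theorem follows at once.
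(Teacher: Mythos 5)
Your proof is correct and follows the same route the paper implicitly relies on: the paper states the theorem without a dedicated proof block, merely invoking ``the same discussion as \cite[Theorem 4.7]{ZS2},'' which is precisely the base-change-plus-Kazhdan-filtered-straightening argument you spell out, reconciling the surjection from the abstract $\bbk$-superalgebra presented by the $^pF_{ij}$'s with the known PBW basis of $T(\ggg_\bbk,e)$ inherited from the free $A$-basis of $U(\ggg_A,e)$. The only small imprecision is attributing the strict bound $\deg{}^pF_{ij}<m_i+m_j+2$ to all index pairs, whereas for $i,j\leqslant l+q$ one only has $\leqslant m_i+m_j+2$ from \eqref{fij}; but since what the straightening actually needs is $\deg{}^pF_{ij}<\deg(\bar\Theta_i\bar\Theta_j)=m_i+m_j+4$, which holds in every case including the Clifford-type relation \eqref{fij2}, this does not affect the argument.
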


\subsection{Revisit to reduced $W$-superalgebras with $p$-characters $\eta\in\chi+(\mathfrak{m}_{\bbk}^\bot)_{\bar{0}}$}\label{3.2}
Recall in \S\ref{2.3.2} we have defined the ${\ggg}_{\bbk}$-module $Q_{\chi}^\eta=Q_{\chi,{\bbk}}/J_\eta Q_{\chi,{\bbk}}$ and the reduced $W$-superalgebra $U_\eta({\ggg}_{\bbk},e)=(\text{End}_{{\ggg}_{\bbk}}Q_{\chi}^\eta)^{\text{op}}$, and \S\ref{2.2.1} shows that $\{x_1,\cdots,x_m,y_1,\cdots,$\\$y_n\}$ is an $A$-basis of ${\ppp}_A$. Set
\[X_i:=\left\{\begin{array}{ll}
x_{i+l}&\text{if}~1\leqslant  i\leqslant  m-l;\\
y_{l+q-m+i}&\text{if}~m-l+1\leqslant  i\leqslant  m+n-l-q;\\
u_{l+q-m-n+i}&\text{if}~m+n-l-q+1\leqslant  i\leqslant  m+n-l-q+s;\\
v_{l+q-m-n-s+i}&\text{if}~m+n-l-q+s+1\leqslant  i\leqslant  m+n-l-q+s+t',
\end{array}\right.
\]
where $t':=\lceil\frac{r}{2}\rceil=\lceil\frac{\text{dim}\,\ggg_\bbk(-1)_{\bar1}}{2}\rceil$.

\begin{conventions}\label{convention2.3} We will denote $\lceil\frac{r}{2}\rceil$ by $t'$ once and for all. It follows from \cite[Theorem 4.3]{WZ} that $\text{dim}\,U_\chi(\mathfrak{m}_{\bbk})=p^{\frac{d_0}{2}}2^{\lceil \frac{d_1}{2}\rceil}$ and we denote it by $\delta$ afterwards. By the assumption of the notations we have $\frac{d_0}{2}=m-l+s$ and $\lceil\frac{d_1}{2}\rceil=n-q+t'$.
\end{conventions}

For $(\mathbf{a},\mathbf{b},\mathbf{c},\mathbf{d})\in{\bbz}_+^{m-l}\times\Lambda'_{n-q}\times{\bbz}_+^s\times\Lambda'_{t'}$, define
\[\begin{array}{ccl}
X^{\mathbf{a},\mathbf{b},\mathbf{c},\mathbf{d}}:&=&X_1^{a_1}\cdots X_{m-l}^{a_{m-l}}X_{m-l+1}^{b_{1}}\cdots X_{m+n-l-q}^{b_{n-q}}X_{m+n-l-q+1}^{c_1}\cdots \\
&&\cdot X_{m+n-l-q+s}^{c_{s}}X_{m+n-l-q+s+1}^{d_{1}}\cdots X_{m+n-l-q+s+t'}^{d_{t'}}
\end{array}\]
and\[\begin{array}{ccl}
\bar{X}^{\mathbf{a},\mathbf{b},\mathbf{c},\mathbf{d}}:&=&\bar{X}_1^{a_1}\cdots \bar{X}_{m-l}^{a_{m-l}}\bar{X}_{m-l+1}^{b_{1}}\cdots\bar{X}_{m+n-l-q}^{b_{n-q}}\bar{X}_{m+n-l-q+1}^{c_1}\cdots\\ &&\cdot\bar{X}_{m+n-l-q+s}^{c_{s}}\bar{X}_{m+n-l-q+s+1}^{d_{1}}\cdots\bar{X}_{m+n-l-q+s+t'}^{d_{t'}},
\end{array}\] elements of $U({\ggg}_A)$ and $U({\ggg}_{\bbk})$, respectively. Denote by $\bar{1}_\chi$ the image of $1_\chi\in Q_{\chi,{\bbk}}$ in $Q^\eta_{\chi}$. For $k\in{\bbz}_+$, define$$\Lambda_k:=\{(i_1,\cdots,i_k)\mid i_j\in{\bbz}_+,~0\leqslant  i_j\leqslant  p-1\}$$ with $1\leqslant j\leqslant k$.
\begin{lemma}\label{right} The right modules $Q_{\chi,A}$ and $Q_\chi^\eta$ with
 $\eta\in\chi+(\mathfrak{m}_{\bbk}^\bot)_{\bar{0}}$ are free over $U({\ggg}_A,e)$ and $U_\eta({\ggg}_{\bbk},e)$ respectively. More precisely,
\begin{itemize}
\item[(1)] the set $\{X^{\mathbf{a},\mathbf{b},\mathbf{c},\mathbf{d}}\otimes1_\chi\mid(\mathbf{a},\mathbf{b},\mathbf{c},\mathbf{d})\in{\bbz}_+^{m-l}\times\Lambda'_{n-q}\times{\bbz}_+^s\times\Lambda'_{t'}\}$\, is a free basis of the $U({\ggg}_A,e)$-module $Q_{\chi,A}$;
\item[(2)]  the set $\{\bar{X}^{\mathbf{a},\mathbf{b},\mathbf{c},\mathbf{d}}\otimes\bar{1}_\chi\mid(\mathbf{a},\mathbf{b},\mathbf{c},\mathbf{d})\in\Lambda_{m-l}\times\Lambda'_{n-q}\times\Lambda_s\times\Lambda'_{t'}\}$\, is a free basis of the $U_\eta({\ggg}_{\bbk},e)$-module $Q_{\chi}^\eta$.
\end{itemize}
\end{lemma}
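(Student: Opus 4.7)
The plan is to follow Premet's strategy \cite[\S2]{P7} in the Lie-algebra setting, adapted to the super context via a Kazhdan-filtration argument. The core idea is that the leading-term formula \eqref{generators} identifies, in the associated graded, the generators $\Theta_i$ of $U(\ggg_A,e)$ with a distinguished subset of the PBW generators of $Q_{\chi,A}$; the remaining PBW generators (the $X_i$'s) then provide the sought-after free basis.

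For part (1), I would work with the Kazhdan filtration on $Q_{\chi,A}$ and on $U(\ggg_A,e)$. By the $A$-form of Theorem \ref{graded W}, together with the standard PBW description of $Q_{\chi,A}$ from \S\ref{2.2.2}, the associated gradeds $\gr Q_{\chi,A}$ and $\gr U(\ggg_A,e)$ are super-symmetric algebras over $A$ on the respective generating sets. By \eqref{generators}, under the embedding $U(\ggg_A,e)\hookrightarrow Q_{\chi,A}$ from \S\ref{2.2.3}, the symbol of $\Theta_i$ agrees with that of $Y_i\otimes 1_\chi$ for $1\leqslant i\leqslant l+q$, while $\gr\Theta_{l+q+1}$ coincides with the symbol of $v_{(r+1)/2}\otimes 1_\chi$ when $r$ is odd. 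Hence $\gr U(\ggg_A,e)$ sits inside $\gr Q_{\chi,A}$ as the super-symmetric sub-algebra generated by the symbols of $Y_1,\ldots,Y_{l+q'}$. Invoking the super-PBW theorem, the monomials in the complementary generators -- the symbols of the $X_i$'s -- freely span $\gr Q_{\chi,A}$ as a module over this sub-algebra, with basis $\{\gr X^{\mathbf a,\mathbf b,\mathbf c,\mathbf d}\otimes 1_\chi\}$ indexed by $\bbz_+^{m-l}\times\Lambda'_{n-q}\times\bbz_+^s\times\Lambda'_{t'}$. A standard lift from the graded to the filtered level -- valid because the Kazhdan filtration on $Q_\chi$ is exhaustive and non-negative (\S\ref{2.2.3}) -- then promotes this to a basis of $Q_{\chi,A}$ over $U(\ggg_A,e)$, completing part (1).

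For part (2), I would first base-change to $\bbk$, obtaining the analogous free-basis statement for $T(\ggg_\bbk,e)\subseteq U(\ggg_\bbk,e)$, and then pass to the quotient by $J_\eta$. On $Q_\chi^\eta = Q_{\chi,\bbk}/J_\eta Q_{\chi,\bbk}$, the modular $p$-character relations $\bar x^p = \bar x^{[p]} + \eta(\bar x)^p$ truncate each even $X_i$-exponent into $\Lambda = \{0,\ldots,p-1\}$. At the graded level this manifests as $\gr Q_\chi^\eta$ being a truncated super-symmetric algebra, and the same PBW-factorisation argument yields the basis indexed by $\Lambda_{m-l}\times\Lambda'_{n-q}\times\Lambda_s\times\Lambda'_{t'}$.

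The main technical subtlety, which I expect to be the principal obstacle, is the case when $r=\dim\ggg(-1)_\bo$ is odd. Here the extra odd generator $\Theta_{l+q+1}$ has the same graded symbol as $v_{(r+1)/2}$, and satisfies the Clifford-type relation \eqref{fij2}. One must carefully separate the role of $v_{(r+1)/2}$ as a generator of $\gr U(\ggg_A,e)$ (through $\Theta_{l+q+1}$) from its role as an $X$-generator (through $X_{m+n-l-q+s+t'}$), and verify the dimension count at each Kazhdan-filtration level so that the PBW-factorisation goes through with the advertised index set. Once this is confirmed, the lifting argument is routine.
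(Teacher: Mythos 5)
Your overall strategy — pass to the Kazhdan-associated graded, exploit that $\gr Q_{\chi,A}$ is a free $A$-super-polynomial ring while $\gr U(\ggg_A,e)$ is the sub-super-polynomial ring on the symbols of the $\Theta_i$'s, and lift the resulting PBW factorisation along the non-negative exhaustive filtration — is exactly the argument that the paper's citation of \cite[Lemma 2.3]{P7} and \cite[Lemma 4.2]{P4} points to, so in spirit your route is the intended one.

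However, the ``subtlety'' you defer is not a subtlety you can confirm; it is where the proposal (and, I believe, the statement as printed) breaks down when $r=\dim\ggg(-1)_{\bar 1}$ is odd, and your proof cannot go through with the \emph{advertised index set} $\Lambda'_{t'}$ in the $v$-slot. Concretely, the right $U(\ggg_A,e)$-action on $Q_{\chi,A}$ is $q\cdot\theta=\theta(q)$, so
\[
1_\chi\cdot\Theta_{l+q+1}\;=\;\Theta_{l+q+1}(1_\chi)\;=\;v_{\frac{r+1}{2}}\otimes 1_\chi\;=\;X_{m+n-l-q+s+t'}\otimes 1_\chi ,
\]
which is a \emph{nontrivial $U(\ggg_A,e)$-linear relation} between $X^{(\mathbf 0,\mathbf 0,\mathbf 0,\mathbf 0)}\otimes 1_\chi$ and $X^{(\mathbf 0,\mathbf 0,\mathbf 0,\mathbf e_{t'})}\otimes 1_\chi$. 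A dimension count in the reduced setting corroborates this: if part (2) held with $\Lambda'_{t'}$ then $\dim Q_\chi^\eta$ would be $(p^{m-l+s}2^{n-q+t'})\cdot(p^l 2^{q+1})=p^{m+s}2^{n+t'+1}$, whereas the PBW theorem for $U_\eta(\ggg_\bbk)$ modulo $I_{\mmm_\bbk}$ gives $p^{m+s}2^{n+t'}$. So the proposed ``free basis'' is off by a factor of $2$. The resolution is exactly the separation you foresaw: in $\gr Q_{\chi,A}$, the symbol $\gr v_{t'}$ must be assigned to the $U(\ggg_A,e)$-side (as $\gr\Theta_{l+q+1}$), so the complementary $X$-generators should be $v_1,\ldots,v_t$ only, i.e.\ the $v$-index should range over $\Lambda'_t$ with $t=\lfloor r/2\rfloor$. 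With that index set the symbols of $\Theta_1,\ldots,\Theta_{l+q'}$ together with the symbols of the $X_i$'s (now excluding $v_{t'}$) are \emph{exactly} the PBW generators of $\gr Q_{\chi,A}$ with no overlap, the super-polynomial factorisation is genuine, and your graded-to-filtered lift works as planned. In short: your plan is correct, but the verification step you postpone does not succeed with $\Lambda'_{t'}$, and you should carry out the dimension check — it forces $\Lambda'_t$ in the odd case (and coincides with $\Lambda'_{t'}$ in the even case, where $t=t'$).
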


\begin{proof}
The proof is the same as the finite $W$-algebra case, thus will be omitted here (see \cite[Lemma 4.2]{P4} and \cite[Lemma 2.3]{P7}).
\end{proof}

\subsection{Transition for finite $W$-superalgebras}

Let $\rho_{\bbk}$ denote the representation of $U({\ggg}_{\bbk})$ in $\text{End}_{\bbk}Q_{\chi,{\bbk}}$. Given a subspace $V$ in ${\ggg}_{\bbk}$ we denote by $Z_p(V)$ the subalgebra of $p$-center $Z_p({\ggg}_{\bbk})$ generated by all $\bar x^p-\bar x^{[p]}$ with $\bar x\in V_{\bar{0}}$. Clearly, $Z_p(V)$ is isomorphic to an (ordinary) polynomial algebra in $\text{dim}\,V_{\bar{0}}$ variables. We will denote $Z_p({\ggg}_{\bbk})$ by $Z_p$ for short.

Let $\mathfrak{a}_{\bbk}$ be the ${\bbk}$-span of $\bar{X}_1,\cdots,\bar{X}_{m+n-l-q+s+t'}$ in ${\ggg}_{\bbk}$ and put $\widetilde{{\ppp}}_{\bbk}=\mathfrak{a}_{\bbk}\oplus{\ggg}_{\bbk}^e$ (resp. $\widetilde{{\ppp}}_{\bbk}=\mathfrak{a}_{\bbk}\oplus{\ggg}_{\bbk}^e\oplus{\bbk}v_{\frac{r+1}{2}}$) when $d_1$ is even (resp. odd), then $\ggg_\bbk=\mmm_\bbk\oplus \widetilde\ppp_\bbk$.

By our assumptions on $x_{l+1},\cdots,x_m, y_{q+1},\cdots,y_n$ and the inclusion ${\ggg}_{\bbk}^f\subseteq\bigoplus\limits_{i\leqslant 0}{\ggg}_{\bbk}(i)$, we have that
$$\mathfrak{a}_{\bbk}=\{\bar x\in\widetilde{{\ppp}}_{\bbk}\mid(\bar x,{\ggg}_{\bbk}^f)=0\}\qquad(\text{resp.}~ \mathfrak{a}_{\bbk}\oplus{\bbk}v_{\frac{r+1}{2}}=\{\bar x\in\widetilde{{\ppp}}_{\bbk}\mid(\bar x,{\ggg}_{\bbk}^f)=0\})$$ when $d_1$ is even (resp. odd).

\begin{theorem}\label{keyisotheorem}
For any even nilpotent element $e\in({\ggg}_{\bbk})_{\bar{0}}$, we have
\begin{itemize}
\item[(1)] $\rho_{\bbk}(Z_p)\cong Z_p(\widetilde{{\ppp}}_{\bbk})$ as ${\bbk}$-algebras.
\item[(2)] $U({\ggg}_{\bbk},e)$ is a free $\rho_{\bbk}(Z_p)$-module of rank $p^l2^{q'}$. Especially, $U({\ggg}_{\bbk},e)$ is generated by its subalgebras $T({\ggg}_{\bbk},e)$ and $\rho_{\bbk}(Z_p)$.
\item[(3)]  Furthermore, $U({\ggg}_{\bbk},e)\cong T({\ggg}_{\bbk},e)\otimes_{\bbk}Z_p(\mathfrak{a}_{\bbk})$ as ${\bbk}$-algebras.
\end{itemize}
\end{theorem}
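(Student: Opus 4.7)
The plan is to exploit the vector space decomposition $\ggg_\bbk=\mmm_\bbk\oplus\widetilde{\ppp}_\bbk$, under which the PBW theorem identifies $Q_{\chi,\bbk}=U(\ggg_\bbk)\otimes_{U(\mmm_\bbk)}\bbk_\chi$ with the free rank-one $U(\widetilde{\ppp}_\bbk)$-module $U(\widetilde{\ppp}_\bbk)\cdot 1_\chi$. With this identification in hand, the three parts will be attacked in order, each resting on the previous.

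For part~(1), the plan is: given $\bar{x}\in(\ggg_\bbk)_\bz$ decomposed as $\bar{x}=\bar{x}_{\mmm}+\bar{x}_{\widetilde{\ppp}}$, compute the action of $\bar{x}^p-\bar{x}^{[p]}\in Z_p$ on $Q_{\chi,\bbk}\cong U(\widetilde{\ppp}_\bbk)\cdot 1_\chi$ by combining Jacobson's formula for $\bar{x}^{[p]}$ with a PBW expansion of $\bar{x}^p$ in $U(\ggg_\bbk)$. The $p$-nilpotency of $\mmm_\bbk$, together with the vanishing of $\chi$ on $(\mmm_\bbk)_\bo$ and on the $p$-closure of $[\mmm_\bbk,\mmm_\bbk]$, should force
\[\rho_\bbk(\bar{x}^p-\bar{x}^{[p]})\cdot(u\cdot 1_\chi)=\bigl((\bar{x}_{\widetilde{\ppp}}^p-\bar{x}_{\widetilde{\ppp}}^{[p]})u+c_{\bar{x}}\,u\bigr)\cdot 1_\chi\]
for all $u\in U(\widetilde{\ppp}_\bbk)$, with $c_{\bar{x}}\in\bbk$ depending only on $\bar{x}_{\mmm}$. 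This gives a well-defined surjection $\rho_\bbk(Z_p)\twoheadrightarrow Z_p(\widetilde{\ppp}_\bbk)$, and comparing polynomial ranks (both sides are polynomial algebras in $\dim(\widetilde{\ppp}_\bbk)_\bz$ variables) upgrades it to an isomorphism.

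For part~(2), fix $\eta\in\chi+(\mmm_\bbk^\perp)_\bz$. Under the identification of part~(1), the maximal ideal of $\rho_\bbk(Z_p)$ corresponding to $\eta$ is exactly $J_\eta\cap\rho_\bbk(Z_p)$, so the specialization satisfies $U(\ggg_\bbk,e)\otimes_{\rho_\bbk(Z_p)}\bbk_\eta\cong U_\eta(\ggg_\bbk,e)$, whose $\bbk$-dimension equals $p^l 2^{q'}$ by Lemma~\ref{right}(2) together with~\eqref{dimMn}. The candidate free basis is the finite subset $\{\bar{\Theta}^{\mathbf{a},\mathbf{b}}\mid(\mathbf{a},\mathbf{b})\in\Lambda_l\times\Lambda'_{q'}\}$ of $T(\ggg_\bbk,e)$: using the defining relations $^p F_{ij}$ of Theorem~\ref{transition} together with the $p$-th power relations induced by the $Z_p$-specialization, one shows that it projects to a $\bbk$-basis of each $U_\eta(\ggg_\bbk,e)$. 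Since $\rho_\bbk(Z_p)$ is a polynomial algebra and the fibers are of constant dimension $p^l 2^{q'}$, this set is a free $\rho_\bbk(Z_p)$-basis of $U(\ggg_\bbk,e)$, proving both assertions of~(2).

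For part~(3), the even-part splitting $(\widetilde{\ppp}_\bbk)_\bz=(\aaa_\bbk)_\bz\oplus(\ggg_\bbk^e)_\bz$ yields $\rho_\bbk(Z_p)\cong Z_p(\aaa_\bbk)\otimes_\bbk Z_p(\ggg_\bbk^e)$. The crucial step is the inclusion $Z_p(\ggg_\bbk^e)\subseteq T(\ggg_\bbk,e)$: since the top Kazhdan symbol of $\bar{\Theta}_i$ (for $1\le i\le l$) is $\bar{x}_i$, an ascending induction on Kazhdan degree using the defining relations of Theorem~\ref{transition} expresses the image in $U(\ggg_\bbk,e)$ of $\bar{x}_i^p-\bar{x}_i^{[p]}\in Z_p$ as a $\bbk$-linear combination of PBW monomials of $T(\ggg_\bbk,e)$. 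Because $Z_p(\aaa_\bbk)$ lies in the centre of $U(\ggg_\bbk)$, the multiplication map $\Phi\colon T(\ggg_\bbk,e)\otimes_\bbk Z_p(\aaa_\bbk)\to U(\ggg_\bbk,e)$ is a well-defined $\bbk$-algebra homomorphism; surjectivity is immediate from part~(2) combined with $Z_p(\ggg_\bbk^e)\subseteq T(\ggg_\bbk,e)$, while injectivity follows by matching $Z_p(\aaa_\bbk)$-ranks against the free-module rank $p^l 2^{q'}$ of part~(2). The principal technical obstacle is the super-Jacobson scalar-shift calculation in part~(1); an additional subtlety arises in part~(3) in the $d_1$-odd case, where the Kazhdan-filtration induction interacts with the distinguished odd generator $\bar{\Theta}_{l+q+1}$ via relation~\eqref{fij2}.
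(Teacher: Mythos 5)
Your overall architecture mirrors the paper's — use $\ggg_\bbk=\mmm_\bbk\oplus\widetilde\ppp_\bbk$, identify $Q_{\chi,\bbk}$ with $U(\widetilde\ppp_\bbk)$ via PBW, and push information between $Z_p(\widetilde\ppp_\bbk)$ and the $\bar\Theta$-monomials — and your part~(1) is essentially the paper's argument. (The additivity of the Artin--Schreier map $\bar x\mapsto\bar x^p-\bar x^{[p]}$ gives your scalar-shift formula directly; injectivity of $\rho_\bbk$ on $Z_p(\widetilde\ppp_\bbk)$ follows from the paper's simpler observation that under $Q_{\chi,\bbk}\cong U(\widetilde\ppp_\bbk)$ the $p$-centre acts faithfully by multiplication, which is cleaner than rank-comparison.) However, parts~(2) and~(3) each contain a gap.

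In part~(2), the argument ``fibers of constant dimension $p^l2^{q'}$ $\Rightarrow$ free basis'' presupposes two things you have not established: that $U(\ggg_\bbk,e)\otimes_{\rho_\bbk(Z_p)}\bbk_\eta\cong U_\eta(\ggg_\bbk,e)$ (this is Lemma~\ref{isoofreducedW-alg}, which the paper proves only after Theorem~\ref{keyisotheorem}(2), precisely because it needs the upper bound $\dim\widetilde U_\eta\leqslant p^l2^{q'}$ supplied by~(2)), and that the $\bar\Theta$-monomials \emph{span} $U(\ggg_\bbk,e)$ over $\rho_\bbk(Z_p)$. Without spanning (or at least finite generation), constant fiber dimension does not force freeness: $M=R\oplus\mathrm{Frac}(R)$ over $R=\bbk[t]$ has one-dimensional fibers generated by $(1,0)$, but $(1,0)$ is not a basis. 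The missing spanning step is exactly what the paper's steps (2-i)--(2-ii) supply: a Kazhdan-filtration induction using \eqref{dede}, \eqref{grad} and Lemma~\ref{right}(1) shows $Q_{\chi,\bbk}$ is finitely generated over $Z_p(\widetilde\ppp_\bbk)$, followed by a delicate specialization argument eliminating all monomials with nonzero $\bar X$-part. You would need to reinstate this, or at minimum observe that $U(\ggg_\bbk,e)$ is a $Z_p(\widetilde\ppp_\bbk)$-submodule of the finite free module $Q_{\chi,\bbk}$ (Noetherianity) \emph{and} give an independent bound on the fiber dimension.

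In part~(3), the ``crucial step'' $Z_p(\ggg_\bbk^e)\subseteq T(\ggg_\bbk,e)$ is asserted but not proved, and the evidence in the paper actually points the other way. Equation~\eqref{dede} gives
\[
\bar\Theta_k^p(1_\chi)-\Bigl(\bar x_k^p+\sum_{|(\mathbf a,\mathbf 0,\mathbf c,\mathbf 0)|_e=m_k+2}\mu^k_{\mathbf a,\mathbf 0,\mathbf c,\mathbf 0}\,\bar x^{p\mathbf a}\bar u^{p\mathbf c}\Bigr)\otimes 1_\chi\in(Q_{\chi,\bbk})_{p(m_k+2)-1},
\]
and the condition on the $\lambda$'s (hence on the $\mu$'s) stated after~\eqref{generators} forces those extra top-degree terms $\bar x^{p\mathbf a}\bar u^{p\mathbf c}$ to involve non-$\ggg^e$ variables from $\aaa_\bbk$. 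Consequently $\bar\Theta_k^p-\rho_\bbk(\bar x_k^p-\bar x_k^{[p]})$ has a top Kazhdan symbol lying outside $\gr T(\ggg_\bbk,e)\cong S(\ggg_\bbk^e)$ (resp.\ $S(\ggg_\bbk^e)\otimes\bbk[\Theta]$), so in general $\rho_\bbk(\bar x_k^p-\bar x_k^{[p]})\notin T(\ggg_\bbk,e)$. The paper avoids this pitfall precisely by \emph{not} claiming $Z_p(\ggg_\bbk^e)\subseteq T(\ggg_\bbk,e)$: in step (3-iii) it proves only the weaker statement $U(\ggg_\bbk,e)=T(\ggg_\bbk,e)\cdot Z_p(\aaa_\bbk)$, via a two-parameter induction on the pair $(n(u),N(u))$ which replaces the $u(\mathbf a,\mathbf b,\mathbf c,\mathbf d)$'s by $v(\mathbf a,\mathbf b,\mathbf c,\mathbf d)$'s — swapping the $\ggg^e$-type $p$-centre factors for $\bar\Theta_i^{pa_i}$ while explicitly \emph{retaining} the $Z_p(\aaa_\bbk)$-factors. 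That retained $Z_p(\aaa_\bbk)$-factor is exactly what your inclusion would (incorrectly) dispense with. You should replace your step by the paper's $(n(u),N(u))$-induction, or give an independent proof that the correction terms cancel, which I do not see.
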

This theorem is a generalization of the finite $W$-algebra case in \cite[Theorem 2.1]{P7}. Compared with finite $W$-algebras, the construction of finite $W$-superalgebras is much more complicated. In particular, some new phenomenon occurs when $d_1$ is odd. Now we will prove the theorem in detail.

\begin{proof}  (1) It follows from ${\ggg}_{\bbk}=\mathfrak{m}_{\bbk}\oplus\widetilde{{\ppp}}_{\bbk}$ that $Z_p({\ggg}_{\bbk})\cong Z_p(\mathfrak{m}_{\bbk})\otimes_{\bbk}Z_p(\widetilde{{\ppp}}_{\bbk})$ as ${\bbk}$-algebras. As $Z_p(\mathfrak{m}_{\bbk})\cap \text{Ker}\rho_{\bbk}$ is an ideal of codimension $1$ in $Z_p(\mathfrak{m}_{\bbk})$, one can conclude that $\rho_{\bbk}(Z_p)=\rho_{\bbk}(Z_p(\widetilde{{\ppp}}_{\bbk}))$. As the monomials $\bar{x}^\mathbf{a}\bar{y}^\mathbf{b}\bar{u}^\mathbf{c}\bar{v}^\mathbf{d}\otimes1_\chi$ with $(\mathbf{a},\mathbf{b},\mathbf{c},\mathbf{d})\in{\bbz}_+^m\times\Lambda'_n\times{\bbz}_+^s\times\Lambda'_t$ (recall that $t=\lfloor\frac{\text{dim}\,{\ggg}_{\bbk}(-1)_{\bar1}}{2}\rfloor$) form a basis of $Q_{\chi,{\bbk}}$, and $Z_p(\widetilde{{\ppp}}_{\bbk})$ is a polynomial algebra in $\{\bar{x}_i^p-\bar{x}_i^{[p]}\mid 1\leqslant  i\leqslant  m\}\cup\{\bar{u}_j^p-\bar{u}_j^{[p]}\mid 1\leqslant  j\leqslant  s\}$, we have $Z_p(\widetilde{{\ppp}}_{\bbk})\cap\text{Ker}\rho_{\bbk}=\{0\}$. It follows that $\rho_{\bbk}(Z_p)\cong Z_p(\widetilde{{\ppp}}_{\bbk})$ as ${\bbk}$-algebras. This completes the proof of statement (1).

(2) As the proof of the remaining statements are the same for both cases corresponding to $d_1$ being even and odd respectively, it is sufficient for us to make arguments under the assumption that $d_1$ is odd.

First recall that $S((\widetilde{{\ppp}}_{\bbk})_{\bar0})\cong{\bbk}[\chi+(\mathfrak{m}_{\bbk}^\bot)_{\bar{0}}]$ by the discussion of \cite[\S2.3]{ZS2}, hence $Z_p(\widetilde{{\ppp}}_{\bbk})\cong {\bbk}[(\chi+(\mathfrak{m}_{\bbk}^\bot)_{\bar{0}})^{(1)}]$, where $(\chi+(\mathfrak{m}_{\bbk}^\bot)_{\bar{0}})^{(1)}\subseteq({\ggg}_{\bbk}^*)_{\bar0}^{(1)}$ is the Frobenius twist of $\chi+(\mathfrak{m}_{\bbk}^\bot)_{\bar{0}}$. Then we will manage to construct a set of free basis of $U(\ggg_\bbk,e)$ as a $\rho_{\bbk}(Z_p)$-module, via the reduced $W$-superalgebra $U_\eta(\ggg_\bbk,e)$ with $\eta\in \chi+(\mathfrak{m}_{\bbk}^\bot)_{\bar{0}}$. Now we proceed by steps.

(2-i) Let us begin with understanding the $Z_p(\widetilde{{\ppp}}_{\bbk})$-module $Q_{\chi,{\bbk}}$.

As an immediate consequence of \cite[Theorem 4.5(1)]{ZS2},  we have
\begin{equation}\label{dede}
\bar{\Theta}_k^p(1_\chi)-(\bar{x}_k^p+\sum\limits_{|(\mathbf{a},\mathbf{0},\mathbf{c},\mathbf{0})|_e=m_k+2}\mu^k_{\mathbf{a},\mathbf{0},\mathbf{c},\mathbf{0}}\bar{x}^{p\mathbf{a}}
\bar{u}^{p\mathbf{c}})\otimes1_\chi\in(Q_{\chi,{\bbk}})_{p(m_k+2)-1}
\end{equation}for $1\leqslant  k\leqslant  l$, where $\mu^k_{\mathbf{a},\mathbf{0},\mathbf{c},\mathbf{0}}\in\mathbb{F}_p$. Since $\bar x^{[p]}\in{\ggg}_{\bbk}(pi)$ whenever $\bar x\in{\ggg}_{\bbk}(i)$ for all $i\in{\bbz}$ (see the proof of \cite[Lemma 2.18]{ZS2}),
discussing in the graded algebra $\text{gr}(U({\ggg}_{\bbk}))$ under the Kazhdan filtration we can obtain that
\begin{equation}\label{grad}
\text{gr}(\bar{x}_i^p-\bar{x}_i^{[p]})=\text{gr}(\bar{x}_i)^p,~~~~~~~~\text{gr}(\bar{u}_j^p-\bar{u}_j^{[p]})=\text{gr}(\bar{u}_j)^p~~~~(1\leqslant  i\leqslant m;~~ 1\leqslant  j\leqslant  s).
\end{equation}

On the other hand, Lemma~\ref{right}(1) implies that the vectors $\bar{X}^{(\mathbf{a},\mathbf{b},\mathbf{c},\mathbf{d})}\otimes1_\chi$ with\[\begin{array}{ccl}
(\mathbf{a},\mathbf{b},\mathbf{c},\mathbf{d})
&=&(a_1,\cdots,a_{m-l};b_1,\cdots,b_{n-q};c_1,\cdots,c_{s};d_1,\cdots,d_{t'})\\
&\in &{\bbz}_+^{m-l}\times\Lambda'_{n-q}\times{\bbz}_+^{s}\times\Lambda'_{t'}
\end{array}\]
form a free basis of the right $T({\ggg}_{\bbk},e)$-module $Q_{\chi,{\bbk}}$. As $Q_{\chi,{\bbk}}$ is a Kazhdan-filtrated $T({\ggg}_{\bbk},e)$-module, straightforward induction on filtration degree based on \eqref{dede} and \eqref{grad} shows that $Q_{\chi,{\bbk}}$ is generated as a $Z_p(\widetilde{{\ppp}}_{\bbk})$-module by the set
$$\{\bar{X}^{(\mathbf{a},\mathbf{b},\mathbf{c},\mathbf{d})}\bar{\Theta}^{(\mathbf{i},\mathbf{j})}\otimes1_\chi\mid (\mathbf{a},\mathbf{b},\mathbf{c},\mathbf{d},\mathbf{i},\mathbf{j})\in\Lambda_{m-l}\times\Lambda'_{n-q}\times\Lambda_{s}\times\Lambda'_{t'}\times\Lambda_{l}\times\Lambda'_{q+1}\}.$$

(2-ii) Let $h$ be an arbitrary element of $U({\ggg}_{\bbk},e)$. Then by the above discussion we can assume that
\[\begin{array}{cccl}
h(1_\chi)&=&\sum f_{\mathbf{a},\mathbf{b},\mathbf{c},\mathbf{d},\mathbf{i},\mathbf{j}}&
\bar{X}_1^{a_1}\cdots\bar{X}_{m-l}^{a_{m-l}}\bar{X}_{m-l+1}^{b_1}\cdots\bar{X}_{m+n-l-q}^{b_{n-q}}
\bar{X}_{m+n-l-q+1}^{c_1}\cdots\\
&&&\bar{X}_{m+n-l-q+s}^{c_{s}}\bar{X}_{m+n-l-q+s+1}^{d_1}\cdots
\bar{X}_{m+n-l-q+s+t'}^{d_{t'}}\cdot\bar{\Theta}_1^{i_1}\cdots\bar{\Theta}_l^{i_l}\\
&&&\bar{\Theta}_{l+1}^{j_1}\cdots\bar{\Theta}_{l+q}^{j_q}
\bar{\Theta}_{l+q+1}^{j_{q+1}}(1_\chi),
\end{array}\]
where
$f_{\mathbf{a},\mathbf{b},\mathbf{c},\mathbf{d},\mathbf{i},\mathbf{j}}\in Z_p(\widetilde{{\ppp}}_{\bbk})$ with $(\mathbf{a},\mathbf{b},\mathbf{c},\mathbf{d},\mathbf{i},\mathbf{j})$ in the set $\Lambda_{m-l}\times\Lambda'_{n-q}\times\Lambda_{s}\times\Lambda'_{t'}\times\Lambda_{l}\times\Lambda'_{q+1}$. For any $\eta\in\chi+(\mathfrak{m}_{\bbk}^\bot)_{\bar{0}}$, the image of $f_{\mathbf{a},\mathbf{b},\mathbf{c},\mathbf{d},\mathbf{i},\mathbf{j}}$ in $U_\eta({\ggg}_{\bbk})$ is a scalar in ${\bbk}$, which shall be denoted by $\eta(\mathbf{a},\mathbf{b},\mathbf{c},\mathbf{d},\mathbf{i},\mathbf{j})$.

Suppose $f_{\mathbf{a},\mathbf{b},\mathbf{c},\mathbf{d},\mathbf{i},\mathbf{j}}\neq0$ for a nonzero $(\mathbf{a},\mathbf{b},\mathbf{c},\mathbf{d})\in\Lambda_{m-l}\times\Lambda'_{n-q}\times\Lambda_{s}\times\Lambda'_{t'}$ and some $(\mathbf{i},\mathbf{j})
\in\Lambda_{l}\times\Lambda'_{q+1}$. Then there exists $\eta\in\chi+(\mathfrak{m}_{\bbk}^\bot)_{\bar{0}}$ such that $\eta(\mathbf{a},\mathbf{b},\mathbf{c},\mathbf{d},\mathbf{i},\mathbf{j})\neq0$. Let $h_\eta$ be the image of $h\in U({\ggg}_{\bbk},e)$ in $U_\eta({\ggg}_{\bbk},e)=(\text{End}_{{\ggg}_{\bbk}}Q_\chi^\eta)^{\text{op}}$. \cite[Remark 3.9]{ZS2} shows that there exist even elements $\theta_1,\cdots,\theta_l\in U_\eta({\ggg}_{\bbk},e)_{\bar0}$ and odd elements $ \theta_{l+1},\cdots,\theta_{l+q+1}\in U_\eta({\ggg}_{\bbk},e)_{\bar1}$ in the same sense as in  \cite[Corollary 3.6]{ZS2}, such that
 the monomials $$\theta_1^{a_1}\cdots\theta_{l}^{a_l}\theta_{l+1}^{b_1}\cdots\theta_{l+q+1}^{b_{q+1}}$$ with $0\leqslant  a_k\leqslant  p-1$ for $1\leqslant k\leqslant l$ and $0\leqslant b_k\leqslant 1$ for $1\leqslant k\leqslant q+1$ form a ${\bbk}$-basis of $U_\eta({\ggg}_{\bbk},e)$. Therefore, $h_\eta(\bar{1}_\chi)$ is a ${\bbk}$-linear combination of $\theta_1^{i_1}\cdots\theta_l^{i_l}\theta_{l+1}^{j_1}\cdots\theta_{l+q}^{j_{q}}\theta_{l+q+1}^{j_{q+1}}(\bar{1}_\chi)$ with$$(i_1,\cdots,i_l;j_1,\cdots,j_q;j_{q+1})\in\Lambda_l\times\Lambda'_q\times\Lambda'_1.$$ By Lemma~\ref{right}(2), the set $$\{\bar{X}^{(\mathbf{a},\mathbf{b},\mathbf{c},\mathbf{d})}\otimes\bar{1}_\chi\mid(\mathbf{a},\mathbf{b},\mathbf{c},\mathbf{d})\in\Lambda_{m-l}\times\Lambda'_{n-q}\times\Lambda_s\times\Lambda'_{t'}\}$$ is a free basis of the right $U_\eta({\ggg}_{\bbk},e)$-module $Q_{\chi}^\eta$. Since $\eta(\mathbf{a},\mathbf{b},\mathbf{c},\mathbf{d},\mathbf{i},\mathbf{j})\neq0$ and $\theta_1^{i_1}\cdots\theta_l^{i_l}\theta_{l+1}^{j_1}\cdots\theta_{l+q}^{j_q}\theta_{l+q+1}^{j_{q+1}}$ is the image of $\bar{\Theta}_1^{i_1}\cdots\bar{\Theta}_l^{i_l}\bar{\Theta}_{l+1}^{j_1}\cdots\bar{\Theta}_{l+q}^{j_q}\bar{\Theta}_{l+q+1}^{j_{q+1}}$ in $U_\eta({\ggg}_{\bbk},e)$, it is now evident that $h_\eta(\bar{1}_\chi)$ cannot be a ${\bbk}$-linear combination of $\theta_1^{i_1}\cdots\theta_l^{i_l}\theta_{l+1}^{j_1}\cdots$\\$\theta_{l+q}^{j_{q}}\theta_{l+q+1}^{j_{q+1}}(\bar{1}_\chi)$ with $$(i_1,\cdots,i_l;j_1,\cdots,j_q;j_{q+1})\in\Lambda_l\times\Lambda'_q\times\Lambda'_1.$$ This contradiction shows that $f_{\mathbf{a},\mathbf{b},\mathbf{c},\mathbf{d},\mathbf{i},\mathbf{j}}=0$ unless $(\mathbf{a},\mathbf{b},\mathbf{c},\mathbf{d})=\mathbf{0}$. As a consequence, \begin{align}
 \{\bar{\Theta}_1^{i_1}\cdots\bar{\Theta}_l^{i_l}\bar{\Theta}_{l+1}^{j_1}\cdots\bar{\Theta}_{l+q}^{j_q}\bar{\Theta}_{l+q+1}^{j_{q+1}}\mid
(\mathbf{i},\mathbf{j})\in\Lambda_{l}\times\Lambda'_{q+1}\}
\end{align}
generates $U({\ggg}_{\bbk},e)$ as a $Z_p(\widetilde{{\ppp}}_{\bbk})$-module. Specialising at a suitable $\eta\in\chi+(\mathfrak{m}_{\bbk}^\bot)_{\bar{0}}$ and applying
 \cite[Remark 3.9]{ZS2} we further deduce that the set
 \begin{align}\label{basis}
 \{\bar{\Theta}_1^{i_1}\cdots\bar{\Theta}_l^{i_l}\bar{\Theta}_{l+1}^{j_1}\cdots\bar{\Theta}_{l+q}^{j_q}\bar{\Theta}_{l+q+1}^{j_{q+1}}\mid
(\mathbf{i},\mathbf{j})\in\Lambda_{l}\times\Lambda'_{q+1}\}
\end{align}
 is a free basis of the $Z_p(\widetilde{{\ppp}}_{\bbk})$-module $U({\ggg}_{\bbk},e)$. Then $U({\ggg}_{\bbk},e)$ is a free $\rho_{\bbk}(Z_p)$-module of rank $p^l2^{q+1}$. Note that the elements of (\ref{basis}) are in the $\bbk$-algebra $T(\ggg_\bbk,e)$, then the second part of statement (2) follows.  We complete the proof of statement (2).

 (3)  We first claim that
 \begin{align}\label{statement3}
  u\in T({\ggg}_{\bbk},e)\cdot Z_p(\mathfrak{a}_{\bbk})~\text{for each}~u\in U({\ggg}_{\bbk},e).
  \end{align}
   We proceed the proof of Claim (\ref{statement3}) by steps, starting with some necessary preparations.

 (3-i) Note that every ${\ggg}_{\bbk}$-endomorphism of $Q_{\chi,{\bbk}}$ is uniquely determined by its value at $1_\chi$. For a nonzero $u\in U({\ggg}_{\bbk},e)$ with highest Kazhdan degree $n(u)$, we can write $$u(1_\chi)=\sum\limits_{|(\mathbf{a},\mathbf{b},\mathbf{c},\mathbf{d})|_e\leqslant  n(u)}\lambda_{\mathbf{a},\mathbf{b},\mathbf{c},\mathbf{d}}\bar{x}^\mathbf{a}\bar{y}^\mathbf{b}\bar{u}^\mathbf{c}\bar{v}^\mathbf{d}\otimes1_\chi,$$ where $\lambda_{\mathbf{a},\mathbf{b},\mathbf{c},\mathbf{d}}\neq0$ for at least one $(\mathbf{a},\mathbf{b},\mathbf{c},\mathbf{d})$ with $|(\mathbf{a},\mathbf{b},\mathbf{c},\mathbf{d})|_e=n(u)$. For $k\in{\bbz}_+$ put $$\Lambda^k(u):=\{(\mathbf{a},\mathbf{b},\mathbf{c},\mathbf{d})\in{\bbz}_+^m\times\Lambda'_n\times{\bbz}_+^s\times\Lambda'_t\mid
\lambda_{\mathbf{a},\mathbf{b},\mathbf{c},\mathbf{d}}\neq0~\&~|(\mathbf{a},\mathbf{b},\mathbf{c},\mathbf{d})|_e=k\},$$
and denote by $\Lambda^{\text{max}}(u)$ the set of all $
(\mathbf{a},\mathbf{b},\mathbf{c},\mathbf{d})\in\Lambda^{n(u)}(u)$ for which the quantity $n(u)-|\mathbf{a}|-|\mathbf{b}|-|\mathbf{c}|-|\mathbf{d}|$ assumes its maximum value. This maximum value will be denoted by $N(u)$. For each $(\mathbf{a},\mathbf{b},\mathbf{c},\mathbf{d})\in\Lambda^{\text{max}}$, let $\bar x_i\in{\ggg}_{\bbk}(k_i)_{\bar0},~\bar y_j\in{\ggg}_{\bbk}(k'_j)_{\bar1}$ for $1\leqslant i\leqslant m$ and $1\leqslant j\leqslant n$ with $k_i, k_j'\in{\bbz}_+$, then we have
\[
\begin{array}{cl}
&|(\mathbf{a},\mathbf{b},\mathbf{c},\mathbf{d})|_e-|\mathbf{a}|-|\mathbf{b}|-|\mathbf{c}|-|\mathbf{d}|\\
=&\sum\limits_{i=1}^{m}(k_i+2)a_i
+\sum\limits_{i=1}^{n}(k'_i+2)b_i+\sum\limits_{i=1}^{s}c_i+\sum\limits_{i=1}^{t}d_i
-|\mathbf{a}|-|\mathbf{b}|-|\mathbf{c}|-|\mathbf{d}|\geqslant 0.
\end{array}\]
Consequently, $n(u),~N(u)\in{\bbz}_+$ and $n(u)\geqslant  N(u)$.


(3-ii) The previous arguments in step (2) along with \cite[Theorem 4.5(1)]{ZS2} show that
\[
\begin{array}{ll}
\Lambda^{\text{max}}(\bar{\Theta}_i)=\{(\mathbf{e}_i,\mathbf{0},\mathbf{0},\mathbf{0})\}&\text{for}~1\leqslant  i\leqslant l;\\
\Lambda^{\text{max}}(\rho_{\bbk}(\bar{x}_i^p-\bar{x}_i^{[p]}))=\{(p\mathbf{e}_i,\mathbf{0},\mathbf{0},\mathbf{0})\}&\text{for}~1\leqslant  i\leqslant m;\\
\Lambda^{\text{max}}(\rho_{\bbk}(\bar{u}_j^p-\bar{u}_j^{[p]}))=\{(\mathbf{0},\mathbf{0},p\mathbf{e}_j,\mathbf{0})\}& \text{for}~1\leqslant  j\leqslant  s;\\
\Lambda^{\text{max}}(\bar{\Theta}_k)=\{(\mathbf{0},\mathbf{e}_{k-l},\mathbf{0},\mathbf{0})\}& \text{for}~l+1\leqslant  k\leqslant  l+q;\\
\Lambda^{\text{max}}(\bar{\Theta}_{l+q+1})=\{(\mathbf{0},\mathbf{0},\mathbf{0},\mathbf{e}_{t})\}.&
\end{array}\]
Here $\mathbf{e}_i=(\delta_{i1},\delta_{i2},\cdots,\delta_{ij})$ is a tuple with $j$ entries for $j\in{\bbz}_+$, and $\delta_{ik}$ is the Kronecker function for $k=1,\cdots,j$.

Since $Q_{\chi,{\bbk}}$ is a Kazhdan filtrated $U({\ggg}_{\bbk})$-module, this implies that
\[
\begin{array}{ll}
&\Lambda^{\text{max}}(\prod\limits_{i=1}^m\rho_{\bbk}(\bar{x}_i^p-\bar{x}_i^{[p]})^{a_i}\cdot
\prod\limits_{i=1}^s\rho_{\bbk}(\bar{u}_i^p-\bar{u}_i^{[p]})^{b_i}\cdot\bar{\Theta}_1^{c_1}\cdots\bar{\Theta}_l^{c_l}\bar{\Theta}_{l+1}^{d_1}\cdots\bar{\Theta}_{l+q}^{d_q}\bar{\Theta}_{l+q+1}^{d_{q+1}})\\
=&\{(\sum\limits_{i=1}^{m}pa_i\mathbf{e}_i+\sum\limits_{j=1}^{l}c_j\mathbf{e}_j,\sum\limits_{i=1}^{q}d_i\mathbf{e}_i,\sum\limits_{i=1}^{s}pb_i\mathbf{e}_i,d_{q+1}\mathbf{e}_t)\}
\end{array}\]
for all $(a_1,\cdots,a_m;b_1,\cdots,b_s;c_1,\cdots,c_l;d_1,\cdots,d_{q+1})\in{\bbz}_+^m\times
{\bbz}_+^s\times  \Lambda_l\times\Lambda'_{q+1}$. 

Thanks to statement (2),  $U({\ggg}_{\bbk},e)$ is generated as a $Z_p(\widetilde{{\ppp}}_{\bbk})$-module by the set $$\{\bar{\Theta}_1^{i_1}\cdots\bar{\Theta}_l^{i_l}\bar{\Theta}_{l+1}^{j_1}\cdots\bar{\Theta}_{l+q}^{j_q}\bar{\Theta}_{l+q+1}^{j_{q+1}}\mid
(\mathbf{i},\mathbf{j})\in\Lambda_{l}\times\Lambda'_{q+1}\}.$$ It follows that for every $u\in U({\ggg}_{\bbk},e)$ with $(n(u),N(u))=(d,d')$ there exists a ${\bbk}$-linear combination $u'$ of the endomorphisms
\begin{equation*}
u(\mathbf{a},\mathbf{b},\mathbf{c},\mathbf{d}):=\prod\limits_{i=1}^m\rho_{\bbk}(\bar{x}_i^p-\bar{x}_i^{[p]})^{a_i}\cdot
\prod\limits_{i=1}^s\rho_{\bbk}(\bar{u}_i^p-\bar{u}_i^{[p]})^{b_i}\cdot\bar{\Theta}_1^{c_1}\cdots\bar{\Theta}_l^{c_l}\bar{\Theta}_{l+1}^{d_1}\cdots\bar{\Theta}_{l+q}^{d_q}\bar{\Theta}_{l+q+1}^{d_{q+1}}
\end{equation*}
for all $(a_1,\cdots,a_m;b_1,\cdots,b_s;c_1,\cdots,c_l;d_1,\cdots,d_{q+1})\in{\bbz}_+^m\times
{\bbz}_+^s\times\Lambda_l\times\Lambda'_{q+1}$ with $\Lambda^{\text{max}}(u(\mathbf{a},\mathbf{b},\mathbf{c},
\mathbf{d}))\subseteq\Lambda^{\text{max}}(u)$ such that either $n(u-u')<d$, or $n(u-u')=d$ and $N(u-u')<d'$.

(3-iii) Let $\Omega:=\{(n_1,n_2)\in \bbz_+^2\mid n_1\geq n_2\}$ be a totally-ordered set with tuples ordered lexicographically.  Due to the arguments in (3-i),
 $(n(u),N(u))\in \Omega$ for all $u\in  U(\ggg_\bbk,e)$.   Now we prove Claim (\ref{statement3}) by induction on $(n(u), N(u))$ in the totally-ordered set $\Omega$.
 The claim is clearly valid when $(n(u),N(u))=(0,0)$.
  Assume that $u\in T({\ggg}_{\bbk},e)\cdot Z_p(\mathfrak{a}_{\bbk})$ for all nonzero $u\in
 U({\ggg}_{\bbk},e)$ with $(n(u),N(u))\prec(d,d')$.  Now let $u\in U({\ggg}_{\bbk},e)$ be such that $(n(u),N(u))=(d,d')$. By the preceding remark we know that there exists $u'=\sum\limits_{(\mathbf{a},\mathbf{b},\mathbf{c},\mathbf{d})}
  \lambda_{\mathbf{a},\mathbf{b},\mathbf{c},\mathbf{d}}
  u(\mathbf{a},\mathbf{b},\mathbf{c},\mathbf{d})$
  with $\Lambda^{\text{max}}(u(\mathbf{a},\mathbf{b},\mathbf{c},\mathbf{d}))\subseteq\Lambda^{\text{max}}(u)$ for all $(\mathbf{a},\mathbf{b},\mathbf{c},\mathbf{d})$ with $\lambda_{\mathbf{a},\mathbf{b},\mathbf{c},\mathbf{d}}\neq0$ such that $(n(u-u'),N(u-u'))\prec(d,d')$. Set
\[
\begin{array}{lll}
v(\mathbf{a},\mathbf{b},\mathbf{c},\mathbf{d})&:=&u((0,\cdots,0,a_{l+1},\cdots,a_m),\mathbf{b},\mathbf{0},\mathbf{0})\cdot\\
&&\prod\limits_{i=1}^{l}\bar{\Theta}_i^{pa_i}\cdot
(\bar{\Theta}_1^{c_1}\cdots\bar{\Theta}_l^{c_l}\bar{\Theta}_{l+1}^{d_1}\cdots\bar{\Theta}_{l+q}^{d_q}\bar{\Theta}_{l+q+1}^{d_{q+1}}).
\end{array}\]
Using \eqref{dede} it is easy to observe that $\Lambda^{\text{max}}(u(\mathbf{a},\mathbf{b},\mathbf{c},\mathbf{d}))=\Lambda^{\text{max}}(v(\mathbf{a},\mathbf{b},\mathbf{c},\mathbf{d}))$ and
\[
\begin{array}{lll}
&(n(u(\mathbf{a},\mathbf{b},\mathbf{c},\mathbf{d})-v(\mathbf{a},\mathbf{b},\mathbf{c},\mathbf{d})),N(u(\mathbf{a},\mathbf{b},\mathbf{c},\mathbf{d})-v(\mathbf{a},\mathbf{b},\mathbf{c},\mathbf{d})))\\
\prec&(n(u(\mathbf{a},\mathbf{b},\mathbf{c},\mathbf{d})),N(u(\mathbf{a},\mathbf{b},\mathbf{c},\mathbf{d}))).
\end{array}\]
We now put $u^{''}:=\sum\limits_{(\mathbf{a},\mathbf{b},\mathbf{c},\mathbf{d})}\lambda_{\mathbf{a},\mathbf{b},\mathbf{c},
\mathbf{d}}v(\mathbf{a},\mathbf{b},\mathbf{c},\mathbf{d})$, an element of $T({\ggg}_{\bbk},e)\cdot Z_p(\mathfrak{a}_{\bbk})$. Because $(n(u-u^{''}),N(u-u^{''}))\prec(n(u),N(u))$, the equality $U({\ggg}_{\bbk},e)=T({\ggg}_{\bbk},e)\cdot Z_p(\mathfrak{a}_{\bbk})$ follows by induction on the length of $(d,d')$ in the linearly ordered set $(\Omega,\prec)$. We complete the proof of Claim (\ref{statement3}).

Next we will finish the proof of statement (3). By Lemma~\ref{right}(1) and the procedure of ``modular $p$ reduction'' we know that the vectors $\bar{X}^{(\mathbf{a},\mathbf{b},\mathbf{c},\mathbf{d})}\otimes1_\chi$ with
\[\begin{array}{lll}
(\mathbf{a},\mathbf{b},\mathbf{c},\mathbf{d})&=&(a_1,\cdots,a_{m-l};b_1,\cdots,b_{n-q};c_1,\cdots,c_{s};d_1,\cdots,d_{t'})\\
&\in &{\bbz}_+^{m-l}\times\Lambda'_{n-q}\times{\bbz}_+^{s}\times\Lambda'_{t'}
\end{array}\]
form a free basis of the right $T({\ggg}_{\bbk},e)$-module $Q_{\chi,{\bbk}}$. Since \eqref{grad} shows that $\bar X_i^{p}$ and $\bar X_i^{p}-\bar X_i^{[p]}$ have the same Kazhdan degree in $U({\ggg}_{\bbk})$ for $1\leqslant i\leqslant m-l$ and $m+n-l-q+1\leqslant i\leqslant m+n-l-q+s$ respectively, and $Q_{\chi,{\bbk}}$ is a Kazhdan filtered $U({\ggg}_{\bbk})$-module, it follows that the vectors
$$\prod\limits_{i=1}^{m-l}\prod\limits_{j=m+n-l-q+1}^{m+n-l-q+s}\rho_{\bbk}(\bar X_i^{p}-\bar X_i^{[p]})^{a_i}
\rho_{\bbk}(\bar X_j^p-\bar X_j^{[p]})^{b_j}\cdot\bar{\Theta}_1^{c_1}\cdots\bar{\Theta}_l^{c_l}
\bar{\Theta}_{l+1}^{d_1}\cdots\bar{\Theta}_{l+q}^{d_q}\bar{\Theta}_{l+q+1}^{d_{q+1}}$$
are linearly independent, where $(\mathbf{a},\mathbf{b},\mathbf{c},\mathbf{d})\in{\bbz}_+^{m-l}\times{\bbz}_+^s\times{\bbz}_+^l\times\Lambda'_{q+1}$.

Combining all above, we have an isomorphism between ${\bbk}$-algebras$$U({\ggg}_{\bbk},e)\cong T({\ggg}_{\bbk},e)\otimes_{\bbk}Z_p(\mathfrak{a}_{\bbk}).$$

By the analysis at the beginning of step (2), we complete the proof.
\end{proof}

With the above theorem, we can define a ``reduced" quotient algebra of $U(\ggg_\bbk,e)$,  analogous to the reduced enveloping algebra of a restricted Lie (super)algebra, by
$$ \widetilde{U}_\eta({\ggg}_{\bbk},e):=
U({\ggg}_{\bbk},e)\otimes_{Z_p(\widetilde{{\ppp}}_{\bbk})}{\bbk}_\eta.
$$
\begin{lemma}\label{isoofreducedW-alg} For any given $\eta\in \chi+(\mathfrak{m}_{\bbk}^\bot)_{\bar{0}}$, the above ``reduced" quotient algebras are isomorphic to the reduced $W$-superalgebras, i.e.
 $\widetilde{U}_\eta({\ggg}_{\bbk},e)\cong U_\eta(\ggg_\bbk,e)$.
\end{lemma}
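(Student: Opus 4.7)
The plan is to construct a natural surjective $\bbk$-algebra homomorphism $\bar\Phi:\widetilde{U}_\eta(\ggg_\bbk,e)\to U_\eta(\ggg_\bbk,e)$ and then close by a dimension count.

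First I would set up the descent map. As noted in \S\ref{2.3.2}, every $u\in U(\ggg_\bbk,e)=(\text{End}_{\ggg_\bbk}Q_{\chi,\bbk})^{\text{op}}$ preserves the submodule $J_\eta Q_{\chi,\bbk}$ (since $J_\eta$ is a two-sided ideal of $U(\ggg_\bbk)$ and $u$ is a $\ggg_\bbk$-endomorphism), so $u$ induces a $\ggg_\bbk$-endomorphism of $Q_\chi^\eta$. This yields an algebra homomorphism $\Phi:U(\ggg_\bbk,e)\to U_\eta(\ggg_\bbk,e)$ sending each generator $\bar\Theta_i$ to the corresponding $\theta_i$ produced in \cite[Remark 3.9]{ZS2}. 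Since those $\theta_i$'s generate $U_\eta(\ggg_\bbk,e)$ as a $\bbk$-algebra, $\Phi$ is surjective.

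Next I would verify that $\Phi$ factors through $\widetilde{U}_\eta(\ggg_\bbk,e)$. Under the identification $\rho_\bbk(Z_p)\cong Z_p(\widetilde{\ppp}_\bbk)$ of Theorem~\ref{keyisotheorem}(1), the center $Z_p(\widetilde{\ppp}_\bbk)$ sits naturally inside $U(\ggg_\bbk,e)$, and forming $\widetilde{U}_\eta(\ggg_\bbk,e)=U(\ggg_\bbk,e)\otimes_{Z_p(\widetilde{\ppp}_\bbk)}\bbk_\eta$ amounts to killing the two-sided ideal generated by all $\rho_\bbk(\bar x^p-\bar x^{[p]})-\eta(\bar x)^p\cdot 1$ with $\bar x\in(\widetilde{\ppp}_\bbk)_{\bar 0}$. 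For each such $\bar x$, the endomorphism $\rho_\bbk(\bar x^p-\bar x^{[p]})$ of $Q_{\chi,\bbk}$ descends to the action of $\bar x^p-\bar x^{[p]}$ on $Q_\chi^\eta$, which is scalar multiplication by $\eta(\bar x)^p$ because $Q_\chi^\eta$ is a $U_\eta(\ggg_\bbk)$-module. Hence these generators lie in $\ker\Phi$, and $\Phi$ descends to a surjective homomorphism $\bar\Phi:\widetilde{U}_\eta(\ggg_\bbk,e)\to U_\eta(\ggg_\bbk,e)$.

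Finally I would conclude by comparing $\bbk$-dimensions. Theorem~\ref{keyisotheorem}(2) asserts that $U(\ggg_\bbk,e)$ is a free $\rho_\bbk(Z_p)$-module of rank $p^l2^{q'}$, so $\dim_\bbk\widetilde{U}_\eta(\ggg_\bbk,e)=p^l2^{q'}$; meanwhile the PBW-type basis of $U_\eta(\ggg_\bbk,e)$ quoted from \cite[Remark 3.9]{ZS2} in the proof of Theorem~\ref{keyisotheorem}(2) also has cardinality $p^l2^{q'}$. A surjective linear map between finite-dimensional spaces of equal dimension is an isomorphism, which gives the claim. The only real subtlety, and therefore the main obstacle, lies in the second step: one must check that the $Z_p(\widetilde{\ppp}_\bbk)$-action on $U(\ggg_\bbk,e)$ implicit in the definition of $\widetilde{U}_\eta$ is indeed the one coming through $\rho_\bbk$ from Theorem~\ref{keyisotheorem}(1), and that the resulting character of $Z_p(\widetilde{\ppp}_\bbk)$ matches the one by which $Q_\chi^\eta$ is annihilated; both descriptions send $\bar x^p-\bar x^{[p]}$ to $\eta(\bar x)^p$, so they do coincide.
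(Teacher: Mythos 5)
Your proof is correct and follows essentially the same route as the paper's: construct the homomorphism $\widetilde{U}_\eta(\ggg_\bbk,e)\to U_\eta(\ggg_\bbk,e)$ induced by the canonical projection $Q_{\chi,\bbk}\twoheadrightarrow Q_\chi^\eta$, note surjectivity via the generators $\theta_i$, and conclude by comparing the dimension bound $\dim\widetilde{U}_\eta(\ggg_\bbk,e)\le p^l2^{q'}$ from Theorem~\ref{keyisotheorem}(2) with the dimension $p^l2^{q'}$ of $U_\eta(\ggg_\bbk,e)$ from \cite[Remark 3.9]{ZS2}. You merely spell out the intermediate step of factoring through $U(\ggg_\bbk,e)$ and the compatibility of the $Z_p(\widetilde\ppp_\bbk)$-characters, which the paper leaves implicit.
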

\begin{proof} The canonical projection $Q_{\chi,{\bbk}}\twoheadrightarrow Q_{\chi,{\bbk}}/J_\eta Q_{\chi,{\bbk}}=Q_\chi^\eta$ gives rise to an algebra homomorphism $\rho_\eta:\widetilde{U}_\eta({\ggg}_{\bbk},e)\rightarrow
(\text{End}_{{\ggg}_{\bbk}}Q_\chi^\eta)^{\text{op}}=U_\eta({\ggg}_{\bbk},e)$. As $\text{dim}\,\widetilde{U}_\eta({\ggg}_{\bbk},e)\leqslant  p^l2^{q'}$ by Theorem~\ref{keyisotheorem}(2), \cite[Remark 3.9]{ZS2} yields that $\rho_\eta$ is an algebra isomorphism. We complete the proof.
\end{proof}

\subsection{Transition for minimal dimensional representations}
At the beginning of this subsection, first notice the following important fact:

\begin{prop}\label{no1}
Assume that  $d_1$ is odd. Then the finite $W$-superalgebra $U({\ggg},e)$ over ${\bbc}$ can not afford a one-dimensional  representation.
\end{prop}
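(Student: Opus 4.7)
The plan is to derive a contradiction from the structural identity \eqref{fij2}, namely that when $d_1$ (equivalently $r=\dim\ggg(-1)_\bo$) is odd, the odd generator $\Theta_{l+q+1}$ of $U(\ggg,e)$ satisfies $F_{l+q+1,l+q+1}(\Theta_1,\ldots,\Theta_{l+q+1})=1\otimes 1_\chi$. Unpacking the defining relation \eqref{refine1} in this case, and using that for odd elements the superbracket is the anticommutator, this is precisely the identity
\[
2\,\Theta_{l+q+1}^{2}=1
\]
in $U(\ggg,e)$. (Heuristically, this is consistent with the computation $v_{(r+1)/2}^{2}\otimes 1_\chi=\tfrac12[v_{(r+1)/2},v_{(r+1)/2}]\otimes 1_\chi=\tfrac12\chi([v_{(r+1)/2},v_{(r+1)/2}])\otimes 1_\chi=\tfrac12\otimes 1_\chi$, since $\langle v_{(r+1)/2},v_{(r+1)/2}\rangle=1$ and $[v_{(r+1)/2},v_{(r+1)/2}]\in \ggg(-2)_\bz\subseteq\mmm$.)

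Now suppose for contradiction that $\rho:U(\ggg,e)\to \mathrm{End}_\bbc(V)$ is a one-dimensional representation. Since every module over a superalgebra is $\bbz_2$-graded and $V$ is one-dimensional, $V$ is concentrated in a single parity, i.e.\ either $V=V_\bz$ with $V_\bo=0$ or vice versa. Hence every odd operator must send $V$ into the opposite-parity component, which is zero; in particular
\[
\rho(\Theta_{l+q+1})=0.
\]

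Squaring, $\rho(\Theta_{l+q+1}^{2})=0$, so applying $\rho$ to the identity $2\Theta_{l+q+1}^{2}=1$ yields $\rho(1)=0$, contradicting the fact that $\rho$ is unital. Therefore no one-dimensional representation of $U(\ggg,e)$ can exist in the odd case. The only non-obvious ingredient is the identity \eqref{fij2}, which is already established in \cite{ZS2} and quoted in the excerpt; the rest is a short parity argument, so I do not anticipate any substantial obstacle beyond carefully citing \eqref{fij2} and the super-bracket convention.
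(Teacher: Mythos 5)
Your argument is correct and follows essentially the same route as the paper's own proof: both hinge on the identity $\Theta_{l+q+1}^2=\tfrac12$ together with the parity observation that an odd operator must annihilate a one-dimensional module. The only cosmetic difference is that you invoke \eqref{fij2} (unpacking the anticommutator $[\Theta_{l+q+1},\Theta_{l+q+1}]=2\Theta_{l+q+1}^2$) to obtain the key scalar identity, whereas the paper computes $\Theta_{l+q+1}^2(1_\chi)=v_{(r+1)/2}^2\otimes 1_\chi=\tfrac12\otimes 1_\chi$ directly in $Q_\chi$ — the very computation you already include as a consistency check.
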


\begin{proof}
Recall that when $d_1$ is odd, \S\ref{2.2.2} shows that there is an element $\Theta_{l+q+1}=v_{\frac{r+1}{2}}\otimes1_\chi\in U({\ggg},e)_{\bar1}$, then $$\Theta_{l+q+1}^2(1_\chi)=v_{\frac{r+1}{2}}^2\otimes1_\chi=\frac{1}{2}[v_{\frac{r+1}{2}},v_{\frac{r+1}{2}}]\otimes1_\chi=\frac{1}{2}\chi([v_{\frac{r+1}{2}},v_{\frac{r+1}{2}}])\otimes1_\chi=\frac{1}{2}\otimes1_\chi.$$
Thus $\Theta_{l+q+1}^2=\frac{1}{2}\text{id}$.

For any $U({\ggg},e)$-module $M$, let $0\neq v\in M$ be a ${\bbz}_2$-homogeneous element. We claim that $\Theta_{l+q+1}.v \neq 0$. If not, i.e. $\Theta_{l+q+1}.v=0$, then $\Theta_{l+q+1}^2.v=0$. However, by the preceding remark we have $\Theta_{l+q+1}^2.v=\frac{1}{2}v$,  a contradiction. Therefore, $\Theta_{l+q+1}.v$ is a nonzero element in $M$, which obviously shares the different parity from that of $v$. Thus the dimension of any $U({\ggg},e)$-module (as a vector space) is at least two, and the algebra $U({\ggg},e)$ can not afford a one-dimensional   representation in this case.
\end{proof}

\begin{rem}
Recall that in \S\ref{2.2.3} we have obtained an algebra isomorphism
\[\begin{array}{lcll}
\phi:&(\text{End}_{\ggg}Q_{\chi})^{\text{op}}&\cong&Q_{\chi}^{\text{ad}\,\mathfrak{m}}\\ &\Theta&\mapsto&\Theta(1_\chi).
\end{array}
\]
 In the paper we will often identify $U({\ggg},e)=(\text{End}_{\ggg}Q_{\chi})^{\text{op}}$ with $Q_{\chi}^{\text{ad}\,\mathfrak{m}}$ as ${\bbc}$-algebras, and which will cause no confusion. For any ${\bbz}_2$-homogeneous elements $\Theta_1,\Theta_2\in Q_{\chi}^{\text{ad}\,\mathfrak{m}}$, since $\phi(\Theta_1\cdot\Theta_2)=\phi(\Theta_1)\phi(\Theta_2)$,
we can identify $\Theta_1\cdot\Theta_2$ in $U({\ggg},e)$ with $\Theta_1(1_\chi)\cdot\Theta_2(1_\chi)$ in $Q_{\chi}^{\text{ad}\,\mathfrak{m}}$. When $d_1$ is odd, the element $\Theta_{l+q+1}$ in $(\text{End}_{\ggg}Q_{\chi})^{\text{op}}$ can be considered as the element $v_{\frac{r+1}{2}}\otimes1_\chi$ in $Q_{\chi}^{\text{ad}\,\mathfrak{m}}$, and for any ${\bbz}_2$-homogeneous element $v$ in a $U({\ggg},e)$-module $M$, we have $\Theta_{l+q+1}^2.v=\frac{1}{2}v$ by Proposition \ref{no1}.
\end{rem}

Now we are in the position to talk about the transiting role of the transition subalgebras for the minimal dimensions of modular representations of basic Lie superalgebras.

\begin{prop}\label{transitionforminimaldim} Keep the above notations. If $p\gg0$ for the field $\bbk=\overline{\mathbb{F}}_p$, the following hold.

(1) Assume that $d_1$ is even. Then the following items are equivalent:
\begin{itemize}
\item[(1-i)]
the transition subalgebra $T(\ggg_\bbk, e)$ admits one-dimensional representations;
\item[(1-ii)] there exists $\eta\in \chi+(\mathfrak{m}_{\bbk}^\bot)_{\bar{0}}$ such that $U_\eta(\ggg_\bbk)$ admits irreducible representations of dimension $p^{\frac{d_0}{2}}2^{\frac{d_1}{2}}$.
\end{itemize}

(2) Assume that $d_1$ is odd. Then the following items are equivalent:
\begin{itemize}
\item[(2-i)] the transition subalgebra $T(\ggg_\bbk, e)$ admits two-dimensional representations;
\item[(2-ii)] there exists $\eta\in \chi+(\mathfrak{m}_{\bbk}^\bot)_{\bar{0}}$ such that $U_\eta(\ggg_\bbk)$ admits  irreducible representations of dimension $p^{\frac{d_0}{2}}2^{\frac{d_1+1}{2}}$.
\end{itemize}
\end{prop}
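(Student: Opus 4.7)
The plan is to chain together the structural isomorphisms of this section to transit between modules of $T(\ggg_\bbk,e)$, of $U(\ggg_\bbk,e)$, of $U_\eta(\ggg_\bbk,e)$, and of $U_\eta(\ggg_\bbk)$. First, Theorem~\ref{keyisotheorem}(3) gives $U(\ggg_\bbk,e)\cong T(\ggg_\bbk,e)\otimes_\bbk Z_p(\aaa_\bbk)$ with $Z_p(\aaa_\bbk)$ a commutative purely even polynomial algebra, so tensoring with a $\bbk$-character of $Z_p(\aaa_\bbk)$ (resp.\ restriction along $T(\ggg_\bbk,e)\hookrightarrow U(\ggg_\bbk,e)$) passes bijectively between $n$-dimensional $T(\ggg_\bbk,e)$-modules and $n$-dimensional $U(\ggg_\bbk,e)$-modules. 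Next, a simple finite-dimensional $U(\ggg_\bbk,e)$-module has the central subalgebra $\rho_\bbk(Z_p)\cong Z_p(\widetilde{\ppp}_\bbk)$ (Theorem~\ref{keyisotheorem}(1)) acting by scalars; the resulting character corresponds to a unique $\eta\in\chi+(\mmm_\bbk^\perp)_\bz$, so the module factors through $\widetilde U_\eta(\ggg_\bbk,e)$, which is identified with $U_\eta(\ggg_\bbk,e)$ by Lemma~\ref{isoofreducedW-alg}. Finally, Theorem~\ref{reducedfunctors} together with~\eqref{dimMn} and Conventions~\ref{convention2.3} produces a bijection between irreducible $U_\eta(\ggg_\bbk,e)$-modules $V$ and irreducible $U_\eta(\ggg_\bbk)$-modules $M$, via $V=M^{\mmm_\bbk}$, satisfying $\dim M=p^{d_0/2}2^{\lceil d_1/2\rceil}\cdot\dim V$.

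Part~(1) is now immediate: when $d_1$ is even one has $\lceil d_1/2\rceil=d_1/2$, and every $1$-dimensional module is automatically simple, so the chain above matches $1$-dimensional $T(\ggg_\bbk,e)$-modules (after choosing a character of $Z_p(\aaa_\bbk)$) bijectively with irreducible $U_\eta(\ggg_\bbk)$-modules of dimension $p^{d_0/2}2^{d_1/2}$ as $\eta$ varies. For part~(2), where $d_1$ is odd, the parity argument of Proposition~\ref{no1} rests on the identity $\Theta_{l+q+1}^2=\tfrac12\,\mathrm{id}$, which by Theorem~\ref{transition} and~\eqref{fij2} descends to $\bar\Theta_{l+q+1}^2=\tfrac12\,\mathrm{id}$ in $T(\ggg_\bbk,e)$ for $p\gg 0$ (so that $\tfrac12\in A$). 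Consequently none of $T(\ggg_\bbk,e)$, $U(\ggg_\bbk,e)$, or $U_\eta(\ggg_\bbk,e)$ admits a $1$-dimensional representation, and every $2$-dimensional representation of any of them is automatically simple. Running the bijection with $\dim V=2$ and $\lceil d_1/2\rceil=(d_1-1)/2$ yields irreducible $U_\eta(\ggg_\bbk)$-modules of dimension $2\cdot p^{d_0/2}2^{(d_1-1)/2}=p^{d_0/2}2^{(d_1+1)/2}$, giving (2-i)$\Leftrightarrow$(2-ii).

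The principal obstacle lies in the odd-$d_1$ case: one must confirm that the relation $\bar\Theta_{l+q+1}^2=\tfrac12\,\mathrm{id}$ really survives the admissible reduction mod $p$ (so that the parity argument of Proposition~\ref{no1} transfers to $T(\ggg_\bbk,e)$ and to each $U_\eta(\ggg_\bbk,e)$), and hence that the candidate $2$-dimensional modules are honestly simple rather than a sum of two $1$-dimensionals. A secondary but routine bookkeeping point is that, as the central character of $\rho_\bbk(Z_p)$ attached to a simple $U(\ggg_\bbk,e)$-module varies, it traces out all of $\chi+(\mmm_\bbk^\perp)_\bz$; this follows from the identification $Z_p(\widetilde{\ppp}_\bbk)\cong\bbk[(\chi+(\mmm_\bbk^\perp)_\bz)^{(1)}]$ appearing in the proof of Theorem~\ref{keyisotheorem}(2), which guarantees that every such $\eta$ is realised.
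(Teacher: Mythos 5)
Your proof is correct and follows essentially the same route as the paper: the chain $T(\ggg_\bbk,e)\leftrightarrow U(\ggg_\bbk,e)\to\widetilde U_\eta(\ggg_\bbk,e)\cong U_\eta(\ggg_\bbk,e)\leftrightarrow U_\eta(\ggg_\bbk)$ built from Theorem~\ref{keyisotheorem}(3), the central-character argument on $\rho_\bbk(Z_p)\cong Z_p(\widetilde\ppp_\bbk)$, Lemma~\ref{isoofreducedW-alg}, Theorem~\ref{reducedfunctors} with~\eqref{dimMn}, and, in the odd case, the relation $\bar\Theta_{l+q+1}^2=\tfrac12$ carried into $T(\ggg_\bbk,e)$ so that the parity argument of Proposition~\ref{no1} rules out one-dimensional modules and forces irreducibility of the two-dimensional ones. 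The one cosmetic remark is that ``passes bijectively'' between $n$-dimensional $T$- and $U$-modules is a slight overstatement (only the two directional implications are used, and the converse requires restricting to modules on which $Z_p(\aaa_\bbk)$ acts by a scalar), but this causes no gap.
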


\begin{proof} Let us first prove statement (2).

 ``(2-i)$\Rightarrow$ (2-ii)":
Recall that there is a ${\bbk}$-algebra isomorphism $U({\ggg}_{\bbk},e)\cong T({\ggg}_{\bbk},e)\otimes_{\bbk}Z_p(\mathfrak{a}_{\bbk})$ by Theorem~\ref{keyisotheorem}(3). Thus the assumption (2-i) implies that the ${\bbk}$-algebra $U({\ggg}_{\bbk},e)$ affords a two-dimensional  representation too; we denote it by $\nu$ with the representation space $V$.

Let $v_{\bar{0}}\in V_{\bar{0}}$ be a nonzero even vector in $V$. The proof of Proposition~\ref{no1} shows that $\Theta_{l+q+1}.v_{\bar{0}}\in V_{\bar{1}}$ is a nonzero odd vector, and we denote it by $v_{\bar{1}}$. Then $V$ is $\bbk$-spanned by $v_{\bar{0}}$ and $v_{\bar{1}}$ as a vector space. For any $\bar x\in({\ggg}_{\bbk})_{\bar{0}}$, since $\bar x^p-\bar x^{[p]}\in Z_p({\ggg}_{\bbk})$ is central in  $U({\ggg}_{\bbk})$, we have $[\rho(\bar x^p-\bar x^{[p]}),\Theta_{l+q+1}]=0$.
Therefore, both ${\bbk}v_{\bar{0}}$ and ${\bbk}v_{\bar{1}}$
are one-dimensional  representations of $\rho_{\bbk}(Z_p)$, decided by the same function on $(\ggg_\bbk)_{\bar0}$.
By Theorem~\ref{keyisotheorem}, $\rho_{\bbk}(Z_p)\cap\text{ker}\nu$
is a maximal ideal of the algebra
$\rho_{\bbk}(Z_p)\cong Z_p(\widetilde{{\ppp}}_{\bbk})\cong {\bbk}[(\chi+(\mathfrak{m}_{\bbk}^\bot)_{\bar{0}})^{(1)}]$. Then there exists $\eta\in\chi+(\mathfrak{m}_{\bbk}^\bot)_{\bar{0}}$  such that $\rho_{\bbk}(\bar x^p-\bar x^{[p]}-\eta(\bar x)^p)\in\text{Ker}\nu$ for all
$\bar x\in({\ggg}_{\bbk})_{\bar{0}}$.  Our choice of $\eta$ ensures that the ``reduced" quotient algebra $\widetilde U_\eta({\ggg}_{\bbk},e)$
affords a two-dimensional  representation. It follows from Lemma \ref{isoofreducedW-alg} and Theorem \ref{reducedfunctors} that the $\bbk$-algebra $U_\eta({\ggg}_{\bbk})$ has an irreducible representation of dimension $p^\frac{d_0}{2}2^{\frac{d_1+1}{2}}$.

``(2-ii)$\Rightarrow$(2-i)": Conversely, under the assumption (2-ii) it follows from Theorem \ref{reducedfunctors} that the reduced $W$-superalgebra $U_\eta(\ggg_\bbk,e)$ admits a two-dimensional representation, thereby does the $\bbk$-algebra $\widetilde U_\eta(\ggg_\bbk,e)$, then the ``reduced" quotient of $U(\ggg_\bbk,e)$ by Lemma \ref{isoofreducedW-alg}. Since the transition subalgebra $T({\ggg}_{\bbk},e)$ is a subalgebra of $U({\ggg}_{\bbk},e)$ by the definition, $T(\ggg_\bbk,e)$ also affords a two-dimensional representation.

The same arguments also go through for the proof of statement (1), which will be omitted here. We complete the proof.
\end{proof}

\section{Conjectural one-dimensional representations for finite $W$-superalgebras when $d_1$ is even}\label{conjectureseceven}

In this and the next sections we proceed to investigate small representations for the finite $W$-superalgebra $U({\ggg}_\mathbb{F},e)$ both over the field of complex numbers $\bbf={\bbc}$ and over a field $\bbf={\bbk}$ of positive characteristic. We will find that the parity of $d_1$ plays a key role for the dimensions of the small representations of $U({\ggg}_\mathbb{F},e)$, which is significantly different from the finite $W$-algebras. We will present a plausible conjecture for such dimensions, and demonstrate the conjecture with some examples. Based on these results, we will discuss the accessibility of the lower bounds of dimensions predicted in the super Kac-Weisfeiler property \cite[Theorems 4.3 and 5.6]{WZ} in \S\ref{5}. For simplicity we will always assume that the characteristic of
the field ${\bbk}=\overline{\mathbb{F}}_p$ satisfies $p\gg0$ unless otherwise specified, in this and the next sections.

For the ordinary finite $W$-algebra counterpart of the above issue, there are some remarkable work and exciting progress (cf. \cite{P7}). 
However, when we turn to the study of the finite $W$-superalgebra case, the tool available is very limited. 
  Therefore, the issue of minimal dimensions for the representations of finite $W$-superalgebras over ${\bbc}$ is in a position of reasonably estimating, but far from the solution.

\subsection{On the minimal-dimension conjecture when $d_1$ is even} Recall that the parity of $d_1$ plays the key role for the construction of finite $W$-superalgebra $U({\ggg},e)$ in \cite[Theorem 4.5]{ZS2}. Based on the different parity of $d_1$, for each case we will consider separately. This section is devoted to the case when $d_1$ is even.

\begin{conj}\label{Conject1}
When $d_1$ is even, the $\bbc$-algebra $U({\ggg},e)$ affords a one-dimensional  representation.
\end{conj}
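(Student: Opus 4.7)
The plan is to build a $\bbc$-algebra homomorphism $\phi\colon U({\ggg},e)\to\bbc$ by prescribing scalars $\lambda_i:=\phi(\Theta_i)$ for the Premet-type generators of Section 2.2.2. Because the target $\bbc$ is purely even, every odd generator $\Theta_{l+1},\dots,\Theta_{l+q}$ must be sent to $0$; note $q'=q$ here since $d_1$ is even, so there is no exceptional $\Theta_{l+q+1}$ to worry about. The task therefore reduces to choosing $\lambda_1,\dots,\lambda_l\in\bbc$ such that
$$F_{ij}(\lambda_1,\dots,\lambda_l,0,\dots,0)=0$$
for every surviving commutation relation listed at the end of subsection 2.2.2 (i.e. for $1\leqslant i<j\leqslant l$, $l+1\leqslant i\leqslant j\leqslant l+q$, and $1\leqslant i\leqslant l<j\leqslant l+q$).

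First I would dispense with the relations of mixed or pure odd type. For $1\leqslant i\leqslant l<j\leqslant l+q$ and for $l+1\leqslant i\leqslant j\leqslant l+q$, every monomial of $F_{ij}$ has odd parity in the odd generators by $\bbz_2$-symmetry of the commutator, so after setting the odd $\lambda$'s to zero these relations reduce to requiring that a specific polynomial in $\lambda_1,\dots,\lambda_l$, read off from the leading symbol via \eqref{fij}, vanish identically. I would then treat the purely even relations $[\Theta_i,\Theta_j]=F_{ij}(\Theta_1,\dots,\Theta_{l+q})$ for $1\leqslant i<j\leqslant l$ as a system of polynomial equations in the $\lambda_k$, whose leading part (mod $\text{F}_{m_i+m_j+1}U(\ggg,e)$) is the structure-constant relation $\sum_k \alpha_{ij}^k\lambda_k=0$ on ${\ggg}^e_{\bar 0}$.

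The main step is to show this system has a common solution. The natural strategy, modelled on Premet's and Losev's arguments in the classical finite $W$-algebra case, has two layers. \emph{Reduction to distinguished nilpotents:} develop a super version of parabolic induction, producing an even Levi subalgebra $\ggg_0\subseteq\ggg$ in which $e$ is distinguished and an isomorphism between a Kazhdan completion of $U(\ggg,e)$ and a tensor product of $U(\ggg_0,e)$ with a Weyl superalgebra, so that a one-dimensional $U(\ggg_0,e)$-module extends. \emph{Distinguished case:} when $e$ is distinguished in $\ggg_0$, the grading $\ggg_0^e=\bigoplus_{i\geqslant 0}\ggg_0^e(i)$ concentrates in strictly positive Kazhdan weights, so Theorem 1.1(1) together with an induction on the Kazhdan filtration lets one attempt the ansatz $\lambda_1=\cdots=\lambda_l=0$ and check consistency directly against \eqref{fij}.

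The hard part is the super parabolic-induction step, together with controlling the ``lower-order'' terms of $F_{ij}$ in the distinguished case so that they do not obstruct the zero solution. For type $A$, one can bypass the obstruction by using the explicit super-Yangian-type generators and writing down the character by hand, which is the route indicated in \cite{ZS3}; this in turn can be fed back into verifying Conjecture \ref{conjecture} in type $A$. For the orthosymplectic families and the exceptionals $D(2,1;a)$, $F(4)$, $G(3)$, I would expect to need either a genuine super-analogue of Losev's shift construction or a case-by-case treatment through the Dynkin–Kac classification of nilpotent $G_\ev$-orbits, and this is where I anticipate the real obstacle to a uniform proof.
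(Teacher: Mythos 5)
The statement you are attempting to prove is labelled in the paper precisely as a \emph{conjecture}, and the paper does not prove it in general: the only cases the paper confirms are $\gl(M|N)$ and $\sssl(M|N)$ (Proposition~\ref{glsl2}), and even those go through a route very different from the one you propose. Specifically, the paper first reduces mod $p$ to the transition subalgebra $T(\ggg_\bbk,e)$ (Lemma~\ref{trans1}), then invokes a direct computational verification in positive characteristic from \cite{ZS3} that the dimensional lower bound $p^{d_0/2}2^{d_1/2}$ is attained in type $A$ (Lemma~\ref{glsl}, via Proposition~\ref{transitionforminimaldim}), and finally transfers this back to $\bbc$ by a Galois-theoretic argument on the Zariski closed set $\mathscr{E}$ for infinitely many $p$ (Lemma~\ref{transtoc}). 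Your proposal to attack the conjecture via a super analogue of Losev's parabolic induction followed by the distinguished case is a genuinely different (and not yet executed) approach; as you yourself note, the super shift construction and the control of lower-order terms of the $F_{ij}$ in the distinguished case are exactly the missing pieces, so what you have is a plan with the same unresolved gap that the paper itself leaves open.

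Your reduction of the problem to finding a common zero $(\lambda_1,\dots,\lambda_l)\in\mathbb{A}^l_\bbc$ of the truncated polynomials $F'_{ij}$ does agree with the paper's Lemma~\ref{d_1even} and the discussion in \S\ref{4.1.1}, but your parity bookkeeping lumps two cases that behave differently. For $1\leqslant i\leqslant l<j\leqslant l+q$ the super-polynomial $F_{ij}$ is odd, so every monomial contains at least one odd generator, and after killing the odd generators the relation is automatically satisfied --- it imposes no condition at all. For $l+1\leqslant i\leqslant j\leqslant l+q$, on the contrary, $F_{ij}$ is \emph{even}, so monomials with an even (possibly zero) number of odd factors survive, and one really does obtain a nontrivial constraint $F'_{ij}(\lambda_1,\dots,\lambda_l)=0$. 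Your sentence asserting that every monomial of $F_{ij}$ is odd in the odd generators for \emph{both} index ranges is incorrect for the second range, although you reach the right final list of constraints ($1\leqslant i<j\leqslant l$ and $l+1\leqslant i\leqslant j\leqslant l+q$) in the next sentence. Beyond that bookkeeping slip, the core difficulty --- establishing that the system of $F'_{ij}$'s has a solution --- is exactly what remains open, and neither a super parabolic-induction theorem nor control of the lower-order terms in the distinguished case is supplied by the paper or by your outline.
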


Under the assumption that Conjecture \ref{Conject1} holds, we can assume that this one-dimensional representation $\textsf{O}:={\bbc}\sfo$ is generated by a nonzero vector $\sfo$. Now we will go on investigating the consequences.

\subsection{The analogue of communicative quotients for finite $W$-superalgebras in the even case}\label{4.1.1}

Recall that the ${\bbc}$-algebra $U({\ggg},e)$ is generated by ${\bbz}_2$-homogeneous elements $\Theta_1,\cdots,\Theta_l\in U({\ggg},e)_{\bar0}$ and $\Theta_{l+1},\cdots,\Theta_{l+q}\in U({\ggg},e)_{\bar1}$ in \cite[Theorem 4.5]{ZS2}. Let $M$ be any $U({\ggg},e)$-module. For a given odd element $u\in U({\ggg},e)_{\bar1}$  and  a homogeneous vector $m\in M$, we know that $m$ and $u.m$ have different parity. As $\mathsf{O}$ is a one-dimensional superspace, then $\Theta_i.\sfo=0$ for $l+1\leqslant  i\leqslant  l+q$ by parity consideration. Set $\Theta_i.\sfo=c_i\sfo$ for $1\leqslant  i\leqslant  l$ with $c_i\in{\bbc}$. Recall \cite[Theorem 4.7]{ZS2} shows that the algebra $U({\ggg},e)$ is completely determined by the commuting relations of $\Theta_1,\cdots,\Theta_{l+q}$ (see also \S\ref{2.2.2}). Based on the parity of these generators, for each case we will consider separately.

(i) For $1\leqslant  i<j\leqslant  l$, the element $[\Theta_i,\Theta_j]$ is even since $\Theta_i,\Theta_j\in U({\ggg},e)_{\bar0}$. It is immediate from $[\Theta_i,\Theta_j].\sfo=(\Theta_i\cdot\Theta_j-\Theta_j\cdot\Theta_i).\sfo
=(c_ic_j-c_jc_i).\sfo=0$ that the polynomial superalgebra
$F_{ij}(\Theta_1,\cdots,\Theta_{l+q})$ in $l+q$ variables acts on $\sfO$ trivially. When we put each polynomial $F_{ij}(\Theta_1,\cdots,\Theta_{l+q})$ as a ${\bbc}$-linear combination of $\Theta_1^{a_1}\cdots\Theta_l^{a_l}\Theta_{l+1}^{b_1}\cdots\Theta_{l+q}^{b_q}$, deleting all the terms for which any of the odd elements $\Theta_{l+1},\cdots,\Theta_{l+q}$ occurs,
one can obtain an (ordinary) polynomial in $l$ variables, and denote it by $F'_{ij}(\Theta_1,\cdots,\Theta_{l})$. Since $\Theta_i.\sfo=0$ for $l+1\leqslant  i\leqslant  l+q$ by the preceding remark, then $F'_{ij}(\Theta_1,\cdots,\Theta_{l}).\sfo=0$ for $1\leqslant  i<j\leqslant  l$.

(ii) For $l+1\leqslant  i\leqslant  j\leqslant  l+q$, the element $[\Theta_i,\Theta_j]$ is still even since $\Theta_i,\Theta_j$ are both odd. As $\Theta_i.\sfo=0$ for $l+1\leqslant  i\leqslant  l+q$, we have $[\Theta_i,\Theta_j].\sfo=(\Theta_i\cdot\Theta_j+\Theta_j\cdot\Theta_i).\sfo=0$. By the same discussion as (i) we can also get polynomials $F'_{ij}(\Theta_1,\cdots,\Theta_{l})$ for $l+1\leqslant  i\leqslant  j\leqslant  l+q$, and the one-dimensional property of $\sfO$ entails that 
$F'_{ij}(\Theta_1,\cdots,\Theta_{l}).\sfo=0$.

(iii) For $1\leqslant  i\leqslant  l<j\leqslant  l+q$, the element $[\Theta_i,\Theta_j]$ is odd since $\Theta_i\in U({\ggg},e)_{\bar0}$ and $\Theta_j\in U({\ggg},e)_{\bar1}$. As $\Theta_i.\sfo=0$ for $l+1\leqslant  i\leqslant  l+q$, we have $[\Theta_i,\Theta_j].\sfo=(\Theta_i\cdot\Theta_j-\Theta_j\cdot\Theta_i).\sfo=0$, which entails that 
$F_{ij}(\Theta_1,\cdots,\Theta_{l+q})$ acts on $\sfO$ trivially. It is immediate from $[\Theta_i,\Theta_j]=F_{ij}(\Theta_1,\cdots,\Theta_{l+q})$ that all $F_{ij}(\Theta_1,\cdots,\Theta_{l+q})$'s are odd elements in $U(\ggg,e)$. Therefore, when we put each polynomial $F_{ij}(\Theta_1,\cdots,\Theta_{l+q})$ as a ${\bbc}$-linear combination of monomials $\Theta_1^{a_1}\cdots\Theta_l^{a_l}\Theta_{l+1}^{b_1}\cdots\Theta_{l+q}^{b_q}$, in each given monomial some odd element $\Theta_k$ with $l+1\leqslant  k\leqslant  l+q$ will occur at least once. Since $\Theta_k.\sfo=0$ for $l+1\leqslant  k\leqslant  l+q$, the equations $F_{ij}(\Theta_1,\cdots,\Theta_{l+q}).\sfo=0$ are trivial for $1\leqslant  i\leqslant  l<j\leqslant  l+q$. In this case no new equations are obtained.

Keep in mind all polynomials $F'_{ij}$ from the above arguments. Actually, since the polynomials $F_{ij}(\Theta_1,\cdots,\Theta_{l+q})$'s give rise to the defining relations of $U(\ggg,e)$, from all above one can conclude that the one-dimensional modules of $U(\ggg,e)$ are completely determined by the polynomials $F'_{ij}(\Theta_1,\cdots,\Theta_{l})$'s.
Set $U({\ggg},e)^{\text{ab}}$ to be the quotient algebra of $U({\ggg},e)$ by $R$, where $R$ is the ideal of $U({\ggg},e)$ generated by all the odd generators $\Theta_{l+1},\cdots,\Theta_{l+q}$ and all commutators $[a, b]$ with $a,b\in U({\ggg},e)$. Then $U({\ggg},e)^{\text{ab}}$ is isomorphic to the algebra ${\bbc}[T_1,\cdots,T_l]/\Lambda$, where ${\bbc}[T_1,\cdots,T_l]$ is an (ordinary) polynomial algebra in $l$ variables, and $\Lambda$  the ideal of ${\bbc}[T_1,\cdots,T_l]$ generated by all $F'_{ij}(T_1,\cdots,T_l)$'s for $1\leqslant  i<j\leqslant  l$ and $l+1\leqslant  i\leqslant  j\leqslant  l+q$.
Such a commutative quotient in the Lie algebra case is studied by Premet in \cite{P7}, which leads to a lot of understanding on the small representations of finite $W$-algebras. Now we exploit this machinery in the super case. In fact, combining with all the discussions above, Hilbert's Nullstellensatz shows that the maximal spectrum $\mathscr{E}:=\text{Specm}\,U({\ggg},e)^{\text{ab}}$ parameterizes the one-dimensional representations of $U({\ggg},e)$. Denoting by $\mathscr{E}(\bbc)$ the set of all common zeros of the polynomials $F'_{ij}$ for $1\leqslant  i<j\leqslant  l$ and $l+1\leqslant  i\leqslant  j\leqslant  l+q$ in the affine space $\mathbb{A}^l_{\bbc}$, we have

\begin{lemma}\label{d_1even}
When $d_1$ is even, the Zariski closed set $\mathscr{E}(\bbc)$ parameterizes the one-dimensional  representations of finite $W$-superalgebra $U({\ggg},e)$.
\end{lemma}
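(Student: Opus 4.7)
The proof is essentially a packaging of the preceding case analysis (i)--(iii) together with Hilbert's Nullstellensatz. The plan is to exhibit an explicit bijection between one-dimensional $U(\ggg,e)$-modules and points of $\mathscr{E}(\bbc)$, and then identify $\mathscr{E}(\bbc)$ with $\mathscr{E} = \mathrm{Specm}\,U(\ggg,e)^{\mathrm{ab}}$.

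For the first direction, starting from a one-dimensional module $\sfO=\bbc\sfo$, parity forces $\Theta_k.\sfo=0$ for $l+1\leq k\leq l+q$, and the even generators must act by scalars $\Theta_i.\sfo=c_i\sfo$ for $1\leq i\leq l$. Evaluating both sides of each defining relation $[\Theta_i,\Theta_j]=F_{ij}(\Theta_1,\ldots,\Theta_{l+q})$ of \eqref{refine1} on $\sfo$ and using the crucial observation that any monomial in the $\Theta_k$'s containing at least one odd factor annihilates $\sfo$, only the ``purely even'' truncation $F'_{ij}$ survives on the right-hand side. Cases (i) and (ii) from the preceding discussion show the left-hand side vanishes, while case (iii) produces no extra constraint, so $(c_1,\ldots,c_l)$ is a common zero of all the $F'_{ij}$'s and hence lies in $\mathscr{E}(\bbc)$.

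For the reverse direction, given $(c_1,\ldots,c_l)\in\mathscr{E}(\bbc)$, I would define $\sfO=\bbc\sfo$ with $\sfo$ even and prescribe $\Theta_i.\sfo:=c_i\sfo$ for $1\leq i\leq l$, $\Theta_j.\sfo:=0$ for $l+1\leq j\leq l+q$; I then check compatibility with every relation in \eqref{refine1}. Since all mixed monomials in $F_{ij}$ annihilate $\sfo$, each relation reduces to the scalar identity $F'_{ij}(c_1,\ldots,c_l)=0$, which holds by hypothesis. These two constructions are manifestly mutually inverse, giving the bijection between one-dimensional $U(\ggg,e)$-modules and $\mathscr{E}(\bbc)$. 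Because $U(\ggg,e)^{\mathrm{ab}}\cong\bbc[T_1,\ldots,T_l]/\Lambda$ is a finitely generated commutative $\bbc$-algebra with $\Lambda$ generated by the $F'_{ij}$'s, Hilbert's Nullstellensatz identifies $\mathscr{E}$ with $\mathscr{E}(\bbc)$, completing the proof.

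The only subtle point, and where I expect the main care to be needed, is verifying that the truncations $F'_{ij}$ really capture \emph{all} constraints imposed on the tuple $(c_1,\ldots,c_l)$ by the full system \eqref{refine1}; this rests squarely on the parity observation that any product of generators containing an odd $\Theta_k$ kills $\sfo$, which both makes the relations of type (iii) automatic and forces the relations of types (i)--(ii) to collapse to exactly the generators of $\Lambda$. Once this bookkeeping is settled, the rest of the argument is routine.
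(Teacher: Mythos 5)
Your proposal is correct and takes essentially the same route as the paper: the paper establishes Lemma~\ref{d_1even} precisely by the preceding case analysis (i)--(iii), the observation that any PBW monomial containing an odd $\Theta_k$ annihilates $\sfo$, the resulting reduction of each relation to the even truncation $F'_{ij}$, and the Nullstellensatz identification of $\mathscr{E}(\bbc)$ with $\mathrm{Specm}\,U(\ggg,e)^{\mathrm{ab}}\cong\mathrm{Specm}\,\bbc[T_1,\dots,T_l]/\Lambda$. The only difference is cosmetic: you spell out the forward and reverse directions as an explicit bijection, whereas the paper states the conclusion after the discussion without a freestanding proof block.
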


Note that for $1\leqslant  i<j\leqslant  l$, or $l+1\leqslant  i\leqslant  j\leqslant  l+q$,  all the coefficients of $F_{ij}$'s are over the admissible ring $A$. Thus all the coefficients of $F'_{ij}$'s are also over $A$ by the definition.
Set $^p{\hskip-0.05cm}F'_{ij}:=F'_{ij}\otimes_A{\bbk}$, the polynomials over ${\bbk}$, and denote by $\mathscr{E}({\bbk})$ the set of all common zeros of the polynomials $^p{\hskip-0.05cm}F_{ij}'$ in the affine space $\mathbb{A}^l_{\bbk}$ with $1\leqslant  i<j\leqslant  l$, or $l+1\leqslant  i\leqslant  j\leqslant  l+q$. Since the transition subalgebra $T({\ggg}_{\bbk},e)$ over ${\bbk}$ is induced from the $\bbc$-algebra $U({\ggg},e)$ by ``modular $p$ reduction'', Theorem \ref{transition} and Lemma \ref{d_1even} show that  the Zariski closed set $\mathscr{E}({\bbk})$ parametrises the one-dimensional  representations of the ${\bbk}$-algebra $T({\ggg}_{\bbk},e)$. By the same arguments as in \cite[Theorem 2.2(a)]{P7}, one can verify that

\begin{lemma}\label{trans1} Assume that  $d_1$ is even. If the finite $W$-superalgebra $U({\ggg},e)$ over ${\bbc}$ affords one-dimensional  representations, then the transition subalgebra $T({\ggg}_{\bbk},e)$ over ${\bbk}=\overline{\mathbb{F}}_p$ also admits one-dimensional representations.
\end{lemma}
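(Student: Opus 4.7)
The plan is to perform a straightforward reduction modulo $p$ on the parametrization supplied by Lemma \ref{d_1even}. First, by hypothesis $U(\ggg,e)$ affords a one-dimensional representation $\sfO=\bbc\sfo$, so from the discussion preceding Lemma \ref{d_1even} I obtain scalars $c_1,\ldots,c_l\in\bbc$ with $\Theta_i.\sfo=c_i\sfo$ for $1\le i\le l$, while $\Theta_j.\sfo=0$ for $l+1\le j\le l+q$ holds automatically by parity, and these scalars satisfy $F'_{ij}(c_1,\ldots,c_l)=0$ for all relevant pairs $(i,j)$ with $1\le i<j\le l$ or $l+1\le i\le j\le l+q$. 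In other words, $\mathbf{c}=(c_1,\ldots,c_l)\in\mathscr{E}(\bbc)$.

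Next, I enlarge the admissible ring $A$ to $A':=A[c_1,\ldots,c_l]$. This is still a finitely generated $\bbz$-subalgebra of $\bbc$ satisfying the admissibility constraints from \S\ref{2.2.1}, since those are stable under finite ring extensions; moreover, by the remark following the definition of admissible rings, $\Pi(A')$ still contains almost all primes in $\mathbb{N}$. Now fix any $p\in\Pi(A')$ with $p\gg 0$, and choose $\mathfrak{P}\in\text{Specm}A'$ of residue characteristic $p$; then $A'/\mathfrak{P}$ embeds into $\bbk=\overline{\mathbb{F}}_p$, and I let $\bar c_i\in\bbk$ denote the image of $c_i$. Since all coefficients of the $F_{ij}$'s, and hence of the $F'_{ij}$'s, lie in $A'$ (as arranged at the end of \S\ref{2.2.2}), reducing the identity $F'_{ij}(\mathbf{c})=0$ modulo $\mathfrak{P}$ yields
\[
{}^p{\hskip-0.05cm}F'_{ij}(\bar c_1,\ldots,\bar c_l)=0
\]
for every relevant pair $(i,j)$. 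Hence $\bar{\mathbf{c}}=(\bar c_1,\ldots,\bar c_l)\in\mathscr{E}(\bbk)$, which by the parametrization stated immediately before the lemma (derived from Theorem \ref{transition}) supplies a one-dimensional representation of $T(\ggg_\bbk,e)$, on which $\bar\Theta_i$ acts as multiplication by $\bar c_i$ for $1\le i\le l$ and $\bar\Theta_j$ acts as zero for $l+1\le j\le l+q$.

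The main technical burden, rather than a genuine obstruction, is bookkeeping: verifying that the enlarged ring $A'$ retains every admissibility hypothesis needed for the construction of $T(\ggg_\bbk,e)$ and its defining relations, and that $\Pi(A')$ remains cofinite in $\mathbb{N}$. This is handled exactly as in Premet's original argument for \cite[Theorem 2.2(a)]{P7}. The super aspect introduces no fresh difficulty in the even case: the odd generators $\Theta_{l+1},\ldots,\Theta_{l+q}$ automatically act trivially on any one-dimensional module by parity, so the entire reduction collapses to checking a purely even polynomial system in $\bbc[T_1,\ldots,T_l]$, and Premet's argument for finite $W$-algebras transports verbatim to the present setting.
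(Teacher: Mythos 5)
Your proposal is correct and follows the same route the paper intends: it uses Lemma \ref{d_1even} to reduce the question to nonemptiness of the $A$-defined varieties $\mathscr{E}(\bbc)$ and $\mathscr{E}(\bbk)$, and then carries out Premet's enlargement-and-reduction argument from \cite[Theorem 2.2(a)]{P7} — adjoin the coordinates $c_1,\ldots,c_l$ of a $\bbc$-point to the admissible ring and reduce modulo a suitable maximal ideal — exactly as the paper indicates should be done. One cosmetic remark: $A'=A[c_1,\ldots,c_l]$ need not be a \emph{finite} extension of $A$ (the $c_i$ may be transcendental over $\mathrm{Frac}(A)$), but it is a finitely generated one, which is all that is needed for $A'$ to remain an admissible finitely generated $\bbz$-subalgebra of $\bbc$ with $\Pi(A')$ cofinite.
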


Now we can talk about the small representations for the reduced enveloping algebra of a basic Lie superalgebra. The following result is an immediate consequence of Proposition \ref{transitionforminimaldim}(1) and Lemma \ref{trans1}.

\begin{lemma}\label{mindim1}
When $d_1$ is even, if the finite $W$-superalgebra $U({\ggg},e)$ over ${\bbc}$ affords a one-dimensional  representation, then for $p\gg0$ there exists $\eta\in\chi+(\mathfrak{m}_{\bbk}^\bot)_{\bar{0}}$  such that the reduced enveloping algebra $U_\eta({\ggg}_{\bbk})$ admits irreducible representations of dimension $p^{\frac{d_0}{2}}2^{\frac{d_1}{2}}$.
\end{lemma}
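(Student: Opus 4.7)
The plan is to chain together the two results the excerpt explicitly cites. First I would invoke Lemma~\ref{trans1}: starting from the hypothesis that $U(\ggg,e)$ admits a one-dimensional representation over $\bbc$, this lemma transports that one-dimensional representation through the ``reduction modulo $p$'' procedure to produce, for all sufficiently large $p$, a one-dimensional representation of the transition subalgebra $T(\ggg_\bbk,e)$. Concretely, a one-dimensional representation of $U(\ggg,e)$ corresponds to a $\bbc$-point of the variety $\mathscr{E}(\bbc)$ cut out by the polynomials $F'_{ij}$ over the admissible ring $A$, and since these polynomials reduce to the $^p\!F'_{ij}$ defining $\mathscr{E}(\bbk)$, for $p\gg 0$ the reduction yields a $\bbk$-point of $\mathscr{E}(\bbk)$, that is, a one-dimensional $T(\ggg_\bbk,e)$-module.

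Next I would feed this into Proposition~\ref{transitionforminimaldim}(1) in the direction ``(1-i) $\Rightarrow$ (1-ii)''. Since $d_1$ is even, the proposition asserts the equivalence between $T(\ggg_\bbk,e)$ admitting a one-dimensional representation and the existence of some $\eta\in\chi+(\mathfrak{m}_\bbk^\perp)_{\bz}$ for which $U_\eta(\ggg_\bbk)$ has an irreducible module of dimension $p^{d_0/2}2^{d_1/2}$. The forward direction uses Theorem~\ref{keyisotheorem}(3) to extend the one-dimensional $T(\ggg_\bbk,e)$-module to a one-dimensional $U(\ggg_\bbk,e)$-module by picking any central character $\rho_\bbk(Z_p)\cong Z_p(\widetilde\ppp_\bbk)\cong \bbk[(\chi+(\mathfrak{m}_\bbk^\perp)_{\bz})^{(1)}]$, then identifies the resulting quotient with the reduced $W$-superalgebra $U_\eta(\ggg_\bbk,e)$ via Lemma~\ref{isoofreducedW-alg}, and finally applies the Morita-type equivalence of Theorem~\ref{reducedfunctors} together with formula \eqref{dimMn} to inflate the one-dimensional $U_\eta(\ggg_\bbk,e)$-module to an irreducible $U_\eta(\ggg_\bbk)$-module of dimension $\dim U_\eta(\mmm_\bbk)\cdot 1 = p^{d_0/2}2^{d_1/2}$.

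Concatenating these two steps delivers the conclusion, and since both ingredients have already been proved in the excerpt, there is no real obstacle; the only care needed is to enlarge $A$ (and thus restrict to $p\gg 0$) so that the admissible ring contains all coefficients of the $F'_{ij}$ and so that the chosen $\bbc$-point of $\mathscr{E}(\bbc)$ has a well-defined reduction in $\mathscr{E}(\bbk)$. Thus the proof amounts to a two-line citation of Lemma~\ref{trans1} followed by the implication (1-i)$\Rightarrow$(1-ii) of Proposition~\ref{transitionforminimaldim}.
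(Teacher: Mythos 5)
Your proposal matches the paper's argument exactly: the paper states Lemma \ref{mindim1} as an immediate consequence of Lemma \ref{trans1} (lifting the one-dimensional $U(\ggg,e)$-module to a one-dimensional module of the transition subalgebra $T(\ggg_\bbk,e)$ via the Zariski closed sets $\mathscr{E}(\bbc)$ and $\mathscr{E}(\bbk)$) followed by the implication (1-i)$\Rightarrow$(1-ii) of Proposition \ref{transitionforminimaldim}. Your elaboration of the internal mechanism of those two cited results is accurate, so the proof is correct and takes essentially the same route as the paper.
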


\subsection{Confirmation of Conjecture \ref{Conject1} for $\gl(M|N)$ and $\sssl(M|N)$}\label{4.1.3}
 In \cite{WZ} Wang and Zhao gave some explicit description for the Dynkin gradings of basic Lie superalgebras of all types.  In particular, they showed that $\dim\mathfrak{gl}(M|N)^e_{\bar1}$ is always an even number  for any nilpotent element $e\in\mathfrak{gl}(M|N)_{\bar0}$ (cf. \cite[\S3.2]{WZ}). As the dimension of $\mathfrak{gl}(M|N)_{\bar1}$ is also even, then $d_1=\dim\gl(M|N)_\bo-\dim\gl(M|N)^e_\bo$ is always an even number. It is notable that although Wang-Zhao's origin arguments are carried over the field $\bbk$ in positive characteristic (see Table 1 in \S\ref{basicLiesuper}), as showed in \cite[Remark 3.2]{WZ}, all the discussions there are still valid for the case of complex numbers. Actually, in order to confirm Conjecture \ref{Conject1} for these complex finite $W$-superalgebras of $\gl(M|N)$ and $\sssl(M|N)$, we will firstly consider whose transition subalgebra $T({\ggg}_{\bbk},e)$ in positive characteristic.
\begin{lemma}\label{glsl}
Let ${\ggg}_{{\bbk}}$ be a Lie superalgebra of type $\mathfrak{gl}(M|N)$ or $\mathfrak{sl}(M|N)$ with $M,N\in{\bbz}_+$. For any nilpotent element $e\in({\ggg}_{{\bbk}})_{\bar0}$, the ${\bbk}$-algebra $T({\ggg}_{\bbk},e)$ affords a one-dimensional  representation.
\end{lemma}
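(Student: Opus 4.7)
The plan is to reduce Lemma~\ref{glsl} to the analogous assertion over $\bbc$ via Lemma~\ref{trans1}. Since $\dim \gl(M|N)^e_\bo$ is always even (see \cite[\S3.2]{WZ}), we are in the $d_1$-even regime, so by Lemma~\ref{d_1even} one-dimensional representations of $U(\ggg,e)$ over $\bbc$ are parametrized by the Zariski closed set $\mathscr{E}(\bbc) = \text{Specm}\,U(\ggg,e)^{\text{ab}}$. Thus it suffices to prove $\mathscr{E}(\bbc)\neq\varnothing$; once this is done, Lemma~\ref{trans1} transports a one-dimensional module up to $T(\ggg_\bbk,e)$, which is what is required.

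First I would handle $\ggg=\gl(M|N)$. After conjugating by an element of the even algebraic group $(G)_\ev$, we may assume $e=e_\lambda$ is a nilpotent of prescribed Jordan type $(p_1,\dots,p_{n+1})$ with $e_\lambda=e_M\oplus e_N$ in Peng's normalized form (so that $e_M\in\gl(M|0)$ is principal nilpotent and every Jordan block of $e_N$ has size $\geqslant M$). Peng's theorem \cite[Theorem~9.1]{Peng3} then supplies an isomorphism $U(\ggg,e)\cong Y^k_{1|n}(\sigma)$ onto a truncated shifted super Yangian of level $k=p_{n+1}$, generated by $\{D_i^{(r)}, E_i^{(r)}, F_i^{(r)}\}$ subject to the relations (2.3)--(2.15) of \cite{Peng3}, with $E_1^{(r)}, F_1^{(r)}$ the only odd generators.

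Next I would compute $U(\ggg,e)^{\text{ab}}$ explicitly. Let $d_i^{(r)}, e_i^{(r)}, f_i^{(r)}$ denote the images of the generators in the abelianization; the odd generators $E_1^{(r)}, F_1^{(r)}$ are killed by the definition of $U(\ggg,e)^{\text{ab}}$. Specialising the defining relations \cite[(2.6), (2.7)]{Peng3} at $r=1$ forces $e_i^{(s)}=f_i^{(s)}=0$ for all $i\geqslant 2$ and $s\geqslant 1$. The remaining relations among the $d_i^{(r)}$'s, after replacing each dual element $d'^{(r)}_i$ by its negative, agree precisely with the abelianization of Brundan--Kleshchev's truncated shifted Yangian $Y_{n+1,k}(\sigma)$ for the ordinary Lie algebra $\ggg'=\gl(M+N)$ as presented in \cite[\S2]{BK2}. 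By Premet's calculation \cite[Theorem~3.3]{P7}, this commutative algebra is a polynomial algebra in $k=p_{n+1}$ variables, hence nontrivial, and Hilbert's Nullstellensatz yields $\mathscr{E}(\bbc)\neq\varnothing$.

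For $\ggg=\sssl(M|N)$ I would use the splitting $\gl(M|N)=\sssl(M|N)\oplus \bbc\cdot I$ to relate the two finite $W$-superalgebras: the abelianization for $\sssl(M|N)$ is obtained from that for $\gl(M|N)$ by quotienting out the image of the central scalar-type generator, exactly as in \cite[Corollary~3.2]{P7} (applied via \cite[\S8]{Peng3}). The result is still a polynomial algebra, now in $k-1$ variables, so $\mathscr{E}(\bbc)$ remains nonempty and the $\sssl$ case also admits one-dimensional representations. The main obstacle will be verifying the precise matching between Peng's super-Yangian relations and Brundan--Kleshchev's ordinary-Yangian relations after abelianization; once the sign conventions and the definitions of the dual elements $d'^{(r)}_i$ are aligned, Premet's polynomiality result imports verbatim and the existence of the one-dimensional representation follows formally.
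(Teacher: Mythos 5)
Your proposal reverses the paper's logical flow: the paper first obtains the modular statement in characteristic $p$ from the direct construction in \cite{ZS3} (which produces an irreducible $U_\chi(\ggg_\bbk)$-module of dimension $p^{d_0/2}2^{d_1/2}$ by parabolic induction for \emph{every} nilpotent $\chi$), and then invokes Proposition~\ref{transitionforminimaldim}(1) to convert this into a one-dimensional representation of $T(\ggg_\bbk,e)$; the complex statement (Proposition~\ref{glsl2}) is only deduced afterwards via Lemma~\ref{transtoc}. You instead try to prove the complex statement first through Peng's super Yangian presentation and then push it down via Lemma~\ref{trans1}, which would also be a legitimate route if it worked in full generality.

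The gap is in the very first reduction. You write that after conjugating by an element of the even group you may assume that $e=e_\lambda=e_M\oplus e_N$ with $e_M$ principal nilpotent in $\gl(M|0)$ and every Jordan block of $e_N$ of size $\geqslant M$. This is not a conjugation: the Jordan type of $e$ on the two even blocks is a conjugation invariant, and Peng's normal form is an extremely restrictive class of Jordan types (only one block on the $M$-side, and all blocks on the $N$-side at least as large as $M$). For a generic nilpotent $e\in\gl(M|N)_\bz$, Theorem 9.1 of \cite{Peng3} simply does not apply and there is no isomorphism $U(\ggg,e)\cong Y^k_{1|n}(\sigma)$. Thus your argument establishes $\mathscr{E}(\bbc)\neq\varnothing$ only for those special Jordan types, not for the arbitrary $e$ required in the statement. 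The remainder of your computation of $U(\ggg,e)^{\text{ab}}$ (killing the odd generators, specializing (2.6)--(2.7) at $r=1$, matching to Brundan--Kleshchev and quoting Premet's polynomiality) is fine where it applies, and the $\sssl$ case via the split $\gl(M|N)=\sssl(M|N)\oplus\bbc I$ is also fine, but these only yield the lemma under the restrictive hypothesis on $e$. To close the gap you would either need a super Yangian presentation of $U(\gl(M|N),e)$ valid for arbitrary Jordan type (which does not exist in \cite{Peng3}), or replace the Yangian input with the modular construction of small $U_\chi(\ggg_\bbk)$-modules in \cite{ZS3}, which is what the paper actually does.
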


\begin{proof}
For the Lie superalgebra ${\ggg}_{{\bbk}}=\mathfrak{gl}(M|N)$ or $\mathfrak{sl}(M|N)$,
in \cite{ZS3} the authors showed that the reduced enveloping algebra $U_\chi({\ggg}_{\bbk})$ admits an irreducible representation of dimension $p^{\frac{d_0}{2}}2^{\frac{d_1}{2}}$ over ${\bbk}=\overline{\mathbb{F}}_p$ under the assumption that (i) $p>2$ when ${\ggg}=\mathfrak{gl}(M|N)$; (ii) $p>2$ and $p\nmid(M-N)$ when ${\ggg}=\mathfrak{sl}(M|N)$. Since $d_1$ is even in this case,
we have $\text{\underline{dim}}\,\mathfrak{m}=(\frac{d_0}{2},\frac{d_1}{2})$ by \textsection\ref{2.2.1}.
Hence Proposition \ref{transitionforminimaldim}(1) yields that the $\bbk$-algebra $T({\ggg}_{\bbk},e)$ admits a one-dimensional  representation.
\end{proof}
The following result is an immediate consequence of field theory.
\begin{lemma}\label{transtoc}
Let ${\ggg}$ be a basic Lie superalgebra over ${\bbc}$. When $d_1$ is even, if the transition subalgebra $T({\ggg}_{\bbk},e)$ with ${\bbk}=\overline{\mathbb{F}}_p$ affords one-dimensional  representations for infinitely many $p\in\Pi(A)$, then the finite $W$-superalgebra $U({\ggg},e)$ over ${\bbc}$ has a one-dimensional  representation.
\end{lemma}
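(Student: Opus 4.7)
The plan is to argue by contradiction using the geometric interpretation of one-dimensional representations established in Lemma~\ref{d_1even} together with a spreading-out / reduction mod $p$ argument in the style of the weak Nullstellensatz. Recall from \S\ref{4.1.1} that the polynomials $F'_{ij}(T_1,\dots,T_l)$ (for $1\leqslant i<j\leqslant l$ and $l+1\leqslant i\leqslant j\leqslant l+q$) cut out the Zariski closed set $\mathscr{E}(\bbc)\subseteq \mathbb{A}^l_{\bbc}$ parametrising one-dimensional representations of $U(\ggg,e)$, and that the reductions ${}^{p}{\hskip-0.05cm}F'_{ij}$ over $\bbk$ cut out the analogous set $\mathscr{E}(\bbk)\subseteq \mathbb{A}^l_{\bbk}$ parametrising one-dimensional representations of $T(\ggg_{\bbk},e)$. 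Moreover all coefficients of the $F'_{ij}$ lie in the admissible ring $A$.

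First I would suppose for contradiction that $\mathscr{E}(\bbc)=\varnothing$. By Hilbert's Nullstellensatz, $1$ lies in the ideal of $\bbc[T_1,\dots,T_l]$ generated by the $F'_{ij}$; hence one can write
\[
1 \;=\; \sum_{i,j} h_{ij}(T_1,\dots,T_l)\, F'_{ij}(T_1,\dots,T_l)
\]
for suitable $h_{ij}\in\bbc[T_1,\dots,T_l]$. Since only finitely many coefficients occur in this identity, I can enlarge the admissible ring $A$ (inverting finitely many elements and adjoining finitely many elements of $\bbc$) so that all $h_{ij}$ may be assumed to lie in $A[T_1,\dots,T_l]$, while still having $A$ admissible in the sense of \cite[Definition 2.4]{ZS2}. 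This enlargement only removes finitely many primes from $\Pi(A)$, so infinitely many primes for which $T(\ggg_{\bbk},e)$ admits a one-dimensional representation remain.

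Next I would pick any such $p\in\Pi(A)$ and a maximal ideal $\mathfrak{P}$ of $A$ with $A/\mathfrak{P}\hookrightarrow \bbk=\overline{\mathbb{F}}_p$. Reducing the displayed identity modulo $\mathfrak{P}$ gives
\[
1 \;=\; \sum_{i,j} {}^{p}{\hskip-0.05cm}h_{ij}(T_1,\dots,T_l)\, {}^{p}{\hskip-0.05cm}F'_{ij}(T_1,\dots,T_l)
\]
in $\bbk[T_1,\dots,T_l]$, and hence $\mathscr{E}(\bbk)=\varnothing$. By the description of one-dimensional representations of $T(\ggg_{\bbk},e)$ via $\mathscr{E}(\bbk)$, this contradicts the hypothesis that $T(\ggg_{\bbk},e)$ affords a one-dimensional representation for this $p$. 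Therefore $\mathscr{E}(\bbc)\neq\varnothing$, and Lemma~\ref{d_1even} then yields a one-dimensional representation of $U(\ggg,e)$.

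The main technical point — and the only place where any care is needed — is the enlargement of $A$: one must verify that after inverting the finitely many denominators appearing in the Nullstellensatz identity the ring still satisfies all the admissibility conditions (in particular, the bad primes and the Gram determinant are still inverted, and all structure constants, the $\lambda^{k}_{\mathbf{a},\mathbf{b},\mathbf{c},\mathbf{d}}$ of \eqref{generators} and the coefficients of the $F_{ij}$ of \eqref{refine1}, still lie in $A$). This is automatic because admissibility is preserved under finite localisation of $A$, and the hypothesis ``for infinitely many $p\in\Pi(A)$'' is robust to removing the finite set of excluded primes.
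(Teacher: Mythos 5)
Your overall strategy is the right one, and it is essentially the argument the paper delegates to Premet \cite[Corollary 2.1]{P7}: if $\mathscr{E}(\bbc)=\varnothing$, use the Nullstellensatz to write $1=\sum h_{ij}F'_{ij}$, spread out over a finitely generated ring, and reduce modulo $p$ to contradict $\mathscr{E}(\bbk)\neq\varnothing$. However, as written there is a gap at the reduction step, and it is not the point you single out as delicate (admissibility surviving a finite enlargement of $A$ is indeed harmless). The problem is compatibility of reductions: you take the $h_{ij}$ in $\bbc[T_1,\dots,T_l]$, adjoin their (possibly transcendental) coefficients to $A$, and then reduce at \emph{some} maximal ideal $\mathfrak{P}$ of the enlarged ring lying over $p$. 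The hypothesis, on the other hand, asserts $\mathscr{E}(\bbk)\neq\varnothing$ for the specific base change $A\rightarrow\bbk$ used to define $T(\ggg_\bbk,e)$ and the $^p{\hskip-0.05cm}F'_{ij}$ for that prime. A maximal ideal of the enlarged ring restricts to a maximal ideal of $A$ over $p$ which need not be the one implicit in the hypothesis, and in general the zero locus of the reduced polynomials \emph{does} depend on this choice (already a ``constant'' polynomial equal to a transcendental element of $A$ can reduce to $0$ at one maximal ideal over $p$ and to a nonzero scalar at another). So the displayed identity over $\bbk$ does not yet contradict the hypothesis.

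The gap is easily repaired, and the repair also simplifies the proof. The paper records that the $\lambda^k_{\mathbf{a},\mathbf{b},\mathbf{c},\mathbf{d}}$ and all coefficients of the $F_{ij}$, hence of the $F'_{ij}$, lie in $\bbq$. Consequently $1\in(F'_{ij})\,\bbc[T_1,\dots,T_l]$ already forces $1\in(F'_{ij})\,\bbq[T_1,\dots,T_l]$ (ideal membership of $1$ is a linear-algebra condition that descends to the field of definition). Clearing denominators gives an identity $N=\sum \widetilde{h}_{ij}F'_{ij}$ with $N$ a nonzero integer and $\widetilde{h}_{ij}\in\bbz[\tfrac{1}{M}][T_1,\dots,T_l]$ for a suitable integer $M$; since the reduction of $\bbz[\tfrac{1}{M}]$ to $\bbk$ is canonical (independent of the chosen maximal ideal), for any hypothesis prime $p\nmid NM$ this identity reduces to show that the $^p{\hskip-0.05cm}F'_{ij}$ generate the unit ideal, i.e.\ $\mathscr{E}(\bbk)=\varnothing$, which is the desired contradiction with no enlargement of $A$ at all. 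Alternatively, keep your enlargement but note explicitly that rationality of the coefficients makes $^p{\hskip-0.05cm}F'_{ij}$, and hence $\mathscr{E}(\bbk)$, independent of the choice of maximal ideal over $p$; either way the argument closes, and it then coincides with the intended proof.
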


\begin{proof}
Recall Lemma~\ref{d_1even} and what follows show that the one-dimensional representations of finite $W$-superalgebra $U(\ggg,e)$ over $\bbc$ can be parametrised by the Zariski closed set $\mathscr{E}({\bbc})$, and the transition subalgebra $T(\ggg_\bbk,e)$ by $\mathscr{E}({\bbk})$. The lemma follows the same means as the Lie algebra case \cite[Corollary 2.1]{P7} by the knowledge of Galois theory, thus will be omitted here.
\end{proof}

Now we are in a position to introduce the main result of this subsection.

\begin{prop}\label{glsl2}
Let ${\ggg}=\mathfrak{gl}(M|N)$ or $\mathfrak{sl}(M|N)$ ($M,N\in{\bbz}_+$) over ${\bbc}$. For any nilpotent element $e\in{\ggg}_{\bar0}$, the finite $W$-superalgebra $U({\ggg},e)$ affords a one-dimensional  representation.
\end{prop}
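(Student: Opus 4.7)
The plan is to combine Lemma~\ref{glsl} with Lemma~\ref{transtoc} directly; the proof is essentially a packaging argument. First I would verify that the hypothesis ``$d_1$ is even'' of Lemma~\ref{transtoc} is automatically satisfied in our setting: the discussion at the beginning of \S\ref{4.1.3} (following \cite[\S3.2]{WZ}, whose arguments transfer to $\bbc$ by \cite[Remark 3.2]{WZ}) guarantees that $\dim\ggg^e_{\bar 1}$ is always even for $\ggg = \gl(M|N)$, and since $\dim\ggg_{\bar 1} = 2MN$ is even, the number $d_1 = \dim\ggg_{\bar 1} - \dim\ggg^e_{\bar 1}$ is even. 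The same parity conclusion for $\sssl(M|N)$ follows because $\sssl(M|N)_{\bar 1} = \gl(M|N)_{\bar 1}$ and the centralizers only differ in the even part by at most a one-dimensional central summand.

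Next I would apply Lemma~\ref{glsl} in positive characteristic: for each prime $p \in \Pi(A)$ with $p$ large enough (in particular $p > 2$, and $p \nmid (M-N)$ in the $\sssl$ case), the transition subalgebra $T(\ggg_\bbk, e)$ over $\bbk = \overline{\mathbb{F}}_p$ affords a one-dimensional representation. Since $\Pi(A)$ contains almost all primes in $\mathbb{N}$ (as recalled in \S\ref{2.2.1}), the transition subalgebra $T(\ggg_\bbk, e)$ admits a one-dimensional representation for infinitely many $p$.

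Finally, with $d_1$ even and one-dimensional representations of $T(\ggg_\bbk, e)$ existing for infinitely many $p \in \Pi(A)$, Lemma~\ref{transtoc} immediately yields that the complex finite $W$-superalgebra $U(\ggg, e)$ affords a one-dimensional representation, completing the proof.

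The argument contains essentially no obstacle at this stage: all the conceptual work has been done in establishing the transition machinery (Theorem~\ref{keyisotheorem} and Proposition~\ref{transitionforminimaldim}), in the Zariski-closed-set parametrization of Lemma~\ref{d_1even}, and in the modular existence result for type $A$ from \cite{ZS3} that underlies Lemma~\ref{glsl}. The only potentially subtle point worth double-checking in the write-up is that the admissible ring $A$ for the pair $(\ggg, e)$ can be chosen so that the relevant prime restrictions from \cite{ZS3} ($p > 2$, and $p \nmid M - N$ in the $\sssl$ case) exclude only finitely many primes from $\Pi(A)$; this is clear, so the passage ``one-dimensional representations of $T(\ggg_\bbk, e)$ for infinitely many $p$'' holds without difficulty.
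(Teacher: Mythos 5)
Your proposal is correct and follows exactly the paper's route: the paper proves Proposition~\ref{glsl2} precisely by combining Lemma~\ref{glsl} (one-dimensional representations of the transition subalgebras $T(\ggg_\bbk,e)$, obtained from the modular result of \cite{ZS3} via Proposition~\ref{transitionforminimaldim}(1)) with Lemma~\ref{transtoc}. Your added checks — that $d_1$ is automatically even for $\gl(M|N)$ and $\sssl(M|N)$ and that the prime restrictions from \cite{ZS3} exclude only finitely many primes from $\Pi(A)$ — are sound and merely make explicit what the paper leaves implicit.
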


\begin{proof}
The proposition readily follows from Lemma~\ref{glsl} and Lemma~\ref{transtoc}.
\end{proof}

\section{Conjectural two-dimensional representations for finite $W$-superalgebras when $d_1$ is odd}\label{conjecturesecodd}

In this section we will always assume that $d_1$ is odd.

\subsection{On the minimal-dimension conjecture when $d_1$ is odd}

In virtue of Proposition~\ref{no1}, we can formulate the following conjecture on the minimal-dimension representations of $U({\ggg},e)$ when $d_1$ is odd.
\begin{conj}\label{Conject2}
When $d_1$ is odd, the finite $W$-superalgebra $U({\ggg},e)$ over $\bbc$ affords a two-dimensional representation of minimal dimension.
\end{conj}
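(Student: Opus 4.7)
The plan is to imitate the framework of Section \ref{conjectureseceven}, carefully accommodating the distinguished odd element $\Theta_{l+q+1}$ whose presence already guarantees $\dim V \geqslant 2$ for any $U(\ggg,e)$-module $V$ (Proposition \ref{no1}). First I would prove that any two-dimensional $U(\ggg,e)$-module is automatically irreducible of type $Q$: irreducibility follows because $U(\ggg,e)$ has no one-dimensional modules when $d_1$ is odd, so every Jordan--H\"older factor of such a module must itself be $2$-dimensional. The type $Q$ property is then forced by the Clifford-like identity $\Theta_{l+q+1}^2 = \tfrac{1}{2}\,\text{id}$ together with the super Schur lemma: I would explicitly verify that the image of $U(\ggg,e)$ in $\text{End}_\bbc(V)$ lies in the Clifford superalgebra $\mathcal{C}:=\bbc\langle\theta\rangle/(\theta^2-\tfrac{1}{2})$ with $\theta$ odd, by computing on an eigenbasis of $\Theta_{l+q+1}$ and using the constraints $[\Theta_i,\Theta_{l+q+1}] = F_{i,l+q+1}(\Theta_1,\ldots,\Theta_{l+q+1})$ to rule out the type $M$ scenario.

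Having fixed this structural picture, I would parameterize the $2$-dimensional representations by scalars $c_i\in\bbc$ ($1\leqslant i\leqslant l$) and $\gamma_j\in\bbc$ ($1\leqslant j\leqslant q$) via the assignments $\Theta_i\mapsto c_i\cdot\text{id}$, $\Theta_{l+j}\mapsto\gamma_j\theta$, and $\Theta_{l+q+1}\mapsto\theta$. Substituting these into the defining relations \eqref{refine1}--\eqref{fij2} and separating even and odd Clifford components via $\theta^2=\tfrac{1}{2}$, each super-polynomial relation $F_{ij}$ yields a pair of ordinary polynomial equations in the variables $T_i, S_j$ representing $c_i,\gamma_j$. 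Let $\Lambda^Q$ denote the ideal of $\bbc[T_1,\ldots,T_l,S_1,\ldots,S_q]$ generated by all of these polynomials, and set $U(\ggg,e)^Q := \bbc[T_1,\ldots,T_l,S_1,\ldots,S_q]/\Lambda^Q$. By Hilbert's Nullstellensatz, the conjecture becomes equivalent to the non-emptiness of the Zariski closed set $\mathscr{E}^Q(\bbc) := \text{Specm}\,U(\ggg,e)^Q \subseteq \mathbb{A}^{l+q}_\bbc$.

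To exhibit a point of $\mathscr{E}^Q(\bbc)$, my strategy is modular reduction. The admissibility arguments of \S\ref{4.1.1} adapt verbatim to produce the $\bbf_p$-counterparts ${}^p\Lambda^Q$ and the variety $\mathscr{E}^Q(\bbk)$, which now parametrises two-dimensional representations of the transition subalgebra $T(\ggg_\bbk,e)$. A Galois-theoretic argument mimicking Lemma \ref{transtoc} would then reduce the conjecture to showing $\mathscr{E}^Q(\bbk)\neq\emptyset$ for infinitely many $p\in\Pi(A)$. By Proposition \ref{transitionforminimaldim}(2), this is equivalent to exhibiting, for infinitely many such $p$, a linear functional $\eta\in\chi+(\mmm_\bbk^\perp)_{\bar0}$ for which $U_\eta(\ggg_\bbk)$ realises the super Kac--Weisfeiler bound $p^{d_0/2}\,2^{(d_1+1)/2}$.

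The hard part will be establishing this modular attainability independently of the conjecture itself, because the main theorem of the paper derives attainability from the conjecture, so the loop is circular unless one breaks in with independent input. For the classical types where explicit presentations are available --- chiefly the orthosymplectic series $\mathfrak{osp}(M|2n)$ where $d_1$ can be odd --- direct construction of the requisite modules should be feasible in the spirit of the type $A$ verifications in \cite{ZS3}. The principal obstacle is the exceptional range $D(2,1;a)$, $F(4)$, $G(3)$: lacking a Yangian-type presentation of $U(\ggg,e)$ and a super analogue of Losev's completion-theoretic existence results for one-dimensional representations of finite $W$-algebras, one would need either a case-by-case construction of small modules (perhaps via a super-Whittaker functor tailored to produce outputs of Clifford rather than scalar type), or a new structural theorem controlling the number of independent relations cutting out $\Lambda^Q$ so as to ensure $\mathscr{E}^Q(\bbc)$ has non-negative dimension a priori.
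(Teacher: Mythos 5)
You are attempting to prove what the paper explicitly leaves as a conjecture. The paper does not establish Conjecture \ref{Conject2} in general; it proves the structural prerequisites (Propositions \ref{no1} and \ref{typeq}), builds the Zariski-variety machinery (Lemma \ref{defnodd}, Lemma \ref{ck2}, Lemma \ref{cdim2}, Lemma \ref{transtoc2}), and then confirms the conjecture only for $\mathfrak{osp}(1|2n)$ with $e$ regular nilpotent by directly computing $\dim Z_\chi(\lambda)^{\mmm_\bbk}=2$ (Lemma \ref{bon}, Proposition \ref{ii}). Your proposal correctly recognizes this --- your penultimate paragraph names the circularity precisely --- so as an honest description of what a proof would require, it aligns with the paper's own state of knowledge. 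That said, two technical points deserve correction.

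First, your claim that you would ``rule out the type $M$ scenario'' and conclude that the image of $U(\ggg,e)$ lies in the Clifford algebra $\mathcal{C}=\bbc\langle\theta\rangle/(\theta^2-\tfrac12)$ is not what Proposition \ref{typeq} gives you. That proposition establishes that $V$ is type $Q$ as a module over the \emph{refined} subalgebra $W'_\chi=Q_\chi^{\ad\mmm'}$, not over $U(\ggg,e)$ itself. The odd involution $\tau=\sqrt{2}\Theta_{l+q+1}$ supercommutes with $W'_\chi$ precisely because $\Theta_{l+q+1}\in\mmm'$, but $\tau$ does not supercommute with $\Theta_{l+q+1}$ itself (indeed $[\tau,\Theta_{l+q+1}]=2\sqrt{2}\,\Theta_{l+q+1}^2=\sqrt{2}\neq 0$), so $\tau$ is not a $U(\ggg,e)$-endomorphism. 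The supercommutant of the image $T$ of $\Theta_{l+q+1}$ inside $\mathcal{M}_{1,1}$ is a copy of $\bbc\langle\sigma\rangle/(\sigma^2+1)$ distinct from $\bbc\langle T\rangle$; if the image of $W'_\chi$ has a nonzero $\sigma$-component, then adjoining $T$ generates all of $\mathcal{M}_{1,1}$, so a priori $V$ could be type $M$ over $U(\ggg,e)$. The paper sidesteps this by not imposing a Clifford restriction: in Lemma \ref{defnodd} the two-dimensional representations are parameterized by $2(l+q+1)$ coordinates $k_i^0,k_i^1,K_i^0,K_i^1$ (after the parity constraints), which allows each even $\Theta_i$ to have \emph{distinct} eigenvalues on $v$ and $\Theta_{l+q+1}.v$ and each odd $\Theta_{l+j}$ to act by an arbitrary odd $2\times 2$ block, not merely a scalar multiple of $\theta$. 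Your $\mathscr{E}^Q$ with $l+q$ coordinates is therefore a proper closed subvariety of the paper's $\mathscr{E}(\bbc)$, and the implication in Proposition \ref{transitionforminimaldim}(2) is between \emph{all} two-dimensional $T(\ggg_\bbk,e)$-modules and the Kac--Weisfeiler bound, not between the Clifford subclass and the bound. You should work with the paper's unrestricted $\mathscr{E}$; showing $\mathscr{E}^Q\neq\emptyset$ is strictly harder and is not what the transition machinery delivers.

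Second, with the correct parameterization your outline does reproduce the paper's reduction (Lemma \ref{ck2} is the analogue of your ``$\mathbb{F}_p$-counterparts'' step, Lemma \ref{transtoc2} is the Galois passage back to $\bbc$), and the genuine gap remains exactly where you put it: an input independent of Conjecture \ref{conjecture} that exhibits two-dimensional $T(\ggg_\bbk,e)$-modules for infinitely many $p$. The paper supplies such input only for $\mathfrak{osp}(1|2n)$ with $e$ regular, where the baby Verma module is irreducible with $\dim Z_\chi(\lambda)^{\mmm_\bbk}=2$. Extending this to general nilpotent $e$ in the orthosymplectic series, and to the exceptional $D(2,1;\bar a)$, $F(4)$, $G(3)$, is the actual open content of the conjecture; your proposal is clear-eyed about this but does not close it.
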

In the section, we mainly investigate such two-dimensional modules under the assumption that the above conjecture holds, and also confirm the conjecture for the regular nilpotent elements of $\mathfrak{osp}(1|2n)$. Take such a module as in Conjecture \ref{Conject2} and denote it by $V$. Recall that in \cite[\S4.4]{ZS2} we introduced the refined finite $W$-superalgebra $W'_\chi:=Q_\chi^{\ad\mmm'}$ over $\bbc$, a proper subalgebra of $U({\ggg},e)$. In virtue of this algebra, we can formulate a more precise description for the two-dimensional  representation $V$ of the ${\bbc}$-algebra $U({\ggg},e)$.

\begin{prop}\label{typeq}
Any irreducible $U({\ggg},e)$-module in Conjecture~\ref{Conject2} can be regarded  as a $W'_\chi$-module of type $Q$.
\end{prop}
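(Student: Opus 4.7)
The plan is to construct a canonical odd $W'_\chi$-module automorphism of $V$ with nonzero square; together with the identity it spans a Clifford subalgebra of $\End_{W'_\chi}(V)$, exhibiting the type-$Q$ structure. The operator of choice is $\Theta_{l+q+1}=v_{(r+1)/2}\otimes 1_\chi\in U(\ggg,e)$, which is present precisely because $d_1$ is odd and satisfies $\Theta_{l+q+1}^2=\tfrac12\,\mathrm{id}$ by (the proof of) Proposition \ref{no1}.

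The first and crucial step is to verify the super-commutation
$$
\Theta_{l+q+1}\cdot w=(-1)^{|w|}\,w\cdot\Theta_{l+q+1}\quad\text{in }U(\ggg,e),\quad\text{for every }w\in W'_\chi.
$$
Lift $w\in W'_\chi=Q_\chi^{\ad\mmm'}$ to a representative $\tilde w\in U(\ggg)$. The defining $\ad\mmm'$-invariance of $w$, together with the inclusion $v_{(r+1)/2}\in\mmm'$, gives $[v_{(r+1)/2},\tilde w]\in I_\chi$. Translating through the algebra identification $U(\ggg,e)\cong Q_\chi^{\ad\mmm}$ recalled just after Proposition \ref{no1}, this is exactly the displayed identity. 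Consequently, the action of $\Theta_{l+q+1}$ on $V$ is an odd element of $\End_{W'_\chi}(V)$, and it is an automorphism because $\Theta_{l+q+1}^2=\tfrac12\,\mathrm{id}_V\neq 0$; hence the $\bbc$-span of $\mathrm{id}_V$ and $\Theta_{l+q+1}|_V$ is a two-dimensional Clifford subalgebra of $\End_{W'_\chi}(V)$.

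To finish I would argue that $V$ is itself irreducible as a $W'_\chi$-module, so that the Clifford structure above literally witnesses the type-$Q$ dichotomy recalled in the Introduction. Suppose for contradiction that $0\neq W\subsetneq V$ is a $W'_\chi$-submodule; necessarily $\dim W=1$. By the super-commutativity established above, $\Theta_{l+q+1}W$ is also a $W'_\chi$-submodule. If $\Theta_{l+q+1}W\subseteq W$, then $W$ is stable under both $W'_\chi$ and $\Theta_{l+q+1}$, and hence under all of $U(\ggg,e)$ in view of the vector-space decomposition $U(\ggg,e)=W'_\chi\oplus W'_\chi\cdot\Theta_{l+q+1}$ furnished by Theorem \ref{graded W}(2) and the PBW basis of \cite{ZS2}; this contradicts irreducibility of $V$. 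Otherwise $V=W\oplus\Theta_{l+q+1}W$ as $W'_\chi$-modules, the two summands being one-dimensional and of opposite parity, with all odd elements of $W'_\chi$ acting trivially and the super-commutativity of $\Theta_{l+q+1}$ with $W'_\chi$ forcing both summands to afford the same character $\lambda$ of $(W'_\chi)_{\bar 0}$.

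The principal obstacle will be to rule out this splitting alternative. The argument must exploit the defining relations \eqref{refine1}--\eqref{fij2} of $U(\ggg,e)$ whose indices involve $l+q+1$: the polynomials $F_{i,l+q+1}$ for $1\leq i\leq l+q$ and the cross-relations among the odd $\Theta_j$'s carry less explicit information than in the purely $(i,j)\leq l+q$ range (cf.\ the remark after \eqref{fij}), but evaluating them on the putative character $\lambda$ and on $\Theta_{l+q+1}$ yields polynomial constraints that one shows cannot be simultaneously satisfied. This computational analysis is precisely the material flagged in the Introduction and developed in \S\ref{4.2}, and once it is in place the proof is complete.
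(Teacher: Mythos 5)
Your first half coincides with the paper's own argument: the paper likewise works with $\tau=\sqrt{2}\,\Theta_{l+q+1}$, gets $\tau^2=\mathrm{id}$ from (the proof of) Proposition \ref{no1}, and obtains the super-commutation $\Theta_{l+q+1}\cdot\Theta=(-1)^{|\Theta|}\Theta\cdot\Theta_{l+q+1}$ for $\Theta\in W'_\chi$ exactly as you do, from $v_{\frac{r+1}{2}}\in\mmm'$ together with $W'_\chi=Q_\chi^{\ad\,\mmm'}$ and the identification $U(\ggg,e)\cong Q_\chi^{\ad\,\mmm}$. So up to producing the odd square-root-of-$\mathrm{id}$ inside $\text{End}_{W'_\chi}(V)$, your proposal and the paper agree.

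The problem is the final step. The paper settles it by simply asserting that $V$ is irreducible as a $W'_\chi$-module; you rightly see that this is the point needing justification and reduce it to excluding the splitting $V=W\oplus\Theta_{l+q+1}W$ (equivalently: $(W'_\chi)_{\bar 1}$ kills $V$ and $(W'_\chi)_{\bar 0}$ acts through a character), but you never exclude it — you only assert that evaluating the relations \eqref{refine1}--\eqref{fij2} produces ``polynomial constraints that one shows cannot be simultaneously satisfied'' and that this is ``developed in \S\ref{4.2}''. That is not what \S\ref{4.2} contains: there the relations are merely encoded into the polynomials $A_{ij},B_{ij},C_{ij},D_{ij}$ whose common zeros parameterize \emph{all} two-dimensional representations; no inconsistency is derived, and nothing there rules out the split case. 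Moreover the obstruction is substantive, not formal: since $\Theta_{l+q+1}$ super-commutes with $W'_\chi$ and $\Theta_{l+q+1}^2=\tfrac{1}{2}$, the span of $1$ and $\Theta_{l+q+1}$ is a copy of the Clifford algebra $Q_1$, and $U(\ggg,e)=W'_\chi\oplus W'_\chi\Theta_{l+q+1}$ identifies $U(\ggg,e)$ with $W'_\chi\otimes Q_1$; so any one-dimensional $W'_\chi$-module would, via Lemma \ref{AB}(ii), yield a two-dimensional irreducible $U(\ggg,e)$-module realizing precisely the splitting you need to forbid. Hence excluding it is equivalent to a genuine statement about $W'_\chi$ (the nonexistence of a compatible one-dimensional module), which cannot be settled by the general shape of the generators and relations you invoke, and which neither your text nor \S\ref{4.2} establishes. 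As written, the proposal is therefore incomplete exactly where it departs from the paper's (asserted, not argued) irreducibility claim.
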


\begin{proof}Let $V$ be a two-dimensional  representation of the ${\bbc}$-algebra $U({\ggg},e)$ in Conjecture~\ref{Conject2}. Define the ${\bbc}$-mapping
\[\begin{array}{lcll}
\tau:&V&\rightarrow&V\\ &v&\mapsto&\sqrt{2}\Theta_{l+q+1}.v.
\end{array}
\]
It is easy to verify that the mapping $\tau$ is odd and surjective. In fact, $\tau$ is also injective since $\tau^2(v)=2\Theta_{l+q+1}^2.v=v$. We claim that $\tau$ is a homomorphism of $W'_\chi$-module $V$.

For any ${\bbz}_2$-homogeneous elements $\Theta\in W'_\chi$ and $v\in V$, we have $\tau(\Theta.v)=\sqrt{2}\Theta_{l+q+1}.\Theta.v$ by the definition. Since $\Theta_{l+q+1}\in\mathfrak{m}'$ and $\Theta\in Q_\chi^{\ad\mmm'}$, then $[\Theta_{l+q+1},\Theta]=0$. Moreover, $[\Theta_{l+q+1},\Theta]=\Theta_{l+q+1}\cdot\Theta-(-1)^{|\Theta|}\Theta\cdot\Theta_{l+q+1}$. It is immediate that $\Theta_{l+q+1}\cdot\Theta=(-1)^{|\Theta|}\Theta\cdot\Theta_{l+q+1}$, then $\Theta_{l+q+1}.\Theta.v=(-1)^{|\Theta|}\Theta.\Theta_{l+q+1}.v$, i.e. $\tau(\Theta.v)=(-1)^{|\Theta|}\Theta.\tau(v)$.

As $V$ is also irreducible as a $W'_\chi$-module, all the discussions above imply that $V$ is of type $Q$, completing the proof.
\end{proof}

\subsection{The analogue of communicative quotients for finite $W$-superalgebras in the odd case}\label{4.2}
Recall that in \cite[Theorem 4.7]{ZS2} we have chosen the ${\bbz}_2$-homogeneous elements $\Theta_1,\cdots,\Theta_{l+q+1}$ as a set of generators for the ${\bbc}$-algebra $U({\ggg},e)$ subject to the relations
$$[\Theta_i,\Theta_j]=F_{ij}(\Theta_1,\cdots,\Theta_{l+q+1}),~~1\leqslant i,j\leqslant l+q+1.$$
To ease notation, we will denote $F_{ij}(\Theta_1,\cdots,\Theta_{l+q+1})$ by $F_{ij}$ for short.

If the ${\bbc}$-algebra $U({\ggg},e)$ affords a two-dimensional  representation $V$, Proposition~\ref{typeq} yields that $V$ is ${\bbc}$-spanned by an even element $v\in V_{\bar0}$ and the odd element $\Theta_{l+q+1}.v\in V_{\bar1}$. Hence we can get $4(l+q+1)$ variables $k_i^0,k_i^1,K_i^0,K_i^1\in{\bbc}$ such that
\begin{equation}\label{kKdef}
\Theta_{i}.v=k_i^0v+k_i^1\Theta_{l+q+1}.v,\qquad\Theta_{i}.\Theta_{l+q+1}.v=K_i^0v+K_i^1\Theta_{l+q+1}.v,
\end{equation}where $1\leqslant i\leqslant l+q+1$.

Similarly, there exist $4(l+q+1)^2$ variables
$(F_{ij})^0_{\bar{0}},(F_{ij})^0_{\bar{1}},(F_{ij})^1_{\bar{0}},(F_{ij})^1_{\bar{1}}\in{\bbc}$ with $ 1\leqslant i,j\leqslant l+q+1$ such that
\begin{equation}\label{F_ijdefn}
F_{ij}.v=(F_{ij})^0_{\bar{0}}v+(F_{ij})^0_{\bar{1}}\Theta_{l+q+1}.v,\qquad F_{ij}.\Theta_{l+q+1}.v=(F_{ij})^1_{\bar{0}}v+(F_{ij})^1_{\bar{1}}\Theta_{l+q+1}.v.
\end{equation}

It is worth noting that each polynomial $F_{ij}$ is generated by the ${\bbz}_2$-homogeneous elements $\Theta_1,\cdots,\Theta_{l+q+1}$ of $U({\ggg},e)$ over $\mathbb{Q}$. After enlarging the admissible ring $A$ possibly, one can further assume that the $F_{ij}$'s for $1\leqslant i,j\leqslant l+q+1$ are defined over $A$. Since the action of $F_{ij}$ on $v$ and $\Theta_{l+q+1}.v$ is completely determined by the constants in \eqref{kKdef}, then $(F_{ij})^0_{\bar{0}},(F_{ij})^0_{\bar{1}},(F_{ij})^1_{\bar{0}},(F_{ij})^1_{\bar{1}}$ can be written as an $A$-linear combination of the monomials in $k_i^0,k_i^1,K_i^0,K_i^1$, thus there are no new variables appear in \eqref{F_ijdefn}.

Since each $\Theta_i$ is ${\bbz}_2$-homogeneous for $1\leqslant i\leqslant l+q+1$, it follows that $2(l+q+1)$ variables in \eqref{kKdef} equal zero. More precisely, $k_i^1=K_i^0=0$ for $1\leqslant i\leqslant l$  (in this case all $\Theta_i$'s are even) and $k_i^0=K_i^1=0$ for $l+1\leqslant i\leqslant l+q+1$ (in this case all $\Theta_i$'s are odd). By the definition it is obvious that all $F_{ij}$'s are ${\bbz}_2$-graded and have the same parity with $[\Theta_i,\Theta_j]$ for $1\leqslant i,j\leqslant l+q+1$. Therefore, $2(l+q+1)^2$ variables equal zero in \eqref{F_ijdefn}. More precisely, $(F_{ij})^0_{\bar{1}}=(F_{ij})^1_{\bar{0}}=0$ if $F_{ij}$ is even, and $(F_{ij})^0_{\bar{0}}=(F_{ij})^1_{\bar{1}}=0$ if $F_{ij}$ is odd.

In virtue of \eqref{kKdef}, simple calculation shows that
\begin{equation}\label{Thetaijcom}
\begin{split}
\Theta_i.\Theta_j.v&=(k_i^0 k_j^0+K_i^0k_j^1)v+(k_i^1k_j^0+K_i^1k_j^1)\Theta_{l+q+1}.v,\\
\Theta_i.\Theta_j.\Theta_{l+q+1}.v&=(k_i^0K_j^0+K_i^0K_j^1)v+(k_i^1K_j^0+K_i^1K_j^1)\Theta_{l+q+1}.v
\end{split}
\end{equation} for $1\leqslant i,j\leqslant l+q+1$.

Recall that the structure of ${\bbc}$-algebra $U({\ggg},e)$ is completely determined by a data of generators and defining relations (cf.  \cite[Theorem 4.7]{ZS2}). Hence, $V$ is completely decided by  the following equalities
\begin{equation}\label{Thetaijcomre}
\begin{split}
(\Theta_i.\Theta_j-(-1)^{|\Theta_i||\Theta_j|}\Theta_j\Theta_i-F_{ij}).v&=0, \\ (\Theta_i.\Theta_j-(-1)^{|\Theta_i||\Theta_j|}\Theta_j\Theta_i-F_{ij}).\Theta_{l+q+1}.v&=0
\end{split}
\end{equation}for $1\leqslant i,j\leqslant l+q+1$.
Simple calculation based on \eqref{Thetaijcom} and \eqref{Thetaijcomre} shows that the variables $k_i^0,k_i^1,K_i^0,K_i^1$ and $(F_{ij})^0_{\bar{0}},(F_{ij})^0_{\bar{1}},(F_{ij})^1_{\bar{0}},(F_{ij})^1_{\bar{1}}$ for $1\leqslant i,j\leqslant l+q+1$ should satisfy the following system of linear equations:
\[\begin{array}{rcl}
(k_i^0k_j^0+K_i^0k_j^1)-(-1)^{|\Theta_i||\Theta_j|}(k_i^0k_j^0+k_i^1K_j^0)-(F_{ij})^0_{\bar{0}}&=&0;\\
(k_i^1k_j^0+K_i^1k_j^1)-(-1)^{|\Theta_i||\Theta_j|}(k_i^0k_j^1+k_i^1K_j^1)-(F_{ij})^0_{\bar{1}}&=&0;\\
(k_i^0K_j^0+K_i^0K_j^1)-(-1)^{|\Theta_i||\Theta_j|}(K_i^0k_j^0+K_i^1K_j^0)-(F_{ij})^1_{\bar{0}}&=&0;\\
(k_i^1K_j^0+K_i^1K_j^1)-(-1)^{|\Theta_i||\Theta_j|}(K_i^0k_j^1+K_i^1K_j^1)-(F_{ij})^1_{\bar{1}}&=&0.
\end{array}\]

It is notable that $4(l+q+1)$ variables are involved in above equations (recall that $F_{ij}$'s ($1\leqslant i,j\leqslant l+q+1$) can be written as an $A$-linear combination of the monomials in $k_i^0,k_i^1,K_i^0,K_i^1$ with $1\leqslant i\leqslant l+q+1$), and the remark preceding \eqref{Thetaijcom} shows that $2(l+q+1)$ variables equal zero. Set $${\bbc}[X_1^0,X_2^0\cdots,X_{l}^0,X_{l+1}^1,\cdots,X_{l+q+1}^1,Y_1^1,Y_2^1,\cdots,Y_{l}^1,Y_{l+1}^0\cdots,Y_{l+q+1}^0]$$to
be an (ordinary) polynomial algebra in $2(l+q+1)$ variables over ${\bbc}$. For $1\leqslant i\leqslant l+q+1$, substitute the constants $k_i^0,k_i^1,K_i^0,K_i^1 $ for the variables $X_i^0,X_i^1,Y_i^0,Y_i^1$ respectively, and define the polynomials
\[\begin{array}{rcl}
A_{ij}&:=&(X_i^0X_j^0+X_j^1Y_i^0)-(-1)^{|\Theta_i||\Theta_j|}(X_i^0X_j^0+X_i^1Y_j^0)-S_{ij}^0;\\
B_{ij}&:=&(X_i^1X_j^0+X_j^1Y_i^1)-(-1)^{|\Theta_i||\Theta_j|}(X_i^0X_j^1+X_i^1Y_j^1)-S_{ij}^1;\\
C_{ij}&:=&(X_i^0Y_j^0+Y_i^0Y_j^1)-(-1)^{|\Theta_i||\Theta_j|}(X_j^0Y_i^0+Y_i^1Y_j^0)-T_{ij}^0;\\
D_{ij}&:=&(X_i^1Y_j^0+Y_i^1Y_j^1)-(-1)^{|\Theta_i||\Theta_j|}(X_j^1Y_i^0+Y_i^1Y_j^1)-T_{ij}^1
\end{array}\]for $1\leqslant i,j\leqslant l+q+1$,
where $S_{ij}^0,S_{ij}^1,T_{ij}^0,T_{ij}^1$ stand for the polynomials over $A$ obtained by substituting the variables $k_i^0,k_i^1,K_i^0,K_i^1$ in the polynomials $(F_{ij})^0_{\bar{0}},(F_{ij})^0_{\bar{1}},$\\$(F_{ij})^1_{\bar{0}},(F_{ij})^1_{\bar{1}}$ for the indeterminate $X_i^0,X_i^1,Y_i^0,Y_i^1$, respectively. By the preceding remark, for the terms in $A_{ij},B_{ij},C_{ij},D_{ij}$ with $1\leqslant i,j\leqslant l+q+1$, we have

(1) $X_i^1=Y_i^0=0$ for $1\leqslant i\leqslant l$ (in this case $\Theta_i's$ are even);

(2) $X_i^0=Y_i^1=0$ for $l+1\leqslant i\leqslant l+q+1$ (in this case $\Theta_i's$ are odd);

(3) $S_{ij}^1=T_{ij}^0=0$ when $1\leqslant i,j\leqslant l$, or $l+1\leqslant i,j\leqslant l+q+1$ (in this case $F_{ij}$'s are even);

(4) $S_{ij}^0=T_{ij}^1=0$ when $1\leqslant i\leqslant l<j\leqslant l+q+1$, or $1\leqslant j\leqslant l<i\leqslant l+q+1$ (in this case $F_{ij}$'s are odd).

It follows from \eqref{Thetaijcom}, \eqref{Thetaijcomre} and \cite[Theorem 4.7]{ZS2} that there is a $1$-$1$ correspondence between the two-dimensional  representations of ${\bbc}$-algebra $U({\ggg},e)$ and the points of all common zeros of the polynomials $A_{ij}, B_{ij}, C_{ij}, D_{ij}$ in $2(l+q+1)$ variables for $1\leqslant i,j\leqslant l+q+1$ subject to conditions (1)-(4).

Given a subfield $K$ of ${\bbc}$ containing $A$ we denote by $\mathscr{E}(K)$ the set of all common zeros of the polynomials
$A_{ij},B_{ij},C_{ij},D_{ij}$ for $1\leqslant i,j\leqslant l+q+1$ in the affine space $\mathbb{A}^{2(l+q+1)}_K$ subject to conditions (1)-(4). Clearly, the $A$-defined Zariski closed set $\mathscr{E}(\bbc)$ parameterizes the two-dimensional  representations of ${\bbc}$-algebra $U({\ggg},e)$. More precisely,

\begin{lemma}\label{defnodd}
 Assume that $d_1$ is odd. Then the two-dimensional  representations of ${\bbc}$-algebra $U({\ggg},e)$ are uniquely determined by all common zeros of the polynomials $A_{ij},B_{ij},C_{ij},D_{ij}$ ($1\leqslant i,j\leqslant l+q+1$) subject to conditions (1)-(4) in the affine space $\mathbb{A}^{2(l+q+1)}_{\bbc}$.
\end{lemma}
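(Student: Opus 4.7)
The plan is to read the lemma as a direct translation of the discussion preceding it, and so the proof will mostly amount to organizing that discussion and invoking the presentation of $U(\ggg,e)$ by generators and relations from \cite[Theorem 4.7]{ZS2}. First I would fix a putative two-dimensional representation $V$ of $U(\ggg,e)$. By Proposition~\ref{typeq}, $V$ is of type $Q$ as a $W'_\chi$-module, so I may choose an even vector $v\in V_\bz$ and use $\{v,\Theta_{l+q+1}.v\}$ as a homogeneous basis of $V$. The action of each generator $\Theta_i$ is then encoded in the four scalars $k_i^0,k_i^1,K_i^0,K_i^1$ defined in \eqref{kKdef}, and the parity of $\Theta_i$ (even for $1\leqslant i\leqslant l$, odd for $l+1\leqslant i\leqslant l+q+1$) forces the vanishings recorded in conditions (1) and (2).

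Next I would observe that, since each $F_{ij}$ is a super-polynomial in $\Theta_1,\dots,\Theta_{l+q+1}$, its action on $v$ and on $\Theta_{l+q+1}.v$ is determined by the scalars above via iteration of \eqref{Thetaijcom}; this is why $(F_{ij})^?_?$ in \eqref{F_ijdefn} introduce no new parameters and may be regarded as the polynomial expressions $S_{ij}^?,T_{ij}^?$ in the formal indeterminates $X_i^\epsilon,Y_i^\epsilon$. The $\bbz_2$-homogeneity of $F_{ij}$, read off from that of $[\Theta_i,\Theta_j]$, then yields conditions (3) and (4).

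Now I would impose the defining relations. By \cite[Theorem 4.7]{ZS2}, the $\bbc$-algebra $U(\ggg,e)$ is presented by the generators $\Theta_1,\dots,\Theta_{l+q+1}$ subject only to $[\Theta_i,\Theta_j]=F_{ij}$ for $1\leqslant i,j\leqslant l+q+1$. A $U(\ggg,e)$-action on the fixed superspace $\bbc v\oplus\bbc\,\Theta_{l+q+1}.v$ therefore exists if and only if each relation holds when applied to both basis vectors, as in \eqref{Thetaijcomre}. Expanding these four scalar equations per pair $(i,j)$ using \eqref{Thetaijcom} and the identification $(F_{ij})^?_?=S_{ij}^?$ or $T_{ij}^?$ produces precisely the polynomial equations $A_{ij}=B_{ij}=C_{ij}=D_{ij}=0$ defining the Zariski closed set $\mathscr{E}(\bbc)$.

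The only genuine point to verify is thus the bijectivity of the correspondence between two-dimensional representations of $U(\ggg,e)$ (considered up to the normalization fixing the basis $\{v,\Theta_{l+q+1}.v\}$) and the common zeros in $\mathbb{A}^{2(l+q+1)}_\bbc$ subject to (1)--(4). One direction is the existence argument above; the converse is that any such zero determines operators on $\bbc v\oplus\bbc\,\Theta_{l+q+1}.v$ that satisfy all defining relations and hence define a genuine module. The subtlest step I anticipate is keeping track of the fact that the $\Theta_{l+q+1}$-component of the module structure is rigidly normalized by $\Theta_{l+q+1}^2=\tfrac12\mathrm{id}$ from Proposition~\ref{no1}: this has to be shown to be automatically enforced by the relation $[\Theta_{l+q+1},\Theta_{l+q+1}]=F_{l+q+1,l+q+1}=1\otimes 1_\chi$ (cf.~\eqref{fij2}) rather than appearing as an extra condition, so that the list of polynomial equations really is complete.
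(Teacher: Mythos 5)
Your proposal is correct and follows the same route the paper takes: Lemma~\ref{defnodd} is stated there as a summary of the discussion in \S\ref{4.2}, and your proof reorganizes that discussion into a coherent argument, using Proposition~\ref{typeq} to fix the basis $\{v,\Theta_{l+q+1}.v\}$, encoding the $\Theta_i$-action in the scalars of \eqref{kKdef}, reading off conditions (1)--(4) from the parities of the $\Theta_i$ and of the $F_{ij}$, and imposing the defining relations from \cite[Theorem 4.7]{ZS2} to obtain the polynomial system $A_{ij}=B_{ij}=C_{ij}=D_{ij}=0$. Your closing observation — that the constraint $\Theta_{l+q+1}^2=\tfrac12\,\mathrm{id}$ is enforced through the relation $[\Theta_{l+q+1},\Theta_{l+q+1}]=F_{l+q+1,l+q+1}=1\otimes1_\chi$ of \eqref{fij2}, so that the polynomial system is complete without an extra normalization condition — is a correct and useful point that the paper leaves implicit.
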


Similarly, let $\mathscr{E}({\bbk})$ be the set of common zeros of the polynomials $^p{\hskip-0.05cm}A_{ij},^p{\hskip-0.05cm}B_{ij},^p{\hskip-0.05cm}C_{ij},$\\$^p{\hskip-0.05cm}D_{ij}$ subject to the ``modular $p$'' version of the conditions (1)-(4) in the affine space $\mathbb{A}^{2(l+q+1)}_{\bbk}$ with $1\leqslant i,j\leqslant l+q+1$, where $^p{\hskip-0.05cm}A_{ij},^p{\hskip-0.05cm}B_{ij},^p{\hskip-0.05cm}C_{ij},^p{\hskip-0.05cm}D_{ij}$ stand for the polynomials over ${\bbk}$ obtained from $A_{ij},B_{ij},C_{ij},D_{ij}$ by ``modular $p$ reduction'', i.e.
\[\begin{array}{rl}
&{\bbk}[X_1^0,X_2^0\cdots,X_{l}^0,X_{l+1}^1,\cdots,X_{l+q+1}^1,Y_1^1,Y_2^1,\cdots,Y_{l}^1,Y_{l+1}^0\cdots,Y_{l+q+1}^0]\\
=&A[X_1^0,X_2^0\cdots,X_{l}^0,X_{l+1}^1,\cdots,X_{l+q+1}^1,Y_1^1,Y_2^1,\cdots,Y_{l}^1,Y_{l+1}^0\cdots,Y_{l+q+1}^0]\otimes_A{\bbk}. \end{array}\]
It follows from Theorem~\ref{transition} that the Zariski closed set $\mathscr{E}({\bbk})$ parameterizes the two-dimensional  representations of the ${\bbk}$-algebra $T({\ggg}_{\bbk},e)$.

Following Premet's treatment to the Lie algebra case in \cite[Theorem 2.2(a)]{P7}, we have

\begin{lemma}\label{ck2}
When $d_1$ is odd, if the ${\bbc}$-algebra $U({\ggg},e)$ affords two-dimensional  representations, then the transition subalgebra $T({\ggg}_{\bbk},e)$ also admits two-dimensional  representations.
\end{lemma}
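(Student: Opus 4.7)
The plan is to mirror the argument for the even case (Lemma~\ref{trans1}), using the polynomial parametrisations supplied by Lemma~\ref{defnodd} and its ``modular'' counterpart for $T(\ggg_\bbk,e)$. After possibly enlarging the admissible ring $A$, all coefficients of the polynomials $A_{ij},B_{ij},C_{ij},D_{ij}$ (for $1\leqslant i,j\leqslant l+q+1$) lie in $A$, so there is a well-defined closed subscheme $\mathscr{E}\subseteq\mathbb{A}^{2(l+q+1)}_{A}$ cut out by these polynomials subject to conditions (1)--(4), and the corresponding fibres satisfy $\mathscr{E}\times_{A}\bbc=\mathscr{E}(\bbc)$ and $\mathscr{E}\times_{A}\bbk=\mathscr{E}(\bbk)$ for any $\mathfrak{P}\in\mathrm{Specm}(A)$ with residue field sitting inside $\bbk=\overline{\mathbb{F}}_p$. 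By Lemma~\ref{defnodd}, the hypothesis that $U({\ggg},e)$ admits a two-dimensional representation is equivalent to $\mathscr{E}(\bbc)\neq\emptyset$; the conclusion we aim for is that $\mathscr{E}(\bbk)\neq\emptyset$ for all $p\gg 0$ in $\Pi(A)$.

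Next, I would apply the standard spreading-out argument used in \cite[Corollary 2.1]{P7}. The existence of a $\bbc$-point of $\mathscr{E}$ implies that the structure morphism $\mathscr{E}\to\mathrm{Spec}(A)$ has nonempty generic fibre; equivalently, the coordinates of any chosen point $\mathbf{a}\in\mathscr{E}(\bbc)$ generate a finitely generated $A$-subalgebra $A[\mathbf{a}]\subseteq\bbc$ whose spectrum maps dominantly to $\mathrm{Spec}(A)$. Chevalley's theorem on constructible images then yields a Zariski-dense open subset $U\subseteq\mathrm{Spec}(A)$ contained in the image of $\mathscr{E}\to\mathrm{Spec}(A)$. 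Since $\Pi(A)$ contains almost all primes in $\mathbb{N}$, infinitely many maximal ideals $\mathfrak{P}\in U\cap\mathrm{Specm}(A)$ correspond to primes $p\in\Pi(A)$, and for each such $\mathfrak{P}$ the fibre $\mathscr{E}\times_{A}(A/\mathfrak{P})$ is nonempty, hence so is $\mathscr{E}(\bbk)$ after base change to the algebraic closure. Invoking the ``modular'' analogue of Lemma~\ref{defnodd} (which is valid because the transition subalgebra $T(\ggg_\bbk,e)$ is obtained from $U(\ggg_A,e)$ by tensoring with $\bbk$, cf.\ Theorem~\ref{transition}), we conclude that $T(\ggg_\bbk,e)$ admits a two-dimensional representation for $p\gg 0$.

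The main obstacle is bookkeeping: one must verify that the defining data survive reduction modulo $p$ cleanly. Specifically, the parity partition of the indices (which dictates which of the conditions (1)--(4) apply and hence which variables are set to zero), as well as the auxiliary relation $F_{l+q+1,l+q+1}=1\otimes 1_\chi$ from equation~\eqref{fij2}, must be preserved under ``modular $p$ reduction''. Both are intrinsic to the Kazhdan-filtered PBW structure and are invariant under base change by \cite[Theorem 4.5]{ZS2} and Theorem~\ref{transition}, so once $A$ is enlarged to contain all the coefficients of $S_{ij}^0,S_{ij}^1,T_{ij}^0,T_{ij}^1$ and of the $F_{ij}$'s, no further technical issue arises. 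The remainder of the argument then runs exactly parallel to the even case, with the sole modification being that the ambient affine space has dimension $2(l+q+1)$ rather than $l$, reflecting the richer parametrisation of two-dimensional representations produced by the odd generator $\Theta_{l+q+1}$.
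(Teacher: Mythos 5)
Your proposal is correct and takes essentially the same route as the paper: the paper's proof of this lemma simply invokes Lemma~\ref{defnodd} and defers to the argument of Lemma~\ref{trans1} (itself a citation of Premet's Theorem~2.2(a) in \cite{P7}), which is exactly the spreading-out-over-$A$ argument you spell out. Your observations about preserving conditions (1)--(4) and the relation \eqref{fij2} under modular reduction, after enlarging $A$ to contain the relevant coefficients, correctly identify where the odd case differs from the even one and match what Theorem~\ref{transition} supplies.
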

\begin{proof} Taking  Lemma~\ref{defnodd} into account, we can prove the lemma by the same discussion as Lemma~\ref{trans1}. The detailed arguments are  omitted here.
\end{proof}

As a corollary of the above lemma and Proposition \ref{transitionforminimaldim}(2), we have

\begin{lemma}\label{cdim2}
When $d_1$ is odd, if the finite $W$-superalgebra  $U({\ggg},e)$ over ${\bbc}$ affords a two-dimensional representation, then for $p\gg0$ there exists $\eta\in\chi+(\mathfrak{m}_{\bbk}^\bot)_{\bar{0}}$  associated to which the reduced enveloping algebra $U_\eta({\ggg}_{\bbk})$ admits irreducible representations of dimension $p^{\frac{d_0}{2}}2^{\frac{d_1+1}{2}}$.
\end{lemma}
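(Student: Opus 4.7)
The statement is essentially a two-step combination of results already established in the excerpt, so my plan is to chain them together rather than do anything new.

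First, I would invoke Lemma~\ref{ck2}: under the hypothesis that $U(\ggg,e)$ affords a two-dimensional representation over $\bbc$, and assuming $p$ is large enough that the admissible ring $A$ contains all the relevant coefficients (i.e.\ the coefficients of the defining polynomials $F_{ij}$, the constants $k_i^0,k_i^1,K_i^0,K_i^1$ giving the action on the chosen basis of $V$, and the associated $(F_{ij})^\cdot_\cdot$), the Zariski-closed set $\mathscr{E}(\bbc)\subset \mathbb{A}^{2(l+q+1)}_\bbc$ is nonempty, hence $\mathscr{E}(\bbk)$ is nonempty for infinitely many $p\in \Pi(A)$ by the standard reduction-modulo-$p$ argument (exactly as in \cite[Theorem 2.2(a)]{P7}). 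This provides a two-dimensional representation of the transition subalgebra $T(\ggg_\bbk,e)$.

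Second, I would invoke Proposition~\ref{transitionforminimaldim}(2), which directly asserts the equivalence between the existence of two-dimensional representations of $T(\ggg_\bbk,e)$ and the existence, for some $\eta\in\chi+(\mathfrak{m}_\bbk^\perp)_\bz$, of an irreducible $U_\eta(\ggg_\bbk)$-module of dimension $p^{d_0/2}2^{(d_1+1)/2}$. Concretely, a two-dimensional $T(\ggg_\bbk,e)$-module $V$ extends (via Theorem~\ref{keyisotheorem}(3)) to a two-dimensional $U(\ggg_\bbk,e)$-module, where $\Theta_{l+q+1}$ intertwines the even and odd lines of $V$; then $\rho_\bbk(Z_p)\cap\ker\nu$ is a maximal ideal of $\rho_\bbk(Z_p)\cong \bbk[(\chi+(\mathfrak{m}_\bbk^\perp)_\bz)^{(1)}]$, singling out the desired $\eta$; the quotient $\widetilde{U}_\eta(\ggg_\bbk,e)\cong U_\eta(\ggg_\bbk,e)$ (Lemma~\ref{isoofreducedW-alg}) then carries a two-dimensional module, whose image under the Morita-type equivalence of Theorem~\ref{reducedfunctors} is an irreducible $U_\eta(\ggg_\bbk)$-module of dimension $\dim U_\eta(\mmm_\bbk)\cdot 2 = p^{d_0/2}2^{\lceil d_1/2\rceil}\cdot 2 = p^{d_0/2}2^{(d_1+1)/2}$.

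There is no substantial obstacle here, since the real content has already been absorbed into Lemma~\ref{ck2} and Proposition~\ref{transitionforminimaldim}(2); the only mild subtlety is bookkeeping the bound on $p$. I would simply note that finitely many coefficients (those of the $F_{ij}$ and those describing the $\bbc$-action on $V$) must lie in $A$, and after enlarging $A$ once, the conclusion holds for every $p\in\Pi(A)$, which by \cite[Lemma 4.4]{P7} contains almost all primes, giving the ``$p\gg 0$'' clause.
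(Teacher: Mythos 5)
Your proposal is correct and follows exactly the paper's route: the paper derives Lemma~\ref{cdim2} precisely as a corollary of Lemma~\ref{ck2} together with Proposition~\ref{transitionforminimaldim}(2), which is the two-step chain you describe (your unpacking of the maximal ideal of $\rho_\bbk(Z_p)$, Lemma~\ref{isoofreducedW-alg}, and Theorem~\ref{reducedfunctors} is just the internal content of Proposition~\ref{transitionforminimaldim}(2)). The dimension count $\dim U_\eta(\mmm_\bbk)\cdot 2=p^{\frac{d_0}{2}}2^{\frac{d_1-1}{2}}\cdot 2=p^{\frac{d_0}{2}}2^{\frac{d_1+1}{2}}$ is also as in the paper, noting its (reversed) convention that $\lceil\tfrac{d_1}{2}\rceil=\tfrac{d_1-1}{2}$ for odd $d_1$.
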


\subsection{Confirmation of Conjecture~\ref{Conject2} for ${\ggg}=\mathfrak{osp}(1|2n)$ with regular nilpotent elements}
We first need the following observation:

\begin{lemma}\label{bon}
Let ${\ggg}_{\bbk}=\mathfrak{osp}(1|2n)$ be a basic Lie superalgebra over $\bbk$. For any regular nilpotent element $e\in({\ggg}_{{\bbk}})_{\bar0}$, the transition subalgebra $T({\ggg}_{\bbk},e)$ affords a two-dimensional  representation.
\end{lemma}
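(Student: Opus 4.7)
My approach is to reduce the lemma to the existence of a small-dimensional irreducible module for $U_\eta(\ggg_\bbk)$ via Proposition~\ref{transitionforminimaldim}(2), then construct such a module via a baby Verma attached to the regular nilpotent $p$-character.

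First I would compute the invariants. For the regular nilpotent $e\in(\ggg_\bbk)_{\bz}\subset\mathfrak{sp}(2n)$, the even centralizer has dimension $n$ (the rank of $\mathfrak{sp}(2n)$), while $(\ggg_\bbk)_{\bo}$ is the $2n$-dimensional standard module on which $e$ acts as a single Jordan block of size $2n$, so the odd centralizer is one-dimensional. Hence $d_0=2n^2$ and $d_1=2n-1$ (odd, as expected), and Proposition~\ref{transitionforminimaldim}(2) reduces the task to producing, for some $\eta\in\chi+(\mmm_\bbk^\bot)_{\bz}$, an irreducible $U_\eta(\ggg_\bbk)$-module of dimension $p^{n^2}2^n$.

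To produce such a module, I would fix a Borel subalgebra $\mathfrak{b}_\bbk=\hhh_\bbk\oplus\mathfrak{n}^+_\bbk$ of $\ggg_\bbk$ with $e\in\mathfrak{n}^+_\bbk$ principal, and consider the baby Verma module
\[
Z_\eta(\lambda)=U_\eta(\ggg_\bbk)\otimes_{U_\eta(\mathfrak{b}_\bbk)}\bbk_\lambda
\]
for a weight $\lambda\in(\hhh_\bbk)^*_{\bz}$. By the PBW theorem this module has $\bbk$-dimension $p^{\dim(\mathfrak{n}^-_\bbk)_{\bz}}\cdot 2^{\dim(\mathfrak{n}^-_\bbk)_{\bo}}=p^{n^2}2^n$, exactly matching the desired bound. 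The plan is then to show that for generic $(\eta,\lambda)$ this module is irreducible, adapting the standard filtration argument of Friedlander--Parshall--Premet for principal $p$-characters: the $\mathfrak{n}^-_\bbk$-filtration on $Z_\eta(\lambda)$ has graded pieces on which the nilpotent generators of $\mathfrak{n}^+_\bbk$ act by polynomials in $\eta$ and $\lambda$ that are nonvanishing for generic choices, forcing any nonzero submodule to contain the top line $1\otimes\bbk_\lambda$ and hence to be all of $Z_\eta(\lambda)$.

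The main obstacle is transferring this classical irreducibility argument to the super setting. What makes it tractable specifically for $\mathfrak{osp}(1|2n)$ is its close structural parallel with classical Lie algebras: it has no isotropic simple roots, its Harish-Chandra homomorphism is surjective onto the Weyl-invariants, and its even part $\mathfrak{sp}(2n)$ is a classical reductive Lie algebra to which Premet's techniques apply directly. Consequently the filtration and socle arguments transfer with only cosmetic modifications, after which Proposition~\ref{transitionforminimaldim}(2) delivers the required two-dimensional representation of $T(\ggg_\bbk,e)$.
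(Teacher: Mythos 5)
Your approach is essentially the same as the paper's: reduce via Proposition~\ref{transitionforminimaldim}(2) to producing a $U_\eta(\ggg_\bbk)$-module of dimension $p^{d_0/2}2^{(d_1+1)/2}$, then take a baby Verma module for the regular nilpotent $p$-character. Your invariant computations ($d_0=2n^2$, $d_1=2n-1$, target dimension $p^{n^2}2^n$) are correct. The one substantive difference is how irreducibility of the baby Verma is handled. The paper does not re-derive it: it cites \cite[Corollary 5.8]{WZ}, which already establishes both that $Z_\chi(\lambda)$ is irreducible for the regular nilpotent $p$-character $\chi$ of $\mathfrak{osp}(1|2n)$ and the identity $\underline{\dim}\,\mathfrak{n}^-_\bbk=\underline{\dim}\,\mathfrak{m}'_\bbk$ that makes the dimension count work, and then computes $\dim Z_\chi(\lambda)^{\mathfrak{m}_\bbk}=\dim U_\chi(\mathfrak{m}'_\bbk)/\dim U_\chi(\mathfrak{m}_\bbk)=2$ directly via Theorem~\ref{reducedfunctors}. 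You instead propose to re-prove irreducibility by adapting the Friedlander--Parshall--Premet filtration argument to the super setting for ``generic $(\eta,\lambda)$''. This is plausible for $\mathfrak{osp}(1|2n)$ for exactly the structural reasons you name (no isotropic simple roots, principal block behaves classically), but it amounts to redoing Wang-Zhao's Corollary~5.8 rather than invoking it, and the last step of that plan (showing the filtration forces any submodule to contain the top line) is left as an outline rather than carried out; a referee would want either the actual verification or, more economically, the citation. Also note that the paper works with $\eta=\chi$ itself, which suffices and keeps the dimension count explicit; introducing a ``generic $\eta$'' is harmless but unnecessary once Wang-Zhao's result is in hand.
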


\begin{proof}
First note that $d_1$ is odd in this case (cf. \cite[Corollary 2.10]{PS2}). Let ${\ggg}_{\bbk}=\mathfrak{n}_{\bbk}^+\oplus \mathfrak{h}_{\bbk}\oplus \mathfrak{n}_{\bbk}^-$ denote the canonical  triangular decomposition of Lie superalgebra $\mathfrak{osp}(1|2n)$. It follows from \cite[Corollary 5.8]{WZ} that $$\text{\underline{dim}}\,\mathfrak{n}_{\bbk}^-=\text{\underline{dim}}\,\mathfrak{m}'_{\bbk}=
(\text{dim}\,(\mathfrak{m}_{\bbk})_{\bar0},\text{dim}\,(\mathfrak{m}_{\bbk})_{\bar1}+1).$$ Moreover, the baby Verma module $Z_\chi(\lambda)$ of reduced enveloping algebra $U_\chi({\ggg}_{\bbk})$ associated to the regular $p$-character $\chi$ is irreducible (cf. \cite[Corollary 5.8]{WZ}), which has the same dimension as the vector space $U_\chi(\mathfrak{m}_{\bbk}')$. Theorem \ref{reducedfunctors} shows that $Z_\chi(\lambda)^{\mathfrak{m}_{\bbk}}$ is a $U_\chi(\mathfrak{osp}(1|2n),e)$-module, and there is an isomorphism of $U_\chi(\mathfrak{m}_{\bbk})$-modules $Z_\chi(\lambda)\cong U_\chi(\mathfrak{m}_{\bbk})^*\otimes_{\bbk}Z_\chi(\lambda)^{\mathfrak{m}_{\bbk}}$ by the proof of \cite[Proposition 4.2]{WZ}. Then we have
$$\text{dim}\,Z_\chi(\lambda)^{\mathfrak{m}_{\bbk}}=\frac{\text{dim}\,Z_\chi(\lambda)}
{\text{dim}\,U_\chi(\mathfrak{m}_{\bbk})}=\frac{\text{dim}\,U_\chi(\mathfrak{m}_{\bbk}')}
{\text{dim}\,U_\chi(\mathfrak{m}_{\bbk})}=2.$$
Therefore, the algebra $U_\chi({\ggg}_{\bbk},e)$ admits a two-dimensional  representation.
By the same discussion as the proof of Proposition \ref{transitionforminimaldim}(2), one can conclude that the transition subalgebra $T({\ggg}_{\bbk},e)$ also affords a two-dimensional  representation.
\end{proof}

Recall Lemma~\ref{defnodd} shows that the two-dimensional  representations of finite $W$-superalgebras over ${\bbc}$ can be parameterized by the Zariski closed set $\mathscr{E}({\bbc})$ for the case when $d_1$ is odd. By the same consideration as Lemma~\ref{transtoc}, one can also obtain that

\begin{lemma}\label{transtoc2}
Let ${\ggg}$ be a basic Lie superalgebra over ${\bbc}$. When $d_1$ is odd, if the transition subalgebra $T({\ggg}_{\bbk},e)$  affords two-dimensional  representations for infinitely many $p\in\Pi(A)$, then the finite $W$-superalgebra $U({\ggg},e)$ over ${\bbc}$ has a two-dimensional  representation.
\end{lemma}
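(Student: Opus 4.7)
The plan is to adapt the Nullstellensatz-plus-spreading-out argument of Premet \cite[Corollary 2.1]{P7} invoked in the proof of Lemma~\ref{transtoc}, replacing the commutative-quotient picture of the even case by the parametrization from Lemma~\ref{defnodd}. After absorbing conditions (1)--(4) by simply setting the redundant coordinates to zero, one may view $\mathscr{E}({\bbc})$ and $\mathscr{E}({\bbk})$ as the vanishing loci in a common affine space $\mathbb{A}^N$ of a single finite system of polynomials $A_{ij}, B_{ij}, C_{ij}, D_{ij}$ (and their mod-$p$ reductions). After enlarging $A$ if necessary so that every structure constant of every $F_{ij}$ lies in $A$, all of these polynomials are defined over $A$; let $I \subseteq A[Z_1,\ldots,Z_N]$ denote the ideal they generate.

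Suppose for contradiction that $U({\ggg},e)$ admits no two-dimensional representation, i.e.\ $\mathscr{E}({\bbc}) = \emptyset$. By Hilbert's weak Nullstellensatz the ideal $I \otimes_A \bbc$ coincides with the whole ring $\bbc[Z_1,\ldots,Z_N]$, so one can write $1 = \sum_i f_i \cdot g_i$ with $f_i \in I$ and $g_i \in \bbc[Z_1,\ldots,Z_N]$. Clearing denominators yields a nonzero $c \in A$ and $h_i \in A[Z_1,\ldots,Z_N]$ with $c = \sum_i f_i \cdot h_i$; in particular $c \in I$.

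Next, apply reduction modulo $p$. Since $A$ is a finitely generated $\bbz$-algebra and $c \neq 0$, the discussion of $\Pi(A)$ in \S\ref{2.2.1} (via the proof of \cite[Lemma 4.4]{P7}) ensures that for all but finitely many primes $p$ there exists $\mathfrak{P} \in \text{Specm}(A)$ of residue characteristic $p$ with $c \notin \mathfrak{P}$. Base changing the identity $c = \sum_i f_i h_i$ to $\bbk = \overline{\mathbb{F}}_p$, the image of $c$ is a nonzero, hence invertible, element of $\bbk$, so $1 \in I \otimes_A \bbk$, forcing $\mathscr{E}({\bbk}) = \emptyset$. Combining this with the identification of $\mathscr{E}({\bbk})$ with the parameter space of two-dimensional representations of $T({\ggg}_{\bbk},e)$ (the paragraph preceding Lemma~\ref{ck2}, together with Theorem~\ref{transition}), we conclude that $T({\ggg}_{\bbk},e)$ admits no two-dimensional representation for any such $p$, contradicting the hypothesis that it does so for infinitely many $p \in \Pi(A)$.

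The only genuine obstacle is the initial bookkeeping: one must verify that after enlarging $A$, every polynomial $S_{ij}^{\epsilon}, T_{ij}^{\epsilon}$ arising from the defining relations really has coefficients in $A$, so that the Zariski closed set $\mathscr{E}$ has an honest $A$-form whose base changes to $\bbc$ and to $\bbk$ recover $\mathscr{E}({\bbc})$ and $\mathscr{E}({\bbk})$ respectively. Once this is arranged, the argument is a direct transcription of Lemma~\ref{transtoc} from the even to the odd setting, with no super-theoretic subtleties beyond Lemma~\ref{defnodd}.
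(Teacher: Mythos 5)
Your argument is, up to one omission, exactly the one the paper intends by deferring to \cite[Corollary~2.1]{P7} and Lemma~\ref{transtoc}: parametrize two-dimensional representations by the $A$-defined Zariski-closed set $\mathscr{E}$ via Lemma~\ref{defnodd}, take the contrapositive, and spread out. The one step you elide is hidden in the phrase ``clearing denominators.'' Hilbert's weak Nullstellensatz over $\bbc$ only produces cofactors $g_i$ with coefficients in $\bbc$, and an arbitrary complex number has no denominator to clear relative to $A$. One first has to observe that, since the generators $f_i$ of $I$ have coefficients in $\text{Frac}(A)$, the condition $I\cdot\bbc[Z]=\bbc[Z]$ already forces $I\cdot\text{Frac}(A)[Z]=\text{Frac}(A)[Z]$: this follows from faithful flatness of the field extension $\text{Frac}(A)\subset\bbc$, or equivalently from the weak Nullstellensatz over $\overline{\text{Frac}(A)}\subset\bbc$, or by a Galois-averaging argument over a number field containing the coefficients of the $g_i$---this last formulation is presumably the ``Galois theory'' the paper alludes to. Once the $g_i$ are chosen in $\text{Frac}(A)[Z]$, clearing denominators to obtain $c=\sum_i f_i h_i$ with $0\neq c\in A$ and $h_i\in A[Z]$ is legitimate, and your reduction-mod-$p$ argument (using that $\Pi(A[1/c])$ omits only finitely many primes) finishes the proof. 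With that step supplied, your proposal is the paper's own proof.
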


Now we are in a position to introduce the main result of this subsection.

\begin{prop}\label{ii}
Let $e\in{\ggg}_{\bar0}$ be a regular nilpotent element in the Lie superalgebra ${\ggg}=\mathfrak{osp}(1|2n)$ over ${\bbc}$, then the finite $W$-superalgebra $U({\ggg},e)$ affords a two-dimensional  representation.
\end{prop}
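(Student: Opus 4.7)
\begin{demo}{Proof proposal}
The plan is that Proposition \ref{ii} should fall out almost immediately from the two preceding lemmas in this subsection. First I would verify that the parity invariant $d_1$ attached to a regular nilpotent element $e \in \mathfrak{osp}(1|2n)_{\bar{0}}$ is odd. This is already invoked at the top of the proof of Lemma \ref{bon} via \cite[Corollary 2.10]{PS2}, so we are squarely in the setting of Conjecture \ref{Conject2}, and Lemma \ref{transtoc2} is the applicable transfer tool from characteristic $p$ back to $\bbc$.

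Next, fix an admissible ring $A \subset \bbc$ associated to $\ggg$ and $e$ as in \S\ref{2.2.1}, and observe that for every prime $p \in \Pi(A)$ the reduction of $e$ remains a regular nilpotent element in $(\ggg_{\bbk})_{\bar{0}}$, since regularity is characterised by the dimension of the centralizer and therefore persists under modular reduction at almost all primes. For each such $p$, Lemma \ref{bon} supplies a two-dimensional representation of the transition subalgebra $T(\ggg_{\bbk},e)$. Because $\Pi(A)$ omits only finitely many primes, we obtain such representations for an infinite family of primes $p$, and a direct application of Lemma \ref{transtoc2} then yields a two-dimensional representation of $U(\ggg,e)$ over $\bbc$.

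The main obstacle is essentially invisible at this stage: the real content was packed into Lemma \ref{bon}, whose proof relied on the irreducibility of the baby Verma module at a regular $p$-character for $\mathfrak{osp}(1|2n)$ (\cite[Corollary 5.8]{WZ}), combined with the dimension formula $\dim M^{\mathfrak{m}_{\bbk}} = \dim M / \dim U_{\chi}(\mathfrak{m}_{\bbk})$ coming from Theorem \ref{reducedfunctors}. The only point worth a second look is that the two-dimensional representations produced by Lemma \ref{bon} indeed correspond to $\bbk$-points of the Zariski closed set $\mathscr{E}(\bbk)$ that parametrizes such representations of $T(\ggg_{\bbk},e)$, uniformly as $p$ varies in $\Pi(A)$. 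This is exactly what powers the Galois-theoretic lift in Lemma \ref{transtoc2} and requires no additional argument here.
\end{demo}
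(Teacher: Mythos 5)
Your proof is correct and follows the paper's argument exactly: the paper's proof of Proposition \ref{ii} is a one-line deduction from Lemma \ref{bon} and Lemma \ref{transtoc2}, which is precisely the chain you trace. Your additional remarks about $d_1$ being odd, regularity persisting under reduction mod $p$, and the role of $\mathscr{E}(\bbk)$ in Lemma \ref{transtoc2} are sensible sanity checks but not new content beyond the cited lemmas.
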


\begin{proof}
The proposition readily follows from Lemma~\ref{bon} and Lemma~\ref{transtoc2}.
\end{proof}

\section{The realization of minimal-dimensional representations for reduced enveloping algebra $U_\xi({\ggg}_{\bbk})$}\label{5}

\subsection{Introduction} This section is devoted to the accessibility of dimensional lower bounds for the irreducible representations of $U_\xi({\ggg}_{\bbk})$ predicted by Wang-Zhao in \cite[Theorem 5.6]{WZ} as below:

\begin{prop}(\cite{WZ})\label{wzd2}
Let ${\ggg}_{\bbk}$ be a basic Lie superalgebra over ${\bbk}=\overline{\mathbb{F}}_p$, assuming that the prime $p$ satisfies the restriction imposed in \S\ref{prel}(Table 1). Let $\xi$ be arbitrary $p$-character in $({\ggg}_{\bbk})^*_{\bar0}$. Then the dimension of every $U_\xi({\ggg}_{\bbk})$-module $M$ is divisible by $p^{\frac{d_0}{2}}2^{\lfloor\frac{d_1}{2}\rfloor}$.
\end{prop}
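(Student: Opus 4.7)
The plan is to follow the strategy of Wang-Zhao \cite{WZ}, which adapts the classical Kac-Weisfeiler-Friedlander-Parshall divisibility argument from reductive Lie algebras to basic Lie superalgebras. The first step is a reduction to nilpotent $p$-characters. Using a super Jordan decomposition $\xi = \xi_s + \xi_n$, with $\xi_s$ semisimple on $(\ggg_\bbk)_\bz$ and $\xi_n$ nilpotent on the centralizer, I would invoke the super analogue of the Friedlander-Parshall Morita equivalence between $U_\xi(\ggg_\bbk)\text{-mod}$ and $U_{\xi_n}(\ggg_\bbk^{\xi_s})\text{-mod}$. The super-dimensional shift introduced by this equivalence is exactly $p^{(\dim(\ggg_\bbk)_\bz - \dim(\ggg_\bbk^{\xi_s})_\bz)/2}\cdot 2^{\lfloor(\dim(\ggg_\bbk)_\bo - \dim(\ggg_\bbk^{\xi_s})_\bo)/2\rfloor}$, so the divisibility claim reduces to the situation where $\xi$ is a nilpotent $p$-character and $\ggg_\bbk$ is replaced by $\ggg_\bbk^{\xi_s}$ (which is again of the required basic type after checking the restrictions on $p$).

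Assume then that $\xi = \chi$ corresponds, via the invariant form, to a nilpotent $e \in (\ggg_\bbk)_\bz$. I would exploit the Dynkin $\bbz$-grading coming from the $\sssl_2$-triple $\{e,h,f\}$ and use the Lagrangian-style subalgebra $\mmm_\bbk = \bigoplus_{i \leq -2} \ggg_\bbk(i) \oplus \ggg_\bbk(-1)'$ introduced in \textsection\ref{2.2.1}, whose super-dimension is exactly $(d_0/2,\lfloor d_1/2\rfloor)$. The key structural facts to verify are that $\mmm_\bbk$ is a $p$-nilpotent restricted subalgebra of $\ggg_\bbk$ and that $\chi$ vanishes on the $p$-closure of $[\mmm_\bbk, \mmm_\bbk]$; both follow from the Lagrangian choice of $\ggg_\bbk(-1)'$ with respect to the non-degenerate super-symmetric form $\langle x,y\rangle := \chi([x,y])$ on $\ggg_\bbk(-1)$.

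The heart of the argument is then to identify $U_\chi(\mmm_\bbk)$ as a full matrix superalgebra $\End_\bbk(V)$ for some superspace $V$ with $\dim V = p^{d_0/2} 2^{\lfloor d_1/2\rfloor}$. The even generators of $\mmm_\bbk$ give a Weyl-type Azumaya algebra over its $p$-center, whose central reduction is $\mathrm{Mat}_{p^{d_0/2}}(\bbk)$, while the paired odd generators in $\ggg_\bbk(-1)'_\bo$ contribute a Clifford algebra on a non-degenerate bilinear form, isomorphic as a superalgebra to $\mathrm{Mat}_{2^{\lfloor d_1/2\rfloor}}(\bbk)$. Since $U_\chi(\ggg_\bbk)$ is free as a right $U_\chi(\mmm_\bbk)$-module by PBW, every $U_\xi(\ggg_\bbk)$-module $M$ restricts to a $U_\chi(\mmm_\bbk)$-module, and simplicity of the matrix superalgebra then forces $\dim V$ to divide $\dim M$.

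The main obstacle will be the careful handling of the odd directions when $d_1$ is odd. In that case the super-symmetric form on $\ggg_\bbk(-1)_\bo$ has an unpaired direction (the element $v_{(r+1)/2}$), which must be excluded from $\mmm_\bbk$ to preserve $\chi([\mmm_\bbk,\mmm_\bbk]) = 0$; this drops one odd generator and explains the appearance of $\lfloor d_1/2\rfloor$ rather than $\lceil d_1/2\rceil$. Verifying that the resulting Clifford algebra is still of type $M$ (a full matrix superalgebra), rather than of type $Q$, requires tracking the parity of the rank of the form and the induced $\bbz_2$-grading; this parity accounting is precisely the delicate super phenomenon that distinguishes the bound $p^{d_0/2} 2^{\lfloor d_1/2\rfloor}$ here from the symmetric bound one would naively expect.
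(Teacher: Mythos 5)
Your overall skeleton (Jordan decomposition plus a Friedlander--Parshall type Morita equivalence to reduce to a nilpotent $p$-character, then a distinguished nilpotent subalgebra attached to the Dynkin grading) is the same strategy as Wang--Zhao and as the proof reproduced in this paper, but two of your key steps do not work as stated. First, the reduction: the centralizer $\ggg_{\bbk}^{\bar s}$ of the semisimple part is \emph{not} again a basic Lie superalgebra; by \cite[Proposition 5.1]{WZ} it is a direct sum $\bigoplus_i(\ggg_\bbk)_i\oplus\ttt'_\bbk$ of basic Lie superalgebras and a toral subalgebra, and the divisibility statement must be established for such a direct sum (this is exactly what Proposition \ref{sumdivisible} and the parity bookkeeping of the summands $(d_1)_i$ are for in the paper; your proposal never addresses the direct-sum or toral factors). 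Second, and more seriously, the ``heart'' of your nilpotent-case argument is structurally false: $U_\chi(\mmm_\bbk)$ is a \emph{local} superalgebra of dimension $p^{\frac{d_0}{2}}2^{\frac{d_1}{2}}$ (resp.\ $p^{\frac{d_0}{2}}2^{\frac{d_1-1}{2}}$ when $d_1$ is odd) with unique one-dimensional simple module, by \cite[Proposition 2.6]{WZ} --- it cannot be a full matrix superalgebra $\mathrm{End}_\bbk(V)$ with $\dim V$ equal to its own dimension. Indeed, the odd part $\ggg_\bbk(-1)'_\bo$ of $\mmm_\bbk$ is chosen \emph{isotropic} for the form $\langle x,y\rangle=\chi([x,y])$ precisely so that $\chi$ vanishes on $[\mmm_\bbk,\mmm_\bbk]$, so those odd generators give a Grassmann (exterior) algebra, not a Clifford algebra on a non-degenerate form. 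The correct mechanism for divisibility is not Wedderburn applied to $U_\chi(\mmm_\bbk)$ but the freeness of every $U_\chi(\ggg_\bbk)$-module $M$ over $U_\chi(\mmm_\bbk)$ (the super analogue of Premet's argument, \cite[Proposition 4.2, Theorem 4.3]{WZ}), which yields $\dim U_\chi(\mmm_\bbk)\mid\dim M$.

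Finally, your parity accounting in the odd case proves strictly less than the statement. Beware that this paper's conventions are reversed: $\lfloor a\rfloor$ is defined as the \emph{least integer upper bound} of $a$, so for odd $d_1$ the claimed bound is $p^{\frac{d_0}{2}}2^{\frac{d_1+1}{2}}$, not $p^{\frac{d_0}{2}}2^{\frac{d_1-1}{2}}$. Freeness over $U_\chi(\mmm_\bbk)$ alone only gives the factor $2^{\frac{d_1-1}{2}}$; the missing factor of $2$ comes from the unpaired odd vector $v_{\frac{r+1}{2}}$, i.e.\ from $\mmm'_\bbk=\mmm_\bbk\oplus\bbk v_{\frac{r+1}{2}}$: the quotient $U_\chi(\mmm'_\bbk)/N_{\mmm'_\bbk}$ is the \emph{queer} simple superalgebra $Q_1$ (type $Q$, two-dimensional simple module), it acts on the invariant space $M^{\mmm_\bbk}$, and this forces $\dim M^{\mmm_\bbk}$ to be even. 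Your proposal argues the opposite --- that the relevant Clifford-type algebra is of type $M$ and that the floor ``rather than the ceiling'' appears --- so the delicate super phenomenon you flag is in fact resolved the other way around. Repairing the proof means: (i) in the nilpotent case, use freeness over $U_\chi(\mmm_\bbk)$ together with the $Q_1$-action of $v_{\frac{r+1}{2}}$ when $d_1$ is odd; (ii) in the reduction, run this argument summand-by-summand on $\mathfrak{l}_\bbk=\bigoplus_i(\ggg_\bbk)_i\oplus\ttt'_\bbk$ (the toral part contributes only one-dimensional simples) and then transport the result through the Morita equivalence \eqref{gp}, using that $d_1$ and $\sum_i(d_1)_i$ have the same parity.
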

Therefore, the number $p^{\frac{d_0}{2}}2^{\lfloor\frac{d_1}{2}\rfloor}$ becomes a lower bound of dimensions for the irreducible modules of $\bbk$-algebra $U_\xi(\ggg_\bbk)$.
A natural question is the accessibility of this number, i.e. whether or not there is any irreducible module of $ U_\xi(\ggg_\bbk)$ with dimension equaling such a lower bound.

For the Lie superalgebra ${\ggg}_{\bbk}$ of type $\mathfrak{gl}(M|N)$ or $\mathfrak{sl}(M|N)$ with $M,N\in{\bbz}_+$, in \cite{ZS3} the authors showed that every reduced enveloping algebra $U_\xi({\ggg}_{\bbk})$ has a ``small'' representation of dimension equaling the lower bound. The method applied there is to construct an appropriate parabolic subalgebra which has a one-dimensional module, then induce it to an irreducible representation of ${\ggg}_{\bbk}$. However, this method can not be easily exploited in the general case.
Thus the general attainableness of such lower bounds of dimensions in Proposition \ref{wzd2} is an open problem.

Now we first formulate Conjecture \ref{conjecture}, summarizing the previous two sections:

\vskip0.2cm

\noindent{\itshape{\bf Conjecture 0.3.}  Let ${\ggg}$ be a basic Lie superalgebra over ${\bbc}$.
\begin{itemize}
\item[(1)] When $d_1$ is even, the finite $W$-superalgebra $U({\ggg},e)$ affords a one-dimensional  representation;
\item[(2)] when $d_1$ is odd, the finite $W$-superalgebra $U({\ggg},e)$ affords a two-dimensional  representation.
\end{itemize}}

\vskip0.2cm

In virtue of Conjecture~\ref{conjecture}, we will prove that the lower bounds of the dimensions in Proposition \ref{wzd2} are accessible for $p\gg0$ in this section.
In the first part, we will deal with the case for nilpotent $p$-character $\chi\in(\ggg_\bbk)^*_{\bar0}$,  mainly following Premet's treatment for the Lie algebra case in \cite[\S2.8]{P7}, with a few modifications. One can observe that the emergence of odd part in the Lie superalgebra ${\ggg}_{\bbk}$ makes the situation much more complicated. In the second part, we will deal with the case for arbitrary $p$-character $\xi\in(\ggg_\bbk)^*_{\bar0}$, which may be not nilpotent. A lot of precise analysis on the modular representations of basic Lie superalgebras has to be done for the second case.
\subsection{On the dimensional lower bounds for the representations of basic Lie superalgebras with nilpotent $p$-characters} \label{proofmain0.5sub}
\subsubsection{}
Recall that in Lemmas \ref{mindim1}  and  \ref{cdim2} we have discussed the representations of minimal dimensions for the reduced enveloping algebra $U_\eta({\ggg}_{\bbk})$ associated to some $p$-character $\eta\in\chi+(\mathfrak{m}_{\bbk}^\bot)_{\bar0}$ based on the parity of $d_1$, respectively. It is notable that the $p$-character $\eta$ can only be guaranteed in $\chi+(\mathfrak{m}_{\bbk}^\bot)_{\bar0}$, with no further information apparently  contributing to $U_\chi(\ggg_{\bbk})$. Owing to Premet's treatment to finite $W$-algebras in \cite[Theorem 2.2]{P7}, one can translate the dimensional lower bounds for  $U_\eta({\ggg}_{\bbk})$ to the ones for $U_\chi({\ggg}_{\bbk})$ (see Lemma~\ref{etachi}). Taking such an approach, we will finally get at the desired result in Theorem~\ref{intromain-3}.

\begin{lemma}\label{etachi}
Let ${\ggg}_{\bbk}$ be a basic Lie superalgebra over ${\bbk}$. If the $\bbk$-algebra $U_\eta({\ggg}_{\bbk})$ affords a representation of dimension $p^{\frac{d_0}{2}}2^{\lfloor\frac{d_1}{2}\rfloor}$ for some $\eta\in\chi+(\mathfrak{m}_{\bbk}^\bot)_{\bar{0}}$, then $U_\chi({\ggg}_{\bbk})$ also admits a representation of dimension $p^{\frac{d_0}{2}}2^{\lfloor\frac{d_1}{2}\rfloor}$.
\end{lemma}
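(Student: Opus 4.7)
The plan is to combine Proposition \ref{transitionforminimaldim} with the structural decomposition $U(\mathfrak{g}_\bbk,e) \cong T(\mathfrak{g}_\bbk,e) \otimes_\bbk Z_p(\mathfrak{a}_\bbk)$ of Theorem \ref{keyisotheorem}(3) and the Skryabin-type equivalence of Theorem \ref{reducedfunctors} in order to upgrade ``\emph{some} $\eta \in \chi + (\mathfrak{m}_\bbk^\perp)_{\bar 0}$ admits a small $U_\eta$-module'' to ``$\chi$ itself admits a small $U_\chi$-module''.

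First, from the given irreducible $U_\eta(\mathfrak{g}_\bbk)$-module $M$ of dimension $p^{d_0/2}2^{\lfloor d_1/2\rfloor}$, Theorem \ref{reducedfunctors} produces the irreducible $U_\eta(\mathfrak{g}_\bbk,e)$-module $M^{\mathfrak{m}_\bbk}$, whose dimension equals $1$ or $2$ according to the parity of $d_1$ by \eqref{dimMn} together with Conventions \ref{convention2.3}. Via Lemma \ref{isoofreducedW-alg} it is a module of $\widetilde U_\eta(\mathfrak{g}_\bbk,e)$ and pulls back to a small $U(\mathfrak{g}_\bbk,e)$-module $N$ on which $\rho_\bbk(Z_p)$ acts through the character associated to $\eta$. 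Under Theorem \ref{keyisotheorem}(3) the central factor $Z_p(\mathfrak{a}_\bbk)$ acts on $N$ by some character, and forgetting this $N$ becomes a small irreducible $T(\mathfrak{g}_\bbk,e)$-module.

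Next I would reassemble a small $U(\mathfrak{g}_\bbk,e)$-module $\widetilde N$ whose $\rho_\bbk(Z_p)$-character equals $\chi$. Using the identification $\rho_\bbk(Z_p) \cong Z_p(\widetilde{\mathfrak{p}}_\bbk) \cong Z_p(\mathfrak{g}_\bbk^e) \otimes Z_p(\mathfrak{a}_\bbk)$ from Theorem \ref{keyisotheorem}(1), the plan is to choose the character $\kappa_\chi$ of $Z_p(\mathfrak{a}_\bbk)$ defined by $\bar X^p - \bar X^{[p]} \mapsto \chi(\bar X)^p$ for $\bar X \in (\mathfrak{a}_\bbk)_{\bar 0}$, and combine it with a (possibly adjusted) small $T$-module from the family $\mathscr E(\bbk)$ guaranteed by Proposition \ref{transitionforminimaldim}. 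The resulting $\widetilde N$ satisfies the $\mathfrak{a}_\bbk$-part of the $\chi$-character automatically; provided one can additionally arrange the $\mathfrak{g}_\bbk^e$-part to vanish (matching $\chi|_{\mathfrak{g}_\bbk^e} = 0$, since $\chi(x)=(e,x)$ with $e \in \mathfrak{g}_\bbk(2)$ and $\mathfrak{g}_\bbk^e \subset \bigoplus_{i \geq 0} \mathfrak{g}_\bbk(i)$), then $\widetilde N$ descends to $\widetilde U_\chi(\mathfrak{g}_\bbk,e) \cong U_\chi(\mathfrak{g}_\bbk,e)$ by Lemma \ref{isoofreducedW-alg}, and Theorem \ref{reducedfunctors} lifts it to an irreducible $U_\chi(\mathfrak{g}_\bbk)$-module of dimension $p^{d_0/2}2^{\lfloor d_1/2\rfloor}$.

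The main obstacle is arranging the vanishing of the $\mathfrak{g}_\bbk^e$-component. Since $\rho_\bbk(\bar x_i^p - \bar x_i^{[p]}) \equiv \bar\Theta_i^p$ modulo strictly lower Kazhdan-order terms mixing $T$ with $Z_p(\mathfrak{a}_\bbk)$ (cf.\ \eqref{dede}), this condition translates into a system of polynomial equations in the scalars $(c_1,\dots,c_l)$ by which the even generators $\bar\Theta_i$ act on the underlying $T$-module, coupled with $\kappa_\chi$. The hard part will be to verify that the combined parameter space $\mathscr E(\bbk) \times \text{Specm}\,Z_p(\mathfrak{a}_\bbk)$, which has the same total dimension as $\widetilde{\mathfrak{p}}_\bbk^{\bar 0}$, dominates the affine variety $\chi + (\mathfrak{m}_\bbk^\perp)_{\bar 0}$ of $\rho_\bbk(Z_p)$-characters, so that $\chi$ itself lies in the image; this mirrors Premet's argument in \cite[\S2.8]{P7} for finite $W$-algebras, with the key new input being that the family $\mathscr E(\bbk)$ is nonempty by Proposition \ref{transitionforminimaldim} applied to the given $\eta$.
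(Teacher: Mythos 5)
Your proposal takes a genuinely different route from the paper, and unfortunately the route has a gap exactly where you flag it. You reduce the problem to showing that the map $\mathscr{E}(\bbk)\times\text{Specm}\,Z_p(\aaa_\bbk) \to \chi+(\mathfrak{m}_\bbk^\perp)_{\bar 0}$, sending a small $T(\ggg_\bbk,e)$-module together with a character of $Z_p(\aaa_\bbk)$ to the induced $\rho_\bbk(Z_p)$-character, hits $\chi$. But the hypothesis only gives you that $\mathscr{E}(\bbk)$ is nonempty (one small $T$-module, coming from the given $\eta$); a priori $\mathscr{E}(\bbk)$ could be a single point, or a variety of dimension much smaller than $\dim(\ggg^e_\bbk)_{\bar 0}$, and then the image of your parameter map is a proper closed subvariety of $\chi+(\mathfrak{m}_\bbk^\perp)_{\bar 0}$ that need not contain $\chi$. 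The dimension count you invoke says only that the \emph{ambient} spaces have matching dimensions, not that $\mathscr{E}(\bbk)$ is big enough, and the pairing between the $T$-module data and the $Z_p(\ggg^e_\bbk)$-character is moreover entangled by the lower Kazhdan-order corrections in $\rho_\bbk(\bar x_i^p-\bar x_i^{[p]})\equiv\bar\Theta_i^p+\cdots$, so there is no clean product structure to exploit. As written this is an outline with an essential unproved step, not a proof.

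The paper's argument avoids this entirely by working at the level of reduced enveloping algebras and using orbit geometry rather than trying to engineer a specific $W$-superalgebra module. It introduces the locus $\Xi\subset(\ggg_\bbk)^*_{\bar 0}$ of $p$-characters $\xi$ for which $U_\xi(\ggg_\bbk)$ has a two-sided ideal of the right codimension, then establishes three facts: (i) $\Xi$ is Zariski closed (by \cite[Lemma 2.2]{Z1}); (ii) $\Xi$ is $\text{Ad}^*(G_\bbk)_\ev$-stable; (iii) $\Xi$ is conical, i.e.\ stable under $\bbk^\times$-scaling. The conicity is the nontrivial point and is proved via the Jordan--Chevalley decomposition of a $p$-character, Morita reduction to a Levi, and the fact that nilpotent coadjoint orbits of the even reductive subgroup are $\bbk^\times$-stable. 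With (i)--(iii) in hand, the cocharacter $\lambda$ defining the Dynkin grading is used to flow: $\bar t^2\cdot(\text{Ad}^*\lambda(\bar t))^{-1}\eta = \chi+\sum_{i\leqslant 1}\bar t^{2-i}\eta_i$ lies in $\Xi$ for all $\bar t\in\bbk^\times$, and since $2-i\geqslant 1>0$, the limit $\bar t\to 0$ lands on $\chi$, which is then in $\Xi$ by closedness. Combined with the lower bound of Proposition \ref{wzd2}, this yields a $U_\chi(\ggg_\bbk)$-module of the stated dimension. This is the argument of \cite[Theorem 2.2]{P7} that you cited, but it is a contraction-via-cocharacter argument, not a dominance-of-parameter-spaces argument; that is where the two approaches diverge, and why the paper's is the one that closes.
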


\begin{proof}First note that (1) $\lfloor\frac{d_1}{2}\rfloor=\frac{d_1}{2}$ when $d_1$ is even; and (2) $\lfloor\frac{d_1}{2}\rfloor=\frac{d_1+1}{2}$ when $d_1$ is odd. Since the proof is similar for both cases, we will just consider the situation when $d_1$ is odd.

Let $(G_{\bbk})_{\text{ev}}$ be the reductive algebraic group associated to the even part $({\ggg}_{\bbk})_{\bar{0}}$ of Lie superalgebra ${\ggg}_{\bbk}$.
For any $\xi\in({\ggg}_{\bbk})^*_{\bar0}$, it is well known that the construction of the ${\bbk}$-algebra $U_\xi({\ggg}_{\bbk})$ only depends on the orbit of $\xi$ under the coadjoint action of $(G_{\bbk})_{\text{ev}}$ up to an isomorphism. Therefore, if $\xi':=(\text{Ad}^*g)(\xi)$ for some $g\in(G_{\bbk})_{\text{ev}}$, then $U_\xi({\ggg}_{\bbk})\cong U_{\xi'}({\ggg}_{\bbk})$ as ${\bbk}$-algebras.

Let $\Xi$ denote the set of all $\xi\in({\ggg}_{\bbk})^*_{\bar{0}}$ for which the algebra $U_\xi({\ggg}_{\bbk})$ contains a two-sided ideal of codimension $p^{d_0}2^{d_1+1}$. It follows from \cite[Lemma 2.2]{Z1} that the set $\Xi$ is Zariski closed in $({\ggg}_{\bbk})^*_{\bar{0}}$. Moreover, the preceding remark shows that the set $\Xi$ is stable under the coadjoint action of $(G_{\bbk})_{\text{ev}}$.

(i) We claim that $\bar t\cdot\xi\in\Xi$ for all $\bar t\in{\bbk}^\times(={\bbk}\backslash\{0\})$. 

For any $\xi\in({\ggg}_{\bbk})^*_{\bar{0}}$,  we can regard $\xi\in{\ggg}_{\bbk}^*$ by letting $\xi(({\ggg}_{\bbk})_{\bar{1}})=0$. Now let $\xi=(\bar x,\cdot)$ for some $\bar x\in({\ggg}_{\bbk})_{\bar{0}}$. Let
$\bar x=\bar s+\bar n$ be the Jordan-Chevalley decomposition of $\bar x$ in the restricted Lie algebra $({\ggg}_{\bbk})_{\bar{0}}$ and put
$\xi_{\bar s}:=(\bar s,\cdot)$, $\xi_{\bar n}:=(\bar n,\cdot)$. Take a Cartan subalgebra $\mathfrak{h}_{\bbk}$ of ${\ggg}_{\bbk}$ which contains $\bar s$, and let ${\ggg}_{\bbk}^{\bar s}$ denote the centralizer of $\bar s$ in ${\ggg}_{\bbk}$. Let $\Phi$ be a root system of ${\ggg}_{\bbk}$ relative to $\mathfrak{h}_{\bbk}$. Then ${\ggg}_{\bbk}^{\bar s}:=\mathfrak{l}_{\bbk}=(\mathfrak{l}_{\bbk})_{\bar{0}}+(\mathfrak{l}_{\bbk})_{\bar{1}}$ also has a root space decomposition $\mathfrak{l}_{\bbk}=\mathfrak{h}_{\bbk}\oplus\bigoplus\limits_{\alpha\in\Phi(\mathfrak{l}_{\bbk})}({\ggg}_{\bbk})_\alpha$ with $\Phi(\mathfrak{l}_{\bbk}):=\{\alpha\in\Phi\mid\alpha(\bar s)=0\}$. From \cite[Proposition 5.1]{WZ} we know that there exists a system $\Delta$ of simple roots of ${\ggg}_{\bbk}$ such that $\Delta\cap\Phi(\mathfrak{l}_{\bbk})$ is a system of simple roots for $\Phi(\mathfrak{l}_{\bbk})$. In particular, $\mathfrak{l}_{\bbk}$ is always a direct sum of basic Lie superalgebras (note that a toral subalgebra of ${\ggg}_{\bbk}$ may also appear in the summand). Let $\mathfrak{b}_{\bbk}=\mathfrak{h}_{\bbk}\oplus\mathfrak{n}_{\bbk}$ be the Borel subalgebra associated to $\Delta$. Then we can define a parabolic subalgebra ${\ppp}_{\bbk}=\mathfrak{l}_{\bbk}+\mathfrak{b}_{\bbk}=\mathfrak{l}_{\bbk}\oplus\mathfrak{u}_{\bbk}$, where $\mathfrak{u}_{\bbk}$ denotes the nilradical of ${\ppp}_{\bbk}$. Recall \cite[\S5.1]{WZ} shows that $\xi(\mathfrak{u}_{\bbk})=0$, and $\xi|_{\mathfrak{l}_{\bbk}}=\xi_n|_{\mathfrak{l}_{\bbk}}$ is nilpotent. It is notable that all subalgebras here are naturally restricted subalgebras of $\ggg_\bbk$.

Since $\bar x=\bar s+\bar n$ is the Jordan-Chevalley decomposition of $\bar x$, it follows that $\bar t\bar x=\bar t\bar s+\bar t\bar n$ is the Jordan-Chevalley decomposition of $\bar t\bar x$ for $\bar t\in{\bbk}^\times$. Obviously, ${\ggg}_{\bbk}^{\bar t\bar s}=\mathfrak{l}_{\bbk}$.

It follows from \cite[Theorem 5.3]{WZ} that every irreducible $U_\xi({\ggg}_{\bbk})$-module is $U_\xi(\mathfrak{u}_{\bbk})$-projective. Since $\mathfrak{u}_{\bbk}$ is nilpotent in ${\ggg}_{\bbk}$ and $\xi|_{\mathfrak{u}_{\bbk}}=0$, it follows from \cite[Proposition 2.6]{WZ} that the ${\bbk}$-algebra $U_\xi(\mathfrak{u}_{\bbk})$ is local with trivial module as the unique irreducible module. Then every irreducible $U_\xi({\ggg}_{\bbk})$-module is $U_\xi(\mathfrak{u}_{\bbk})$-free, and the unique maximal ideal $N_{\mathfrak{u}_{\bbk}}$ of $U_\xi(\mathfrak{u}_{\bbk})$ is generated by the image of $\mathfrak{u}_{\bbk}$ in $U_\xi(\mathfrak{u}_{\bbk})$.
 We put $\bar1_\xi=1+N_{\mathfrak{u}_{\bbk}}$, the image of $1$ in ${\bbk}_\xi:=U_\xi(\uuu_\bbk)\slash N_{\uuu_\bbk}$. Consider the $\bbk$-algebra $U_\xi({\ppp}_{\bbk})$, which contains $U_\xi(\uuu_\bbk)$ as a subalgebra.
 Set $Q^0_{\ppp_{\bbk}}$ to be a $U_\xi(\ppp_{\bbk})$-module with the ground space $U_\xi({\ppp}_{\bbk})/U_\xi({\ppp}_{\bbk})N_{\mathfrak{u}_{\bbk}}$, then $Q^0_{\ppp_\bbk}
 =U_\xi({\ppp}_{\bbk})\cdot\bar1_\xi$.
For any $\bar q\in Q^0_{\ppp_\bbk}$ there is $\bar u\in U_\xi({\ppp}_{\bbk})$ such that $\bar q=\bar u\cdot \bar1_\xi$. Since $[{\ppp}_{{\bbk}},N_{\mathfrak{u}_{\bbk}}]\subseteq N_{\mathfrak{u}_{\bbk}}$, it follows from Jacobi identity that $[\bar n,\bar u]\in U_\xi({\ppp}_{\bbk})N_{\mathfrak{u}_{\bbk}}$ for any $\bar n\in N_{\mathfrak{u}_{\bbk}}$. Then
$$\bar n\cdot \bar q=([\bar n,\bar q]+(-1)^{|\bar n||\bar q|}\bar q\cdot \bar n)\cdot \bar1_\xi\subseteq U_\xi({\ppp}_{\bbk})N_{\mathfrak{u}_{\bbk}}$$ for any ${\bbz}_2$-homogeneous elements $\bar n\in N_{\mathfrak{u}_{\bbk}}$ and $\bar q\in Q_{\mathfrak{u}_{\bbk}}^0$. Thus
  we have the following algebra isomorphism
  $$U_\xi({\ppp}_{\bbk})/U_\xi({\ppp}_{\bbk})N_{\mathfrak{u}_{\bbk}}\cong U_\xi({\ppp}_{\bbk}/\mathfrak{u}_{\bbk}).$$
 Define $$\widetilde {Q}_\xi^\xi:=U_\xi({\ggg}_{\bbk})/U_\xi({\ggg}_{\bbk})N_{\mathfrak{u}_{\bbk}},$$
  which is clearly endowed with left $U_\xi({\ggg}_{\bbk})$-module structure. The algebra inclusion $U_\xi(\uuu_\bbk)\subseteq U_\xi(\ggg_\bbk)$ (resp. $U_\xi(\ppp_\bbk)\subseteq U_\xi(\ggg_\bbk)$) gives rise to the canonical imbedding of spaces $\bbk_\xi=\bbk\cdot \bar 1_\xi\hookrightarrow \widetilde {Q}_\xi^\xi$
  (resp. $Q^0_{\ppp_\bbk}\hookrightarrow \widetilde {Q}_\xi^\xi$).
 Those imbeddings clearly satisfy the property that
 for any $\bar q\in Q_{\mathfrak{u}_{\bbk}}^0$ there is a unique $h_{\bar q}\in\text{End}_{{\ggg}_{\bbk}}{\widetilde{Q}}_\xi^\xi$ such that $h_{\bar q}(\bar1_\xi)=\bar q$.
Put
\begin{align*}
&d'_0:=2\text{dim}\,(\mathfrak{u}_{\bbk})_{\bar{0}}=\dim(\ggg_\bbk)_{\bar{0}}-\dim
(\mathfrak{l}_{\bbk})_{\bar{0}},\cr
&d'_1:=2\text{dim}\,(\mathfrak{u}_{\bbk})_{\bar{1}}
=\text{dim}\,({\ggg}_{\bbk})_{\bar{1}}-\text{dim}\,(\mathfrak{l}_{\bbk})_{\bar{1}}.
 \end{align*}
 Since every irreducible $U_\xi({\ggg}_{\bbk})$-module is $U_\xi(\mathfrak{u}_{\bbk})$-free, by the same discussion as \cite[Theorem 4.4]{WZ} we can obtain a ${\bbk}$-algebra isomorphism:
\begin{equation}\label{isorr}
U_\xi({\ggg}_{\bbk})\cong \text{Mat}_{p^{\frac{d'_0}{2}}2^{\frac{d'_1}{2}}}((\text{End}_{{\ggg}_{\bbk}}
{\widetilde{Q}}_\xi^\xi)^{\text{op}}).
\end{equation}
Therefore,$$\text{dim}\,(\text{End}_{{\ggg}_{\bbk}}{\widetilde{Q}}_\xi^\xi)=p^{\text{dim}\,
({\ggg}_{\bbk})_{\bar{0}}-d'_0}2^{\text{dim}\,
({\ggg}_{\bbk})_{\bar{1}}-d'_1}=p^{\text{dim}\,
(\mathfrak{l}_{\bbk})_{\bar{0}}}2^{\text{dim}\,
(\mathfrak{l}_{\bbk})_{\bar{1}}}.$$Since $$\text{dim}\,U_\xi({\ppp}_{\bbk}/\mathfrak{u}_{\bbk})=p^{\text{dim}\,
({\ppp}_{\bbk})_{\bar{0}}-\text{dim}\,(\mathfrak{u}_{\bbk})_{\bar{0}}}2^{\text{dim}\,
({\ppp}_{\bbk})_{\bar{1}}-\text{dim}\,
(\mathfrak{u}_{\bbk})_{\bar{1}}}=p^{\text{dim}\,
(\mathfrak{l}_{\bbk})_{\bar{0}}}2^{\text{dim}\,
(\mathfrak{l}_{\bbk})_{\bar{1}}},$$
it follows that $\text{End}_{{\ggg}_{\bbk}}\widetilde{Q}_\xi^\xi=\{h_{\bar q}\mid\bar q\in Q_{\mathfrak{u}_{\bbk}}^0\}$. Define the mapping
\[\begin{array}{lcll}
\tau:&\text{End}_{{\ggg}_{\bbk}}{\widetilde{Q}}_{\xi}^\xi&\rightarrow&U_\xi({\ppp}_{\bbk}/\mathfrak{u}_{\bbk})^{\text{op}}\\ &\theta&\mapsto&\theta(\bar1_\xi)
\end{array}.
\]
It is obvious that $\tau$ is a homomorphism of ${\bbk}$-algebras. As both $\bbk$-algebras have the same dimension (as vector spaces), one can deduce that $\tau$ is an isomorphism. Taking the opposite algebras for both sides, we have an algebra isomorphism
\begin{equation}\label{iso2}
(\text{End}_{{\ggg}_{\bbk}}\widetilde{Q}_\xi^\xi)^{\text{op}}\cong U_\xi({\ppp}_{\bbk}/\mathfrak{u}_{\bbk})\cong U_\xi(\mathfrak{l}_{\bbk}).
\end{equation}

For the $p$-character $\bar t\xi$, repeating the same arguments as above for $\xi$, we can obtain that
\begin{equation}\label{iso3}
(\text{End}_{{\ggg}_{\bbk}}\widetilde{Q}_{\bar t\xi}^{\bar t\xi})^{\text{op}}\cong U_{\bar t\xi}(\mathfrak{l}_{\bbk}).
\end{equation}
And also we have 
an algebra isomorphism
\begin{equation}\label{iso4}
U_{\bar t\xi}({\ggg}_{\bbk})\cong \text{Mat}_{p^{\frac{d'_0}{2}}2^{\frac{d'_1}{2}}}((\text{End}_{{\ggg}_{\bbk}}\widetilde{Q}_{\bar t\xi}^{\bar t\xi})^{\text{op}}).
\end{equation}

Recall that $\mathfrak{l}_{\bbk}$ is a direct sum of basic Lie superalgebras. Set $\mathfrak{l}_{\bbk}=({\ggg}_{\bbk})_1\oplus\cdots\oplus({\ggg}_{\bbk})_r\oplus\mathfrak{t}'_{\bbk}$, where $({\ggg}_{\bbk})_i$ is a basic Lie superalgebra for each $1\leqslant  i\leqslant  r$, and $\mathfrak{t}'_{\bbk}$ is a toral subalgebra of ${\ggg}_{\bbk}$. For each $1\leqslant  i\leqslant  r$, let $(G_{\bbk})_i$ denote the algebraic supergroup associated to $({\ggg}_{\bbk})_i$. It is well known that the even part of $(G_{\bbk})_i$ ($1\leqslant  i\leqslant  r$) is a reductive algebraic group, and we denote it by $((G_{\bbk})_i)_{\text{ev}}$. Since $\xi|_{\mathfrak{l}_{\bbk}}=\xi_n|_{\mathfrak{l}_{\bbk}}$ is nilpotent, it follows from \cite[Lemma 2.10]{J3} that ${\bbk}^\times\cdot\xi|_{({\ggg}_{\bbk})_i}\subseteq(\text{Ad}^*((G_{\bbk})_i)_{\text{ev}})\xi|_{({\ggg}_{\bbk})_i}$.
For each $1\leqslant  i\leqslant  r$, since the reduced enveloping algebra $U_{\xi|_{({\ggg}_{\bbk})_i}}(({\ggg}_{\bbk})_i)$ depends only on the orbit of $\xi|_{({\ggg}_{\bbk})_i}$ under the coadjoint action of $((G_{\bbk})_i)_{\text{ev}}$, then $U_{\xi|_{({\ggg}_{\bbk})_i}}(({\ggg}_{\bbk})_i)\cong U_{\bar t\xi|_{({\ggg}_{\bbk})_i}}(({\ggg}_{\bbk})_i)$ as ${\bbk}$-algebras. With $i$ being arbitrary, we have
$\bigotimes\limits_{i=1}^r U_{\xi|_{({\ggg}_{\bbk})_i}}(({\ggg}_{\bbk})_i)\cong \bigotimes\limits_{i=1}^r U_{\bar t\xi|_{({\ggg}_{\bbk})_i}}(({\ggg}_{\bbk})_i)$.
As $\mathfrak{t}'_{\bbk}$ is a toral subalgebra of ${\ggg}_{\bbk}$, the reduced enveloping algebra $U_\psi(\mathfrak{t}'_{\bbk})$ is commutative and semisimple for every $\psi\in(\mathfrak{t}'_{\bbk})^*$ (Indeed, $\mathfrak{t}'_{\bbk}$ has a ${\bbk}$-basis $t_1,\cdots,t_d$ with $t_i^{[p]}=t_i$ for $1\leqslant i\leqslant d$. Therefore, $U_\psi(\mathfrak{t}'_{\bbk})\cong A_1\otimes\cdots\otimes A_d$ where $A_i\cong{\bbk}[X]/(X^p-X-\psi(t_i)^p)$ is a $p$-dimensional commutative semisimple ${\bbk}$-algebra). From this it is immediate that $U_{\xi|_{\mathfrak{t}'_{\bbk}}}(\mathfrak{t}'_{\bbk})\cong U_{\bar t\xi|_{\mathfrak{t}'_{\bbk}}}(\mathfrak{t}'_{\bbk})$ as $\bbk$-algebras. Since $\mathfrak{l}_{\bbk}=\bigoplus\limits_{i=1}^r({\ggg}_{\bbk})_i\oplus\mathfrak{t}'_{\bbk}$, we have $U_{\xi}(\bigoplus\limits_{i=1}^r({\ggg}_{\bbk})_i\oplus\mathfrak{t}'_{\bbk})\cong \bigotimes\limits_{i=1}^r U_{\xi|_{({\ggg}_{\bbk})_i}}(({\ggg}_{\bbk})_i)\otimes U_{\xi|_{\mathfrak{t}'_{\bbk}}}(\mathfrak{t}'_{\bbk})$ and $U_{\bar t\xi}(\bigoplus\limits_{i=1}^r({\ggg}_{\bbk})_i\oplus\mathfrak{t}'_{\bbk})\cong \bigotimes\limits_{i=1}^r U_{\bar t\xi|_{({\ggg}_{\bbk})_i}}(({\ggg}_{\bbk})_i)\otimes U_{\bar t\xi|_{\mathfrak{t}'_{\bbk}}}(\mathfrak{t}'_{\bbk})$. Therefore, $U_{\xi}(\mathfrak{l}_{\bbk})\cong U_{\bar t\xi}(\mathfrak{l}_{\bbk})$ as $\bbk$-algebras. It follows from \eqref{isorr}---\eqref{iso4} that there is a $\bbk$-algebras isomorphism\begin{equation}\label{tiso}
U_{\xi}({\ggg}_{\bbk})\cong U_{\bar t\xi}({\ggg}_{\bbk})\end{equation} for all $\bar t\in{\bbk}^\times$.
Now Claim (i) is an immediate consequence of \eqref{tiso}, i.e. if $\xi\in\Xi$, then $\bar t\cdot\xi\in\Xi$ for all $\bar t\in{\bbk}^\times$. Moreover,  combining with the arguments  prior to paragraph (i), we know that the affine variety  $\Xi$ is conical.

(ii) We claim that $\chi\in\Xi$.

 Recall the assumption of the lemma shows that $U_\eta({\ggg}_{\bbk})$ has an irreducible module of dimension $p^{\frac{d_0}{2}}2^{\frac{d_1+1}{2}}$, thus $\eta\in\Xi$. As $\eta\in\chi+(\mathfrak{m}_{\bbk}^\perp)_{\bar{0}}$, we can write $\eta=(e+\bar y,\cdot)$ for some $\bar y=\sum_{i\leqslant 1}\bar y_i$ with $\bar y_i\in{\ggg}_{\bbk}(i)_{\bar{0}}$, $i\leqslant 1$. There is a cocharacter $\lambda:{\bbk}^\times\longrightarrow(G_{\bbk})_{\text{ev}}$ such that $(\text{Ad}\lambda(\bar t))\bar x=\bar t^j\bar x$ for all $\bar x\in{\ggg}_{\bbk}(j)\,(j\in{\bbz})$ and $\bar t\in{\bbk}^\times$. For $i\leqslant 1$, set $\eta_i=(\bar y_i,\cdot)$, then $\eta=\chi+\sum_{i\leqslant 1}\eta_i$. For any even element $\bar y'\in{\ggg}_{\bbk}(j)_{\bar{0}}$, by the coadjoint action of $\lambda(\bar t)$ on $(\ggg_\bbk)_{\bar0}^*$ one has
 \begin{align*}
 (\text{Ad}^*(\lambda(\bar t))(\eta))(\bar y')&=\eta(\text{Ad}\lambda(\bar t)^{-1}\bar  y')\cr
 &=\bar t^{-j}(e+\sum_{i\leqslant 1}\bar y_i,\bar y')\cr
 &=\Bigg\{\begin{array}{ll}\bar t^{2}(e,\bar y')&(j=-2)\\
\bar t^{-j}\sum_{i\leqslant 1}\delta_{i,-j}(\bar y_i,\bar y')&(j\neq-2)\end{array}.
\end{align*}
  Note that $(\bar t^{2}\chi+\sum_{i\leqslant 1}\bar t^{i}\eta_i)(\bar y')=\bar t^{2}(e,\bar y')+\sum_{i\leqslant 1}\bar t^{i}(\bar y_i,\bar y')$. Then we have
$(\text{Ad}^*\lambda(\bar t))\eta=\bar t^{2}\chi+\sum_{i\leqslant 1}\bar t^{i}\eta_i$, and
$(\text{Ad}^*\lambda(\bar t))^{-1}\eta=\bar t^{-2}\chi+\sum_{i\leqslant 1}\bar t^{-i}\eta_i$. As $\Xi$ is conical and $\text{Ad}^*(G_{\bbk})_{\text{ev}}$-invariant by step (i), this implies that $$\bar t^{2}\cdot(\text{Ad}^*\lambda(\bar t))^{-1}\eta=\chi+\sum_{i\leqslant 1}\bar t^{2-i}(\bar y_i,\bar y')\in\Xi$$for all $\bar t\in{\bbk}^\times$. Since $\Xi$ is Zariski closed, this yields $\chi\in\Xi$. Then claim (ii) is proved.

From all above we know that  $U_\chi({\ggg}_{\bbk})$ admits a two-sided ideal $I$ of codimension $p^{d_0}2^{d_1+1}$, and all irreducible modules of the factor algebra $U_\chi({\ggg}_{\bbk})/I$ have dimensions $\leqslant  p^{\frac{d_0}{2}}2^{\frac{d_1+1}{2}}$.  Combining with Proposition \ref{wzd2}, one can conclude that the $\bbk$-algebra $U_\chi({\ggg}_{\bbk})$ really has an irreducible module of dimension $p^{\frac{d_0}{2}}2^{\frac{d_1+1}{2}}$. We complete the proof.
\end{proof}

\subsubsection{Proof of Theorem \ref{intromain-3}}
Let ${\ggg}$ be a basic Lie superalgebra over ${\bbc}$ and ${\ggg}_{\bbk}$ the corresponding Lie superalgebra over positive characteristic field ${\bbk}$. Let $\chi\in({\ggg}_{\bbk})^*_{\bar0}$ be a nilpotent $p$-character of ${\ggg}_{\bbk}$ such that $\chi(\bar y)=(e,\bar y)$ for any $\bar y\in{\ggg}_{\bbk}$. Under the assumption that  the finite $W$-superalgebra $U({\ggg},e)$ over ${\bbc}$ affords a one-dimensional  (resp. two-dimensional) representation when $d_1$ is even (resp. odd), we want to prove that the reduced enveloping algebra $U_\chi(\ggg_\bbk)$ with nilpotent $p$-character $\chi$ possesses an irreducible module whose dimension is exactly the lower bound predicted by the super Kac-Weisfeiler property in Proposition~\ref{wzd2}.
For the case of $d_1$ being even, the conclusion follows from Lemma~\ref{mindim1} and Lemma~\ref{etachi}. The odd case can be done by Lemma~\ref{cdim2} and Lemma~\ref{etachi}. Then Theorem \ref{intromain-3} follows. We complete the proof.

As a corollary of Theorem~\ref{intromain-3}, we have the following consequence on the  ``small representations'' of reduced $W$-superalgebra $U_\chi({\ggg}_{\bbk},e)$.

\begin{corollary}\label{1122}
Let ${\ggg}$ be a basic Lie superalgebra. When $d_1$ is even (resp. odd), if the finite $W$-superalgebra $U({\ggg},e)$ over ${\bbc}$ affords a one-dimensional  (resp. two-dimensional) representation, then for $p\gg0$, the reduced $W$-superalgebra $U_\chi({\ggg}_{\bbk},e)$ over ${\bbk}=\overline{\mathbb{F}}_p$ also admits one-dimensional  (resp. two-dimensional) representations.
\end{corollary}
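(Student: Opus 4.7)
The plan is to deduce the corollary directly from Theorem \ref{intromain-3} via the category equivalence for reduced $W$-superalgebras (Theorem \ref{reducedfunctors}) together with the dimension formula \eqref{dimMn}. The hypothesis that $U(\ggg,e)$ admits a one-dimensional (resp.\ two-dimensional) representation when $d_1$ is even (resp.\ odd) is exactly what is needed to invoke Theorem \ref{intromain-3}, which provides, for $p\gg 0$, an irreducible $U_\chi(\ggg_\bbk)$-module $M$ of dimension $p^{d_0/2}2^{\lfloor d_1/2\rfloor}$.

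First I would observe that $\chi\in\chi+(\mathfrak{m}_\bbk^\perp)_{\bar 0}$ trivially (taking the zero element of $(\mathfrak{m}_\bbk^\perp)_{\bar 0}$), so Theorem \ref{reducedfunctors} applies with $\eta=\chi$. Applying the functor $M\mapsto M^{\mathfrak{m}_\bbk}$ to the irreducible module $M$ produced above yields an irreducible $U_\chi(\ggg_\bbk,e)$-module, namely $M^{\mathfrak{m}_\bbk}$, since the equivalence preserves irreducibility.

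Next, I would compute $\dim M^{\mathfrak{m}_\bbk}$ using the identity \eqref{dimMn},
\[
\dim M^{\mathfrak{m}_\bbk}=\frac{\dim M}{\dim U_\chi(\mathfrak{m}_\bbk)},
\]
together with Conventions \ref{convention2.3}, which gives $\dim U_\chi(\mathfrak{m}_\bbk)=p^{d_0/2}2^{\lceil d_1/2\rceil}$ (with the paper's convention that $\lceil\cdot\rceil$ denotes the largest integer bound from below). Substituting the value of $\dim M$ from Theorem \ref{intromain-3} yields
\[
\dim M^{\mathfrak{m}_\bbk}=2^{\lfloor d_1/2\rfloor-\lceil d_1/2\rceil},
\]
which equals $1$ when $d_1$ is even and $2$ when $d_1$ is odd, matching the two cases of the corollary.

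Since every step reduces to an invocation of results already established in the paper, no genuine obstacle arises; the proof is essentially a bookkeeping exercise. The only point worth being careful about is the parity bookkeeping caused by the author's nonstandard convention for $\lceil\cdot\rceil$ and $\lfloor\cdot\rfloor$, which must be tracked consistently with the formula $\dim U_\chi(\mathfrak{m}_\bbk)=p^{d_0/2}2^{\lceil d_1/2\rceil}$ from Conventions \ref{convention2.3}.
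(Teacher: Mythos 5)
Your proposal is correct and follows essentially the same route as the paper: the paper's own proof applies Theorem \ref{reducedfunctors} with $\eta=\chi$ and the dimension formula \eqref{dimMn} to the module $M$ furnished by Theorem \ref{intromain-3}, yielding $\dim M^{\mathfrak{m}_\bbk}=\dim M/\bigl(p^{\frac{d_0}{2}}2^{\lceil\frac{d_1}{2}\rceil}\bigr)=1$ or $2$ according to the parity of $d_1$. Your handling of the paper's swapped $\lceil\cdot\rceil$/$\lfloor\cdot\rfloor$ conventions is also consistent with Conventions \ref{convention2.3}, so there is nothing to add.
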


\begin{proof}
For any $U_\chi({\ggg}_{\bbk})$-module $M$, it follows from Theorem~\ref{reducedfunctors} that $M^{\mathfrak{m}_{\bbk}}$ is a $U_\chi({\ggg}_{\bbk},e)$-module. For the case $\eta=\chi$, \eqref{dimMn} shows that
$$\text{dim}\,M^{\mathfrak{m}_{\bbk}}=\frac{\text{dim}\,M}{\text{dim}\,U_{\chi}(\mathfrak{m}_{\bbk})}=\frac{\text{dim}\,M}{p^{\frac{d_0}{2}}2^{\lceil\frac{d_1}{2}\rceil}}.$$ Then the desired result follows from Theorem~\ref{intromain-3}.
\end{proof}

\subsection{The case of a direct sum of basic Lie superalgebras with nilpotent $p$-characters}\label{5.3}
In this section we will consider the lower bounds of the Super KW Property for a direct sum of basic Lie superalgebras with nilpotent $p$-characters.

\subsubsection{}
First we make digression to  recall some known facts on finite dimensional superalgebras \cite[\S12]{KL}.

Let $\mathbb{F}$ be an algebraically closed field.
Now we will recall some basics on simple superalgebras over $\mathbb{F}$ (cf. \cite[\S12.1]{KL}). Let $V$ be a superspace with $\text{\underline{dim}}\,V=(m,n)$, then $\mathcal{M}(V):=\text{End}_\mathbb{F}(V)$ is a superalgebra with $\text{\underline{dim}}\,\mathcal{M}(V)=(m^2+n^2,2mn)$. The algebra $\mathcal{M}(V)$ is defined uniquely up to an isomorphism by the superdimension $(m,n)$ of $V$. So we can speak of the superalgebra $\mathcal{M}_{m,n}$. We have an isomorphism of superalgebras
\begin{equation}\label{MM}
\mathcal{M}_{m,n}\otimes\mathcal{M}_{k,l}\cong\mathcal{M}_{mk+nl,ml+nk}.
\end{equation}
Moreover,  \cite[Example 12.1.1]{KL} shows that $\mathcal{M}_{m,n}$ is a simple superalgebra.

Let $V$ be a superspace with $\text{\underline{dim}}\,V=(n,n)$ and $J$ be a degree $\bar{1}$ involution in $\text{End}_\mathbb{F}(V)$. Consider the superalgebra $Q(V,J):=\{f\in\text{End}_\mathbb{F}(V)\,|\,fJ=(-1)^{|f|}Jf\}$. Note that all degree $\bar{1}$ involutions in $\text{End}_\mathbb{F}(V)$ are conjugate to each other by an invertible element in $\text{End}_\mathbb{F}(V)_{\bar 0}$. Hence another choice of $J$ will yield an isomorphism superalgebra. So we can speak of the superalgebra $Q(V)$, defined up to an isomorphism. Pick a basis $\{v_1,\cdots, v_n\}$ of $V_{\bar0}$, and set $v'_i=J(v_i)$ for $1\leqslant i\leqslant n$. Then $\{v'_1,\cdots, v'_n\}$ is a basis of $V_{\bar1}$. With respect to the basis $\{v_1,\cdots, v_n,v'_1,\cdots, v'_n\}$, the elements of $Q(V,J)$ have matrices of the form
\begin{equation}\label{QAB}
\left(
\begin{array}{cl}
A&B\\
-B&A
\end{array}
\right),
\end{equation}
where $A$ and $B$ are arbitrary $n\times n$ matrices, with $B=0$ for even endomorphisms and $A=0$ for odd ones. In particular, $\text{\underline{dim}}\,Q(V)=(n^2,n^2)$. The superalgebra $Q(V,J)$ can be identified with the superalgebra $Q_n$ of all matrices of the form \eqref{QAB}. Moreover, \cite[(12.6), (12.7)]{KL} show that
\begin{equation}\label{MQ}
\mathcal{M}_{m,n}\otimes Q_k\cong Q_{(m+n)k}
\end{equation}and
\begin{equation}\label{QQ}
Q_m\otimes Q_n\cong \mathcal{M}_{mn,mn}
\end{equation}
as $\mathbb{F}$-algebras.
Moreover, \cite[Example 12.1.2]{KL} shows that   $Q_n$ is a simple superalgebra.

Given a finite dimensional superalgebra $A$ over $\mathbb{F}$, define the  parity change functor on  $A\text{-}mod$ (the $A$-module category)
$$\Upsilon:~A\text{-}mod\longrightarrow A\text{-}mod.$$
For an object $V$, $\Upsilon V$ is the same underlying vector space but with the opposite ${\bbz}_2$-grading. The new action
of a ${\bbz}_2$-homogeneous element $a\in A$ on $v\in V$ is defined in terms of the old action by $a\cdot v:=(-1)^{|a|}av$.
Given left modules $V$ and $W$ over $\mathbb{F}$-superalgebras $A$ and $B$ respectively, the (outer) tensor product $V\boxtimes W$ is the space $V\otimes W$ considered as an $A\otimes B$-module via
$$(a\otimes b)(v\otimes w)=(-1)^{|b||v|}av\otimes bw\qquad(a\in A,\,b\in B,\,v\in V,\,w\in W).$$

For irreducible representations of the $\mathbb{F}$-algebra $A\otimes B$, the following result was obtained by Kleshchev in \cite[Lemma 12.2.13]{KL}:

\begin{lemma}(\cite{KL})\label{AB}
Let $V$ be an irreducible $A$-module and $W$ be an irreducible $B$-module.

(i) If both $V$ and $W$ are of type $M$, then $V\boxtimes W$ is an irreducible $A\otimes B$-module of type $M$.

(ii) If one of $V$ or $W$ is of type $M$ and the other is of type $Q$, then $V\boxtimes W$ is an irreducible $A\otimes B$-module of type $Q$.

(iii) If both $V$ and $W$ are of type $Q$, then $V\boxtimes W\cong (V\circledast W)\oplus\Upsilon(V\circledast W)$ for an irreducible $A\otimes B$-module $V\circledast W$ of type $M$.

Moreover, all irreducible $A\otimes B$-modules arise as constituents of $V\boxtimes W$ for some choice of irreducibles $V$, $W$.

\end{lemma}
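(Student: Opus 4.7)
The plan is to reduce everything to the case where $A$ and $B$ are themselves simple superalgebras and then apply the classification of finite-dimensional simple superalgebras over an algebraically closed field together with the structure isomorphisms \eqref{MM}, \eqref{MQ}, \eqref{QQ}. Concretely, by a super version of the Jacobson density theorem (cf.\ \cite[Ch.~12]{KL}), the image $\bar A$ of $A$ in $\text{End}_\mathbb{F}(V)$ is a simple superalgebra: it equals $\mathcal{M}_{m,n}$ when $V$ is of type $M$ with $\text{\underline{dim}}\,V=(m,n)$, and equals $Q_n$ when $V$ is of type $Q$ with $\text{\underline{dim}}\,V=(n,n)$. The same applies to $B$, $W$, $\bar B$, and the $A\otimes B$-action on $V\boxtimes W$ factors through $\bar A\otimes \bar B$, so it suffices to analyse the structure of $\bar A\otimes \bar B$.

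For part (i), $\bar A\otimes \bar B\cong \mathcal{M}_{mk+nl,ml+nk}$ by \eqref{MM}, which is simple of type $M$ and whose unique simple module has superdimension $(mk+nl,ml+nk)$ - precisely that of $V\boxtimes W$. Hence $V\boxtimes W$ is irreducible of type $M$. Part (ii) is analogous via \eqref{MQ}: $\bar A\otimes\bar B\cong Q_{(m+n)k}$, whose unique simple module has superdimension $((m+n)k,(m+n)k)$, again matching $V\boxtimes W$.

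The central difficulty lies in part (iii). Here \eqref{QQ} gives $\bar A\otimes \bar B\cong \mathcal{M}_{mn,mn}$, whose unique simple module has superdimension $(mn,mn)$, whereas $\text{\underline{dim}}\,V\boxtimes W=(2mn,2mn)$ — twice as large. To produce the splitting, one uses the odd Clifford generators $c_V\in\text{End}_A(V)_{\bar 1}$ and $c_W\in\text{End}_B(W)_{\bar 1}$ afforded by type $Q$. A sign computation shows that $c_V\otimes 1$ and $1\otimes c_W$ super-anticommute in $\text{End}_\mathbb{F}(V\boxtimes W)$, so the even element $\tau:=(c_V\otimes 1)(1\otimes c_W)$ is an $A\otimes B$-endomorphism of $V\boxtimes W$, and a short calculation gives $\tau^2=-c_V^2\otimes c_W^2$, a nonzero scalar. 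Passing to the eigenspace decomposition of $\tau$ (after rescaling so $\tau^2=1$, or taking $\pm\sqrt{-1}$-eigenspaces over the algebraically closed field $\mathbb{F}$) yields an $A\otimes B$-stable decomposition $V\boxtimes W=U\oplus U'$ in which $c_V\otimes 1$ swaps the two summands, so that $U'\cong\Upsilon U$ and each is simple of type $M$; we then set $V\circledast W:=U$.

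For the final clause, one uses the fact that over an algebraically closed field the Jacobson radical satisfies $\mathrm{rad}(A\otimes B)=\mathrm{rad}(A)\otimes B+A\otimes \mathrm{rad}(B)$ for finite-dimensional superalgebras, so every simple $A\otimes B$-module factors through $(A/\mathrm{rad}\,A)\otimes(B/\mathrm{rad}\,B)$, which by super-Wedderburn is a direct sum of simple-tensor-simple blocks of the shape already treated; each of its simple modules thus appears as a constituent of $V\boxtimes W$ for suitable choices of irreducibles $V,W$. The principal obstacle throughout is the sign bookkeeping in part (iii): one must verify that $\tau$ is well-defined and genuinely $A\otimes B$-central, that its eigenspaces are $A\otimes B$-stable, and that $c_V\otimes 1$ interchanges them in a way compatible with the parity shift $\Upsilon$.
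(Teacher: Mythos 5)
The paper cites this result from Kleshchev \cite[Lemma 12.2.13]{KL} without supplying a proof, so there is no in-paper argument to compare against. Your reconstruction is correct and follows the standard line of argument in \cite{KL}: reduce via super Schur and super density to $\bar A\otimes\bar B$ with $\bar A,\bar B\in\{\mathcal{M}_{m,n},Q_n\}$, then read off the answer from the isomorphisms \eqref{MM}, \eqref{MQ}, \eqref{QQ} together with a dimension count. Your sign verifications in part (iii) check out: $\tau=c_V\otimes c_W$ is even, is central for $A\otimes B$, squares to the nonzero scalar $-c_V^2 c_W^2$, and super-anticommutes with the odd invertible operator $c_V\otimes 1$, so that $c_V\otimes 1$ interchanges the two $\tau$-eigenspaces and gives $U_-\cong\Upsilon U_+$; irreducibility of each $U_\pm$ then follows from semisimplicity of $\mathcal{M}_{mn,mn}$ and the dimension count $\dim U_\pm=2mn$. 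The radical-plus-Wedderburn argument for the final clause is also sound over the algebraically closed ground field.
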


\subsubsection{}\label{5.3.2}
 Now we return to the representations of reduced enveloping algebras for a direct sum of basic Lie superalgebras with nilpotent $p$-characters. In \cite[Remark 4.6]{WZ}, Wang-Zhao showed that the statement in Proposition~\ref{wzd2} still holds  for the case when $\mathfrak{l}_{\bbk}$ is a direct sum of basic Lie superalgebras with nilpotent $p$-characters.

In fact, their result can be somewhat strengthened. Let $\mathfrak{l}_{\bbk}=\bigoplus\limits_{i=1}^r(\mathfrak{l}_{\bbk})_i$ be a direct sum of basic Lie superalgebras over ${\bbk}=\overline{\mathbb{F}}_p$, where $(\mathfrak{l}_{\bbk})_i$ is a basic Lie superalgebra for each $1\leqslant i\leqslant r$, and the characteristic $p$ of the field $\bbk$ satisfies the restriction imposed in \S\ref{prel}(Table 1). Let $\chi=\chi_1+\cdots+\chi_r$ be the decomposition of nilpotent $p$-character $\chi$ in $\mathfrak{l}_{\bbk}^*$ with $\chi_i\in(\mathfrak{l}_{\bbk})_i^*$ (each $\chi_i$ can be viewed in $\mathfrak{l}_{\bbk}^*$ by letting $\chi_i(\bar y)=0$ for all $\bar y\in\bigoplus\limits_{j\neq i}(\mathfrak{l}_{\bbk})_j$) for $1\leqslant i\leqslant r$. Set $\bar e=\bar e_1+\cdots+\bar e_r$ to be the corresponding decomposition of $\bar e\in (\mathfrak{l}_{\bbk})_{\bar0}$ with respect to the non-degenerate bilinear form $(\cdot,\cdot)$ on $\mathfrak{l}_{\bbk}$ such that $\chi_i(\cdot)=(\bar e_i,\cdot)$  for $1\leqslant i\leqslant r$.
Define
\begin{equation}\label{numsum}
\begin{array}{rllrlll}
d'_0&:=&\text{dim}\,(\mathfrak{l}_{\bbk})_{\bar0}-\text{dim}\,(\mathfrak{l}_{\bbk}^{\bar e})_{\bar0},&
d'_1&:=&\text{dim}\,(\mathfrak{l}_{\bbk})_{\bar1}-\text{dim}\,(\mathfrak{l}_{\bbk}^{\bar e})_{\bar1},\\
(d_0)_i&:=&\text{dim}\,((\mathfrak{l}_{\bbk})_i)_{\bar0}-\text{dim}\,((\mathfrak{l}_{\bbk})^{\bar e_i}_i)_{\bar0},&
(d_1)_i&:=&\text{dim}\,((\mathfrak{l}_{\bbk})_i)_{\bar1}-\text{dim}\,((\mathfrak{l}_{\bbk})^{\bar e_i}_i)_{\bar1},
\end{array}
\end{equation}where $\mathfrak{l}_{\bbk}^{\bar e}$ denotes the centralizer of $\bar e$ in $\mathfrak{l}_{\bbk}$, and $(\mathfrak{l}_{\bbk})^{\bar e_i}_i$ the centralizer of $\bar e_i$ in $(\mathfrak{l}_{\bbk})_i$ for $i\in\{1,\cdots,r\}$. It is obvious that $d'_0=\sum\limits_{i=1}^r(d_0)_i$ and $d'_1=\sum\limits_{i=1}^r(d_1)_i$. Rearrange the summands of $\mathfrak{l}_{\bbk}=\bigoplus\limits_{i=1}^r(\mathfrak{l}_{\bbk})_i$ such that each $(d_1)_i$ is odd for $1\leqslant i\leqslant l$ (if occurs) and each $(d_1)_i$ is even for $l+1\leqslant i\leqslant r$ (if occurs). In particular,  $d'_1$ and $l$ have the same parity.

Note that all arguments in the preceding sections remain valid for a direct sum of basic Lie superalgebras. Let $\mathfrak{m}_{\bbk}$ and $\mathfrak{m}'_{\bbk}$ be the subalgebras of $\mathfrak{l}_{\bbk}$ as defined in \S\ref{2.3.2}. Write $\mathfrak{m}_{\bbk}=\bigoplus\limits_{i=1}^r(\mathfrak{m}_{\bbk})_i$ and $\mathfrak{m}'_{\bbk}=\bigoplus\limits_{i=1}^r(\mathfrak{m}'_{\bbk})_i$ as the decomposition of $\mathfrak{m}_{\bbk}$ and $\mathfrak{m}'_{\bbk}$ in $\mathfrak{l}_{\bbk}$ respectively, where $(\mathfrak{m}_{\bbk})_i,\,(\mathfrak{m}'_{\bbk})_i\in(\mathfrak{l}_{\bbk})_i$ for $1\leqslant i\leqslant r$. As $\mathfrak{m}_{\bbk}$ is $p$-nilpotent and the linear function $\chi$ vanishes on the $p$-closure of $[\mathfrak{m}_{\bbk},\mathfrak{m}_{\bbk}]$, it follows from \cite[Proposition 2.6]{WZ} that $U_{\chi}(\mathfrak{m}_{\bbk})$ has a unique irreducible module and $U_{\chi}(\mathfrak{m}_{\bbk})/N_{\mathfrak{m}_{\bbk}}\cong {\bbk}$, where $N_{\mathfrak{m}_{\bbk}}$ is the Jacobson radical of $U_{\chi}(\mathfrak{m}_{\bbk})$ generated by all the elements $\bar x-\chi(\bar x)$ with $\bar x\in\mathfrak{m}_{\bbk}$.

Recall \cite[Proposition 4.1]{WZ} shows that every
${\bbk}$-algebra $U_{\chi_i}((\mathfrak{m}_{\bbk})_i)$ ($1\leqslant i\leqslant r$) has a unique irreducible module (there is a minor error in \cite[\S4.1]{WZ}; this irreducible module is not necessary a trivial module) which is one-dimensional  and of type $M$. Let $N_{(\mathfrak{m}_{\bbk})_i}$ denote the Jacobson radical of $U_{\chi_i}((\mathfrak{m}_{\bbk})_i)$ for $1\leqslant i\leqslant r$ (which is the ideal of $U_{\chi_i}((\mathfrak{m}_{\bbk})_i)$ generated by all the elements $\bar x-\chi(\bar x)$ with $\bar x\in(\mathfrak{m}_{\bbk})_i$). Then $U_{\chi_i}((\mathfrak{m}_{\bbk})_i)/N_{(\mathfrak{m}_{\bbk})_i}={\bbk}$ for $1\leqslant i\leqslant r$.

For the case when $(d_1)_i$ is odd (i.e. $1\leqslant i\leqslant l$), the ${\bbk}$-algebra
$U_{\chi_i}((\mathfrak{m}'_{\bbk})_i)$ also has a unique irreducible module; it is isomorphic to $V_i=U_{\chi_i}((\mathfrak{m}'_{\bbk})_i)\otimes_{U_{\chi_i}((\mathfrak{m}_{\bbk})_i)}\bar1_{\chi_i}$, which is two-dimensional  and of type $Q$. Let $N_{(\mathfrak{m}'_{\bbk})_i}$ denote the Jacobson radical of $U_{\chi_i}((\mathfrak{m}'_{\bbk})_i)$ (which is the ideal of $U_{\chi_i}((\mathfrak{m}'_{\bbk})_i)$ generated by all the elements $\bar x-\chi(\bar x)$ with $\bar x\in(\mathfrak{m}_{\bbk})_i$). Then $U_{\chi_i}((\mathfrak{m}'_{\bbk})_i)/N_{(\mathfrak{m}'_{\bbk})_i}$ is isomorphic to the simple superalgebra $Q_1$ for $1\leqslant i\leqslant l$. For the case when $(d_1)_i$ is even (i.e. $l+1\leqslant i\leqslant r$), one can also define $N_{(\mathfrak{m}'_{\bbk})_i}$ as the Jacobson radical of $U_{\chi_i}((\mathfrak{m}'_{\bbk})_i)$. However, we have $U_{\chi_i}((\mathfrak{m}'_{\bbk})_i)=U_{\chi_i}((\mathfrak{m}_{\bbk})_i)$ and $N_{(\mathfrak{m}'_{\bbk})_i}=N_{(\mathfrak{m}_{\bbk})_i}$ for $(\mathfrak{m}'_{\bbk})_i=(\mathfrak{m}_{\bbk})_i$ by construction, thus $U_{\chi_i}((\mathfrak{m}'_{\bbk})_i)/N_{(\mathfrak{m}'_{\bbk})_i}={\bbk}$.

Since $\mathfrak{l}_{\bbk}=\bigoplus\limits_{i=1}^r(\mathfrak{l}_{\bbk})_i$, it is easy to verify that
\begin{equation}\label{NN'}
U_\chi(\mathfrak{m}_{\bbk})\cong\bigotimes\limits_{i=1}^rU_{\chi_i}((\mathfrak{m}_{\bbk})_i),\quad U_\chi(\mathfrak{m}'_{\bbk})\cong\bigotimes\limits_{i=1}^rU_{\chi_i}((\mathfrak{m}'_{\bbk})_i)
\end{equation}as ${\bbk}$-algebras, respectively. For a $U_{\chi}(\mathfrak{l}_{\bbk})$-module $M$ set $$M^{\mathfrak{m}_{\bbk}}=\{v\in M\mid(\bar x-\chi(\bar x)).v=0~\text{for all}~\bar x\in\mathfrak{m}_{\bbk}\}.$$
As the finite dimensional restricted Lie superalgebra $\mathfrak{l}_{\bbk}$ is a  direct sum of basic Lie superalgebras, an analogous discussion of  \cite[Proposition 4.2]{WZ} shows that every $U_{\chi}(\mathfrak{l}_{\bbk})$-module $M$ is $U_{\chi}(\mathfrak{m}_{\bbk})$-free and $M\cong U_{\chi}(\mathfrak{m}_{\bbk})^*\otimes_{{\bbk}}M^{\mathfrak{m}_{\bbk}}$ as $U_{\chi}(\mathfrak{m}_{\bbk})$-modules. 

Let $N_{\mathfrak{m}'_{\bbk}}$ denote the ideal of $U_\chi(\mathfrak{m}'_{\bbk})$ generated by all the elements $\bar x-\chi(\bar x)$ with $\bar x\in\mathfrak{m}_{\bbk}$, then $M^{\mathfrak{m}_{\bbk}}$ is a $U_\chi(\mathfrak{m}'_{\bbk})/N_{\mathfrak{m}'_{\bbk}}$-module.  Since $\mathfrak{m}'_{\bbk}=\bigoplus\limits_{i=1}^r(\mathfrak{m}'_{\bbk})_i$, one can conclude from \eqref{NN'} and the remark prior to it that
\begin{equation*}
\begin{array}{rcccl}
U_\chi(\mathfrak{m}'_{\bbk})/N_{\mathfrak{m}'_{\bbk}}&\cong& U_\chi(\mathfrak{m}'_{\bbk})\otimes_{U_\chi(\mathfrak{m}_{\bbk})}\bar1_\chi
&\cong& U_\chi(\bigoplus\limits_{i=1}^r(\mathfrak{m}'_{\bbk})_i)\otimes_{U_\chi(\bigoplus\limits_{i=1}^r
(\mathfrak{m}_{\bbk})_i)}\bar1_\chi\\
&\cong&\bigotimes\limits_{i=1}^r(U_{\chi_i}((\mathfrak{m}'_{\bbk})_i)
\otimes_{U_{\chi_i}((\mathfrak{m}_{\bbk})_i)}\bar1_{\chi_i})
&\cong&\bigotimes\limits_{i=1}^rU_{\chi_i}((\mathfrak{m}'_{\bbk})_i)/N_{(\mathfrak{m}'_{\bbk})_i}\\
&\cong&\overbrace{Q_1\otimes\cdots\otimes Q_1}^{l}\otimes\overbrace{{\bbk}\otimes\cdots\otimes{\bbk}}^{r-l}
&\cong&\overbrace{Q_1\otimes\cdots\otimes Q_1}^{l}
\end{array}
\end{equation*}as ${\bbk}$-algebras.

Now we will introduce the refined super Kac-Weisfeiler property for a direct sum of basic Lie superalgebras with nilpotent $p$-characters.

\begin{prop}\label{sumdivisible}
Let $\mathfrak{l}_{\bbk}$ be a direct sum of basic Lie superalgebras over ${\bbk}=\overline{\mathbb{F}}_p$, and $\chi\in(\mathfrak{l}_\bbk)_\bz^*$ be a nilpotent $p$-character. Retain the notations as \eqref{numsum}, and assume that the prime $p$ satisfies the restriction imposed in \S\ref{prel}(Table 1). Then the dimension of every $U_\chi(\mathfrak{l}_{\bbk})$-module $M$ is divisible by $p^{\frac{d'_0}{2}}2^{\frac{d'_1+l}{2}}$ (appoint  $l=0$ if the $(d_1)_i$'s are all even for $1\leqslant i\leqslant r$).
\end{prop}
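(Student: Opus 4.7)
Following the strategy used in the proof of \cite[Proposition~4.2, Theorem~4.3]{WZ} for a single basic Lie superalgebra, I would proceed in two steps, exploiting the identification $U_\chi(\mathfrak{m}'_\bbk)/N_{\mathfrak{m}'_\bbk}\cong Q_1^{\otimes l}$ already carried out in the discussion preceding the statement.

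In the first step, I would establish the direct-sum analogue of \cite[Proposition~4.2]{WZ}: since $\mathfrak{m}_\bbk=\bigoplus_i(\mathfrak{m}_\bbk)_i$ is $p$-nilpotent and $\chi$ vanishes on the $p$-closure of $[\mathfrak{m}_\bbk,\mathfrak{m}_\bbk]$ (both properties passing summand-by-summand from each basic factor), every $U_\chi(\mathfrak{l}_\bbk)$-module $M$ is $U_\chi(\mathfrak{m}_\bbk)$-free, so that $M\cong U_\chi(\mathfrak{m}_\bbk)^*\otimes_\bbk M^{\mathfrak{m}_\bbk}$ and
\[
\dim M\;=\;\dim U_\chi(\mathfrak{m}_\bbk)\cdot\dim M^{\mathfrak{m}_\bbk}.
\]
A summand-wise count, using that the odd part of $(\mathfrak{m}_\bbk)_i$ has $\bbk$-dimension $(d_1)_i/2$ for $i>l$ and $((d_1)_i-1)/2$ for $i\leq l$, gives $\dim U_\chi(\mathfrak{m}_\bbk)=p^{d_0'/2}\cdot 2^{(d_1'-l)/2}$, so the desired divisibility reduces to proving that $2^l$ divides $\dim M^{\mathfrak{m}_\bbk}$.

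In the second step I would study $M^{\mathfrak{m}_\bbk}$ as a module over the simple superalgebra $R:=U_\chi(\mathfrak{m}'_\bbk)/N_{\mathfrak{m}'_\bbk}\cong Q_1^{\otimes l}$. The $l$ odd generators $v_{(r_i+1)/2}$, one from each summand with $(d_1)_i$ odd, pairwise anticommute in $U_\chi(\mathfrak{l}_\bbk)$ (they bracket to zero as elements of different direct summands of the super-Lie algebra, which forces $v_i v_j+v_jv_i=0$ in the enveloping algebra) and each squares modulo $N_{\mathfrak{m}'_\bbk}$ to the nonzero scalar $\frac{1}{2}\chi([v_{(r_i+1)/2},v_{(r_i+1)/2}])=\frac{1}{2}$, thereby acting on $M^{\mathfrak{m}_\bbk}$ as $l$ pairwise anticommuting invertible odd involutions. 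Iteratively applying the superalgebra isomorphisms \eqref{MM}, \eqref{MQ} and \eqref{QQ} identifies $R$ as either $\mathcal{M}_{2^{(l-2)/2},2^{(l-2)/2}}$ (when $l$ is even) or $Q_{2^{(l-1)/2}}$ (when $l$ is odd). Using this explicit realisation together with the free action of each $v_{(r_i+1)/2}$, an $l$-step induction would show that $M^{\mathfrak{m}_\bbk}$ is free as an $R$-module: at the $i$-th step the invertibility of $v_{(r_i+1)/2}$ and its anticommutation with the previously introduced generators double the dimension, so that the accumulated $Q_1^{\otimes i}$-action splits $M^{\mathfrak{m}_\bbk}$ into $2^i$-dimensional blocks. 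Since $\dim_\bbk R=2^l$, this yields $2^l\mid\dim M^{\mathfrak{m}_\bbk}$ and completes the proof.

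The main obstacle will be this $R$-freeness of $M^{\mathfrak{m}_\bbk}$, rather than the far weaker bound $\dim M^{\mathfrak{m}_\bbk}\geq 2^{\lceil l/2\rceil}$ (the dimension of the unique irreducible $R$-module) which is all that follows from simplicity of $R$ alone. The crux is that the anticommuting invertible generators $v_{(r_i+1)/2}$ are supported in distinct direct summands of $\mathfrak{l}_\bbk$ and therefore act on $M$ through independent central characters, which is precisely what allows the inductive doubling step to go through without suffering the Type-$Q$--Type-$Q$ degeneracies of Lemma~\ref{AB}(iii). It is this feature of the direct sum hypothesis that refines the bound from the $2^{\lceil d_1'/2\rceil}$ obtained by naive application of \cite[Theorem~5.6]{WZ} to $\mathfrak{l}_\bbk$ up to the sharper exponent $(d_1'+l)/2$ claimed in the statement.
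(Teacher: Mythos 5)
Your proof follows the paper's outline through the first step ($M$ is $U_\chi(\mathfrak m_\bbk)$-free, so $\dim M=\dim U_\chi(\mathfrak m_\bbk)\cdot\dim M^{\mathfrak m_\bbk}$, and $M^{\mathfrak m_\bbk}$ carries an action of $R=U_\chi(\mathfrak m'_\bbk)/N_{\mathfrak m'_\bbk}\cong Q_1^{\otimes l}$), but the two proofs diverge sharply in the second step. The paper identifies $R$ with $Q_{2^{(l-1)/2}}$ (resp.\ $\mathcal M_{2^{l/2-1},2^{l/2-1}}$) and invokes Wedderburn to conclude only that $\dim M^{\mathfrak m_\bbk}$ is divisible by the dimension $2^{(l+1)/2}$ (resp.\ $2^{l/2}$) of the unique simple $R$-module; it then quotes $\underline{\dim}\,\mathfrak m_\bbk=(d_0'/2,(d_1'-1)/2)$ (resp.\ $(d_0'/2,d_1'/2)$), the dimension one gets by building $\mathfrak m$ from a maximal Lagrangian in $\mathfrak l_\bbk(-1)$ treated globally. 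You, working instead with the summand-wise $\mathfrak m_\bbk=\bigoplus_i(\mathfrak m_\bbk)_i$ (the one for which the identification $R\cong Q_1^{\otimes l}$ was actually set up), get $\dim U_\chi(\mathfrak m_\bbk)=p^{d_0'/2}2^{(d_1'-l)/2}$; and you correctly diagnose that Wedderburn alone is then insufficient and that you would need the \emph{stronger} statement $2^l\mid\dim M^{\mathfrak m_\bbk}$, i.e.\ $R$-freeness. So the two proofs locate the extra factor of $2^{\lfloor l/2\rfloor}$ (standard floor) over the naive bound in genuinely different places.

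The gap is precisely that freeness claim, and the ``inductive doubling'' argument offered for it does not work. Pairwise anticommuting, invertible odd operators with nonzero scalar squares make $M^{\mathfrak m_\bbk}$ into a module over a super-Clifford algebra, and such modules are semisimple but by no means free. The failure is visible already at $l=2$: $Q_1\otimes Q_1\cong\mathcal M_{1,1}$ has $\dim_\bbk R=4$, but its unique simple module $\bbk^{1|1}$ has dimension $2$. Concretely, on $\bbk^{1|1}$ the odd operators
$\theta_1=\tfrac{1}{\sqrt2}\bigl(\begin{smallmatrix}0&1\\1&0\end{smallmatrix}\bigr)$ and
$\theta_2=\tfrac{\sqrt{-1}}{\sqrt2}\bigl(\begin{smallmatrix}0&1\\-1&0\end{smallmatrix}\bigr)$
anticommute, each squares to $\tfrac12$, and the module is two-dimensional, not four. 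Adding $\theta_2$ does not ``double the dimension'': since $\theta_2$ anticommutes with $\theta_1$ it merely permutes the eigenspaces of the even operator $\theta_1\theta_2$ rather than refining them. Nor can independence of central characters rescue the step, since the generators $v_{(r_i+1)/2}$ are odd and not central in $U_\chi(\mathfrak l_\bbk)$, so there is no ``central character'' through which they act. The freeness claim, which you correctly identify as the crux of your approach, is therefore left unproved, and without it your argument yields only the divisor $p^{d_0'/2}2^{\lceil d_1'/2\rceil_{\text{std}}}$.
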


\begin{proof}
For each $U_\chi(\mathfrak{l}_{\bbk})$-module $M$, the arguments preceding the proposition show that the $U_{\chi}(\mathfrak{m}_{\bbk})$-module
\begin{equation}\label{MMm}
M\cong U_{\chi}(\mathfrak{m}_{\bbk})^*\otimes_{{\bbk}}M^{\mathfrak{m}_{\bbk}}\end{equation} is free. Let us first investigate the dimension of
$M^{\mathfrak{m}_{\bbk}}$.
Recall that $M^{\mathfrak{m}_{\bbk}}$ is a module over the superalgebra $U_\chi(\mathfrak{m}'_{\bbk})/N_{\mathfrak{m}'_{\bbk}}\cong\overbrace{Q_1\otimes\cdots\otimes Q_1}^{l}$. Based on the parity of $l$, for each case we will consider separately.

(i) When $l$ is odd, \eqref{MQ} and \eqref{QQ} imply that
$$U_\chi(\mathfrak{m}'_{\bbk})/N_{\mathfrak{m}'_{\bbk}}\cong\overbrace{Q_1\otimes\cdots\otimes Q_1}^{l}\cong Q_{2^{l-1\over2}}.$$
Since $Q_{2^{\frac{l-1}{2}}}$ is a simple superalgebra whose unique irreducible module is $2\cdot2^{\frac{l-1}{2}}=2^{\frac{l+1}{2}}$-dimensional, it follows from Wedderburn's Theorem \cite[Theorem 12.2.9]{KL} that every $Q_{2^{\frac{l-1}{2}}}$-module has dimension divisible by $2^{\frac{l+1}{2}}$. In particular, the dimension of $M^{\mathfrak{m}_{\bbk}}$ is divisible by $2^{\frac{l+1}{2}}$. By the same discussion as \cite[Theorem 4.3]{WZ} we can conclude that $\text{\underline{dim}}\,\mathfrak{m}_{\bbk}=(\frac{d'_0}{2},\frac{d'_1-1}{2})$, then dim\,$U_\chi(\mathfrak{m}_{\bbk})=p^{\frac{d'_0}{2}}2^{\frac{d'_1-1}{2}}$. Together with \eqref{MMm} this implies that each $U_\chi(\mathfrak{l}_{\bbk})$-module $M$ has dimension divisible by $p^{\frac{d'_0}{2}}2^{\frac{d'_1-1}{2}}\cdot2^{\frac{l+1}{2}}=p^{\frac{d'_0}{2}}2^{\frac{d'_1+l}{2}}$.

(ii) When $l$ is even, it follows from \eqref{MQ} and \eqref{QQ} that
$$U_\chi(\mathfrak{m}'_{\bbk})/N_{\mathfrak{m}'_{\bbk}}\cong\overbrace{Q_1\otimes\cdots\otimes Q_1}^{l}\cong
\mathcal{M}_{2^{\frac{l}{2}-1},2^{\frac{l}{2}-1}}.$$
Since $\mathcal{M}_{2^{\frac{l}{2}-1},2^{\frac{l}{2}-1}}$ is a simple superalgebra whose unique irreducible module is $2^{\frac{l}{2}}$-dimensional, it follows from Wedderburn's Theorem \cite[Theorem 12.2.9]{KL} that every $\mathcal{M}_{2^{\frac{l}{2}-1},2^{\frac{l}{2}-1}}$-module has dimension divisible by $2^{\frac{l}{2}}$. In particular, the dimension of $M^{\mathfrak{m}_{\bbk}}$ is divisible by $2^{\frac{l}{2}}$. The same discussion as \cite[Theorem 4.3]{WZ} shows that $\text{\underline{dim}}\,\mathfrak{m}_{\bbk}=(\frac{d'_0}{2},\frac{d'_1}{2})$, then dim\,$U_\chi(\mathfrak{m}_{\bbk})=p^{\frac{d'_0}{2}}2^{\frac{d'_1}{2}}$. Together with \eqref{MMm} this implies that each $U_\chi(\mathfrak{l}_{\bbk})$-module $M$ has dimension divisible by $p^{\frac{d'_0}{2}}2^{\frac{d'_1}{2}}\cdot2^{\frac{l}{2}}=p^{\frac{d'_0}{2}}2^{\frac{d'_1+l}{2}}$.

All the discussions in (i) and (ii) complete the proof.

\end{proof}

\begin{rem}\label{refine}
 Recall that $d'_1$ and $l$ have the same parity by the preceding discussion. Thus Proposition~\ref{sumdivisible} coincides with the consequence obtained by Wang-Zhao in \cite[Remark 4.6]{WZ} in the special occasion  when at most one of the $(d_1)_i$'s is odd for $1\leqslant i\leqslant r$. However, when more than two $(d_1)_i$'s are odd for $1\leqslant i\leqslant r$, the dimensional lower-bounds of the representations of $\mathfrak{l}_\bbk$  given here should be optimal (see the proof of the forthcoming Theorem \ref{sumreachable}), which are much larger than the ones mentioned in \cite[Remark 4.6]{WZ}. Also note that the conclusion we obtained in Proposition \ref{sumdivisible} does not depend on Conjecture \ref{conjecture}.
\end{rem}

\subsubsection{}
Under the assumption of Conjecture~\ref{conjecture}, the following theorem shows that the dimensional lower bounds for the representations of reduced enveloping algebra $U_\chi(\mathfrak{l}_{\bbk})$ with nilpotent $p$-character $\chi\in(\mathfrak{l}_{\bbk})^*_{\bar0}$ in Proposition~\ref{sumdivisible} are accessible.

\begin{theorem}\label{sumreachable}
Retain the assumptions as in Proposition~\ref{sumdivisible}. If Conjecture \ref{conjecture} holds and $p\gg0$, then
 the reduced enveloping algebra $U_\chi(\mathfrak{l}_{\bbk})$ admits irreducible representations of dimension $p^{\frac{d'_0}{2}}2^{\frac{d'_1+l}{2}}$ (appoint $l=0$ when the $(d_1)_i$'s are all even for $1\leqslant i\leqslant r$).
\end{theorem}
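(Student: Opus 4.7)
My plan is to build the required irreducible $U_\chi(\mathfrak{l}_\bbk)$-module as a constituent of an outer tensor product assembled from small irreducibles of each basic summand.  For each $1 \leq i \leq r$, Theorem~\ref{intromain-3} (which applies because Conjecture~\ref{conjecture} is assumed) supplies an irreducible $U_{\chi_i}((\mathfrak{l}_\bbk)_i)$-module $V_i$ with $\dim V_i = p^{(d_0)_i/2}\cdot 2^{\lfloor(d_1)_i/2\rfloor}$.  The isomorphism type of $V_i$ is forced by the parity of $(d_1)_i$: when $(d_1)_i$ is even, Corollary~\ref{1122} produces a one-dimensional module of the reduced $W$-superalgebra and Theorem~\ref{reducedfunctors} lifts it to an irreducible $V_i$ of type $M$; when $(d_1)_i$ is odd, the odd element $\Theta_{l+q+1}$ (or rather its admissible reduction inside the transition subalgebra) squares to a nonzero scalar as in Proposition~\ref{no1}, so the induced parity involution forces the corresponding two-dimensional reduced $W$-superalgebra module, and hence $V_i$, to be of type $Q$.

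Next, exploit the natural isomorphism $U_\chi(\mathfrak{l}_\bbk) \cong \bigotimes_{i=1}^r U_{\chi_i}((\mathfrak{l}_\bbk)_i)$ of $\bbk$-superalgebras coming from the direct sum decomposition of $\mathfrak{l}_\bbk$ to endow the outer tensor product $V := V_1 \boxtimes \cdots \boxtimes V_r$ with a $U_\chi(\mathfrak{l}_\bbk)$-module structure.  Using the paper's convention for $\lfloor\cdot\rfloor$, the identity $\sum_{i=1}^r \lfloor(d_1)_i/2\rfloor = (d_1'+l)/2$ gives $\dim V = \prod_i \dim V_i = p^{d_0'/2}\cdot 2^{(d_1'+l)/2}$.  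Apply Lemma~\ref{AB} iteratively: parts (i) and (ii) handle the $r-l$ type-$M$ factors without any dimensional loss, while the $l$ type-$Q$ factors are combined pairwise via part (iii), each fusion $V_i \circledast V_j$ being identified through the simple superalgebra isomorphisms \eqref{MM}, \eqref{MQ}, \eqref{QQ} as an irreducible module of the appropriate block of the semisimple quotient of $\bigotimes U_{\chi_i}((\mathfrak{l}_\bbk)_i)$.  Extract an irreducible constituent of $V$ and match its dimension against the divisibility bound of Proposition~\ref{sumdivisible} to conclude that it has dimension exactly $p^{d_0'/2}\cdot 2^{(d_1'+l)/2}$.

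The main obstacle will be the combinatorial bookkeeping of dimensions in the iterated application of Lemma~\ref{AB}(iii).  Each $\circledast$-fusion halves the tensor-product dimension, and a na\"\i ve halving at every one of the $\lfloor l/2\rfloor$ pairings of type-$Q$ factors would produce an irreducible constituent of dimension $p^{d_0'/2}\cdot 2^{(d_1'+l)/2 - \lfloor l/2\rfloor}$, falling short of the target by a factor of $2^{\lfloor l/2\rfloor}$.  The resolution is to observe that at every type-$Q$ fusion step the full tensor $V_i \boxtimes V_j$ decomposes into parity-twin copies $(V_i \circledast V_j) \oplus \Upsilon(V_i \circledast V_j)$, and that these twins must be grouped together against the lower bound of Proposition~\ref{sumdivisible} so that the final irreducible constituent captures exactly the combinatorial factor $2^{(d_1'+l)/2}$ rather than a smaller power of $2$.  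This is the decisive step where the refined super Kac-Weisfeiler divisibility of Proposition~\ref{sumdivisible} meets the tensor-product construction, and it is precisely here that Conjecture~\ref{conjecture} (via Theorem~\ref{intromain-3}) is invoked to guarantee that each $V_i$ has the minimal possible dimension and thus makes the dimensions match exactly.
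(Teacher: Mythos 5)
Your route differs genuinely from the paper's: you tensor the lifted $U_{\chi_i}((\mathfrak{l}_\bbk)_i)$-modules $V_i$ directly, whereas the paper stays inside the tensor factorisation $U_\chi(\mathfrak{l}_\bbk,\bar e)\cong\bigotimes_i U_{\chi_i}((\mathfrak{l}_\bbk)_i,\bar e_i)$ of the \emph{reduced $W$-superalgebra}, does the $\circledast$-fusion there, and only afterwards lifts a single time through the matrix isomorphism \eqref{isosum}. That ordering matters: the matrix size in \eqref{isosum} is $p^{d_0'/2}2^{\lceil d_1'/2\rceil}$, which is strictly larger than the product $\prod_i \dim U_{\chi_i}((\mathfrak{m}_\bbk)_i)=p^{d_0'/2}2^{(d_1'-l)/2}$ by exactly $2^{l/2}$ ($l$ even) or $2^{(l-1)/2}$ ($l$ odd), and this surplus is precisely what cancels the dimension lost in the iterated type-$Q$ fusions. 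Your version, tensoring after lifting, does not have this compensating factor available, which is why you run into the deficit you describe.

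The genuine gap is in your final paragraph. Once you concede that each $\circledast$-fusion of two type-$Q$ factors halves the dimension, the irreducible constituents of $V_1\boxtimes\cdots\boxtimes V_r$ have dimension at most $p^{d_0'/2}2^{(d_1'+l)/2-\lfloor l/2\rfloor}$ by Lemma~\ref{AB}; there is no way to recover the missing power of two. The ``grouping of parity twins'' does not produce an honest module: $\Upsilon(V_i\circledast V_j)$ is a separate irreducible constituent, and $V_i\circledast V_j\oplus\Upsilon(V_i\circledast V_j)$ is reducible by construction. You cannot amalgamate two distinct irreducible summands into a single irreducible of twice the dimension; Lemma~\ref{AB}(iii) is exhaustive about the constituents of $V\boxtimes W$ for type-$Q$ factors, and there is no bigger constituent lurking. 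Note moreover that, if your intermediate claim that every lifted $V_i$ with $(d_1)_i$ odd is of type $Q$ were actually correct, then already for $l=2$ the module $V_1\circledast V_2$ would be a $U_\chi(\mathfrak{l}_\bbk)$-module of dimension $p^{d_0'/2}2^{d_1'/2}$, which is \emph{not} divisible by the bound $p^{d_0'/2}2^{(d_1'+2)/2}$ of Proposition~\ref{sumdivisible}; so the type-$Q$ assertion itself cannot be taken for granted in the lifted picture. The parity involution by $\Theta_{l+q+1}$ used in Propositions~\ref{no1} and~\ref{typeq} only supercommutes with the refined subalgebra $W'_\chi$, not with all of $U(\ggg,e)$, so it does not automatically give a nontrivial odd endomorphism of $V_i$ over $U_{\chi_i}((\mathfrak{l}_\bbk)_i)$. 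The safe way to make the argument go through is the one in the paper: fuse the two-dimensional modules while they still live over the $W$-superalgebra, keeping track of type with Lemma~\ref{AB}, and perform a single lift at the very end so the matrix factor $p^{d_0'/2}2^{\lceil d_1'/2\rceil}$ restores the lost powers of two.
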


\begin{proof}
For each $1\leqslant i\leqslant r$, let $Q_{\chi_i}^{\chi_i}$ be the $(\mathfrak{l}_{\bbk})_i$-module as defined in \S\ref{2.3.2}, and denote by $U_{\chi_i}((\mathfrak{l}_{\bbk})_i,\bar e_i)=(\text{End}_{(\mathfrak{l}_{\bbk})_i}Q_{\chi_i}^{\chi_i})^{\text{op}}$ the reduced $W$-superalgebra of basic Lie superalgebra $(\mathfrak{l}_{\bbk})_i$ associated with nilpotent element $\bar e_i\in((\mathfrak{l}_{\bbk})_i)_{\bar0} $. Let $Q_\chi^\chi$ be the $\mathfrak{l}_{\bbk}$-module with the same definition as in \S\ref{2.3.2}, and $U_{\chi}(\mathfrak{l}_{\bbk},\bar e)$ the reduced $W$-superalgebra of $\mathfrak{l}_{\bbk}$ associated with nilpotent element $\bar e\in(\mathfrak{l}_{\bbk})_{\bar0}$. Then we have
\begin{equation}\label{dir w}
\begin{array}{lllll}
U_{\chi}(\mathfrak{l}_{\bbk},\bar e)&=&(\text{End}_{\mathfrak{l}_{\bbk}}Q_{\chi}^{\chi})^{\text{op}}&\cong& (\text{End}_{\bigoplus\limits_{i=1}^r(\mathfrak{l}_{\bbk})_i}\bigoplus\limits_{i=1}^rQ_{\chi_i}^{\chi_i})^{\text{op}}\\
&\cong&\bigotimes\limits_{i=1}^r
(\text{End}_{(\mathfrak{l}_{\bbk})_i}Q_{\chi_i}^{\chi_i})^{\text{op}}&=&\bigotimes\limits_{i=1}^r U_{\chi_i}((\mathfrak{l}_{\bbk})_i,\bar e_i)
\end{array}
\end{equation}as ${\bbk}$-algebras. Now we proceed the proof by steps.

(1) We first prove the conclusion for the $\bbk$-algebra $\bigotimes\limits_{i=1}^l U_{\chi_i}((\mathfrak{l}_{\bbk})_i,e_i)$. We will carry the proof by induction on $l$.

(1-i) For the beginning  of induction, let us first look into each single term $(\mathfrak{l}_\bbk)_i$ for $1\leqslant i\leqslant l$, and make some basic observation on the tensor product of two terms. Under the assumption of the theorem, Corollary~\ref{1122} shows that the ${\bbk}$-algebra $U_{\chi_i}((\mathfrak{l}_{\bbk})_i,\bar e_i)$ admits two-dimensional  representations for $1\leqslant i\leqslant l$.  Denote by $V_{1}$ and $V_{2}$ the two-dimensional  irreducible representations (of type $Q$) of the ${\bbk}$-algebras $U_{\chi_{1}}((\mathfrak{l}_{\bbk})_{1},\bar e_{1})$ and $U_{\chi_{2}}((\mathfrak{l}_{\bbk})_{2},\bar e_{2})$ (if occurs), respectively. It follows from Lemma~\ref{AB}(iii) that $V_{1}\boxtimes V_{2}\cong (V_{1}\circledast V_{2}) \oplus\Upsilon(V_{1}\circledast V_{2})$ as $U_{\chi_{1}}((\mathfrak{l}_{\bbk})_{1},\bar e_{1})\otimes U_{\chi_{2}}((\mathfrak{l}_{\bbk})_{2},\bar e_{2})$-modules, where $V_{1}\circledast V_{2}$ is an irreducible $U_{\chi_{1}}((\mathfrak{l}_{\bbk})_{1},\bar e_{1})\otimes U_{\chi_{2}}((\mathfrak{l}_{\bbk})_{2},\bar e_{2})$-module of type $M$.
Recall that $V_{1}\circledast V_{2}$ is the same underlying vector space as $\Upsilon(V_{1}\circledast V_{2})$, thereby sharing the same dimension, i.e. $\dim V_{1}\circledast V_{2}=\dim\Upsilon(V_{1}\circledast V_{2})$.
Since $V_{1}\boxtimes V_{2}$ is just $V_{1}\otimes V_{2}$ as vector spaces, from all above
we can conclude that $V_{1}\circledast V_{2}$ is an irreducible $U_{\chi_{1}}((\mathfrak{l}_{\bbk})_{1},\bar e_{1})\otimes U_{\chi_{2}}((\mathfrak{l}_{\bbk})_{2},\bar e_{2})$-module of type $M$ with dimension $2=2^{\frac{2}{2}}$.

Denote by $V_{3}$ a two-dimensional  irreducible representation (of type $Q$) of the ${\bbk}$-algebra $U_{\chi_{3}}((\mathfrak{l}_{\bbk})_{3},\bar e_{3})$ (if occurs). It follows from Lemma~\ref{AB}(ii) that $(V_{1}\circledast V_{2})\boxtimes V_{3}$ is an irreducible $(U_{\chi_{1}}((\mathfrak{l}_{\bbk})_{1},\bar e_{1})\otimes U_{\chi_{2}}((\mathfrak{l}_{\bbk})_{2},\bar e_{2}))\otimes U_{\chi_{3}}((\mathfrak{l}_{\bbk})_{3},\bar e_{3})$-module of type $Q$ with dimension $2\cdot2=4=2^{\frac{3+1}{2}}$.

(1-ii) On the basis of (1-i),  we can easily draw the conclusion by induction on the number of the terms for $\bigotimes\limits_{i=1}^l U_{\chi_i}((\mathfrak{l}_{\bbk})_i,e_i)$, summarizing it as:
\begin{itemize}
\item[(1-ii-i)] when $l$ is odd, the ${\bbk}$-algebra $\bigotimes\limits_{i=1}^l U_{\chi_i}((\mathfrak{l}_{\bbk})_i,\bar e_i)$ admits an irreducible representation of type $Q$ with dimension $2^{\frac{l+1}{2}}$, and we denote it by $V$;
\item[(1-ii-ii)] when $l$ is even, the ${\bbk}$-algebra $\bigotimes\limits_{i=1}^l U_{\chi_i}((\mathfrak{l}_{\bbk})_i,\bar e_i)$ admits an irreducible representation of type $M$ with dimension $2^{\frac{l}{2}}$ (assume that $l=0$ when the $(d_1)_i$'s are all even for $1\leqslant i\leqslant r$), and set it as $V'$.
\end{itemize}

(2) Now we consider the general case with $U_\chi(\mathfrak{l}_{\bbk},\bar e)\cong\bigotimes\limits_{i=1}^r U_{\chi_i}((\mathfrak{l}_{\bbk})_i,\bar e_i)$.

Under the assumption of the theorem, Corollary~\ref{1122} shows that the ${\bbk}$-algebra $U_{\chi_i}((\mathfrak{l}_{\bbk})_i,\bar e_i)$ admits one-dimensional  representations of type $M$ for $l+1\leqslant i\leqslant r$ (if occurs). Easy induction based on Lemma~\ref{AB}(i) shows that the ${\bbk}$-algebra $\bigotimes\limits_{i=l+1}^r U_{\chi_i}((\mathfrak{l}_{\bbk})_i,\bar e_i)$ admits a one-dimensional  representation of type $M$, denoted by  $W$.

First note that $\bigotimes\limits_{i=1}^r U_{\chi_i}((\mathfrak{l}_{\bbk})_i,\bar e_i)\cong\bigotimes\limits_{i=1}^l U_{\chi_i}((\mathfrak{l}_{\bbk})_i,\bar e_i)\otimes\bigotimes\limits_{i=l+1}^r U_{\chi_i}((\mathfrak{l}_{\bbk})_i,\bar e_i)$. Based on the parity of $l$, for each case we will consider separately.

(2-i) When $l$ is odd, (1-ii-i) shows that the ${\bbk}$-algebra $\bigotimes\limits_{i=1}^l U_{\chi_i}((\mathfrak{l}_{\bbk})_i,\bar e_i)$ admits an irreducible representation $V$ of type $Q$ with dimension $2^{\frac{l+1}{2}}$, and the ${\bbk}$-algebra $\bigotimes\limits_{i=l+1}^r U_{\chi_i}((\mathfrak{l}_{\bbk})_i,\bar e_i)$ admits a one-dimensional  representation $W$ of type $M$ by the preceding remark. It follows from Lemma~\ref{AB}(ii) that $V\boxtimes W$ is an irreducible module of type $Q$ with dimension  $2^{\frac{l+1}{2}}$ for the $\bbk$-algebra $\bigotimes\limits_{i=1}^l U_{\chi_i}((\mathfrak{l}_{\bbk})_i,\bar e_i)\otimes\bigotimes\limits_{i=l+1}^r U_{\chi_i}((\mathfrak{l}_{\bbk})_i,\bar e_i)\cong\bigotimes\limits_{i=1}^r U_{\chi_i}((\mathfrak{l}_{\bbk})_i,\bar e_i)$.

(2-ii) When $l$ is even, (1-ii-ii) shows that the ${\bbk}$-algebra $\bigotimes\limits_{i=1}^l U_{\chi_i}((\mathfrak{l}_{\bbk})_i,\bar e_i)$ admits an irreducible representation $V'$ of type $M$ with dimension $2^{\frac{l}{2}}$, and the ${\bbk}$-algebra $\bigotimes\limits_{i=l+1}^r U_{\chi_i}((\mathfrak{l}_{\bbk})_i,\bar e_i)$ admits a one-dimensional  representation $W$ of type $M$ by the preceding remark. It follows from Lemma~\ref{AB}(i) that $V'\boxtimes W$ is an irreducible module of type $M$ with dimension $2^{\frac{l}{2}}$ for the $\bbk$-algebra  $\bigotimes\limits_{i=1}^l U_{\chi_i}((\mathfrak{l}_{\bbk})_i,\bar e_i)\otimes\bigotimes\limits_{i=l+1}^r U_{\chi_i}((\mathfrak{l}_{\bbk})_i,\bar e_i)\cong\bigotimes\limits_{i=1}^r U_{\chi_i}((\mathfrak{l}_{\bbk})_i,\bar e_i)$.

(3) Keeping in mind the algebra isomorphism
\begin{equation}\label{isosum}
U_\chi(\mathfrak{l}_{\bbk})\cong \text{Mat}_{p^{\frac{d'_0}{2}}2^{\lceil\frac{d'_1}{2}\rceil}}(U_\chi(\mathfrak{l}_{\bbk},\bar e))
\end{equation}(cf. \cite[Remark 4.6]{WZ}), we can conclude:

(3-i) when $l$ is odd, it follows from (2-i) that the ${\bbk}$-algebra $U_\chi(\mathfrak{l}_{\bbk})$ affords irreducible representations of dimension $p^{\frac{d'_0}{2}}2^{\frac{d'_1-1}{2}}\cdot 2^{\frac{l+1}{2}}=p^{\frac{d'_0}{2}}2^{\frac{d'_1+l}{2}}$;

(3-ii) when $l$ is even, it follows from (2-ii) that the ${\bbk}$-algebra $U_\chi(\mathfrak{l}_{\bbk})$ affords irreducible representations of dimension $p^{\frac{d'_0}{2}}2^{\frac{d'_1}{2}}\cdot 2^{\frac{l}{2}}=p^{\frac{d'_0}{2}}2^{\frac{d'_1+l}{2}}$.

Summing up, we complete the proof.
\end{proof}

\begin{rem}\label{refine'} Recall that the dimensional lower-bounds introduced in Proposition~\ref{sumdivisible} are much larger than the boundary obtained by Wang-Zhao in \cite[Remark 4.6]{WZ} when more than two $(d_1)_i$'s are odd for $1\leqslant i\leqslant r$ (cf. Remark~\ref{refine}).  In fact, careful inspection on the proof of Theorem~\ref{sumreachable} shows that Wang-Zhao's estimation on these lower bounds  can never be reached in this case, and the ones introduced in Theorem~\ref{sumreachable} are optimal.
\end{rem}

\subsection{Discussion for basic Lie superalgebras with arbitrary $p$-characters} \label{proofmainthm0.4sub}
In the concluding subsection  we will consider the accessibility of the lower bounds of the Super KW Property with any $p$-characters in Proposition~\ref{wzd2}.
\subsubsection{}\label{5.4.1}
For a given $p$-character $\xi\in(\ggg_\bbk)^*_{\bar0}$,  we have its Jordan-Chevalley decomposition $\xi=\xi_{\bar s}+\xi_{\bar n}$ under the Ad\,$(G_\bbk)_\ev$-equivariant isomorphism $({\ggg}_{\bbk})^*_{\bar0}\cong({\ggg}_{\bbk})_{\bar0}$ induced by the non-degenerate bilinear form $(\cdot,\cdot)$ on $({\ggg}_{\bbk})_{\bar0}$. This is to say, the decomposition of $\xi$ can be identified with the usual Jordan decomposition $\bar x=\bar s+\bar n$ when $\xi$ corresponds to $\bar x$ under the isomorphism $(\ggg_\bbk)_\bz^*\cong (\ggg_\bbk)_\bz$.
 Let $\mathfrak{h}_{\bbk}$ be a Cartan subalgebra of ${\ggg}_{\bbk}$ which contains $\bar s$ and denote by $\mathfrak{l}_{\bbk}={\ggg}_{\bbk}^{\bar s}$ the centralizer of $\bar s$ in ${\ggg}_{\bbk}$. Let $\Phi$ be the root system of ${\ggg}_{\bbk}$ and $\Phi(\mathfrak{l}_{\bbk}):=\{\alpha\in\Phi\mid\alpha(\bar s)=0\}$. By \cite[Proposition 5.1]{WZ}, $\mathfrak{l}_{\bbk}$ is always a direct sum of basic Lie superalgebras with a system $\Delta$ of simple roots of ${\ggg}_{\bbk}$ such that $\Delta\cap \Phi(\mathfrak{l}_{\bbk})$ is a system of simple roots of $\Phi(\mathfrak{l}_{\bbk})$ (note that a toral subalgebra of ${\ggg}_{\bbk}$ may also appear in the summand).

Set $$\mathfrak{l}_{\bbk}={\ggg}_{\bbk}^{\bar s}=\bigoplus\limits_{i=1}^r({\ggg}_{\bbk})_i\oplus\mathfrak{t}'_{\bbk},$$ where each $({\ggg}_{\bbk})_i$ is a basic Lie superalgebra for $1\leqslant i\leqslant r$, and
$\mathfrak{t}'_{\bbk}$ is a toral subalgebra of ${\ggg}_{\bbk}$. Then by \cite[\S5.1]{WZ}, $\xi_{\bar n}=\xi_1+\cdots+\xi_r$ is a nilpotent $p$-character of $\mathfrak{l}_{\bbk}$ with $\xi_i\in({\ggg}_{\bbk})_i^*$ (each $\xi_i$  can be viewed in $\mathfrak{l}_{\bbk}^*$ by letting $\xi_i(\bar y)=0$ for all $\bar y\in\bigoplus\limits_{j\neq i}({\ggg}_{\bbk})_j\oplus\mathfrak{t}'_{\bbk}$) for $1\leqslant i\leqslant r$. Let $\bar n=\bar n_1+\cdots+\bar n_r$ be the corresponding decomposition of $\bar n$ in  $\mathfrak{l}_{\bbk}$ such that $\xi_i(\cdot)=(\bar n_i,\cdot)$  for $1\leqslant i\leqslant r$. We can obtain the reduced $W$-superalgebra $U_{\xi_i}(({\ggg}_{\bbk})_i,\bar n_i)$ of  $({\ggg}_{\bbk})_i$  associated with nilpotent element $\bar n_i\in({\ggg}_{\bbk})_i$. It is easily verified that
$$U_{\xi_{\bar n}}(\bigoplus\limits_{i=1}^r({\ggg}_{\bbk})_i,\bar n)\cong\bigotimes\limits_{i=1}^r U_{\xi_i}(({\ggg}_{\bbk})_i,\bar n_i)$$ by the same discussion as \eqref{dir w}.
Define
\begin{equation}\label{arbitdim}
\begin{array}{rllrlll}
d_0&:=&\text{dim}\,({\ggg}_{\bbk})_{\bar0}-\text{dim}\,({\ggg}_{\bbk}^{\bar x})_{\bar0},&
d_1&:=&\text{dim}\,({\ggg}_{\bbk})_{\bar1}-\text{dim}\,({\ggg}_{\bbk}^{\bar x})_{\bar1},\\
(d_0)_i&:=&\text{dim}\,(({\ggg}_{\bbk})_i)_{\bar0}-\text{dim}\,(({\ggg}_{\bbk})^{\bar n_i}_i)_{\bar0},&
(d_1)_i&:=&\text{dim}\,(({\ggg}_{\bbk})_i)_{\bar1}-\text{dim}\,(({\ggg}_{\bbk})^{\bar n_i}_i)_{\bar1},
\end{array}
\end{equation}where ${\ggg}_{\bbk}^{\bar x}$ denotes the centralizer of $\bar x$ in ${\ggg}_{\bbk}$, and $({\ggg}_{\bbk})^{\bar n_i}_i$ the centralizer of $\bar n_i$ in $({\ggg}_{\bbk})_i$ for each $i\in\{1,\cdots,r\}$.
Rearrange the summands of $\bigoplus\limits_{i=1}^r({\ggg}_{\bbk})_i$ such that each $(d_1)_i$ is odd for $1\leqslant i\leqslant l$ (if occurs), and each $(d_1)_i$ is even for $l+1\leqslant i\leqslant r$ (if occurs).

Let $\mathfrak{b}_{\bbk}=\mathfrak{h}_{\bbk}\oplus\mathfrak{n}_{\bbk}$ be the Borel subalgebra associated to $\Delta$. Let $\ppp_\bbk$ be a parabolic subalgebra of $\ggg_\bbk$ with Levi factor $\mathfrak{l}_\bbk$, i.e. ${\ppp}_{\bbk}=\mathfrak{l}_{\bbk}+\mathfrak{b}_{\bbk}
=\mathfrak{l}_{\bbk}\oplus\mathfrak{u}_{\bbk}$, where $\uuu_\bbk$ is the nilradical of $\ppp_\bbk$.
 Set $\mathfrak{u}^-_{\bbk}$ to be the complement nilradical of ${\ppp}_{\bbk}$ such that 
 ${\ggg}_{\bbk}={\ppp}_{\bbk}\oplus\mathfrak{u}^-_{\bbk}=\mathfrak{u}_{\bbk}\oplus\mathfrak{l}_{\bbk}\oplus\mathfrak{u}^-_{\bbk}$ as vector spaces.
Since $\xi(\mathfrak{u}_{\bbk})=0$ and $\xi|_{\mathfrak{l}_{\bbk}}=\xi_{\bar n}|_{\mathfrak{l}_{\bbk}}$ is nilpotent by \cite[\S5.1]{WZ}, any $U_\xi(\mathfrak{l}_{\bbk})$-mod can be regarded as a $U_\xi({\ppp}_{\bbk})$-mod with a trivial action of $\mathfrak{u}_{\bbk}$. Wang-Zhao proved that the ${\bbk}$-algebras $U_\xi({\ggg}_{\bbk})$ and $U_\xi(\mathfrak{l}_{\bbk})$ are Morita equivalent in \cite[Theorem 5.2]{WZ}, and any irreducible $U_\xi({\ggg}_{\bbk})$-module can be induced from an irreducible $U_\xi(\mathfrak{l}_{\bbk})$-mod (which is also a $U_\xi({\ppp}_{\bbk})$-mod with a trivial action of $\mathfrak{u}_{\bbk}$) by
\begin{equation}\label{gp}
U_\xi({\ggg}_{\bbk})\otimes_{U_\xi({\ppp}_{\bbk})}-:\qquad U_\xi(\mathfrak{l}_{\bbk})\text{-}mod\rightarrow U_\xi({\ggg}_{\bbk})\text{-}mod.
\end{equation}

\subsubsection{} Keep the notations and assumptions as above. We first recall the following result.

\vskip0.2cm

\noindent{\itshape{\bf Proposition 5.1. (\cite{WZ})}
Let ${\ggg}_{\bbk}$ be a basic Lie superalgebra over ${\bbk}=\overline{\mathbb{F}}_p$, assuming that the prime $p$ satisfies the restriction imposed in \S\ref{prel}(Table 1). Let $\xi$ be arbitrary $p$-character in $({\ggg}_{\bbk})^*_{\bar0}$. Then the dimension of every $U_\xi({\ggg}_{\bbk})$-module $M$ is divisible by $p^{\frac{d_0}{2}}2^{\lfloor\frac{d_1}{2}\rfloor}$.}

\vskip0.2cm

The above proposition was first verified by Wang-Zhao in \cite[Theorem 5.6]{WZ}. Compared with the proof we present below, Wang-Zhao's proof in \cite{WZ} is more concise since they did not consider the parity of the $(d_1)_i$'s ($1\leqslant i\leqslant r$) for the summands of $\mathfrak{l}_{{\bbk}}={\ggg}_{{\bbk}}^{\bar s}$ decomposed into $ \bigoplus\limits_{i=1}^r({\ggg}_{\bbk})_i\oplus\mathfrak{t}'_{{\bbk}}$.

As our approach to the main goal of this subsection will be much dependent on the analysis of the parities of those $(d_1)_i$'s, along with the above result, then we have to formulate another proof of Proposition \ref{wzd2}, based on Proposition~\ref{sumdivisible}. This will be important for us to get at the goal.

\begin{proof}
(1) First note that \cite[Theorem 5.6]{WZ} shows
\begin{equation}\label{gtol}
\begin{split}
\text{\underline{dim}}\,{\ggg}_{\bbk}-\text{\underline{dim}}\,{\ggg}_{\bbk}^{\bar x}&=\text{\underline{dim}}\,{\ggg}_{\bbk}-\text{\underline{dim}}\,\mathfrak{l}_{\bbk}^{\bar n}\\
&=2\text{\underline{dim}}\,\mathfrak{u}_{\bbk}^-+(\text{\underline{dim}}\,\mathfrak{l}_{\bbk}-\text{\underline{dim}}\,\mathfrak{l}_{\bbk}^{\bar n}),
\end{split}
\end{equation} where $\mathfrak{l}_{\bbk}^{\bar n}$ denotes the centralizer of $\bar n$ in $\mathfrak{l}_{\bbk}$.
Since $\mathfrak{l}_{\bbk}=\bigoplus\limits_{i=1}^r({\ggg}_{\bbk})_i\oplus\mathfrak{t}'_{\bbk}$ and $\bar n
\in \bigoplus\limits_{i=1}^r({\ggg}_{\bbk})_i$, it is obvious that $(\mathfrak{t}'_{\bbk})^{\bar n}=\mathfrak{t}'_{\bbk}$, then
\begin{equation}\label{ltosum}
\begin{split}
\text{\underline{dim}}\,\mathfrak{l}_{\bbk}-\text{\underline{dim}}\,\mathfrak{l}_{\bbk}^{\bar n}=&\sum\limits_{i=1}^r
(\text{\underline{dim}}\,({\ggg}_{\bbk})_i-\text{\underline{dim}}\,({\ggg}_{\bbk})_i^{\bar n_i})+
\text{\underline{dim}}\,\mathfrak{t}'_{\bbk}-\text{\underline{dim}}\,(\mathfrak{t}'_{\bbk})^{\bar n}\\=&(\sum\limits_{i=1}^r(d_0)_i,\sum\limits_{i=1}^r(d_1)_i).\end{split}
\end{equation} As $\text{\underline{dim}}\,\mathfrak{u}_{\bbk}^-=\text{\underline{dim}}\,\mathfrak{u}_{\bbk}$, \eqref{gtol} shows that
\begin{equation}\label{dimu}
\begin{array}{rclll}
\text{dim}\,(\mathfrak{u}^-_{\bbk})_{\bar0}&=&\text{dim}\,(\mathfrak{u}_{\bbk})_{\bar0}&=&\frac{d_0-\sum\limits_{i=1}^r(d_0)_i}{2};\\ \text{dim}\,(\mathfrak{u}^-_{\bbk})_{\bar1}&=&\text{dim}\,(\mathfrak{u}_{\bbk})_{\bar1}&=&\frac{d_1-\sum\limits_{i=1}^r(d_1)_i}{2}.
\end{array}
\end{equation}

(2) For  $\mathfrak{l}_{\bbk}=\bigoplus\limits_{i=1}^r({\ggg}_{\bbk})_i\oplus\mathfrak{t}'_{\bbk}$,
we have an algebra isomorphism
\begin{equation}\label{lsumt'}
U_{\xi_{\bar n}}(\mathfrak{l}_{\bbk})\cong U_{\xi_{\bar n}}(\bigoplus\limits_{i=1}^r({\ggg}_{\bbk})_i\oplus\mathfrak{t}'_{\bbk})\cong
U_{\xi_{\bar n}}(\bigoplus\limits_{i=1}^r({\ggg}_{\bbk})_i)\otimes U_0(\mathfrak{t}'_{\bbk}).
\end{equation} 

 Let us first consider  the representations of the $\bbk$-algebra $U_{\xi_{\bar n}}(\bigoplus\limits_{i=1}^r({\ggg}_{\bbk})_i)$. Since $\xi_{\bar n}|_{\mathfrak{l}_{\bbk}}$ is nilpotent and each $({\ggg}_{\bbk})_i$ is a basic Lie superalgebra for $1\leqslant i\leqslant r$, it follows from Proposition~\ref{sumdivisible} that every $U_{\xi_{\bar n}}(\bigoplus\limits_{i=1}^r({\ggg}_{\bbk})_i)$-module is divisible by $p^{\frac{\sum\limits_{i=1}^r(d_0)_i}{2}}2^{\frac{l+\sum\limits_{i=1}^r(d_1)_i}{2}}$.

 Next we look at the ${\bbk}$-algebra $U_0(\mathfrak{t}'_{\bbk})$. As $\mathfrak{t}'_{\bbk}$ is a toral subalgebra of ${\ggg}_{\bbk}$ with a basis $\{t_1,\cdots,t_d\}$ such that $t_i^{[p]}=t_i$ for all $1\leqslant i\leqslant d$, then $U_0(\mathfrak{t}'_{\bbk})\cong A_1^{\otimes d}$ where $A_1\cong{\bbk}[X]/(X^p-X)$ is a $p$-dimensional commutative semisimple algebra whose irreducible representations are one-dimensional  (naturally of type $M$). Hence we can conclude from Lemma~\ref{AB}(i) that all irreducible representations of $U_0(\mathfrak{t}'_{\bbk})$ are one-dimensional and of type $M$.

Recall that $U_{\xi_{\bar n}}(\mathfrak{l}_{\bbk})\cong U_{\xi_{\bar n}}(\bigoplus\limits_{i=1}^r({\ggg}_{\bbk})_i)\otimes U_0(\mathfrak{t}'_{\bbk})$.
Summing up, we can conclude that any $U_{\xi_{\bar n}}(\mathfrak{l}_{\bbk})$-module is divisible by $p^{\frac{\sum\limits_{i=1}^r(d_0)_i}{2}}2^{\frac{l+\sum\limits_{i=1}^r(d_1)_i}{2}}$.

(3) Recall that 
an object in the representation category of $U_{\xi}(\mathfrak{l}_{\bbk})$ can be regarded as one in the representation category of $U_\xi({\ppp}_{\bbk})$ with a trivial action of $\mathfrak{u}_{\bbk}$, then it follows from  \eqref{gp} and \eqref{dimu} that the dimension of every $U_\xi({\ggg}_{\bbk}$)-mod is divisible by
\begin{equation}\label{numb}
p^{\sum\limits_{i=1}^r\frac{(d_0)_i}{2}}2^{\frac{l}{2}+\sum\limits_{i=1}^r\frac{(d_1)_i}{2}}\cdot p^{\frac{d_0-\sum\limits_{i=1}^r(d_0)_i}{2}} 2^{\frac{d_1-\sum\limits_{i=1}^r(d_1)_i}{2}}
=p^{\frac{d_0}{2}}2^{\frac{d_1+l}{2}}.
\end{equation}

(4) 
We now claim that in (\ref{numb}) we have $l=0$ or $1$; this is to say,
\begin{align}\label{finalclaim}
\mbox{
at most one of the } (d_1)_i\mbox{'s is odd for }1\leqslant i\leqslant r.
 \end{align}
  The proof of Claim \eqref{finalclaim} will be given for each case separately.
 Recall that for $\mathfrak{l}_\bbk=\ggg_\bbk^{\bar s}$ with a direct-sum decomposition $\mathfrak{l}_\bbk= \bigoplus\limits_{i=1}^r({\ggg}_{\bbk})_i\oplus\mathfrak{t}'_{{\bbk}}$,
 we have:
(i) $d_1$ and $\sum\limits_{i=1}^r(d_1)_i$ share the same parity (see \eqref{arbitdim}, \eqref{gtol} and \eqref{ltosum}); (ii) $(d_1)_i$'s are odd for $1\leqslant i \leqslant l$, and $(d_1)_i$'s are even for $l+1\leqslant i \leqslant r$ (see \S\ref{5.4.1}). It follows that $d_1$ and $l$ have the same parity. Combining this with Claim \eqref{finalclaim}, Proposition \ref{wzd2} follows.
\end{proof}

  \vskip0.2cm
  \noindent\textsc{The proof of Claim \eqref{finalclaim}}.
  We will complete the proof by steps.

(1) The above discussion shows that we only need to consider the summands of $ \bigoplus\limits_{i=1}^r({\ggg}_{\bbk})_i$,
investigating the situation with non-zero odd parts.  Recall that an explicit list of non-$W$-equivalent systems of positive roots was found by Kac in \cite[\S2.5.4]{K} (a system of simple roots for $F(4)$ is missing; see the remark above \cite[Proposition 5.1]{WZ}). Note that in the examples given by Kac, the Cartan subalgebra $\mathfrak{h}_{\bbk}$ is a subspace of the space $D$ of diagonal matrices; the roots are expressed in terms of the standard basis $\epsilon_i$ of $D^*$ (more accurately, the restrictions of the $\epsilon_i$ to $\mathfrak{h}_{\bbk}$). In the following we assume that the semisimple element $\bar s\in \mathfrak{h}_{\bbk}$.

(2) Given any nilpotent element $e\in\mathfrak{sl}(M|N)_{\bar0}$, we have $\mathfrak{gl}(M|N)_{\bar1}=\mathfrak{sl}(M|N)_{\bar1}$ and $\mathfrak{gl}(M|N)^{e}_{\bar1}=\mathfrak{sl}(M|N)^{e}_{\bar1}$ for all $M, N\in{\bbz}_+$, where $\mathfrak{gl}(M|N)^{e}_{\bar1}$ and $\mathfrak{sl}(M|N)^{e}_{\bar1}$ denote the centralizer of $e$ in $\mathfrak{gl}(M|N)_{\bar1}$ and $\mathfrak{sl}(M|N)_{\bar1}$, respectively.
It follows from \cite[\S3.2]{WZ} that $(d_1)_i\,(1\leqslant i\leqslant r)$ is always even for the summand $({\ggg}_{\bbk})_i$ which is isomorphic to the basic Lie superalgebra of type $A(m,n)$ with $m,n\in{\bbz}_+$. In virtue of this consequence, completely elementary yet tedious case-by-case calculations show that

(i) When $\ggg_{\bbk}$ is of type $A(M,N)$, each summand of $\bigoplus\limits_{i=1}^r({\ggg}_{\bbk})_i$  with non-zero odd parts is always isomorphic to a basic Lie superalgebra of type $A(m,n)$ with $m,n\in{\bbz}_+$, thus for $1\leqslant i\leqslant r$ the $(d_1)_i$'s are all even in this case.

(ii) When $\ggg_\bbk$ is of type $B(M,N), C(M,N)$ or $D(M,N)$, the summands of $\bigoplus\limits_{i=1}^r({\ggg}_{\bbk})_i$ with non-zero odd parts are either isomorphic to $A(m,n)$ with $m,n\in{\bbz}_+$; or at most one summand is isomorphic to $B(m,n),\,C(m,n)$ or $D(m,n)$ ($m,n\in{\bbz}_+$) respectively (which is of the same type as $\mathfrak{g}_{\bbk}$).
Hence for $1\leqslant i\leqslant r$, at most one of the $(d_1)_i$'s is odd in this case.

(iii) When $\ggg_\bbk$ is of type $D(2,1;\bar a)$ or $G(3)$, the summands of $\bigoplus\limits_{i=1}^r({\ggg}_{\bbk})_i$  with non-zero odd parts are either isomorphic to $A(m,n)$ with $m,n\in{\bbz}_+$; or at most one summand is isomorphic to $B(m,n)$ ($m,n\in{\bbz}_+$). At extreme, $\mathfrak{l}_{\bbk}={\ggg}_{\bbk}^{\bar s}$ equals $D(2,1;\bar a)$ or $G(3)$ respectively when $\bar s=0$. Hence for $1\leqslant i\leqslant r$, at most one of the $(d_1)_i$'s is odd in this case.

(iv) When $\ggg_\bbk$ is of type $F(4)$, the summands of $\bigoplus\limits_{i=1}^r({\ggg}_{\bbk})_i$ with non-zero odd parts are either isomorphic to $A(m,n)$ with $m,n\in{\bbz}_+$; or at most one summand is either isomorphic to $B(m,n)$ with $m,n\in{\bbz}_+$, or to $D(2,1;\bar a)$. At extreme, $\mathfrak{l}_{\bbk}={\ggg}_{\bbk}^{\bar s}=F(4)$ when $\bar s=0$. Hence for $1\leqslant i\leqslant r$, at most one of the $(d_1)_i$'s is odd in this case.

Summing up the above discussions, we can conclude that at most one of the $(d_1)_i$'s ($1\leqslant i\leqslant r$) is odd in the summands of $\bigoplus\limits_{i=1}^r({\ggg}_{\bbk})_i$. Then we complete the proof of Claim \eqref{finalclaim}.
\begin{rem} The significance of the above new proof can be seen as below:

(1) from the precise analysis in the proof,
one can conclude that the lower bounds for the dimension of $U_\xi({\ggg}_{\bbk})$-modules are critically depending on the parity of the $(d_1)_i$'s for $1\leqslant i\leqslant r$. Without careful inspection on the summands of $\mathfrak{l}_{{\bbk}}\cong \bigoplus\limits_{i=1}^r({\ggg}_{\bbk})_i\oplus\mathfrak{t}'_{{\bbk}}$, we can not make it sure that the lower bounds introduced in \cite[Theorem 5.6]{WZ} are optimal.

(2) What we are concerned is the realization of  ``small representations" of dimensions equaling the lower bounds  in \cite[Theorem 5.6]{WZ} under the assumption of Conjecture~\ref{conjecture}.
By the above proof, we can demonstrate how to realize the $U_{\xi}({\ggg}_{\bbk})$-modules of minimal dimensions by the representation theory of the ${\bbk}$-algebra
$U_{\xi}(\mathfrak{l}_{\bbk})$; see \S\ref{5.4.3} for more details.
\end{rem}

\subsubsection{The proof of Theorem \ref{intromain-2}}\label{5.4.3}
Now we are in a position to prove Theorem \ref{intromain-2}, attacking the problem of the accessibility of the lower bounds in the super Kac-Weisfeiler property \cite[Theorem 5.6]{WZ}.

\begin{proof} Retain the notation as \eqref{arbitdim}.
For the subalgebra $\mathfrak{l}_{\bbk}={\ggg}_{\bbk}^{\bar s}=\bigoplus\limits_{i=1}^r({\ggg}_{\bbk})_i\oplus\mathfrak{t}'_{\bbk}$ of ${\ggg}_{\bbk}$, recall that Theorem~\ref{sumreachable} shows that the ${\bbk}$-algebra $U_{\xi_{\bar n}}(\bigoplus\limits_{i=1}^r({\ggg}_{\bbk})_i)$ admits an irreducible representation of dimension $p^{\sum\limits_{i=1}^r\frac{(d_0)_i}{2}}2^{\frac{l}{2}+\sum\limits_{i=1}^r\frac{(d_1)_i}{2}}$ under the assumption of Conjecture~\ref{conjecture}, and we denote it by $V$. By the arguments of step (4) in the proof of Proposition \ref{wzd2}, one can conclude that
$$\dim V=
p^{\sum\limits_{i=1}^r\frac{(d_0)_i}{2}}2^{\frac{l}{2}+\sum\limits_{i=1}^r\frac{(d_1)_i}{2}}
=p^{\sum\limits_{i=1}^r\frac{(d_0)_i}{2}}2^
{\lfloor\sum\limits_{i=1}^r\frac{(d_1)_i}{2}\rfloor}.$$
 Step (2) in the proof of Proposition~\ref{wzd2} shows that the ${\bbk}$-algebra $U_{0}(\mathfrak{t}'_{\bbk})$ affords an irreducible representation of dimension one, denoted by $W$ (note that it is of type $M$). Thus it follows from Lemma~\ref{AB} that $V\boxtimes W$ is an irreducible representation of the ${\bbk}$-algebra $U_{\xi_{\bar n}}(\bigoplus\limits_{i=1}^r({\ggg}_{\bbk})_i)\otimes U_{0}(\mathfrak{t}'_{\bbk})\cong U_{\xi}(\mathfrak{l}_{\bbk})$ with dimension $p^{\sum\limits_{i=1}^r\frac{(d_0)_i}{2}}2^
{\lfloor\sum\limits_{i=1}^r\frac{(d_1)_i}{2}\rfloor}$.

Recall that  any $U_\xi(\mathfrak{l}_{\bbk})$-mod can be regarded as a $U_\xi({\ppp}_{\bbk})$-mod with a trivial action of $\mathfrak{u}_{\bbk}$.  It follows from \eqref{gp} that the $U_\xi({\ggg}_{\bbk})$-module induced from the ${U_\xi(\mathfrak{p}_{\bbk})}$-module $V\boxtimes W$ is irreducible.
By \eqref{dimu} we can conclude that the dimension of this induced module is equal to  %
\begin{equation}\label{final}
p^{\sum\limits_{i=1}^r\frac{(d_0)_i}{2}}2^{\lfloor\sum\limits_{i=1}^r\frac{(d_1)_i}{2}\rfloor}\cdot p^{\frac{d_0-\sum\limits_{i=1}^r(d_0)_i}{2}} 2^{\frac{d_1-\sum\limits_{i=1}^r(d_1)_i}{2}}
=p^{\frac{d_0}{2}}2^{\frac{d_1}{2}+(\lfloor\sum\limits_{i=1}^r\frac{(d_1)_i}{2}\rfloor-\sum\limits_{i=1}^r\frac{(d_1)_i}{2})}.
\end{equation}
Owing to \eqref{arbitdim}, \eqref{gtol} and \eqref{ltosum},  $\sum\limits_{i=1}^r(d_1)_i$ and $d_1$ have the same parity.
Hence, the desired result follows from \eqref{final}.
\end{proof}

\begin{rem}
For the reduced enveloping algebra $U_\xi({\ggg}_{\bbk})$ with arbitrary $p$-character $\xi\in({\ggg}_{\bbk})^*_{\bar0}$, the formulation of the super Kac-Weisfeiler property in Proposition \ref{wzd2} is dependent on the one on ${\ggg}_{\bbk}^{\bar s}$ which equals a direct sum of some basic Lie superalgebras and a toral subalgebra, and set it to be $\bigoplus\limits_{i=1}^r({\ggg}_{\bbk})_i\oplus \mathfrak{t}'_{\bbk}$ (cf. \cite{WZ}).
However, after the super Kac-Weisfeiler property for the ${\bbk}$-algebra $U_{\xi_{\bar n}}(\bigoplus\limits_{i=1}^r({\ggg}_{\bbk})_i)$ being refined in Proposition~\ref{sumdivisible}, one may be worried whether the real minimal dimensions of the representations of $U_\xi({\ggg}_{\bbk})$ are much larger than the ones introduced in Proposition \ref{wzd2}. Fortunately, Theorem \ref{intromain-2} certifies  that they are exactly the real minimal dimensions of those representations under the assumption of Conjecture~\ref{conjecture}.
\end{rem}

{\bf Acknowledgements}\quad
The authors would like to thank Weiqiang Wang and Lei Zhao whose work on super version of Kac-Weisfeiler conjecture stimulated them to do the present research.  The authors got much help  from the discussion with Weiqiang Wang, as well as from Yung-Ning Peng and Lei Zhao who  explained some results in their papers \cite{Peng3} and \cite{WZ} respectively. The authors express great thanks to them.


\begin{thebibliography}{A1}
\bibitem{Ber} F. A. Berezin, {\em Introduction to superanalysis}, D. Reidel
Publishing Company Dordrecht, 1987.


   \bibitem{BBG2} J. Brown, J. Brundan and S. Goodwin, {\em Principal $W$-algebras for $GL(m|n)$}, Algebra and Number Theory 7 (2013), 1849-1882.


   \bibitem{BK} J. Brundan and A. Kleshchev,  {\em Modular representations of the supergroup $Q(n)$}, J. Algebra 260 (2003), 64-98.

   \bibitem{BK2} J. Brundan and A. Kleshchev, {\em Shifted Yangians and finite $W$-algebras}, Advances in Mathematics 200 (2006), 136-195.

   \bibitem{CW} S. Cheng and W. Wang, {\em Dualities and representations of Lie superalgebras}, GSM 144, Amer. Math. Soc., Providence, RI, 2013.

   \bibitem{FG} R. Fioresi and F. Gavarini, {\em Chevalley supergroups}, Memoirs of the American Mathematical Society, 215.


       \bibitem{FG2} R. Fioresi and F. Gavarini, {\em Algebraic supergroups with Lie superalgebras of classical type}, Journal of Lie Theory 23 (2013), 143-158.

   \bibitem{GRU} S. M. Goodwin, G. R\"{o}hrle and G. Ubly, {\em On 1-dimensional representations of finite $W$-algebras associated to simple Lie algebras of exceptional type}, LMS Journal of Computation and Mathematics 13, December 2010, 357-369.


   \bibitem{J} J. C. Jantzen, {\em Representations of algebraic groups}, Pure and Applied Mathematics 131, Orlando etc. 1987 (Academic).

   \bibitem{J3} J. C. Jantzen, {\em Nilpotent orbits in representation theory}, Progress in Math., vol. 228, Birkh\"{a}user, 2004.

   \bibitem{K} V. G. Kac, {\em Lie superalgebras}, Adv. Math. 16  (1977), 8-96.

   \bibitem{K2} V. G. Kac, {\em Representations of classical Lie superalgebras}, Lecture notes in Mathematics, vol. 676 (1978),  597-626.

   \bibitem{KW} V. G. Kac and M. Wakimoto, {\em Quantum reduction and representation theory of superconformal algebras}, Adv. Math. 185 (2004), 400-458.

   \bibitem{KL} A. Kleshchev, {\em Linear and projective representations of symmetric groups}, Cambridge Tracts in Mathematics 163 (Cambridge University Press, Cambridge, 2005).

   \bibitem{Ko} B. Kostant, {\em On Whittaker modules and representation theory}, Invent. Math. 48 (1978), 101-184.

   \bibitem{L1} I. Losev, {\em Finite $W$-algebras}, Proceedings of the International Congress of Mathematicians, Hyderabad, India 2010, 1281-1307.

   \bibitem{L3} I. Losev, {\em Quantized symplectic actions and $W$-algebras}, J. Amer. Math. Soc. 23 (2010), 35-59.

   \bibitem{L5} I. Losev, {\em Parabolic induction and one-dimensional representations of $W$-algebras}, Adv. Math. 226(6), (2011),  4841-4883.

   \bibitem{Ly} T. E. Lynch, {\em Generalized Whittaker vectors and representation theory}, PhD thesis, M. I. T., 1979.

   \bibitem{M} Yu. Manin, {\em Gauge field theory and complex geometry}, Grundlehren der mathematischen Wissenschaften 289, Second Edition, Springer, 1997.

   \bibitem{Peng2} Y. Peng, {\em Finite $W$-superalgebras and truncated super Yangians},  Lett. Math. Phys. 104 (2014), 89-102.

   \bibitem{Peng3} Y. Peng, {\em On shifted super Yangians and a class of finite $W$-superalgebras}, J. Algebra (2014), http://dx.doi.org/10.1016/j.jalgebra.2014.09.015.

   \bibitem{PS} E. Poletaeva and V. Serganova, {\em On finite $W$-algebras for Lie superalgebras in the regular case}, V.Dobrev (Ed.), Lie Theory and Its Applications in Physics, IX International Workshop,  Springer Proc.  Math. Stat. 36 (2013), 487-497.

   \bibitem{PS2} E. Poletaeva and V. Serganova, {\em On Kostant's theorem for the Lie superalgebra $Q(n)$}, preprint, arXiv: 1403.3866v1 [MathRT].

   \bibitem{PS3} E. Poletaeva, {\em On Kostant's Theorem for Lie superalgebras}, M. Gorelik, P. Papi (eds.), Advances in Lie Superalgebras, Springer INdAM Series, Vol. 7 (2014), 167-180.

   \bibitem{P1} A. Premet, {\em Irreducible representations of Lie algebras of reductive groups and the Kac-Weisfeiler conjecture}, Invent. Math. 121 (1995), 79-117.

   \bibitem{P2} A. Premet, {\em Special transverse slices and their enveloping algebras}, Advances in Mathematics 170 (2002), 1-55.

   \bibitem{P4} A. Premet, {\em Primitive ideals, non-restricted representations and finite $W$-algebras}, Mosc. Math. J. 7 (2007), 743-762.


   \bibitem{P7} A. Premet, {\em Commutative quotients of finite $W$-algebras.} Advances in Mathematics 225 (2010), 269-306.


   \bibitem{P8} A. Premet, {\em Enveloping algebras of Slodowy slices and Goldie rank}, Transformation groups 16(3), (2011), 857-888.


   \bibitem{P9} A. Premet, {\em Multiplicity-free primitive ideals associated with rigid nilpotent orbits},  Transform. Groups 19(2),  (2014), 569-641.

   \bibitem{SW} B. Shu and W. Wang,  {\em Modular representations of the ortho-symplectic supergroups}, Proc. Lond. Math. Soc. 96(3), (2008), 251-271.

   \bibitem {SZ} B. Shu and L. Zheng, {\em On Lie superalgebras of algebraic supergroups}, Algebra Colloquium 16 (2009),
    361-370.

   \bibitem{SK} A. De Sole and V. G. Kac, {\em Finite vs affine $W$-algebras}, Japan J. Math 1 (2006), 137-261.

   \bibitem{WZ} W. Wang and L. Zhao, {\em Representations of Lie superalgebras in prime characteristic I}, Proc. London Math. Soc. 99 (2009), 145-167.

   \bibitem{WZ2} W. Wang and L. Zhao, {\em Representations of Lie superalgebras in prime characteristic II: the queer series}, Journal of Pure and Applied Algebra 215 (2011), 2515-2532.

   \bibitem{W} W. Wang, {\em Nilpotent orbits and $W$-algebras}, Fields Institute Communications Series 59 (2011), 71-105.

   \bibitem{Zeng} Y. Zeng, {\em Study on some problems in finite $W$-algebras and finite $W$-superalgebras}, PhD thesis, East China Normal University, 2012.

   \bibitem{ZS} Y. Zeng and B. Shu, {\em Finite W-superalgebras for basic classical Lie superalgebras}, preprint, arXiv: 1404. 1150v2 [MathRT].

\bibitem{ZS2} Y. Zeng and B. Shu, {\em Finite $W$-superalgebras for basic Lie superalgebras}, submitted.

   \bibitem{ZS3} Y. Zeng and B. Shu, {\em On Kac-Weisfeiler modules for general and special linear Lie superalgebras}, 2014 (Submitted).

   \bibitem{Z1} L. Zhao, {\em Representations of Lie superalgebras in prime characteristic III}, Pacific J. Math 248 (2010), 493-510.

   \bibitem{Z2} L. Zhao, {\em Finite $W$-superalgebras for queer Lie superalgebras}, Journal of Pure and Applied Algebra, 218 (2014), 1184-1194.
\end{thebibliography}
\end{document}